 \numberwithin{equation}{section}
	\DeclareMathOperator{\supp}{supp}
	\newcommand*{\op}[1]{{\mathrm{#1}}}
	\newcommand{\modop}{\bigtriangleup} %- the modular operator
	\newcommand{\logmodop}{\bigtriangledown}
	\DeclareMathSymbol{\Gamma}{\mathalpha}{operators}{0}
	\DeclareMathSymbol{\Delta}{\mathalpha}{operators}{1}
	\DeclareMathSymbol{\Theta}{\mathalpha}{operators}{2}
	\DeclareMathSymbol{\Lambda}{\mathalpha}{operators}{3}
	\DeclareMathSymbol{\Xi}{\mathalpha}{operators}{4}
	\DeclareMathSymbol{\Pi}{\mathalpha}{operators}{5}
	\DeclareMathSymbol{\Sigma}{\mathalpha}{operators}{6}
	\DeclareMathSymbol{\Upsilon}{\mathalpha}{operators}{7}
	\DeclareMathSymbol{\Phi}{\mathalpha}{operators}{8}
	\DeclareMathSymbol{\Psi}{\mathalpha}{operators}{9}
	\DeclareMathSymbol{\Omega}{\mathalpha}{operators}{10}
	\renewcommand{\epsilon}{\varepsilon}
	\DeclareMathSymbol{\T}{\mathbin}{AMSb}{"54}
	\newcommand{\cA}{\mathcal{A}}
\newcommand{\defeq}{\vcentcolon=}
\newcommand{\symd}{\partial}
\begin{document}
 \author{Yang Liu}
 \address{Department of Mathematics, Ohio State University, 
Columbus, OH 43201}
\email{liu.858@osu.edu}
%\urladdr{www.math.sc.edu/$\sim$howard} % Delete if not wanted.

\title{Modular Curvature for Toric Noncommutative Manifolds
%\large{Hopf Cyclic cohomology of Noncommutative tori}
}

\keywords{toric noncommutative manifolds, pseudo differential calculus, modular curvature, heat kernel expansion}
\date{\today}
\thanks{The author is greatly indebted to Henri Moscovici for sharing this great research project and this paper grew out of numerous conversations with him.}
\begin{abstract}
     In this paper, we extend recent results on the modular geometry on noncommutative two tori to a larger class of noncommutative manifolds: toric noncommutative manifolds. We first develop  a pseudo differential calculus which is suitable  for spectral geometry on toric noncommutative manifolds. As the main application, we derive a general expression for the modular curvature with respect to a conformal change of metric on  toric noncommutative manifolds. By specializing our results to the noncommutative two  and four tori,  we recovered the modular curvature functions  found in the previous works. An important  technical aspect of the computation is that it is   free of computer assistance.

\end{abstract}

\maketitle

\tableofcontents

% !TEX root =  main.tex

\section{Introduction}
\label{sec:intro}

In the noncommutative differential geometry program (cf. for instance, Connes's book \cite{MR1303779}), the geometric data is given in the form of  a spectral  triple $(\mathcal A, \mathcal H, D)$, where $\mathcal A$ is a $*$-algebra which serves as the algebra of coordinate functions of the underlying space, and $D$ is an unbounded self-adjoint operator such that the commutators $[D,a]$ are bounded operators on $\mathcal H$ for all $a \in \mathcal A$. More than the topological structure, the spectral data also reflects  the metric and differential structure of the geometric space.  The prototypical example  comes from spin geometry:  $(C^\infty(M), L^2(\slashed S),\slashed{D})$, where $M$ is a closed spin manifold with  spinor bundle $\slashed S$ and $\slashed D$ is the Dirac operator. 
In Riemannian geometry, local geometric invariants, such as the scalar curvature function can be recovered from the asymptotic expansion of Schwartz kernel function of the heat operator $e^{-t\Delta}$: 
\begin{align*}
	K(x,x,t) \backsim \nsum{j\geq 0}{} V_{j}(x) t^{(j-d)/2},
	\,\,\, \text{where $d$ is the dimension of the manifold.}
\end{align*}
Equivalently, one turns to the asymptotic expansion of the heat kernel trace 
\begin{align}\label{eq:intro-heatkernel}
  \Tr(fe^{-tD^{2}}) \backsim_{t\searrow 0} \nsum{j\geq 0}{} V_{j}(f,D^{2}) t^{(j-d)/2}, \,\,\, f \in \cA, 
\end{align}
which makes perfect sense in the  operator theoretic setting. In the spirit of  Connes and Moscovici's work \cite{MR3194491}, the coefficients $V_j(\cdot, D^2)$ above (in \eqref{eq:intro-heatkernel}), viewed as functionals on the algebra of coordinate functions, encode the local geometry, such as intrinsic curvatures, with respect to the metric  implemented  by the operator $D$. 
%for  a metric on a noncommutative space implemented  by the operator $D$, a good notion of intrinsic curvatures  comes from the density functions of the heat kernel coefficients $V_j(\cdot, D^2)$ in \eqref{eq:intro-heatkernel}.   
% The coefficients $V_j$ give rise to special values of the zeta function $\zeta_D(f,s)$. To summarize, we quote from  \cite{MR3194491}: ``It is the high frequency behavior of the spectrum of $D$ coupled with the action of the algebra $\mathcal A$ in $\mathcal H$ which detects the local curvature of the geometry.'' This program was carried out in great depth on  noncommutative two torus $\T^2_\theta$ $(\theta \notin \Q)$, the simplest but richest noncommutative surface.
 This approach was carried out in great depth on  noncommutative two tori. The technical tool for the computation is the pseudo differential calculus associated to a $C^*$-dynamical system which was developed in  Connes' seminal paper \cite{connes1980c}, meanwhile the computation was initiated, see \cite{cohen1992conformal}. The first application of such calculus is the Gauss-Bonnet theorem for noncommutative two torus \cite{MR2907006}. The major progress occurred in  Connes
 and Moscovici's recent work \cite{MR3194491}. The high lights of the paper contains not only the full local expression for the functional of the second heat coefficient, but also several geometric applications  of the local formulas  which demonstrate the great significance of the approach.  The appearance of the modular curvature functionals in those closed formulas gives vivid reflections of the noncommutativity.   An independent calculation for the Gauss-Bonnet theorem and  the  full expressions of the modular curvature
 functions, was carried out in  \cite{MR2956317} and \cite{MR3148618}, with a different CAS (Computer Algebra System). Modular scalar curvature on noncommutative four tori was studied in \cite{MR3359018} and \cite{MR3369894}.   Recently, in \cite{Lesch:2015aa}, the computation was extended to Heisenberg modules over noncommutative two torus and the whole calculation was greatly simplified so that CAS was no longer need. See also \cite{Bhuyain2012} and \cite{2013SIGMA...9..071R} for other related work on noncommutative two tori.\par
 It is  natural to investigate  how to implement  the program for other  noncommutative manifolds. An interesting class of examples comes from deformation of classical Riemannian manifolds, such is the Connes-Landi deformations (cf. \cite{MR1846904, MR1937657}), also called toric noncommutative manifolds in  \cite{MR3262521}. 
The underlying deformation theory, called $\theta$-deformation in the literature,  was first developed in Rieffel's  work  \cite{MR1184061}.

Following the spirit of the  previous work on noncommutative tori, we use a pseudo differential calculus to tackle the heat kernel coefficients in this paper.  The construction is the first main outcome of  this paper. 
Our pseudo differential calculus is designed to  handle two families of noncommutative manifolds  simultaneously:  tori and  spheres obtained by  the  Connes-Landi deformation.  In contrast to  noncommutative two tori,   the noncommutative four spheres are different in two essential ways:  
\begin{enumerate}[(1)]
    \item The dimension of the action torus (which is two) is less than the dimension of the underlying manifold (which is four);
    \item The underlying manifold is not parallelizable. 
\end{enumerate}
The first one implies that the torus action is not transitive, hence  the correspondent $C^*$-dynamical system will not be able to reveal the entire geometry. The second fact indicates that one should expect a  more sophisticated asymptotic formula for the product of two symbols than the one appears in  Connes' construction.
The method taken in this paper is to apply the deformation theory not only to the algebra of smooth functions on the underlying Riemannian manifold, but also to the whole pseudo differential calculus. The resulting symbol calculus blends the commutative and the noncommutative coordinates in a simplest fashion.

In order for the deformation theory to apply,  both the symbol map and the quantization map in the calculus have to be equivariant with respect to the torus action. This leads us to  work with  global pseudo differential calculus on closed manifolds in which all the ingredients are given in a coordinate-free way. Such calculus, which appeared first in Widom's work  \cite{MR538027} and \cite{MR560744}, turned out to be the perfect tool to  develop the deformation process.

 In the rest of the paper, we devote the attention to applications. In contrast to  the work  \cite{MR3194491,MR3359018,Lesch:2015aa}, we skip the construction of the spectral triple since only pseudo differential operators acting on functions are consider in this paper. As a consequence, we use  scalar Laplacian operator (instead of the Dirac operator) to define the metric and the noncommutative conformal change of metric is implemented by a perturbation of  the
 scalar Laplacian operator  via a Weyl factor $k$.
  The first consequence of the pseudo differential calculus is the existence of the  asymptotic expansion  \eqref{eq:intro-heatkernel}.   The associated modular  curvature is defined to be the functional density  with respect to the $V_2$ term in the  heat kernel asymptotic \eqref{eq:intro-heatkernel}.  It is worth to point out that the modular curvature defined here is only part of   the full intrinsic scalar curvature in   \cite[definition 1.147]{connes2008noncommutative}.  \par
  
  In this paper, we only test our pseudo differential calculus on the simplest but totally nontrivial perturbed Laplacian: $k\Delta$, which is obtained from the degree zero Laplacian $k \Delta k$ in \cite{MR3194491} by a conjugation.  Here $k$ is a Weyl factor as before, and $\Delta$ is the scalar Laplacian associated to the Riemannian metric. As  an instance of  \cite[Theorem 2.2]{MR3194491}, we  prove that the zeta function at zero is independent of the conformal perturbation, namely:
  \begin{align}
  \label{eq:intro-zeta-at-0}
  	\zeta_{k\Delta}(0) = \zeta_{\Delta}(0). 
  \end{align} 
% Due to the simplified  set up, we  define the modular curvature to be the density of functional associated to the second heat coefficient $f \mapsto V_2(f,k\Delta)$ in \eqref{eq:intro-heatkernel}. 

  The main result is the local formula for the modular curvature $\mathcal R \in C^\infty(M_\Theta)$ with respect to a perturbed Laplacian $\pi^\Theta(P_k)$ (i.e., a noncommutative conformal change of metric): 
     \begin{align}
            \label{eq:instro-scalarcur-Mdim-m}
  	\mathcal R(k) &=  \brac{  k^{-m/2} \mathcal K (\modop)(\nabla^2 k) + k^{-(m+2)/2} \mathcal G(\modop_{(1)},\modop_{(2)}) \brac{\nabla k \nabla k}}g^{-1} \\
    &+   c k^{-(\frac m2 -1)}\mathcal S_{\Delta}.\nonumber
  \end{align}
 Let us explain the notations. First, $k \in C^\infty(M_\Theta)$ is a Weyl factor,  $m =\dim M$ is an even integer, $g^{-1}$ is the metric tensor on the cotangent bundle and $\nabla$ is the Levi-Civita connection so that the contraction $(\nabla^2 k)g^{-1}$ is equal to $-\Delta k$ and  $(\nabla k \nabla k)g^{-1}$, which equals the squared length of the covector $\nabla k$ in the commutative situation, generalizes the Dirichlet quadratic form appeared in \cite[Eq. (0.1)]{MR3194491}. The  scalar curvature function $\mathcal
 S_{\Delta}$ associated to the metric $g$ appears naturally if the metric is nonflat, the coefficient $c$ is a constant depends only on the dimension of the manifold $M$. The triangle $\modop$ (compare to $\Delta$, the Laplacian operator) is  the modular operator (see. \eqref{eq:mod-cur-modop}), while for $j=1,2$, $\modop_{(j)}$ indicates that the operator $\modop$ applied only to the $j$-th factor.  The modular curvature functions $\mathcal K$ and $\mathcal G$ are computed explicitly in the last
 section. A crucial property of the modular curvature functions is that  they can be written as linear combinations of simple divided differences\footnote{``simple'' means that at most the third divided difference occurs. For the notion of divided differences, we refer to \cite[Appendix A]{2014arXiv1405.0863L} and a classical reference \cite{MR0043339}.} of the modified logarithm $\mathcal L_0 = \log s/(s-1)$, which is the generating function of Bernoulli  numbers after the substitution $s \mapsto e^s$. The significance of this feature was explained in \cite{2014arXiv1405.0863L}.
% As pointed out in \cite{2014arXiv1405.0863L}, this fact serves as one of those ``conceptual explanations'' of the two novel aspects of the modular curvature functions $\tilde{K}_0$ and $\tilde H_0$ appeared in the expression of the Gaussian
% curvature $\grad_h F$ for noncommutative two tori \cite[Thm. 4.8]{2011arXiv1110.3500C}:
% \begin{enumerate}[(1)]
%     \item $\tilde K_0$ is (upto a factor $1/8$) the generating function of Bernoulli numbers;
%     \item $\tilde H_0$ is sum of divided differences of $\tilde K_0$.
% \end{enumerate}
% 
 \par
 
The second main outcome of this paper is obtained by specializing  the result above onto dimension two. We show that the expressions of $\mathcal K$ and $\mathcal G$ agree with the result in \cite[Theorem 3.2]{Lesch:2015aa} which gives further validation for our pseudo differential calculus and the computation performed in the last section as in  \cite{Lesch:2015aa} and as a significant improvement of the previous work, the computation does not require aid from CAS. 

In dimension four, we show that the modular curvature functions are both zero with respect to the operator  $k\Delta$. Since $k\Delta$ is  the leading part of the Laplacian adapted  in \cite{MR3359018},  the non-zero contributions to the modular functions  come from the symbols of degree one and zero. This fact can be observed in \cite{MR3369894} in which the computation was simplified. \par

  The other significant  feature of our approach is that the computation is no longer require computer assistance. The efficiency of our computation relies on a tensor calculus over the  toric noncommutative manifolds which is obtained from a   deformation of tensor calculus over the toric manifolds. On a smooth manifold $M$, a tensor calculus consists of three parts: the pointwise tensor product and contraction between tensor fields,  and a connection $\nabla$ which is characterized by the
 Leibniz property. For instance, a differential operator on $C^\infty(M)$ can be represented by a finite sum $f \mapsto \sum_{\alpha } \rho_{\alpha_j} \cdot \nabla^{j} f$, where $\rho_{\alpha_j} $ is a  contravariant tensor field $\rho$ of rank $j$ so that the contraction $\rho_{\alpha_j} \cdot \nabla^{j} f$  produces a smooth function. One of the merits is this observation is that it has a  straightforward generalization to our noncommutative setting: the tensor product and contractions between tensor fields are pointwise, like functions on a manifold, therefore the deformation procedure for functions extends naturally to tensor fields. To obtain a calculus, we show that the  Leibniz property of the Levi-Civita connection still holds in the deformed setting. As an example, we see that the Dirichlet quadratic form appeared in \cite[Eq. (0.1)]{MR3194491} with respect to the
complex structure associated to the modular parameter $\sqrt{-1}$ has the following counterpart in terms of the deformed tensor calculus:
\begin{align}
\Box_{\mathfrak R}(h) = (\nabla h \otimes_\Theta \nabla h)\cdot_\Theta g^{-1},
\label{eq:intro-dir-quad-from}
\end{align}
where  $g^{-1}$ is the metric tensor on the cotangent bundle and $\otimes_\Theta$ and $\cdot_\Theta$ are deformed tensor product and contraction respectively. Being technical tools,  such deformed  tensor calculus and pseudo differential calculus  have  many other potential applications, for instance:
\begin{enumerate}[i)]
 \item the gauge theory on toric noncommutative manifolds studied in  \cite{MR3262521};
 \item exploring generalizations of Riemannian metrics on noncommutative manifolds (cf. \cite{2013SIGMA...9..071R}).   
\end{enumerate}

We end this introduction with a brief outline of the paper. Section \ref{sec:deformation-along-tn} consists of functional analytic backgrounds of  the deformation theory. We split the discussion into two parts: deformation of algebras and deformation of operators according to their roles as ``symbols'' and ``operators'' in the general framework of pseudo differential calculi. \par

 In section \ref{sec:def-of-rie-geo}, we explain that how apply the deformation process to the whole tensor calculus, which serves as preparation for section \ref{sec:pse-ops-non-mflds} and \ref{sec:wid's-cal}, which consist of the construction of pseudo differential calculus for toric noncommutative manifolds..
% Section  \ref{sec:pse-ops-non-mflds} to \ref{sec:def-wid'scal} consist of the construction of pseudo differential calculus for toric noncommutative manifolds. We first (in section \ref{sec:pse-ops-non-mflds}) deform the algebra of pseudo differential operators and then briefly recall Widom's symbol calculus in the next section. Finally, the symbol calculus for toric noncommutative manifolds is achieved by replacing  the usual tensor calculus in Widom's work by the deformed version  constructed in previous sections. \par
% That is the main result of section  \ref{sec:def-wid'scal}. \par

The remaining two sections are devoted to applications. We first sketch the proof of the existence of the heat kernel asymptotic following \cite{gilkey1995invariance} and \cite{branson1986conformal} in section \ref{sec:heat-ker-asym}. Finally, section \ref{sec:modcur} consists of explicit computation of the local formula of the associated modular curvature. Some technical parts of the computation are moved to the appendixes.
% Again, the efficiency of our algorithm comes from the tensor calculus that reduces tons of cancellations  which were carried out by  CAS in the previous work. 
% Following the work on noncommutative tori, a noncommutative metric is implemented by a perturbation of the scalar Laplacian $\Delta$ by a Weyl factor $k$, the resulting operator is denoted by $\pi^\Theta(P_k)$. As in the commutative case, most of the results on pseudo differential operators  we developed before  hold in the parametric situation which allow us to proceed the resolvent approximation. 
%  Arguing like in \cite{gilkey1995invariance} and \cite{branson1986conformal}, we establish the asymptotic expansion of  the heat kernels and meromorphic extension of the associated zeta functions. As a quick consequence, we prove the stability of the value of the zeta function $\zeta_{\pi^\Theta(P_k)}(s)$ at zero with respect to a conformal perturbation. Those are the main results in section  \ref{sec:heat-ker-asym}.  Section \ref{sec:modcur} consists of explicit computation of the local formula of the associated modular curvature (cf.  \eqref{eq:instro-scalarcur-Mdim-m}). Again, the efficiency of our algorithm comes from the tensor calculus that reduces tons of cancellations  which were carried out by  CAS in the previous work. 
% 
%  

%%% Local Variables: 
%%% mode: latex
%%% TeX-master: "main"
%%% End: 

% !TEX root = main.tex  

\section{Deformation along $\T^{n}$}
\label{sec:deformation-along-tn}

\subsection{Deformation of Fr\'echet algebras} \label{subsec:deform-frech-algebr}
In this section, we will provide the functional analytic framework which is necessary for our later discussion on toric noncommutative manifold. We refer to Rieffel's monograph \cite{MR1184061} for further details, also   \cite{MR2230348}, \cite{MR3262521} and \cite{2010LMaPh..94..263Y}. All the topological vectors spaces appeared  in this paper are over the field of complex numbers.
 
\begin{defn}\label{defn:deformation-along-tn}
Let $V$ be Fr\'echet space whose topology is defined by an increasing
family of semi-norms $\norm{\cdot}_{k}$. We say $V$ is a  smooth
$\T^{n} = \R^{n}/\Z^{n}$ module if $V$ admits a $n$-torus action $\alpha_{t}: V \rightarrow V$ such that the function $t \mapsto \alpha_{t}(v)$  belongs to $C^{\infty}(\T^{n},V)$ for all $v
\in V$, moreover, we require the action is strongly continuous in the following sense: $\forall v \in V$,  given a multi-index $\mu$, we can find another integer $j'$ such that 
\begin{align}
  \label{eq:smooth-topology-defn}
   \norm{\partial^\mu_t \alpha_t(v)}_j \le C_{\mu,j,j'} \norm{v}_{j'}, \,\,\, \forall t\in\T^n,
\end{align}
where the constant $C_{j'}$ depends on $j'$ and the vector $v$. 
\end{defn}

%Recall that the $C^{\infty}$-topology on
%$C^{\infty}(\R^{n},V)$ is given the semi-norms: $\forall f \in C^{\infty}(\R^{n},V)$ 
%\begin{align}
%  \label{eq:smooth-topology-seminorms}
%  \norm{f}_{j,k} \defeq \sup_{i\leq j} \nsum{\abs x\leq k}{} \frac{1}{\mu!} \sup_{x\in \R^{n}} \norm{\partial^{(\mu)}f(x)}_{i},
%\end{align}
% where $\mu = (\mu_{1},\cdots, \mu_{n})$ is a multi-index. 

%Then for any $(j,k)\in
%  \N\times\N$, there exists an integer $l> 0$ such that 
%  \begin{align}
%    \label{eq:smooth-action-condition}
%    \norm{\alpha_{t}(v)}_{j,k} \leq C_{j,k} \norm{v}_{l}, \,\,\,
%    \forall v \in V. 
%  \end{align}

Due to the duality between $\Z^{n}$ and $\T^{n}$, Fourier theory tells us that all smooth $\T^{n}$-module  in definition \ref{defn:deformation-along-tn} are  $\Z^n$-graded: 
\begin{align*}
  V = \overline{ \dsum{r\in\Z^{n}}{} V_{r}},
\end{align*} 
where $V_{r}$ is the image of the projection $p_{r}: V \rightarrow V$:
\begin{align}\label{eq:peter-weyl-decom}
  p_{r}(v) = \int_{\T^{n}} \alpha_{t}(v)e^{-2\pi i r \cdot t} dt,
  \;\;\; v\in V. 
\end{align}
Namely, any vector in $V$ admits a isotypical  decomposition:
\begin{align}
\label{eq:def-along-isotropic-decom}
v = \sum_{r\in\Z^n} v_r, \,\, \text{with $v_r= p_r(v)$ as above}.
\end{align}
The sequence $\set{v_r}_{r\in\Z^n}$ is of rapidly decay in $r$ due to an integration by parts argument on \eqref{eq:peter-weyl-decom}. The precise estimate is given below:
\begin{prop}\label{prop:deform-frech-algebr-smoothness}
Let $V$ be a smooth $\T^{n}$-module as in definition
\ref{defn:deformation-along-tn}, whose topology is given by an
countable increasing family of semi-norms $\norm{\cdot}_{j}$ with
$j\in\N$. Then for any element $v = \sum_{r\in\Z^n} v_r \in V$ with its isotypical decomposition, then the sequence of the $j$-th semi-norms:  $\norm{p_{r}(v)}_{j}$
is of rapidly decay in $r \in \Z^{n}$. More precisely, for any integer $k,j>0$, there exist a degree $k$ polynomial
$Q_{k}(x_{1},\cdots,x_{n})$  and another large integer $j'$ such that $\forall v \in V$, 
\begin{align}
  \label{eq:peter-weyl-decom-estimate}
  \norm{p_{r}(v)}_{j} \leq \frac{C_{k,j'}}{\abs{Q_{k}(r)}}
 \norm{\alpha_{t}(v)}_{j'} , 
\end{align}
%where $\norm{\cdot}_{j,k}$ is defined in \eqref{eq:smooth-topology-seminorms}.
In particular,  the isotypical decomposition  $\sum_{r\in\Z^{n}}{} v_{r}$ 
converges absolutely to $v$. 
%with respect to all semi-norms $\norm{\cdot}_{j}$.
\end{prop}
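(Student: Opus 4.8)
The plan is to run the classical argument that the Fourier coefficients of a smooth function on $\T^n$ decay faster than any polynomial, transported verbatim to the Fr\'echet-valued setting and made quantitative by feeding in the strong-continuity bound \eqref{eq:smooth-topology-defn}. The first step is integration by parts in \eqref{eq:peter-weyl-decom}. Since $t\mapsto\alpha_t(v)$ lies in $C^\infty(\T^n,V)$ and $\T^n$ is compact without boundary, differentiation under the ($V$-valued) integral and the fundamental theorem of calculus are legitimate, and from $\partial_t^\mu e^{-2\pi i r\cdot t}=(-2\pi i r)^\mu e^{-2\pi i r\cdot t}$ one obtains, for every multi-index $\mu$,
\[
  (-2\pi i r)^\mu\, p_r(v)\;=\;(-1)^{\abs{\mu}}\int_{\T^n}\partial_t^\mu\!\big(\alpha_t(v)\big)\,e^{-2\pi i r\cdot t}\,dt .
\]
Applying the continuous seminorm $\norm{\cdot}_j$ and pulling it inside the integral (a Riemann-sum / Hahn--Banach argument, using $\operatorname{vol}\T^n=1$) gives the preliminary estimate $(2\pi)^{\abs{\mu}}\,\abs{r^\mu}\,\norm{p_r(v)}_j\le\sup_{t\in\T^n}\norm{\partial_t^\mu\alpha_t(v)}_j$.

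Next I would invoke \eqref{eq:smooth-topology-defn}: for the multi-index $\mu$ there is an integer $j'=j'(\mu,j)$ and a constant with $\norm{\partial_t^\mu\alpha_t(v)}_j\le C_{\mu,j,j'}\norm{v}_{j'}$ (equivalently a suitable seminorm of $t\mapsto\alpha_t(v)$, which is what the right-hand side of \eqref{eq:peter-weyl-decom-estimate} abbreviates), so that $\abs{r^\mu}\,\norm{p_r(v)}_j\lesssim\norm{v}_{j'}$. Running this over all $\mu$ with $\abs{\mu}\le k$ and replacing the finitely many integers $j'(\mu,j)$ by their maximum $j'$ — here one uses that the family $\set{\norm{\cdot}_j}$ is increasing — yields $\big(\sum_{\abs{\mu}\le k}\abs{r^\mu}\big)\norm{p_r(v)}_j\le C_{k,j'}\norm{v}_{j'}$. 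It then remains only to convert the left-hand factor into $\abs{Q_k(r)}$ for a genuine polynomial $Q_k$ of degree $k$: one may take $Q_k$ comparable to $(1+\abs{r})^k$, e.g.\ a suitable power of $1+r_1^2+\cdots+r_n^2$ (times a linear factor if $k$ is odd), and use the elementary comparison $(1+\abs{r}^2)^{\lceil k/2\rceil}\le A_{k,n}\sum_{\abs{\mu}\le k}\abs{r^\mu}$; this gives \eqref{eq:peter-weyl-decom-estimate}.

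For the absolute convergence, choose $k$ large (say $k\ge n+2$); then $\sum_{r\in\Z^n}\abs{Q_k(r)}^{-1}<\infty$, so \eqref{eq:peter-weyl-decom-estimate} forces $\sum_{r\in\Z^n}\norm{p_r(v)}_j<\infty$ for every $j$, i.e.\ $\sum_r v_r$ converges absolutely in $V$. That its sum equals $v$ then follows from the scalar theory applied to continuous linear functionals on $V$, or directly from the observation that the symmetric partial sums are the Fourier partial sums of $t\mapsto\alpha_t(v)\in C^\infty(\T^n,V)$ evaluated at $t=0$, whose limit is $\alpha_0(v)=v$. The one genuine bookkeeping point — the closest thing here to an obstacle — is that the index $j'$ produced by \eqref{eq:smooth-topology-defn} depends a priori on the order of differentiation $\mu$, so the finitely many $\mu$ with $\abs{\mu}\le k$ must be packaged together before the estimate is quoted; this is exactly what the monotonicity of the seminorm family takes care of, and everything else is the classical integration-by-parts estimate.
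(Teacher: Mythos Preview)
Your argument is correct and is exactly the integration-by-parts argument the paper alludes to just after the statement (the paper itself does not write out a proof but refers to \cite[Lemma 1.1]{MR1184061}). The one cosmetic wrinkle---producing a literal polynomial $Q_k$ of degree exactly $k$ rather than a function comparable to $(1+\abs{r})^k$---is immaterial, since for the rapid-decay and absolute-convergence conclusions only the growth order matters.
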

The proof can be found in, for instance,  \cite[Lemma 1.1]{MR1184061}. 

%\begin{proof}
%Let $\Delta$ be the Laplacian operator on $\T^{n}$. For $r \in
%\R^{n}$, let $Q(r)$ be the degree two  polynomial such that $\Delta( e^{2 \pi i
%  r\cdot t}) = Q(r) e^{2 \pi i r\cdot t}$. For any integer $k>0$, apply integration by parts
%$k$ times  on the integral in \eqref{eq:peter-weyl-decom}, we get:
%\begin{align*}
%  p_{r}(v) = \int_{\T^{n}} e^{-2\pi i t \cdot r}\alpha_{t}(v)dt
%  =\int_{\T^{n}} \frac{1}{Q^{k}(r)}e^{-2\pi i t \cdot r}
%  \Delta^{k}\brac{\alpha_{t}(v)}dt.
%\end{align*}
%Due to the compactness of the torus and the strong continuity of the action \eqref{eq:smooth-topology-defn}, there exists a large index $j'\in\N$ so that:
%\begin{align*}
%\norm{\alpha_{t}(v)}_j \le C_{j'} \norm{v}_{j'}
%\end{align*}  
%Hence
%\begin{align*}
%  \norm{p_{r}(v)}_{j} \leq  \frac{C_{k,j'}}{\abs{Q^{k}(r)}}\norm{\alpha_{t}(v)}_{j'}.
%\end{align*}
%\end{proof}

Conversely, suppose $V$ admits a smooth $\Z^{n}$ grading: $V = \overline{ \dsum{r\in\Z^{n}}{} V_{r}}$, then the
$\T^{n}$ action is given by on each homogeneous component $V_{r}$
% the $\T^{n}$ module structure is defined as follows, the torus action
% on homogeneous element $v_{r} \in
% V_{r}$ is given by:
\begin{align}
  \label{eq:z-grading-torus-action}
  t\cdot v_{r} = e^{2\pi i t\cdot  r} v_{r}, \,\,\, t\in\T^{n}. 
\end{align}
A vector $v =\nsum{r\in
    \Z^{n}}{} v_{r} \in V$ is smooth respect to the torus action if and only
if for each semi-norm $\norm{\cdot}_{j}$, the sequence $\norm{v_{r}}_{j}$ decays faster than any polynomial in $r$, that is
  for each semi-norm $\norm{\cdot}_{j}$ and integer $k$, there is an integer $l$ and a
  constant $C_{j,k}$, such that 
  \begin{align}\label{eq:peter-weyl-decay}
    \norm{v_{r}}_{j} \leq C_{j,k} \frac{\norm{v}_{l}}{r^{k}}.
  \end{align} 

% \begin{prop}
%   Any $v \in V$ can be written as a infinite sum $v = \nsum{r\in
%     \Z^{n}}{} v_{r}$, where $v_{r} = p_{r}(v)$, moreover
  
% \end{prop}

% \begin{proof}
% Notice that   $\alpha_{x}(v)$ is a smooth function in $x$, apply
% integration by parts $k$-times on \eqref{eq:peter-weyl-decom}, with
% the estimate \eqref{eq:smooth-action-condition}, we can conclude~\eqref{eq:peter-weyl-decay}

% \end{proof}

\begin{defn}
\label{defn:def-along-smooth-alg-Amods}
A $\T^n$ smooth algebra  $\mathcal A$ is a smooth $\T^n$ module as in definition \ref{defn:deformation-along-tn} such that the multiplication map $\mathcal A \times \mathcal A \rightarrow \mathcal A$ is $\T^n$-equivariant and jointly continuous, that is
for every $j$, there is a $k$ and a constant $C_{j}$ such that 
\begin{align}
  \label{eq:jointly-continuity}
\norm{ab}_{j}\leq C_{k} \norm{a}_{k}\norm{b}_{k}, \,\,\, \forall a,b \in A.
\end{align}
Additionally, if $\mathcal{A}$ is a $*$-algebra, we required the $*$-operator is continuous and $\T^n$-equivariant. 
Similarly, a $\T^n$-smooth left(right) $\mathcal A$-module $V$ is a $\T^n$-smooth module while the  left(right) module structure is $\T^n$-equivariant and the jointly continuous as in \eqref{eq:jointly-continuity}.   
\end{defn}

%Now let us assume that $A$ is a smooth $\Z^{n}$-graded algebra
%whose topology is given by an family of increasing semi-norms
%$\norm{\cdot}_{j}$. Of course, we require that the multiplication in
%$A$ preserve the grading, that is for any $a_{r} \in A_{r}$, $b_{s}\in
%A_{s}$, the product $a_{r}b_{s} \in A_{r+s}$. Also the multiplication
%is assumed to be 

\begin{defn}\label{defn:defomred-alg}
Let $\mathcal A$ be a $\T^n$ smooth algebra as above. 
For a skew symmetric $n\times n$ matrix $\Theta$, we denote the corresponding bi-character: 
\begin{align}
\label{eq:def-along-bi-cha-theta}
\chi_\Theta(r,l) = e^{\pi i\abrac{r, \Theta l}},\,\,\, r,l\in\Z^n,
\end{align}
where the pairing $\abrac{\cdot, \cdot}$ is the usual dot product in $\R^n$. 
The deformation of $\mathcal A$ is a family of algebras $\mathcal A_{\Theta}$ parametrized  by $\Theta$, whose underlying topological vector
space is equal to $\mathcal A$ while the multiplication  $\times_{\Theta}$ is deformed as follow:
\begin{align}
  \label{eq:deformed-product}
  a \times_{\Theta} b = \nsum{r,s\in\Z^{n}}{}\chi_\Theta(r,l) a_{r}b_{s}, \,\,\, \forall a,b \in \mathcal A,
\end{align}
and $a = \sum_r a_{r}$, $b =\sum_s b_{s}$ are the isotypical decomposition as in \eqref{eq:peter-weyl-decom}.  
  \end{defn}

Each $\mathcal{A}_\Theta$ inherits the smooth $\T^n$ module structure from $\mathcal{A}$ and since $\chi_\Theta(r,l)$ in \eqref{eq:deformed-product} are complex numbers of length $1$, the new multiplication is jointly continuous as well. Hence, for any $n \times n$ skew symmetric matrix $\Theta$, the deformation $\mathcal{A}_\Theta$ are all smooth $\T^n$ algebra as in definition \ref{defn:def-along-smooth-alg-Amods}. 
%\begin{prop}
%  The new product is still jointly continuous: for each $j$, there is
%  a $k$ and a constant $C_{j}$ such that 
%  \begin{align}
%    \label{eq:jointly-continuity-deformed-product}
% \norm{ a \times_{\Theta} b}_{j} \leq C_{j} \norm{a}_{k} \norm{b}_{k}   
%  \end{align}
%\end{prop}
%\begin{prop}
%The action $\alpha$ on $A_{\Theta}$ is  a differentiable action and by
%algebra automorphisms with respect to the $\times_{\Theta}$ multiplication.  
%\end{prop}

\begin{prop}\label{prop:deformation-along-tn-associativity-theta-multi}
  The deformed product $\times_{\Theta}$ on $\mathcal A_{\Theta}$ is
  associative. That is, for any $a,b,c\in \mathcal{A}$,
  \begin{align}
    \label{eq:associativity-theta-multi}
    \brac{a \times_{\Theta} b} \times_{\Theta} c = a\times_{\Theta}\brac{b \times_{\Theta} c}.
  \end{align}
  If the algebra $\mathcal A$ is a $*$-algebra, the deformation $\mathcal A_\Theta$ are $*$-algebras as well with respect to the original $*$-operator, that is $\forall a,b \in \mathcal{A}$,
  \begin{align}
  \label{eq:def-along-*-prop-timestheta}
  \brac{a \times_{\Theta} b}^* = b^* \times_\Theta a^*.
  \end{align}
\end{prop}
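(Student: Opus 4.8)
The plan is to verify both identities by direct computation on isotypical components, exploiting the bilinearity and continuity of $\times_\Theta$ together with Proposition~\ref{prop:deform-frech-algebr-smoothness}, which guarantees that the relevant iterated sums converge absolutely and may therefore be reordered freely. For associativity, I would first reduce to the case of homogeneous elements: write $a=\sum_r a_r$, $b=\sum_s b_s$, $c=\sum_u c_u$, and observe that since everything converges absolutely it suffices to check the identity term by term, i.e.\ for $a_r, b_s, c_u$ with $r,s,u\in\Z^n$. The key point is that for homogeneous elements the ordinary product $a_r b_s$ lies in the homogeneous component $\mathcal A_{r+s}$ (because the multiplication $\mathcal A\times\mathcal A\to\mathcal A$ is $\T^n$-equivariant), so applying $\times_\Theta$ a second time produces a single scalar factor $\chi_\Theta(r+s,u)$. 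Then both sides of \eqref{eq:associativity-theta-multi} evaluate, on the $(r,s,u)$ term, to a scalar multiple of the \emph{same} associative product $a_r b_s c_u$ in $\mathcal A$, and I reduce the claim to the cocycle-type identity
\begin{align*}
\chi_\Theta(r,s)\,\chi_\Theta(r+s,u) = \chi_\Theta(s,u)\,\chi_\Theta(r,s+u).
\end{align*}
This in turn follows immediately from bilinearity and skew-symmetry of the form $(r,l)\mapsto\langle r,\Theta l\rangle$: taking logarithms, both sides equal $\pi i\big(\langle r,\Theta s\rangle+\langle r,\Theta u\rangle+\langle s,\Theta u\rangle\big)$.

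For the $*$-identity \eqref{eq:def-along-*-prop-timestheta}, I would again work on homogeneous components. The essential observation is how the $\T^n$-grading interacts with the involution: since the $*$-operator is $\T^n$-equivariant (Definition~\ref{defn:def-along-smooth-alg-Amods}), and using that $\alpha_t$ acts on $V_r$ by the character $e^{2\pi i t\cdot r}$ while $*$ is conjugate-linear, one finds $(a_r)^* \in \mathcal A_{-r}$. Hence the isotypical decomposition of $a^*$ is $a^*=\sum_r (a_r)^*$ with $(a_r)^*$ homogeneous of degree $-r$, and similarly for $b^*$. Expanding
\begin{align*}
(a\times_\Theta b)^* = \Big(\sum_{r,s}\chi_\Theta(r,s)\,a_r b_s\Big)^* = \sum_{r,s}\overline{\chi_\Theta(r,s)}\,(b_s)^*(a_r)^*,
\end{align*}
while on the other side
\begin{align*}
b^*\times_\Theta a^* = \sum_{r,s}\chi_\Theta(-s,-r)\,(b_s)^*(a_r)^*,
\end{align*}
the claim reduces to the scalar identity $\overline{\chi_\Theta(r,s)} = \chi_\Theta(-s,-r)$, which holds because $\chi_\Theta$ has modulus one with $\overline{\chi_\Theta(r,s)}=\chi_\Theta(s,r)=e^{\pi i\langle s,\Theta r\rangle}$ and skew-symmetry of $\Theta$ gives $\langle s,\Theta r\rangle = \langle -s,\Theta(-r)\rangle$; here I should be careful that $\overline{\chi_\Theta(r,s)} = \chi_\Theta(-r,-s)$ directly, and then skew-symmetry swaps the arguments into the order matching the reversed product.

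I expect the only real subtlety — rather than a genuine obstacle — to be the bookkeeping around convergence and reordering: one must cite Proposition~\ref{prop:deform-frech-algebr-smoothness} (rapid decay of isotypical components) and joint continuity \eqref{eq:jointly-continuity} to justify that the triple sum for $(a\times_\Theta b)\times_\Theta c$ converges absolutely in each semi-norm and hence that passing to homogeneous components, applying the equivariance of the multiplication, and resumming are all legitimate. Once that is in place, everything comes down to the two elementary scalar identities above, which encode precisely the fact that $\chi_\Theta$ is a $\T^n$-bicharacter associated to a skew-symmetric form. Accordingly I would structure the write-up as: (i) reduction to homogeneous elements via absolute convergence; (ii) the bicharacter cocycle identity and the conjugation identity; (iii) assembling these into \eqref{eq:associativity-theta-multi} and \eqref{eq:def-along-*-prop-timestheta}.
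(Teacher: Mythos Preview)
Your proposal is correct and follows essentially the same approach as the paper's proof: reduce to homogeneous components via absolute convergence, then verify the bicharacter identities directly. One minor point: the paper explicitly observes that associativity uses only the bilinearity of $(r,l)\mapsto\langle r,\Theta l\rangle$ (not skew-symmetry, which is needed only for the $*$-identity), whereas you invoke skew-symmetry for the cocycle identity even though your own computation does not actually use it.
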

\begin{proof}
Let $a,b,c\in \mathcal A_{\Theta}$ with their isotypical decomposition: $a =
\nsum{r}{}a_{r}$, $b = \nsum{s}{}b_{s}$ and $c = \nsum{l}{}c_{l}$, where $r$, $s$, $l$ are summed over $\Z^n$. We compute the left hand side of \eqref{eq:associativity-theta-multi},
\begin{align*}
 \brac{a \times_{\Theta} b} \times_{\Theta} c & = \nsum{k,l}{}\chi_\Theta(k,l)  
 \brac{\nsum{r+s = k}{} \chi_\Theta(r,s) a_{r}b_{s}} c_{l} \\
 &= \sum_{r,s,l} \chi_\Theta(r+s,l) \chi_\Theta(r,s) a_{r}b_{s} c_{l}\\
&= \nsum{r,s,l}{} \chi_\Theta(r,l)  \chi_\Theta(s,l) \chi_\Theta(r,s) a_{r}b_{s}c_{l},
\end{align*}
here we have used the estimate \eqref{eq:peter-weyl-decay} to exchange the order of summation. Similar computation gives us the right hand side:
\begin{align*}
  a\times_{\Theta}\brac{b \times_{\Theta} c}  = \nsum{r,s,l}{} \chi_\Theta(r,s) \chi_\Theta(r,l)\chi_\Theta(s,l) a_{r}b_{s}c_{l}. 
\end{align*}
Thus we have proved the associativity. Notice that we have not yet used the skew-symmetric property of $\Theta$. In fact, the skew-symmetric property is only necessary for the $*$-operator to survive after deformation. In particular, it implies that for the bi-character $\chi_\Theta$ defined in \eqref{eq:def-along-bi-cha-theta},  
\begin{align*}
\chi_\Theta(r,l) = \chi_\Theta(l,r)^*, \,\, \forall r,l\in\Z^n, 
\end{align*}
here the $*$ operator is the conjugation on complex numbers. Since the $*$ operator is $\T^n$-equivariant, it flips the $\Z^n$-grading of $\mathcal A$, that is, it sends the $r$ component to the $-r$ component: $(a_r)* = a^*_{-r}$, where $a  = \sum_{r\in\Z^n} a_r\in \mathcal A$. Indeed, 
\begin{align*}
(a_r)* = \brac{ \int_{\T^n}e^{-2\pi i r \cdot t} \alpha_t(a)dt }^* 
= \int_{\T^n}e^{2\pi i r \cdot t}\alpha_t(a^*) dt =  a^*_{-r}. 
\end{align*}
 Therefore:
\begin{align*}
(a \times_\Theta b)^* & = (\sum_{r,l\in\Z^n} \chi_\Theta(r,l) a_rb_l)^* 
= \sum_{r,l\in\Z^n} \chi_\Theta(r,l)^*  (b_l)^* (a_r)^*
\\
&=  \sum_{r,l\in\Z^n} \chi_\Theta(l,r) b^*_{-l}a^*_{-r}
=  \sum_{r,l\in\Z^n} \chi_\Theta(-l,-r) b^*_{-l}a^*_{-r}\\
&= \sum_{r,l\in\Z^n} \chi_\Theta(l,r) b^*_{l}a^*_{r}
= b^* \times_\Theta a^*.
\end{align*}
The proof is complete. 
\end{proof}

\begin{prop}\label{prop:functorial-property} 
  Let $\phi: \mathcal A \rightarrow \mathcal B$ be a $\T^{n}$-equivariant continuous algebra
  homomorphism, where $A$, $B$ are two $\T^n$ smooth algebras which admit 
  deformation as above. If we identify $\mathcal A$ and $\mathcal A_{\Theta}$, $\mathcal B$ and
  $\mathcal B_{\Theta}$ by the identity maps respectively, then 
  \begin{align}
    \label{eq:functorial-deformed-product}
    \phi: \mathcal A_{\Theta}\rightarrow \mathcal B_{\Theta}
  \end{align}
is still an $\T^{n}$-equivariant algebra homomorphism with respect to
the new product $\times_{\Theta}$. 
\end{prop}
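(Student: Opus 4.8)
The plan is to verify directly that $\phi$ respects the deformed products by exploiting the two properties of $\phi$ we are handed: it is $\T^n$-equivariant, and it is an algebra homomorphism for the undeformed products. The key observation is that equivariance forces $\phi$ to respect the isotypical decompositions, i.e. $\phi(\mathcal A_r) \subseteq \mathcal B_r$ for every $r \in \Z^n$, and consequently $\phi(a_r) = \phi(a)_r$ where the left side is $\phi$ applied to the $r$-component of $a$ and the right side is the $r$-component of $\phi(a)$. This is the only non-formal ingredient, and even it is short: applying $\phi$ under the integral defining $p_r$ in \eqref{eq:peter-weyl-decom} and using equivariance $\phi \circ \alpha_t = \alpha_t \circ \phi$ gives $\phi(p_r(v)) = p_r(\phi(v))$, where continuity of $\phi$ justifies interchanging $\phi$ with the (absolutely convergent, by Proposition \ref{prop:deform-frech-algebr-smoothness}) integral.

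First I would record this commutation $\phi \circ p_r = p_r \circ \phi$. Then, given $a = \sum_r a_r$ and $b = \sum_s b_s$ in $\mathcal A$ with their isotypical decompositions, I compute
\begin{align*}
\phi(a \times_\Theta b) &= \phi\Bigl( \sum_{r,s} \chi_\Theta(r,s)\, a_r b_s \Bigr)
= \sum_{r,s} \chi_\Theta(r,s)\, \phi(a_r b_s) \\
&= \sum_{r,s} \chi_\Theta(r,s)\, \phi(a_r)\phi(b_s)
= \sum_{r,s} \chi_\Theta(r,s)\, \phi(a)_r\, \phi(b)_s
= \phi(a) \times_\Theta \phi(b),
\end{align*}
where the second equality uses continuity of $\phi$ together with the absolute convergence of the double series (which holds because the $\norm{a_r}_j$ and $\norm{b_s}_j$ are rapidly decaying and the multiplication of $\mathcal A$ is jointly continuous, while $|\chi_\Theta(r,s)| = 1$), the third equality is the homomorphism property of $\phi$ for the original product, the fourth is the commutation with $p_r$ just established, and the last is the definition \eqref{eq:deformed-product} of $\times_\Theta$ on $\mathcal B_\Theta$ applied to the isotypical decomposition of $\phi(a)$ and $\phi(b)$. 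Equivariance of $\phi: \mathcal A_\Theta \to \mathcal B_\Theta$ is immediate since the torus action is unchanged by the deformation.

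The only real obstacle is the justification of moving $\phi$ through the infinite sum, and it is mild: one needs that $\sum_{r,s} \norm{a_r b_s}_j < \infty$ in $\mathcal B$ (equivalently in $\mathcal A$) so that the rearrangement and termwise application of the continuous map $\phi$ are legitimate. This follows from Proposition \ref{prop:deform-frech-algebr-smoothness} (rapid decay of $\norm{a_r}_j$, $\norm{b_s}_j$) and the joint continuity estimate \eqref{eq:jointly-continuity}, exactly as in the exchange-of-summation step already invoked in the proof of Proposition \ref{prop:deformation-along-tn-associativity-theta-multi}; I would simply cite that bookkeeping rather than repeat it. If $\mathcal A$, $\mathcal B$ carry $*$-structures and $\phi$ is a $*$-homomorphism, the same argument with $\chi_\Theta(r,s) = \chi_\Theta(s,r)^*$ and $(a_r)^* = (a^*)_{-r}$ shows $\phi$ remains a $*$-homomorphism for $\times_\Theta$, though this is not needed for the stated conclusion.
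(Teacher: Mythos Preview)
Your proof is correct and follows essentially the same approach as the paper: establish $\phi(a_r)=\phi(a)_r$ from equivariance, then pass $\phi$ through the defining double sum using continuity and the homomorphism property. You are somewhat more careful than the paper in justifying the interchange of $\phi$ with the infinite sum, but the argument is identical in structure.
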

\begin{proof}
For any $a,a' \in\mathcal{A}$ with the isotypical decomposition $a = \sum_r a_r$, $b = \sum_l b_l$, thanks to the equivariant property of $\phi$, we have $\phi(a_r) = \phi(a)_r$ and $\phi(b_l) = \phi(b)_l$ for any $r,l\in\Z^n$. Use the continuity of $\phi$, we compute:
\begin{align*}
\phi( a \times_\Theta a') & = \phi\brac{
\sum_{r,l \in \Z^n} \chi_\Theta(r,l) a_r a'_l 
} =\sum_{r,l \in \Z^n} \chi_\Theta(r,l) \phi(a_r)\phi(a'_l)\\
&= \sum_{r,l \in \Z^n} \chi_\Theta(r,l)\phi(a)_r\phi(a')_l \\
&= 
\phi(a)\times_\Theta \phi(a')
\end{align*}
\end{proof}

%\begin{prop}\label{prop:general-set-up-*-operator}
%Let the algebra $\mathcal A$ be a smooth $\T^{n}$-algebra as above. Suppose $\mathcal A$ has an continuous involution $*: \mathcal A \rightarrow \mathcal A, $ and $\T^{n}$ action on
%$A$ are all $*$-automorphisms. For a fixed skew symmetric matrix
%$\Theta$, the deformed algebra $A_{\Theta}$ is still a $*$-algebra
%with the same $*$ operator, that is
%\begin{align}
%  \label{eq:*-operator}
%  (a\times_{\Theta} b)^{*} = b^{*}\times_{\Theta}a^{*}.
%\end{align} 
%\end{prop}

The next proposition shows that any $\T^n$-equivariant trace on a smooth $\T^n$ algebra $\mathcal{A}$ extends naturally to a trace on all the deformations $\mathcal{A}_\Theta$.

\begin{prop}\label{prop:general-set-up-prop-trace}
Let $\tau: \mathcal A \rightarrow \C$ be a $\T^n$-equivariant trace and $\Theta$ is a $n \times n$ skey symmetric matrix as before. Then $\tau: \mathcal A_\Theta \rightarrow \C$ is a continuous linear functional for  the deformation $\mathcal{A}_\Theta$ and $\mathcal{A}$ are identical as topological vector spaces. However, $\tau$ is indeed a trace on $\mathcal{A}_\Theta$, that is $\forall a,b \in \mathcal{A}_\Theta$
\begin{align}
\label{eq:def-along-trace}
\tau( a \times_\Theta b) = \tau(ab) =  \tau(ba) = \tau(b \times_\Theta a)  .
\end{align} 
\end{prop}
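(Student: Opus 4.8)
The plan is to exploit the fact that a $\T^n$-equivariant trace annihilates every isotypical component $\mathcal{A}_r$ with $r\neq 0$; this collapses the deformed product to the undeformed one once $\tau$ is applied, and the skew-symmetry of $\Theta$ makes the surviving bicharacter factors equal to $1$. First I would note that continuity of $\tau$ on $\mathcal{A}_\Theta$ is immediate: $\mathcal{A}_\Theta$ and $\mathcal{A}$ are the same topological vector space and $\tau$ is continuous on $\mathcal{A}$. Combined with the rapid-decay estimate of Proposition~\ref{prop:deform-frech-algebr-smoothness} and the joint-continuity bound \eqref{eq:jointly-continuity}, this lets $\tau$ be applied term by term to the absolutely convergent isotypical expansions occurring below.

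The two structural inputs I would isolate next are: (i) $\tau(a_r)=0$ whenever $r\neq 0$, and (ii) the multiplication respects the grading, $\mathcal{A}_r\,\mathcal{A}_l\subseteq\mathcal{A}_{r+l}$. For (i), apply $\tau$ to the projection formula \eqref{eq:peter-weyl-decom}, pull $\tau$ inside the integral by continuity, and use $\tau(\alpha_t(a))=\tau(a)$ to get $\tau(p_r(a))=\tau(a)\int_{\T^n}e^{-2\pi i r\cdot t}\,dt$, which vanishes for $r\neq 0$. Statement (ii) is just a reformulation of the $\T^n$-equivariance of the product from Definition~\ref{defn:def-along-smooth-alg-Amods}. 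Together they give $\tau(a_r b_l)=0$ unless $r+l=0$.

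With these in hand, for $a=\sum_r a_r$ and $b=\sum_l b_l$ I would compute
\begin{align*}
\tau(a\times_\Theta b)=\sum_{r,l\in\Z^n}\chi_\Theta(r,l)\,\tau(a_r b_l)=\sum_{r\in\Z^n}\chi_\Theta(r,-r)\,\tau(a_r b_{-r}),
\end{align*}
keeping only $l=-r$. Since $\Theta$ is skew-symmetric, $\langle r,\Theta r\rangle=0$, so $\chi_\Theta(r,-r)=e^{-\pi i\langle r,\Theta r\rangle}=1$, hence $\tau(a\times_\Theta b)=\sum_r\tau(a_r b_{-r})$. The same vanishing applied to the ordinary product yields $\tau(ab)=\sum_{r,l}\tau(a_r b_l)=\sum_r\tau(a_r b_{-r})$, so $\tau(a\times_\Theta b)=\tau(ab)$. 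Then $\tau(ab)=\tau(ba)$ because $\tau$ is a trace for the undeformed product, and repeating the identity with $a$ and $b$ interchanged gives $\tau(ba)=\tau(b\times_\Theta a)$, which is exactly \eqref{eq:def-along-trace}.

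The only genuinely delicate point is the interchange of $\tau$ with the double sum over $\Z^n\times\Z^n$ and with the integral in \eqref{eq:peter-weyl-decom}; this is controlled by continuity of $\tau$ together with the absolute convergence supplied by Proposition~\ref{prop:deform-frech-algebr-smoothness} (the estimate \eqref{eq:jointly-continuity}, applied to the rapidly decaying sequences $\norm{a_r}_k$ and $\norm{b_l}_k$, bounds $\norm{a_r b_l}_j$ by a rapidly decaying function of $(r,l)$). Everything else is bookkeeping, and it is worth emphasizing that the skew-symmetry of $\Theta$ is used exactly once, in the identity $\chi_\Theta(r,-r)=1$.
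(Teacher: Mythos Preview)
Your proof is correct and follows essentially the same approach as the paper: both arguments reduce $\tau$ to the $\T^n$-invariant component by showing $\tau(a_r)=0$ for $r\neq 0$, then use the skew-symmetry of $\Theta$ to collapse $\chi_\Theta(r,-r)$ to $1$ on the surviving diagonal $l=-r$. The only cosmetic difference is that you derive $\tau(a_r)=0$ by integrating the projection formula, while the paper uses the eigenvalue relation $\tau(a_r)=e^{2\pi i t\cdot r}\tau(a_r)$ directly; your version is also a bit more explicit about the convergence justifications.
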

\begin{proof}
From the $\T^n$-equivariant property of $\tau$, we know that for any isotypical component $a_{r}$, $(r\in\Z^{n})$, of $a \in \mathcal A$, 
\begin{align*}
 \tau(a_{r}) = \tau (\alpha_{t}(a_{r})) = \tau (e^{2\pi i t \cdot r} a_{r})
 =e^{2\pi i t \cdot r} \tau(a_{r}), \,\,\, \forall t\in\T^{n}.
\end{align*}
Therefore $\tau(a_{r}) = 0$ for all $r \neq 0$. Follows from the continuity, for any $a \in \mathcal A$, 
\begin{align*}
 \tau(a) = \tau\brac{\sum_{r\in\Z^{n}} a_{r}} = \sum_{r\in\Z^{n}}\tau\brac{ a_{r}} = \tau(a_{0}),
\end{align*}
 that is, the trace of $a$ depends only on its $\T^{n}$-invariant component. 
%Due to the duality of $\Z^n$ and $\T^n$, a linear map between two $\T^n$ modules is $\T^n$-equivariant if and only if it preserves the $\Z^n$-grades. Take $\C$ as a trivial smooth $\T^n$ module, the equivariant assumption on $\tau$ can be rephrased in terms of the $\Z^n$ grading as follows, for any $a = \sum a_r \in \mathcal{A}$ with its isotypical decomposition, 
%\begin{align*}
%\tau(a) = \tau(\sum_{r\in\Z^n} a_r) = \tau(a_0)
%\end{align*}  
%According to \eqref{eq:deformed-product},
Since $\Theta$ is skew symmetric, we get 
\begin{align*}
\tau( a \times_\Theta b) = \tau \brac{( a \times_\Theta b)_{0}} = \sum_{r\in\Z^n} \chi_\Theta(r,-r) \tau( a_r b_{-r})
=  \sum_{r\in\Z^n} \tau( a_r b_{-r}) = \tau(ab).
\end{align*}
Similar computation gives that $\tau(ba) = \tau(b \times_\Theta a)$. Therefore if $\tau$ is a trace on $\mathcal A$, then it is a trace on $\mathcal{A}_\Theta$ as well.
\end{proof}

In this paper, we have to deal with certain topological algebras which are not Fr\'echet. For instance, the 
algebra of pseudo differential operators of  integer orders, and the associated algebra of symbols, whose topology  
is  certain inductive limit of Fr\'echet topologies. More precisely, we consider a filtered algebra  $\mathcal A$ with a filtration: 
\begin{align}
  \label{eq:deformation-of-algebra-filtration}
  \cdots\subset \mathcal A_{-j}\subset\cdots \subset \mathcal A_{0}\subset \cdots \mathcal A_{j}\cdots\subset \mathcal A,
\end{align}
where  each $A_{j}$ ($j\in\Z$) is a smooth $\T^n$-module as defined before, in particular, a Fr\'echet space. As a topological vector space, the total space $\mathcal A$ is a countable strict inductive limit of $\set{\mathcal A_j}_{j\in\Z}$, the topology is just called strict inductive limit topology (cf. for instance \cite[sec. 13]{MR2296978} for more details). This topology is never metrizable unless the filtration is stabilized starting from some $\mathcal{A}_j$, therefore it is not Fr\'echet. Nevertheless, we shall not looking at the topology of the whole algebra $\mathcal A$ even when considering the continuity of the multiplication map. Instead, we focus on each $\mathcal{A}_j$, assume that the Fr\'echet topology
is defined by a  countable family of  increasing semi-norms
$\set{\norm{\cdot}_{l,j}}_{l\in\N}$. The multiplication preserves the filtration:
\begin{align}
  \label{eq:deformation-of-algebra-multiplication}
  m: \mathcal A_{j_{1}} \times \mathcal A_{j_{2}} \rightarrow \mathcal A_{j_{1} + j_{2}}
\end{align}
such that the continuity condition holds: for fixed $j_{1}$,
$j_{2}$ and a positive integer $l$, one can find a
integer $k$ and constant $C_{k,j_{1},j_{2}}$ such that  
\begin{align}
  \label{eq:deformation-of-algebra-multiplication-continuity-condition}
 \norm{m(a_{1}a_{2})}_{l,j_{1} + j_{2}} \leq C_{k,j_{1},j_{2}} \norm{a_{1}}_{k,j_{1}}
 \norm{a_{2}}_{k,j_{2}}, \,\,\,\, \forall a_{1}\in \mathcal A_{j_{1}}, \,\, a_{2}\in \mathcal A_{j_{2}}.
\end{align}
%Assume each $A_{j}$ is a smooth $\T^{n}$-module with respect to the
%semi-norms $\set{\norm{\cdot}_{l,j}}$ above, for a fixed skew
%symmetric matrix $\Theta$, 
The multiplication $m$ is deformed in a similar fashion as in \eqref{eq:deformed-product}:
\begin{align}
  \label{eq:eq:deformation-of-algebra-m-theta}
  m_{\Theta}: A_{j_{1}} \times A_{j_{2}} & \rightarrow A_{j_{1} + j_{2}} \\
           (a_{1},a_{2}) &\mapsto \nsum{r,l\in\Z^{n}}{} \chi_\Theta(r,l) m\brac{ (a_{1})_{r}, (a_{2})_{l}}.
\end{align}

Examples  are provided in the next section. 

% !TEX root =  main.tex

\subsection{Deformation of operators} \label{subsec:deform-oper}
The associativity of the $\times_\Theta$ multiplication proved in proposition \ref{prop:deformation-along-tn-associativity-theta-multi} is a special instance of certain ``functoriality'' in the categorical framework explained in \cite{MR3262521}. Let us start with deformation of operators. \par

 Let $\mathcal H_{1}$ and $\mathcal H_{2}$ be two Hilbert spaces which are both strongly continuous unitary representation of $\T^n$, denoted by $t \mapsto U_{t} \in B(\mathcal{H}_1)$ and $t \mapsto \tilde U_{t} \in B(\mathcal{H}_2)$ respectively, where $t\in\T^n$.  If  no confusions arise, both representations will be  denoted by
$U_{t}$. Then $B(\mathcal H_{1}, \mathcal H_{2})$,  the space of all bounded operators from $\mathcal H_{1} $ to $\mathcal H_{2}$, becomes a $\T^n$-module via  the adjoint action: 
\begin{align}
P \in B(\mathcal H_{1}, \mathcal H_{2}) \mapsto \op{Ad}_{t}(P) \defeq
\tilde U_{t} P U_{-t}, \,\, t \in \T^n.
\label{eq:def-ops-adjaction}
\end{align}
Denote by $B(\mathcal H_{1},\mathcal H_{2})_{\infty}$, the space of all the $\T^n$ smooth vectors in $B(\mathcal H_{1},\mathcal H_{2})$. It is a Fr\'echet space on which the  topology is defined by the semi-norms $\set{\norm{\cdot}_j}_{j\in \N}$:
\begin{align}
  \label{eq:deform-oper-frechet-semi-norms}
   q_{j}(P) \defeq \nsum{\abs\beta\leq j}{} \frac{1}{\beta!}  \norm{\partial^\beta_t \op{Ad}_t(P)}_{B(\mathcal H_{1},\mathcal H_{2})}.
\end{align}
The semi-norms above are constructed in such a way that the continuity estimate \eqref{eq:smooth-topology-defn} for the torus action holds automatically. Following from proposition \ref{prop:deform-frech-algebr-smoothness}, we see that any $\T^n$-smooth operator $P$ admits an isotypical decomposition $P = \sum_{r\in\Z^n} P_r$, where the operator norms $\set{\norm{P_r}}_{r\in\Z^n}$  decays faster than polynomial in $r$, in particular the converges of the infinite sum is absolute with respect to the operator norm in $B(\mathcal{H}_1, \mathcal{H}_2)$.

Now we are ready to define the deformation map $\pi^\Theta: B(\mathcal H_{1},\mathcal H_{2})_{\infty} \rightarrow B(\mathcal H_{1},\mathcal H_{2})_{\infty}$.
\begin{defn}
    \label{defn:def-ops-pitheta}
Let $\mathcal H_{1}$ and $\mathcal H_{2}$ be two Hilbert space with strongly continuous  unitary $\T^{n}$ actions as above, that is $\forall v \in \mathcal H_j$, $(j=1,2)$, $t \mapsto t\cdot v$ is continuous in $t \in \T^n$. We denote the actions by $t \rightarrow U_{t}$ and $t \rightarrow \tilde U_{t}$ respectively. For  a fixed $n \times n$ skew symmetric matrix $\Theta$, we recall the associated bi-character $\chi_\Theta(r,l) = e^{\pi i \abrac{r,\Theta l}}$. Then the deformation map $\pi^{\Theta}:B(\mathcal H_{1},\mathcal H_{2})_{\infty} \rightarrow B(\mathcal H_{1},\mathcal H_{2})_{\infty}: P \mapsto \pi^\Theta(P)$ is defined as follows,
\begin{align}
  \label{eq:deform-oper-pi-theta-2}
  \pi^{\Theta}(P)(f) =\nsum{r,l\in\Z^{n}}{}\chi_\Theta(r,l)  P_{r}(f_{l}),\,\,\,
  P \in B(\mathcal H_{1},\mathcal H_{2})_{\infty},
\end{align}
where  $P = \sum_{r\in\Z^{n}} P_{r}$ and $f = \sum_{l\in\Z^{n}} f_{l} \in \mathcal{H}_1$ with their isotypical decomposition. We can assume that  $f$ is a $\T^{n}$-smooth vector for  the subspace of all $\T^{n}$-smooth vectors in dense in $\mathcal H_1$.  
Alternatively,  $\pi^{\Theta}(P)$ is given by
\begin{align}
  \label{eq:deform-oper-pi-theta-1}
  \pi^{\Theta}(P) =\nsum{r\in\Z^{n}}{} P_{r} U_{r \cdot \Theta/2},
\end{align}
here $r \cdot \Theta/2$ stands for the matrix multiplication between a row vector  $r$ and  $\Theta$ whose result is a point in $\T^n$. 
 \end{defn}
 \begin{rem}
 The deformed operator $\pi^{\Theta}(P)$ belongs to $B(\mathcal H_{1},\mathcal H_{2})_{\infty}$. Indeed, in \eqref{eq:deform-oper-pi-theta-1},  the each isotypical component of $P$ is perturbed by a unitary operator $U_{r \cdot \Theta/2}$, therefore the right hand side  of \eqref{eq:deform-oper-pi-theta-1} is a sum of rapidly decay sequence, which implies not only the boundedness of $\pi^{\Theta}(P)$, but also the $\T^{n}$-smoothness.
 \end{rem}

Let us give a precise estimate of the operator norm of $\pi^\Theta(P)$.
\begin{lem} \label{lem:deform-oper-smoothness-lemma}
Let $B(\mathcal H_{1},\mathcal H_{2})_{\infty}$ denote the Fr\'echet algebra of $\T^{n}$-smooth vectors in $B(\mathcal H_{1},\mathcal H_{2})$ whose topology is given by the seminorms in \eqref{eq:deform-oper-frechet-semi-norms}. Then the deformation $\pi^{\Theta}:B(\mathcal H_{1},\mathcal H_{2})_{\infty}\rightarrow B(\mathcal H_{1},\mathcal H_{2})_{\infty} $ is a continuous linear map with respect to the Fr\'echet topology with the estimate: for any multi-index $\mu$, one can find any integer $l$ large enough such that 
\begin{align}
  \label{eq:deform-oper-frechet-semi-norms-estimate}
 \norm{ \partial_{t}^{\mu}(\op{Ad}_{t}(\pi^{\Theta}(P))) }\leq C_{\mu} q_{l}(P)
\end{align}
\end{lem}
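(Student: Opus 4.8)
The plan is to argue directly from the closed form \eqref{eq:deform-oper-pi-theta-1}, $\pi^{\Theta}(P)=\sum_{r\in\Z^{n}}P_{r}U_{r\cdot\Theta/2}$, reducing the seminorm estimate to the rapid decay of the isotypical components $\|P_{r}\|$ furnished by Proposition \ref{prop:deform-frech-algebr-smoothness}. First I would compute $\op{Ad}_{t}$ of the deformed operator. Since $t\mapsto U_{t}$ and $t\mapsto\tilde U_{t}$ are unitary representations of the \emph{abelian} group $\T^{n}$, conjugation by $U_{t}$ leaves each $U_{r\cdot\Theta/2}$ fixed, while $\op{Ad}_{t}(P_{r})=e^{2\pi i r\cdot t}P_{r}$ by \eqref{eq:z-grading-torus-action}; hence
\[
  \op{Ad}_{t}\bigl(\pi^{\Theta}(P)\bigr)=\sum_{r\in\Z^{n}}e^{2\pi i r\cdot t}\,P_{r}\,U_{r\cdot\Theta/2}.
\]
Because $\{\|P_{r}\|\}_{r}$ decays faster than any polynomial, this series and each of its $t$-derivatives converge uniformly, so differentiation may be carried out term by term, giving for any multi-index $\mu$
\[
  \partial_{t}^{\mu}\op{Ad}_{t}\bigl(\pi^{\Theta}(P)\bigr)=(2\pi i)^{|\mu|}\sum_{r\in\Z^{n}}r^{\mu}\,e^{2\pi i r\cdot t}\,P_{r}\,U_{r\cdot\Theta/2}.
\]
Taking operator norms and using that $U_{r\cdot\Theta/2}$ is unitary and $|e^{2\pi i r\cdot t}|=1$ then bounds the left side of \eqref{eq:deform-oper-frechet-semi-norms-estimate}, uniformly in $t$, by $(2\pi)^{|\mu|}\sum_{r\in\Z^{n}}|r^{\mu}|\,\|P_{r}\|$.

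The remaining task is to dominate $\sum_{r}|r^{\mu}|\,\|P_{r}\|$ by a single seminorm $q_{l}(P)$. Here I would insert the weight $(1+|r|^{2})^{-N}$ with $N>n/2$, so that the lattice sum $\sum_{r\in\Z^{n}}(1+|r|^{2})^{-N}$ is finite; then
\[
  \sum_{r\in\Z^{n}}|r^{\mu}|\,\|P_{r}\|\le\Bigl(\sum_{r\in\Z^{n}}(1+|r|^{2})^{-N}\Bigr)\sup_{r\in\Z^{n}}(1+|r|^{2})^{N}|r^{\mu}|\,\|P_{r}\|.
\]
The factor $(1+|r|^{2})^{N}|r^{\mu}|$ is a polynomial in $r$ of degree at most $2N+|\mu|$, hence a finite linear combination of monomials $r^{\alpha}$ with $|\alpha|\le 2N+|\mu|$; and integration by parts in \eqref{eq:peter-weyl-decom} gives the identity $(2\pi i)^{|\alpha|}r^{\alpha}P_{r}=\int_{\T^{n}}\partial_{t}^{\alpha}\bigl(\op{Ad}_{t}(P)\bigr)e^{-2\pi i r\cdot t}\,dt$, so that $\|r^{\alpha}P_{r}\|\le(2\pi)^{-|\alpha|}\sup_{t}\|\partial_{t}^{\alpha}\op{Ad}_{t}(P)\|\le C\,q_{2N+|\mu|}(P)$. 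Chaining these bounds proves \eqref{eq:deform-oper-frechet-semi-norms-estimate} with $l=2N+|\mu|$, and summing over $|\mu|\le j$ against the coefficients $1/\mu!$ shows $q_{j}(\pi^{\Theta}(P))\le C_{j}\,q_{l(j)}(P)$; linearity of $\pi^{\Theta}$ is evident from \eqref{eq:deform-oper-pi-theta-1}, so this also yields the asserted continuity.

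I do not expect a genuine obstacle here: once the abelian cancellation $U_{t}U_{r\cdot\Theta/2}U_{-t}=U_{r\cdot\Theta/2}$ is observed, the computation is mechanical, and all the real analytic content — the rapid decay of $\|P_{r}\|$ together with the integration-by-parts estimate — is already packaged in Proposition \ref{prop:deform-frech-algebr-smoothness}. The only point requiring some care is the bookkeeping of how many derivatives of $\op{Ad}_{t}(P)$ must be controlled: the loss is finite but grows both with $|\mu|$ and with the ambient dimension $n$, through the exponent $N$ needed to make the lattice sum converge, so the index $l$ in \eqref{eq:deform-oper-frechet-semi-norms-estimate} must be chosen accordingly (e.g. $l=|\mu|+2\lceil (n+1)/2\rceil$).
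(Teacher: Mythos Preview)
Your proposal is correct and follows essentially the same route as the paper: compute $\op{Ad}_{t}(\pi^{\Theta}(P))=\sum_{r}e^{2\pi i r\cdot t}P_{r}U_{r\cdot\Theta/2}$, differentiate term by term to pick up a polynomial weight in $r$, and then invoke the rapid decay of $\|P_{r}\|$ (Proposition~\ref{prop:deform-frech-algebr-smoothness}) to sum. The paper's version is terser --- it simply asserts one can find $l$ large enough that $\|h(r)P_{r}\|\le C q_{l}(P)/|r|^{n+1}$ --- whereas you spell out the integration-by-parts bookkeeping and the weight $(1+|r|^{2})^{-N}$ explicitly, but the argument is the same.
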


\begin{proof}
  Given $P \in B(\mathcal H_{1},\mathcal H_{2})_{\infty}$ a $\T^{n}$-smooth operator with the isotypical decomposition $P = \nsum{r\in\Z^{n}}{} P_{r}$. 
% To see $\pi^{\Theta}(P)$ is $\T^{n}$-smooth,  we would like to show that for any multi-index $\mu$,
%   \begin{align*}
%     \partial_{t}^{\mu}(\op{Ad}_{t}(\pi^{\Theta}(P)))
%   \end{align*}
% is a bounded operator. 
From the definition,  $\pi^{\Theta}(P)$ has the isotypical decomposition $\pi^{\Theta}(P) = \nsum{r\in\Z^{n}}{} P_{r} U_{r \cdot \Theta/2}$, thus
\begin{align*}
  \op{Ad}_{t}(\pi^{\Theta}(P)) =\nsum{r\in\Z^{n}}{}\op{Ad}_{t}(P_{r} U_{r \cdot \Theta/2}) = 
\nsum{r\in\Z^{n}}{} e^{2\pi i r\cdot t}P_{r} U_{r \cdot \Theta/2}.
\end{align*}
If we let $h(r)$ be the polynomial in $r$ such that $\partial_{t}^{\mu}(e^{2\pi i r\cdot t}) = h(r) e^{2\pi i r\cdot t}$, the degree of $h(r)$ is equal to $\abs \mu$, compute   
\begin{align}
\label{eq:deform-oper-higher-order-derivative}
 \partial_{t}^{\mu}(\op{Ad}_{t}(\pi^{\Theta}(P))) = \nsum{r\in\Z^{n}}{}\partial_{t}^{\mu}(e^{2\pi i r\cdot t}) P_{r} U_{r \cdot \Theta/2} 
=\nsum{r\in\Z^{n}}{} h(r) e^{2\pi i r\cdot t} P_{r} U_{r \cdot \Theta/2}. 
\end{align}
Since that $\norm{ P_{r}}$ is of rapidly decay in $r$, we can find a large integer $l$ such that 
\begin{align*}
  \norm{h(r) P_{r}} \leq \frac{C q_{l}(P)}{\abs r^{n+1}}.
\end{align*}
Therefore 
\begin{align*}
  \norm{\partial_{t}^{\mu}(\op{Ad}_{t}(\pi^{\Theta}(P)))} \leq \brac{\nsum{r\in\Z^{n}}{}\frac{C}{\abs r^{n+1}}} q_{l}(P).
\end{align*}

\end{proof}

Similar to \eqref{eq:def-along-*-prop-timestheta}, we have the compatibility between the deformation map and  
the $*$-operation (taking the adjoint) on operators.  
\begin{lem}
  \label{lem:def-ops-deform-oper-pi-theta-adjoint}
    Let $P\in B(\mathcal H_{1},\mathcal H_{2})_{\infty}$, then its adjoint $P^{*}\in
B(\mathcal H_{1},\mathcal H_{2})_{\infty}$ as well, we have
\begin{align}
  \label{eq:deform-oper-pi-theta-adjoint}
  \pi^{\Theta}(P^{*}) = \pi^{\Theta}(P)^{*} 
\end{align}
\end{lem}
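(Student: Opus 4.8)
The statement has two parts: first, that $P \in B(\mathcal H_1,\mathcal H_2)_\infty$ implies $P^* \in B(\mathcal H_1,\mathcal H_2)_\infty$ — wait, I should be careful, $P^*$ maps $\mathcal H_2 \to \mathcal H_1$, so really the claim lives in $B(\mathcal H_2,\mathcal H_1)_\infty$; I'll read it charitably. Second, and the real content, that $\pi^\Theta$ commutes with taking adjoints. My plan is to reduce everything to the isotypical decomposition and the explicit formula $\pi^\Theta(P) = \sum_r P_r U_{r\cdot\Theta/2}$ from \eqref{eq:deform-oper-pi-theta-1}, exactly mirroring the proof of \eqref{eq:def-along-*-prop-timestheta} in Proposition \ref{prop:deformation-along-tn-associativity-theta-multi}.

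**Step 1: smoothness of $P^*$.** First I would observe that the adjoint operation $P \mapsto P^*$ intertwines the adjoint $\T^n$-actions: since $U_t$ and $\tilde U_t$ are unitary, $(\op{Ad}_t P)^* = (\tilde U_t P U_{-t})^* = U_t P^* \tilde U_{-t} = \op{Ad}_{t}(P^*)$ for the action on $B(\mathcal H_2,\mathcal H_1)$. Taking $t \mapsto \op{Ad}_t(P^*) = (\op{Ad}_t P)^*$ and using that $Q \mapsto Q^*$ is an isometry of operator norm that is (norm-, hence) strongly continuous, differentiation in $t$ passes through the adjoint: $\partial_t^\beta \op{Ad}_t(P^*) = (\partial_t^\beta \op{Ad}_t P)^*$. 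Hence $q_j(P^*) = q_j(P)$ for all $j$, so $P^*$ is $\T^n$-smooth. As a byproduct this identifies the isotypical components: $(P^*)_r = (P_{-r})^*$, which one checks directly from \eqref{eq:peter-weyl-decom} as in the $*$-algebra case, using $\overline{e^{-2\pi i r\cdot t}} = e^{2\pi i r \cdot t}$.

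**Step 2: the adjoint identity.** Now compute. From \eqref{eq:deform-oper-pi-theta-1}, $\pi^\Theta(P) = \sum_r P_r U_{r\cdot\Theta/2}$, and since the $U$'s are unitary, $\pi^\Theta(P)^* = \sum_r U_{-r\cdot\Theta/2} (P_r)^*$ with absolutely convergent sum (the $\|P_r\|$ decay rapidly). On the other side, $\pi^\Theta(P^*) = \sum_r (P^*)_r U_{r\cdot\Theta/2} = \sum_r (P_{-r})^* U_{r\cdot\Theta/2} = \sum_s (P_s)^* U_{-s\cdot\Theta/2}$ after reindexing $s = -r$. So it remains to see $U_{-s\cdot\Theta/2}(P_s)^* = (P_s)^* U_{-s\cdot\Theta/2}$, i.e. that $U_{s\cdot\Theta/2}$ commutes with $(P_s)^*$. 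This is where skew-symmetry enters, just as it did for the $*$-algebra. The component $P_s$ is homogeneous of degree $s$ for $\op{Ad}$, meaning $\tilde U_t P_s = e^{2\pi i s\cdot t} P_s U_t$; so for $t = s\cdot\Theta/2$ we get $\tilde U_{s\cdot\Theta/2} P_s = e^{\pi i \langle s,\Theta s\rangle} P_s U_{s\cdot\Theta/2} = P_s U_{s\cdot\Theta/2}$, because $\langle s,\Theta s\rangle = 0$ by skew-symmetry. Taking adjoints of this relation gives $(P_s)^* U_{s\cdot\Theta/2}^* = U_{s\cdot\Theta/2}^* (P_s)^*$, i.e. exactly the commutation needed after adjusting signs. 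Assembling Steps 1 and 2 yields $\pi^\Theta(P^*) = \pi^\Theta(P)^*$.

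**Main obstacle.** None of the steps is deep; the only thing requiring care is bookkeeping — keeping straight which Hilbert space each operator acts on (so that "$P^* \in B(\mathcal H_1,\mathcal H_2)_\infty$" is read as shorthand for membership in $B(\mathcal H_2,\mathcal H_1)_\infty$ with the flipped action), getting the sign conventions on $r\cdot\Theta/2$ and on the grading $(P^*)_r = (P_{-r})^*$ consistent, and justifying that term-by-term adjoints commute with the absolutely convergent series defining $\pi^\Theta(P)$. The conceptual crux, as in Proposition \ref{prop:deformation-along-tn-associativity-theta-multi}, is the single identity $\langle s,\Theta s\rangle = 0$ forcing $U_{s\cdot\Theta/2}$ to commute with the degree-$s$ component — that is the one place the hypothesis on $\Theta$ is used and everything else is formal.
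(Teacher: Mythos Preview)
Your proof is correct and follows essentially the same approach as the paper's: both proofs establish equivariance of the adjoint to obtain $(P^*)_r=(P_{-r})^*$, then use the explicit formula \eqref{eq:deform-oper-pi-theta-1} together with the commutation $\tilde U_{s\cdot\Theta/2}P_s=P_sU_{s\cdot\Theta/2}$ coming from $\langle s,\Theta s\rangle=0$. You organize Step~2 by computing both sides separately and matching, while the paper manipulates $\pi^\Theta(P^*)$ directly into $(\pi^\Theta(P))^*$; the content is identical, and your extra care about the two Hilbert spaces and the smoothness of $P^*$ is a welcome clarification of points the paper leaves implicit.
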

\begin{proof}
  Since the torus action is unitary,  the adjoint operation is equivariant:
  \begin{align*}
  (\op{Ad}_t(P))^* = (U_t P U_{-t})^* = U_t P^* U_{-t} =\op{Ad}_t(P^*), 
  \end{align*}
  therefore for the isotypical components, $P^{*}_{r} = (P_{-r})^{*}$ for all $r \in\Z^{n}$, 
  \begin{align*}
    \pi^{\Theta}(P^{*})& = \nsum{r\in\Z^{n}}{} P^{*}_{r} U_{r\cdot \Theta/2} = \nsum{r\in\Z^{n}}{} (P_{-r})^{*} (U_{-r\cdot \Theta/2})^{*}  \\
&= \nsum{r\in\Z^{n}}{} (U_{-r\cdot \Theta/2} P_{-r})^{*} = \nsum{r\in\Z^{n}}{} (P_{-r}U_{-r\cdot \Theta/2})^{*} \\
&=\nsum{r\in\Z^{n}}{} (P_{r}U_{r\cdot \Theta/2})^{*}
\\ &= (\pi^{\Theta}(P))^{*},
  \end{align*}
here we have used the facts that $U_{-r\cdot \Theta/2}$ and $T_{r}$ commute. 
\end{proof}

The next lemma says that the deformation is somehow invertible. 
\begin{lem}\label{lem:deform-oper-theta+theta'}
  Let $\Theta$ and $\Theta'$ be two $n\times n$ skew symmetric matrices and for any $P \in B(\mathcal H_{1},\mathcal H_{2})_{\infty}$, we have
\begin{align*}
\pi^{\Theta}\circ \pi^{\Theta'}(P) = \pi^{\Theta + \Theta'}(P).
\end{align*}
In particular, we see that the deformation process is invertible, namely $\pi^{\Theta}$ and $\pi^{-\Theta}$ are inverse to each other. 
 \end{lem}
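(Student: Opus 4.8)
The plan is to reduce both sides to the closed form \eqref{eq:deform-oper-pi-theta-1} and to keep track of isotypical components. First I would record what the isotypical decomposition of a deformed operator looks like. From the computation preceding \eqref{eq:deform-oper-higher-order-derivative} (that is, \eqref{eq:deform-oper-higher-order-derivative} with $\mu$ empty) we have $\op{Ad}_{t}(\pi^{\Theta}(P)) = \sum_{r\in\Z^{n}} e^{2\pi i r\cdot t}\, P_{r}U_{r\cdot\Theta/2}$; since $\T^{n}$ is abelian the unitary $U_{r\cdot\Theta/2}$ commutes with every $U_{t}$, so each term $P_{r}U_{r\cdot\Theta/2}$ genuinely lies in the $r$-th isotypical subspace, and by Proposition \ref{prop:deform-frech-algebr-smoothness} the sum converges absolutely in operator norm. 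Hence the isotypical decomposition of $\pi^{\Theta}(P)$ is $\bigl(\pi^{\Theta}(P)\bigr)_{r} = P_{r}U_{r\cdot\Theta/2}$, and likewise with $\Theta$ replaced by $\Theta'$.

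Next I would simply iterate. Writing $Q \defeq \pi^{\Theta'}(P)$, the previous step gives $Q_{r} = P_{r}U_{r\cdot\Theta'/2}$, so feeding $Q$ into \eqref{eq:deform-oper-pi-theta-1} yields $\pi^{\Theta}\bigl(\pi^{\Theta'}(P)\bigr) = \sum_{r\in\Z^{n}} Q_{r}U_{r\cdot\Theta/2} = \sum_{r\in\Z^{n}} P_{r}\, U_{r\cdot\Theta'/2}\,U_{r\cdot\Theta/2}$. Because $t\mapsto U_{t}$ is a group homomorphism on $\mathcal H_{1}$, $U_{r\cdot\Theta'/2}U_{r\cdot\Theta/2} = U_{r\cdot(\Theta+\Theta')/2}$, and therefore $\pi^{\Theta}\bigl(\pi^{\Theta'}(P)\bigr) = \sum_{r\in\Z^{n}} P_{r}U_{r\cdot(\Theta+\Theta')/2} = \pi^{\Theta+\Theta'}(P)$, which is the asserted identity. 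An equivalent route is to expand on $\T^{n}$-smooth vectors via \eqref{eq:deform-oper-pi-theta-2} and use the bilinearity of $(r,l)\mapsto \abrac{r,\Theta l}$, which gives $\chi_{\Theta}(r,l)\chi_{\Theta'}(r,l) = \chi_{\Theta+\Theta'}(r,l)$; but one still needs the isotypical identity above to push $\pi^{\Theta'}(P)$ through $\pi^{\Theta}$, so the first route is shorter.

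For the ``in particular'' clause, note that $\pi^{0}$ is the identity map: in \eqref{eq:deform-oper-pi-theta-1} one has $U_{0} = \mathrm{id}$, or equivalently $\chi_{0}\equiv 1$ in \eqref{eq:deform-oper-pi-theta-2}. Taking $\Theta' = -\Theta$ in the identity just proved gives $\pi^{\Theta}\circ\pi^{-\Theta} = \pi^{0} = \mathrm{id}$, and by symmetry $\pi^{-\Theta}\circ\pi^{\Theta} = \mathrm{id}$ as well, so $\pi^{\Theta}$ is invertible with inverse $\pi^{-\Theta}$.

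There is no serious obstacle here; the one point deserving care is the claim that $\bigl(\pi^{\Theta}(P)\bigr)_{r} = P_{r}U_{r\cdot\Theta/2}$ is genuinely the isotypical decomposition — i.e. that $P_{r}U_{r\cdot\Theta/2}$ transforms under $\op{Ad}_{t}$ by the character $e^{2\pi i r\cdot t}$, which uses that the two $\T^{n}$-representations commute with themselves — together with the absolute convergence supplied by Proposition \ref{prop:deform-frech-algebr-smoothness}, which is what licenses composing the two infinite sums term by term.
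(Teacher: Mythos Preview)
Your proof is correct and follows essentially the same approach as the paper's: identify the isotypical components of $\pi^{\Theta'}(P)$ as $P_{r}U_{r\cdot\Theta'/2}$, then apply \eqref{eq:deform-oper-pi-theta-1} once more and use that $t\mapsto U_{t}$ is a homomorphism. You are simply more explicit than the paper about why $P_{r}U_{r\cdot\Theta/2}$ really is the $r$-th isotypical component and about the convergence, and you spell out the $\pi^{0}=\mathrm{id}$ step for the invertibility, which the paper leaves implicit.
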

\begin{proof}
Given $P = \nsum{r\in \Z^{n}}{}P_{r}$, $\pi^{\Theta}(T) = \nsum{r\in \Z^{n}}{}P_{r} U_{r \cdot \Theta/2}$ is the isotypical decomposition of  $\pi^{\Theta}(T)$, therefore
\begin{align*}
  \pi^{\Theta'}\brac{\pi^{\Theta}(P)} =  \nsum{r\in \Z^{n}}{}P_{r} U_{r \cdot \Theta/2}U_{r \cdot \Theta'/2}
= \nsum{r\in \Z^{n}}{}P_{r} U_{r \cdot (\Theta+\Theta')/2} = \pi^{\Theta + \Theta'}(P).
\end{align*}

\end{proof}

%\begin{lem}
%  If $T \in B(\mathcal H_{1}, \mathcal H_{2})$ is a $\T^{n}$-smooth vector, so is the deformation $\pi^{\Theta}(T)$. 
%\end{lem}
If we take $\mathcal H_1$ and $\mathcal H_2$ above to be the same Hilbert space, $B(\mathcal{H}_\infty)$ becomes an $\T^n$ smooth algebra as in definition \ref{defn:def-along-smooth-alg-Amods}. Following from definition \ref{defn:defomred-alg}, we obtain a family of deformed algebras $(B(\mathcal{H}_\infty),\times_\Theta)$ parametrized by skew symmetric matrices $\Theta$. The multiplication map is obviously $\T^n$-equivariant, that is $\op{Ad}_t(P_1) \op{Ad}_t(P_2) = \op{Ad}_t(P_1 P_2)$, for all $t \in \T^n$ and for any $P_1, P_2 \in B(\mathcal{H}_\infty)$. The associativity of the $\times_\Theta$ multiplication has the following analogy.  

\begin{prop}
    \label{prop:def-op-deform-oper-pi-theta-prod}
    Keep the notations as above. The deformation map 
\begin{align*}
\pi^\Theta: (B(\mathcal{H})_\infty,\times_\Theta) \rightarrow \pi^\Theta\brac{B(\mathcal{H}_\infty)}\subset B(\mathcal H)
\end{align*}    
   is an algebra isomorphism, namely, for any $P_{1},P_{2} \in B(\mathcal H)_\infty$,
  \begin{align}
    \label{eq:deform-oper-pi-theta-prod}
    \pi^{\Theta}(P_{1}) \pi^{\Theta}(P_{2}) = \pi^{\Theta}(P_{1} \times_{\Theta} P_{2}),
  \end{align}
recall that  the deformed product $\times_{\Theta}$ is defined in \eqref{eq:deformed-product}. 
\end{prop}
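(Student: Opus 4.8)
The plan is to prove the multiplicativity identity \eqref{eq:deform-oper-pi-theta-prod} by expanding both of its sides into absolutely convergent double series over $\Z^n\times\Z^n$ by means of the explicit formula \eqref{eq:deform-oper-pi-theta-1}. Once \eqref{eq:deform-oper-pi-theta-prod} is in hand the isomorphism statement is immediate: it shows at the same time that $\pi^\Theta(B(\mathcal H_\infty))$ is closed under composition, hence a subalgebra of $B(\mathcal H)$, and that $\pi^\Theta$ is an algebra homomorphism onto it; injectivity (and hence bijectivity onto the image) follows from Lemma \ref{lem:deform-oper-theta+theta'}, since $\pi^{-\Theta}\circ\pi^\Theta = \pi^{0} = \mathrm{id}$.

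The computation is driven by a single commutation rule. For a $\T^n$-smooth operator with homogeneous component $Q_s$ of degree $s\in\Z^n$, the definition \eqref{eq:def-ops-adjaction} of the adjoint action gives $U_t Q_s = \op{Ad}_t(Q_s)\,U_t = e^{2\pi i\, s\cdot t}\,Q_s\,U_t$ for all $t\in\T^n$; specializing $t = r\cdot\Theta/2$ and comparing the phase $e^{2\pi i\, s\cdot(r\cdot\Theta/2)} = e^{\pi i\abrac{r,\Theta s}}$ with the bi-character \eqref{eq:def-along-bi-cha-theta} yields
\[
U_{r\cdot\Theta/2}\,(P_2)_s = \chi_\Theta(r,s)\,(P_2)_s\,U_{r\cdot\Theta/2}, \qquad r,s\in\Z^n .
\]
Using this together with $U_{r\cdot\Theta/2}\,U_{s\cdot\Theta/2} = U_{(r+s)\cdot\Theta/2}$ and the isotypical decompositions $P_1 = \sum_r (P_1)_r$, $P_2 = \sum_s (P_2)_s$, the left-hand side of \eqref{eq:deform-oper-pi-theta-prod} becomes
\[
\pi^{\Theta}(P_1)\,\pi^{\Theta}(P_2) = \sum_{r,s\in\Z^n} (P_1)_r\,U_{r\cdot\Theta/2}\,(P_2)_s\,U_{s\cdot\Theta/2} = \sum_{r,s\in\Z^n}\chi_\Theta(r,s)\,(P_1)_r(P_2)_s\,U_{(r+s)\cdot\Theta/2} .
\]
On the other side, $(P_1)_r(P_2)_s$ is homogeneous of degree $r+s$, so by \eqref{eq:deformed-product} the degree-$k$ isotypical component of $P_1\times_\Theta P_2$ is $\sum_{r+s=k}\chi_\Theta(r,s)(P_1)_r(P_2)_s$; inserting this into \eqref{eq:deform-oper-pi-theta-1} reproduces the very same double series, which proves \eqref{eq:deform-oper-pi-theta-prod}.

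The only point that needs care — and the technical heart of the argument — is the legitimacy of these rearrangements of operator series. For this I would appeal to Proposition \ref{prop:deform-frech-algebr-smoothness} (equivalently, to the rapid decay of $\{\|(P_j)_r\|\}_{r\in\Z^n}$ recorded after \eqref{eq:deform-oper-frechet-semi-norms}): since $\sum_r\|(P_1)_r\|$ and $\sum_s\|(P_2)_s\|$ both converge, so does $\sum_{r,s}\|(P_1)_r\|\,\|(P_2)_s\|$, whence both $\pi^\Theta(P_1)\pi^\Theta(P_2)$ and $\pi^\Theta(P_1\times_\Theta P_2)$ are absolutely convergent series in $B(\mathcal H)$ and the switch between summing over $(r,s)$ and summing over $k$ and $r$ (with $s = k-r$) is justified. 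Everything else is bookkeeping with $\chi_\Theta$; as with the associativity in Proposition \ref{prop:deformation-along-tn-associativity-theta-multi}, the skew-symmetry of $\Theta$ plays no role here.
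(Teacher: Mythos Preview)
Your proof is correct, but it takes a different route from the paper's. The paper proves \eqref{eq:deform-oper-pi-theta-prod} by applying both sides to a $\T^n$-smooth vector $v\in\mathcal H$ and observing, via \eqref{eq:deform-oper-pi-theta-2}, that $\pi^\Theta(P)(v)$ may be written formally as $P\times_\Theta v$; the desired identity then reads $P_1\times_\Theta(P_2\times_\Theta v)=(P_1\times_\Theta P_2)\times_\Theta v$ and is literally the associativity already established in Proposition~\ref{prop:deformation-along-tn-associativity-theta-multi}. You instead work with the operator representation \eqref{eq:deform-oper-pi-theta-1} and the commutation rule $U_{r\cdot\Theta/2}(P_2)_s=\chi_\Theta(r,s)(P_2)_s U_{r\cdot\Theta/2}$, matching the two double series directly. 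Your approach is more self-contained (it does not invoke Proposition~\ref{prop:deformation-along-tn-associativity-theta-multi}) and makes the role of the unitaries $U_t$ explicit, while the paper's approach is shorter and highlights that the multiplicativity of $\pi^\Theta$ is nothing but a manifestation of the same cocycle identity underlying associativity. Both handle invertibility via Lemma~\ref{lem:deform-oper-theta+theta'} in the same way.
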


\begin{proof}
The invertiblity of $\pi^\Theta$ is proved in lemma \ref{lem:deform-oper-theta+theta'}. It remains to show that it is an algebra morphism, that is for any $\T^n$-smooth vector $v \in \mathcal{H}$, we have
  \begin{align}
  \label{eq:def-ops-proof-pitheta-associavtive}
    \pi^{\Theta}(P_{1})\brac{\pi^{\Theta}(P_{2})(v)} = \pi^{\Theta}\brac{P_{1}\times_{\Theta}P_{2}}(v).
  \end{align}  
Observe that $\pi^{\Theta}(P)(v)$ can be formally written as $P
\times_{\Theta} v$ according to \eqref{eq:deform-oper-pi-theta-2}. Therefore the left hand side and the right hand side of \eqref{eq:def-ops-proof-pitheta-associavtive} becomes $P_{1} \times_{\Theta} (P_{2}\times_{\Theta} v)$ and $(P_{1} \times_{\Theta} P_{2})\times_{\Theta} v$ respectively, thus equation \eqref{eq:def-ops-proof-pitheta-associavtive} is  exactly the same as the associativity of the
$\times_{\Theta}$-multiplication proved in
\eqref{eq:associativity-theta-multi}. 
\end{proof}

%Since the torus action is unitary, the Hilbert trace is also obviously $\T^n$ equivariant, thus proposition \ref{prop:general-set-up-prop-trace} has the following counterpart.
We have seen that the isotypical decomposition of an operator $P = \sum_r P_r$ converges with respect to operator norms. The normality of the trace somehow allows itself  to pass the summation, namely $\Tr (P) = \sum_r \Tr (P_r)$ whenever $P$ is a trace-class operator.    
\begin{lem}
\label{lem:def-ops-inv-of-trace}
  Let $\mathcal H$ be a saperable Hilbert space with a strongly
  continuous unitary $\T^{n}$ action and $P = \sum_r P_r \in B(\mathcal H)_\infty$ is
  $\T^{n}$-smooth operator with its isotypical decomposition. Suppose $P$ is a trace-class operator, then so is $\pi^{\Theta}(P)$, moreover,  $\Tr P = \Tr P_{0} =\Tr \pi^{\Theta}(P)$, where $P_{0}$ is the 
  $\T^{n}$-invariant part of both $P$ and $\pi^{\Theta}(P)$.   
%  In particular, we conclude that 
%  if one of $P$ and $\pi^\Theta(P)$ is of trace-class, then 
%  \begin{align}
%  \Tr P = \Tr \pi^\Theta(P),
%  \label{eq:def-ops-sametrace}
%  \end{align}
%  because they have the same $\T^n$-invariant part according to \eqref{eq:deform-oper-pi-theta-1}.
\end{lem}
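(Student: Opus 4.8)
The plan is to reduce everything to two facts already available: first, that the isotypical decomposition converges absolutely (Proposition~\ref{prop:deform-frech-algebr-smoothness} applied to the Fr\'echet space $B(\mathcal{H})_\infty$, or directly the rapid decay of $\{\norm{P_r}\}$ noted after Definition~\ref{defn:def-ops-pitheta}), and second, that a $\T^n$-equivariant trace kills all nonzero isotypical components, which is exactly the mechanism used in the proof of Proposition~\ref{prop:general-set-up-prop-trace}. So I would first observe that $\op{Tr}$ is $\T^n$-equivariant as a functional on trace-class operators: $\op{Tr}(\op{Ad}_t(P)) = \op{Tr}(\tilde U_t P U_{-t}) = \op{Tr}(P)$ by cyclicity of the trace and unitarity of $U_t$. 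Here one must be slightly careful that $\mathcal{H}_1 = \mathcal{H}_2 = \mathcal{H}$ so that the conjugation makes sense and $P$ is genuinely trace-class on $\mathcal{H}$.

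Next I would show $\op{Tr}(P_r) = 0$ for $r \neq 0$. Since $P$ is trace-class and $P_r = \int_{\T^n} e^{-2\pi i r\cdot t}\op{Ad}_t(P)\,dt$ is a Bochner integral in the trace-norm (the integrand is trace-norm continuous, as $\op{Ad}_t$ is isometric in trace norm and $t \mapsto \op{Ad}_t(P)$ is trace-norm continuous — this needs a short remark, possibly citing strong continuity plus a standard density/uniform-boundedness argument, or one can appeal to $\op{Ad}_t(P) = \sum_s e^{2\pi i s\cdot t}P_s$ with trace-norm-absolutely-convergent tail once one knows $\sum_s \norm{P_s}_1 < \infty$). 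Then continuity of $\op{Tr}$ on trace-class operators gives $\op{Tr}(P_r) = \int_{\T^n} e^{-2\pi i r\cdot t}\op{Tr}(\op{Ad}_t(P))\,dt = \op{Tr}(P)\int_{\T^n} e^{-2\pi i r\cdot t}\,dt = 0$ for $r\neq 0$. The identity $\op{Tr}(P) = \sum_r \op{Tr}(P_r) = \op{Tr}(P_0)$ then follows from trace-norm convergence of $\sum_r P_r$ together with $\abs{\op{Tr}(A)} \le \norm{A}_1$.

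For the $\pi^\Theta$ part, I would use the formula $\pi^\Theta(P) = \sum_r P_r U_{r\cdot\Theta/2}$ from \eqref{eq:deform-oper-pi-theta-1}. Each $P_r U_{r\cdot\Theta/2}$ is trace-class with $\norm{P_r U_{r\cdot\Theta/2}}_1 = \norm{P_r}_1$ since $U_{r\cdot\Theta/2}$ is unitary, so $\sum_r \norm{P_r U_{r\cdot\Theta/2}}_1 = \sum_r \norm{P_r}_1 < \infty$ and $\pi^\Theta(P)$ is trace-class (trace-class operators are a Banach space). Moreover $P_r U_{r\cdot\Theta/2}$ lies in the $r$-th isotypical component of $B(\mathcal{H})_\infty$ because $U_{r\cdot\Theta/2}$ is $\T^n$-invariant under $\op{Ad}$ (it commutes with all $U_t$), so the isotypical decomposition of $\pi^\Theta(P)$ is precisely $\{P_r U_{r\cdot\Theta/2}\}_r$ and its $0$-component is $P_0 U_0 = P_0$. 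Applying the first part to $\pi^\Theta(P)$ gives $\op{Tr}(\pi^\Theta(P)) = \op{Tr}((\pi^\Theta(P))_0) = \op{Tr}(P_0) = \op{Tr}(P)$.

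The main obstacle is the technical point that $\sum_r P_r$ converges in \emph{trace norm}, not merely in operator norm: the rapid-decay estimate after Definition~\ref{defn:def-ops-pitheta} is stated for the operator norm, so to get trace-norm control one should instead integrate the trace-norm-continuous function $t\mapsto\op{Ad}_t(P)$ directly — i.e., define $P_r$ as a trace-norm Bochner integral and note it agrees with the operator-norm one — and then use $\norm{P_r}_1 = \norm{\int e^{-2\pi i r\cdot t}\op{Ad}_t(P)\,dt}_1 \le \norm{P}_1$, which suffices for all the summability claims above (absolute convergence in operator norm plus uniform trace-norm bounds is already enough to push the trace through each finite partial sum and pass to the limit, since $\abs{\op{Ad}_t(P) - \sum_{\abs{r}\le N}e^{2\pi i r\cdot t}P_r}$ goes to zero in operator norm while staying trace-class with controlled trace norm). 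I would phrase the write-up so as not to belabor this: state that equivariance of $\op{Tr}$ plus the argument of Proposition~\ref{prop:general-set-up-prop-trace} gives $\op{Tr}(P_r)=0$ for $r\neq 0$, invoke $\abs{\op{Tr}} \le \norm{\cdot}_1$ for the passage to the limit, and conclude.
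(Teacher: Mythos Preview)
Your approach is correct for the core identity $\op{Tr}(P)=\op{Tr}(P_0)$ and is genuinely different from the paper's. The paper argues concretely: it picks an orthonormal basis $\{\epsilon_{k,l}\}$ adapted to the decomposition $\mathcal H=\overline{\bigoplus_l H_l}$, observes that $P_r(H_l)\subset H_{l+r}$, and concludes $\langle P_r\epsilon_{k,l},\epsilon_{k,l}\rangle=0$ for $r\neq 0$, so the diagonal sums for $P$ and for $P_0$ coincide term by term; the same weight-shifting observation then handles $\pi^\Theta(P)$. You instead invoke invariance of $\op{Tr}$ under $\op{Ad}_t$ and pass the trace through the trace-norm Bochner integral $P_r=\int_{\T^n}e^{-2\pi i r\cdot t}\op{Ad}_t(P)\,dt$, obtaining $\op{Tr}(P_r)=\delta_{r,0}\op{Tr}(P)$ directly --- this is precisely the mechanism of Proposition~\ref{prop:general-set-up-prop-trace} transported to the trace ideal. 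Your route is cleaner in that $\op{Tr}(P)=\op{Tr}(P_0)$ follows immediately from the $r=0$ integral without ever needing $\sum_r P_r$ to converge in trace norm; the paper's basis computation, on the other hand, makes the vanishing mechanism (weight-shifting) completely explicit and requires no Bochner-integral technology.

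One caveat: your assertion $\sum_r\norm{P_r}_1<\infty$, used to conclude that $\pi^\Theta(P)$ is trace-class, does not follow from the uniform bound $\norm{P_r}_1\le\norm{P}_1$, and $\T^n$-smoothness in \emph{operator} norm does not obviously upgrade to rapid decay in \emph{trace} norm. Your suggested workaround (operator-norm convergence plus uniform trace-norm bounds) does not by itself let you pass $\op{Tr}$ through the limit. The paper's own argument at this step is equally incomplete (absolute convergence of diagonal matrix elements in one basis does not imply membership in the trace class). In the paper's applications the operators are smoothing and the issue disappears; as a general statement, though, the trace-class claim for $\pi^\Theta(P)$ would benefit from a sharper justification in either approach.
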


\begin{proof}
Since $\mathcal H$ is a strongly continuous unitary representation of $\T^{n}$, it admits a orthonormal decomposition 
\begin{align*}
 \mathcal  H = \overline{\dsum{l\in\Z^{n}}{} H_{l}},
\end{align*}
 in which each $H_{l}$ consists of eigenvector of the torus action:
 \begin{align*}
   H_{l} = \set{v \in \mathcal H | t\cdot v = e^{2\pi i t \cdot l} v}.
 \end{align*}
For each $H_{l}$, one can pick a orthonormal basis
$\set{\epsilon_{k,l}}_{k\in\N}$, then
$\set{\epsilon_{k,l}}_{l\in\Z^{n},k\in\N}$ is an orthonormal basis of
$\mathcal H$. Since $\nsum{r\in\Z^{n}}{}P_{r}$ convergence absolutely
in the operator norm,
\begin{align*}
   \nsum{l\in\Z^{n},k\in\N}{} \abrac{(\nsum{r\in\Z^{n}}{}P_{r})(\epsilon_{k,l}),\epsilon_{k,l}}
 = \nsum{l\in\Z^{n},k\in\N}{} \nsum{r\in\Z^{n}}{} \abrac{P_{r}(\epsilon_{k,l}),\epsilon_{k,l}},
\end{align*}
observe that for all $r\in\Z^{n}$, $P_{r}(H_{l}) \subset H_{r+l}$,
therefore $\abrac{P_{r}(\epsilon_{k,l}),\epsilon_{k,l}} = 0$ except
the case when $r = 0$. We continue the computation above:  
\begin{align*}
   \nsum{l\in\Z^{n},k\in\N}{} \nsum{r\in\Z^{n}}{} \abrac{P_{r}(\epsilon_{k,l}),\epsilon_{k,l}} = \nsum{l\in\Z^{n},k\in\N}{} \abrac{P_{0}(\epsilon_{k,l}),\epsilon_{k,l}}. 
\end{align*}
Since $P$ is traceable,  the left hand side above converges absolutely, therefore $P_{0}$ is traceable as well and has the same trace as $P$. Recall equation \eqref{eq:deform-oper-pi-theta-1}: $\pi^{\Theta}(P) = \nsum{r\in\Z^{n}}{} P_{r} U_{r \cdot \Theta/2}$. Notice that the computation above still works if $P_{r}$ is replaced by $P_{r} U_{r \cdot \Theta/2}$, therefore $\pi^{\Theta}(P)$ is of trace class and $\Tr \pi^{\Theta}(P) = \Tr (\pi^{\Theta}(P))_{0}$, where $(\pi^{\Theta}(P))_{0}$ is the $\T^{n}$-invariant part of $\pi^{\Theta}(P)$. Since $P$ and $\pi^{\Theta}(P)$ have the same invariant part, we have finished the proof.
\end{proof}

%From \eqref{eq:deform-oper-pi-theta-1}
%and \eqref{eq:deformed-product}, we can see that  when the operations $\pi^{\infty}(\cdot)$
%and $\times_{\Theta}$ are well-defined, 
%operators $P$, $\pi^{\Theta}(P)$, $P_{1}P_{2}$ and $P_{1} \times_{\Theta} P_{2}$ have the same
%$\T^{n}$-invariant part. Thus we have the following corollary. 

The following corollary is important for our later discussion. 
\begin{cor}\label{cor:deform-oper-pi-theta-trace-prop}
Let $\mathcal H$ be a separable Hilbert space. Let $P_{1}, P_{2}
\in B(\mathcal H)$ be two $\T^{n}$-smooth vectors such that at least one of them is traceable,
  then  $P_{1} P_{2}$ and $P_{1} \times_{\Theta} P_{2}$ are both traceable with
 \begin{align*}
   \Tr(P_{1}P_{2}) = \Tr(P_{1} \times_{\Theta} P_{2}).
\end{align*}
Combine the equation above with
\eqref{eq:deform-oper-pi-theta-prod}, we obtain:
\begin{align*}
  \Tr\brac{ \pi^{\Theta}(P_{1})  \pi^{\Theta}(P_{2})} = \Tr (P_{1} P_{2}).
\end{align*}
\end{cor}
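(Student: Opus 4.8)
The plan is to reduce the statement to the two ingredients already in hand: Proposition~\ref{prop:def-op-deform-oper-pi-theta-prod} (that $\pi^\Theta$ intertwines ordinary composition with $\times_\Theta$), and Lemma~\ref{lem:def-ops-inv-of-trace} (that $\Tr P = \Tr P_0$ and that passing to $\pi^\Theta$ does not change the invariant component, hence not the trace). The only genuinely new point is that the $\times_\Theta$-product of two $\T^n$-smooth operators, one of which is trace-class, is again trace-class with the same trace as the ordinary product; once that is established, the displayed identity $\Tr(\pi^\Theta(P_1)\pi^\Theta(P_2)) = \Tr(P_1P_2)$ is immediate by combining with \eqref{eq:deform-oper-pi-theta-prod}.

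First I would record the trace-class claim. Write the isotypical decompositions $P_1 = \sum_{r} (P_1)_r$ and $P_2 = \sum_{s}(P_2)_s$; both series converge absolutely in operator norm, and the sequence of operator norms of the components of whichever factor is trace-class is rapidly decreasing (Proposition~\ref{prop:deform-frech-algebr-smoothness}). By the definition \eqref{eq:deformed-product} of $\times_\Theta$, one has $P_1 \times_\Theta P_2 = \sum_{r,s} \chi_\Theta(r,s)\,(P_1)_r (P_2)_s$; since $|\chi_\Theta(r,s)| = 1$, the double series converges absolutely in trace norm (the trace norm of $(P_1)_r(P_2)_s$ is bounded by $\|(P_1)_r\|_{\mathrm{tr}}\|(P_2)_s\|$ or by $\|(P_1)_r\|\,\|(P_2)_s\|_{\mathrm{tr}}$, and in either case summability follows from rapid decay of one factor against at-most-polynomial growth permitted for the other by \eqref{eq:peter-weyl-decay} together with boundedness of the trace-class factor in trace norm). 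Hence $P_1 \times_\Theta P_2$ is trace-class. Similarly $P_1 P_2 = \sum_{r,s}(P_1)_r(P_2)_s$ is trace-class.

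Next I would compute both traces term by term. Since $(P_i)_r$ maps the isotypical subspace $H_l$ of $\mathcal H$ into $H_{l+r}$, the operator $(P_1)_r(P_2)_s$ maps $H_l$ into $H_{l+r+s}$, so its trace (computed in an orthonormal basis adapted to the $\Z^n$-grading of $\mathcal H$, exactly as in the proof of Lemma~\ref{lem:def-ops-inv-of-trace}) vanishes unless $r+s = 0$. Therefore
\begin{align*}
\Tr(P_1 \times_\Theta P_2) = \sum_{r} \chi_\Theta(r,-r)\,\Tr\big((P_1)_r (P_2)_{-r}\big) = \sum_{r}\Tr\big((P_1)_r (P_2)_{-r}\big) = \Tr(P_1 P_2),
\end{align*}
where $\chi_\Theta(r,-r) = e^{\pi i \abrac{r,\Theta(-r)}} = 1$ because $\Theta$ is skew-symmetric, so $\abrac{r,\Theta r} = 0$. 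This proves the first displayed identity of the corollary. For the second, apply Proposition~\ref{prop:def-op-deform-oper-pi-theta-prod}: $\pi^\Theta(P_1)\pi^\Theta(P_2) = \pi^\Theta(P_1 \times_\Theta P_2)$, and since $P_1 \times_\Theta P_2$ is trace-class, Lemma~\ref{lem:def-ops-inv-of-trace} gives $\Tr\big(\pi^\Theta(P_1 \times_\Theta P_2)\big) = \Tr(P_1 \times_\Theta P_2) = \Tr(P_1 P_2)$, as claimed.

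The main obstacle, such as it is, is the bookkeeping for absolute convergence when only \emph{one} of the two operators is assumed trace-class: one must be careful that the other factor is merely $\T^n$-smooth (so its components decay rapidly in operator norm, not in trace norm), and check that $\sum_{r,s}\|(P_1)_r\|_{\mathrm{tr}}\,\|(P_2)_s\|$ (or the symmetric expression) is finite, which it is because one factor contributes a convergent sum and the other a bounded quantity. Everything else is a direct reduction to the two cited results, and the vanishing of off-diagonal terms is the same grading argument already used for Lemma~\ref{lem:def-ops-inv-of-trace}.
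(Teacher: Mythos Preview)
Your overall strategy is exactly what the paper intends (the corollary is stated without proof, as an immediate consequence of Lemma~\ref{lem:def-ops-inv-of-trace} and Proposition~\ref{prop:def-op-deform-oper-pi-theta-prod}), and your computation of the trace via the invariant component, together with $\chi_\Theta(r,-r)=1$, is correct and mirrors the argument of Proposition~\ref{prop:general-set-up-prop-trace}.

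There is, however, a genuine gap in the part you yourself flag as ``the main obstacle.'' You claim that $\sum_{r,s}\|(P_1)_r\|_{\mathrm{tr}}\,\|(P_2)_s\|$ is finite because ``one factor contributes a convergent sum and the other a bounded quantity,'' but both indices range over $\Z^n$: rapid decay of $\|(P_2)_s\|$ in $s$ gives a finite sum over $s$, yet you still have an infinite sum over $r$ of terms bounded only by $\|P_1\|_{\mathrm{tr}}$. The hypothesis is $\T^n$-smoothness in the \emph{operator} norm, which does not by itself give rapid decay of the trace norms $\|(P_1)_r\|_{\mathrm{tr}}$, so the double sum is not controlled this way.

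The clean fix avoids termwise trace-norm estimates altogether. Since trace-class operators form a two-sided ideal, $P_1P_2$ is trace-class directly. For $P_1\times_\Theta P_2$, use Lemma~\ref{lem:def-ops-inv-of-trace} to see that $\pi^\Theta(P_1)$ is trace-class, hence $\pi^\Theta(P_1)\pi^\Theta(P_2)=\pi^\Theta(P_1\times_\Theta P_2)$ is trace-class by the ideal property; applying Lemma~\ref{lem:def-ops-inv-of-trace} once more (with $-\Theta$, via Lemma~\ref{lem:deform-oper-theta+theta'}) shows $P_1\times_\Theta P_2$ is trace-class. With trace-class established, your invariant-part computation $(P_1\times_\Theta P_2)_0=\sum_r\chi_\Theta(r,-r)(P_1)_r(P_2)_{-r}=\sum_r(P_1)_r(P_2)_{-r}=(P_1P_2)_0$ finishes the proof exactly as you wrote.
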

\par

\subsection{Deformation of functions}
%\subsection{Examples: manifolds with a torus action}
\label{subsec:deformation-funtens}

Let $M$ be a smooth  manifold without boundary. For any diffeomorphism $\varphi \in \op{Diff}(M)$, we defined the pull back action on $C^\infty(M)$:
\begin{align}
    U_\varphi(f)(x) = f(\varphi^{-1}(x)),\,\,\, f\in C^\infty(M),
    \label{eq:deformation-pullback-functions}
\end{align}
which is $*$-automorphism of $C^\infty(M)$. Assume that $M$ admits a $n$-torus action: $\T^n \subset \op{Diff}(M)$, then one can quickly verify that $\T^n$ acts smoothly (cf. eq. \eqref{eq:smooth-topology-defn}) on $C^\infty(M)$ with respect to the smooth Fr\'echet topology, also the pointwise multiplication is jointly continuous (cf. eq. \eqref{eq:jointly-continuity}) , therefore we can deform the multiplication to $\times_\Theta$ following definition \ref{defn:def-along-smooth-alg-Amods} with respect to a skew symmetric matrix $\Theta$, and the new algebra 
\begin{align*}
 C^\infty(M_\Theta) \defeq ( C^\infty(M), \times_\Theta)
\end{align*}
plays the role of smooth coordinate functions on a noncommutative manifold $M_\Theta$. \par
  Later, we will assume the manifold $M$ is compact. The non-compact examples we are interested in is the cotangent bundle $T^*M$. One can easily lift the torus action to $T^*M$ by the natural extension of diffeomorphims: $\varphi \mapsto \varphi^*$, where $\varphi^*$ is the differential of $\varphi$. Thus the cotangent bundle of the noncommutative manifold $M_\Theta$ is given by the deformed algebra:
\begin{align*}
 C^\infty(T^*M_\Theta) \defeq ( C^\infty(T^*M), \times_\Theta). 
\end{align*}
\par

Another crucial example is $S\Sigma(M) \subset C^\infty(T^*M)$, the spaces of symbols of pseudo differential operators on $M$. It is a filtered algebra:
\begin{align*}
        S\Sigma = \ucup{j=-\infty}{\infty} S\Sigma^j(M),
    \end{align*}
%\begin{align*}
%\cdots \subset  S\Sigma^{j-1}(M) \subset S\Sigma^j(M) \subset \cdots,
%\end{align*}
where each $S\Sigma^j(M)$  consists of smooth functions with the estimate in  local coordinates $(x,\xi)$,
    \begin{align}
  \abs{\partial^\alpha_x \partial^\beta_\xi p(x,\xi)}\le C_{\alpha,\beta} (1+\abs\xi)^{j - \abs\beta},
        \label{eq:def_riegeo-defn-sym-estimate}
    \end{align}
    the optimized constants $C_{\alpha,\beta}$ define a family of semi-norms  that makes $S\Sigma(M)$ into a Fr\'echet space. The smoothing symbols $S\Sigma^{-\infty}$ is the intersection:
    \begin{align*}
    S\Sigma^{-\infty} = \icap{j=-\infty}{\infty} S\Sigma^j(M),
    \end{align*}
    and the quotient $\op{CL} = S\Sigma /S\Sigma^{-\infty}$ is called the space of complete symbols. \par
For any $t \in \T^n$ viewed as a diffeomorphism on $M$, let $p\in S\Sigma^j(M)$ be a symbol of order $j$, $t \mapsto U_t(p)$ is a  function valued in $C^\infty(T^*M)$. Observe that  the partial derivatives in $t$ can be written as a finite sum in local coordinates:
\begin{align*}
	\partial_t^\gamma U_t(p) = U_t\brac{
	\sum_j	\partial^{\alpha_j}_x \partial^{\beta_j}_\xi p
		},
\end{align*}
where $\gamma$, $\alpha_j$, $\beta_j$ are multi-indices. This shows not only that  $U_t(p)$ still belongs to  $S\Sigma^j(M)$ but also  the torus action is smooth (cf. definition \ref{defn:deformation-along-tn}). Therefore we can twist the pointwise multiplication on the filtered algebra $S\Sigma(M)$ as explained in subsection \ref{subsec:deform-frech-algebr},  the deformed version is denoted by
\begin{align*}
	S\Sigma(M_\Theta) = (S\Sigma (M), \times_\Theta),
\end{align*}
where $\times_\Theta$ is given in \eqref{eq:eq:deformation-of-algebra-m-theta}.\par

\section{Deformation of tensor calculus}
\label{sec:def-of-rie-geo}

Let $M$ be a compact toric manifold $M$ as before, that is $\op{Diff}(M)$ contains a $n$-torus. So far, we have deformed the ``smooth structure'' of $M$,  namely we have found the counterpart of the algebra of smooth coordinate functions in the noncommutative setting in the previous section. We will extend the deformation process further by restricting the torus action. For example, if we assume that the torus action is affine: $\T^n \subset \op{Affine}(M) \subset \op{Diff}(M)$, then the
whole tensor calculus can be deformed. Since  our ultimate goal is to study curvatures, one can assume that $M$ is Riemannian and the torus acts as isometries.  We make a  formal definition here.
\begin{defn}
\label{defn:def-rie-eg-toricmflds}
A toric Riemannian manifolds $M$ is a closed (compact without boundary) Riemannian manifolds whose isometry group contains a $n$-torus. In other word, $M$ admits an $\T^n$-action as isometries. 
\end{defn}

\begin{eg}
\label{eg:def-rie-eg-torus}
Let $M = \T^n$ be the $n$-torus with the usual flat metric, while the $n$-torus acts on itself by translations. The deformation gives the well-known family called noncommutative $n$-torus.  
\end{eg}

\begin{eg}
\label{eg:def-rie-eg-four-sphere}
Consider the two torus $\T^2$ acts on the four sphere $S^4$ by embedding  $\T^2$ into $\op{SO}(5)$ as rotations in the first four coordinates:
\begin{align*}
(t_1,t_2) \in \R^2 \mapsto \brac{
\begin{matrix}
e^{-\pi i t_1}& &\\&e^{-\pi i t_2}& \\ && 1 
\end{matrix}
},
\end{align*}
where we identify $\R^5$ with $\C\oplus \C \oplus \R$. Thus any Riemannian metric which is invariant under the rotations above provides an instance of a toric manifold, for example, the  Robertson-Walker metrics with a cosmic scale factor $a(t)$:
\begin{align*}
ds^2 = dt^2 + a(t)^2 d\omega^2,
\end{align*}
where $d\omega^2$ is the round metric on $S^3$, when $a(t)= \sin t$, we recover the round metric on $S^4$.
\end{eg}

\subsection{Isospectral deformations of toric Riemannian manifolds}
%\subsection{Deformation of representations}

To finished the story of deforming smooth functions on $M$, we shall represent the deformed algebra $C^\infty(M_\Theta)$ as bounded operators, whose underlying Hilbert space is metric-related. Therefore we assume that  $M$  is a toric Riemannian manifold. It is straightforward to see that the integration  against the Riemannian metric $g$ 
  \begin{align}
      \int_M: C^\infty(M) \rightarrow \C
      \label{eq:def_riegeo_integration-map}
  \end{align}
    is $\T^n$-invariant, where $\C$ is viewed a  trivial $\T^n$ module. The pairing
    \begin{align*}
        \abrac{f,h} \defeq \int_M f \bar h dg, \,\,\,\, \forall f,h \in C^\infty(M),
    \end{align*}
    makes $C^\infty(M)$ into a pre-Hilbert space, and the completion is $\mathcal H = L^2(M)$. For each $t \in \T^n$, the pull-back action $U_t: C^\infty(M) \rightarrow C^\infty(M)$ defined in \eqref{eq:deformation-pullback-functions} can be extended to a unitary operator on $\mathcal H$. It is easy to check the equivariant property of the representation $f \in C^\infty(M) \mapsto L_f \in B(\mathcal H)$, where $L_f$ is the left multiplication operator by $f$:
    \begin{align*}
        U_t L_f U_{-t} = L_{U_t(f)}, \,\,\,\, \forall t \in \T^n.
    \end{align*}
    Recall from section \ref{subsec:deformation-funtens} that for a fixed skew symmetric matrix $\Theta$, the deformed algebra is given by replacing the multiplication in $C^\infty(M)$: $ (C^\infty(M), \times_\Theta)$.  The deformation map $\pi^\Theta: B(\mathcal H)_\infty \rightarrow B(\mathcal H)_\infty$ in section \ref{subsec:deform-oper} gives a new description of the algebra $C^\infty(M_\Theta)$ as a subalgebra of $B(\mathcal H)$.   

    \begin{prop}
     Let $M$ be a  toric Riemannian manifold and $\Theta$ is a skew symmetric matrix.  We denote by $C^\infty(M_\Theta) = \pi^\Theta(C^\infty(M)) \subset B(\mathcal H)$, the image of $C^\infty(M)$ under the deformation map $\pi^\Theta$. Then    
\begin{align*}
\pi^\Theta: (C^\infty(M),\times_\Theta) \rightarrow C^\infty(M_\Theta)
\end{align*}
is an $*$-algebra isomorphism:
 \end{prop}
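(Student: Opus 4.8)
The plan is to realize $\pi^\Theta$ on $C^\infty(M)$ as the composition of the left-regular representation $L\colon f\mapsto L_f\in B(\mathcal H)$ with the operator deformation map $\pi^\Theta\colon B(\mathcal H)_\infty\to B(\mathcal H)_\infty$ of Definition \ref{defn:def-ops-pitheta}, and then to read off all the required properties from the results already established in Section \ref{subsec:deform-oper}.

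First I would record that $L$ is an injective $*$-homomorphism: injectivity because $M$ is compact so $1\in C^\infty(M)$ and $L_f=0$ forces $f=0$, and the $*$-property is $L_{\bar f}=L_f^*$. Moreover $L$ is $\T^n$-equivariant, $U_tL_fU_{-t}=L_{U_t(f)}$, as noted just above the statement. In particular $L$ carries each isotypical component to $L_{f_r}=(L_f)_r$, so the image $L(C^\infty(M))$ consists of $\T^n$-smooth operators, i.e.\ lies in $B(\mathcal H)_\infty$, and $L$ is a continuous $\T^n$-equivariant algebra homomorphism between the relevant smooth $\T^n$-algebras.

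Next, by Proposition \ref{prop:functorial-property} applied to $L$, the same map $L\colon(C^\infty(M),\times_\Theta)\to(B(\mathcal H)_\infty,\times_\Theta)$ remains an algebra homomorphism for the deformed products; concretely, using $L_{f_r}=(L_f)_r$ one gets $L_{f\times_\Theta h}=\sum_{r,l}\chi_\Theta(r,l)L_{f_r}L_{h_l}=L_f\times_\Theta L_h$, while $L_{\bar f}=L_f^*$ is unaffected by the deformation. Composing with $\pi^\Theta$ and invoking Proposition \ref{prop:def-op-deform-oper-pi-theta-prod} then yields
\begin{align*}
\pi^\Theta(L_f)\,\pi^\Theta(L_h)=\pi^\Theta(L_f\times_\Theta L_h)=\pi^\Theta(L_{f\times_\Theta h}),
\end{align*}
so $\pi^\Theta\circ L$ intertwines $\times_\Theta$ with the genuine operator product; by Lemma \ref{lem:def-ops-deform-oper-pi-theta-adjoint}, $\pi^\Theta(L_{\bar f})=\pi^\Theta(L_f^*)=\pi^\Theta(L_f)^*$, so it is $*$-preserving. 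Surjectivity onto $C^\infty(M_\Theta):=\pi^\Theta(L(C^\infty(M)))$ holds by definition, and injectivity follows since $L$ is injective and $\pi^\Theta$ is invertible with inverse $\pi^{-\Theta}$ by Lemma \ref{lem:deform-oper-theta+theta'}. Identifying $C^\infty(M)$ with $L(C^\infty(M))$ as in the statement, this exhibits $\pi^\Theta$ as the desired $*$-algebra isomorphism.

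The bookkeeping is essentially routine once the machinery of Section \ref{sec:deformation-along-tn} is in place; the only point that needs a little care is the compatibility of isotypical decompositions under $L$, namely $L_{f_r}=(L_f)_r$. This is exactly what makes the two a priori distinct meanings of ``$\times_\Theta$'' coincide --- the algebraic one on $C^\infty(M)$ coming from Definition \ref{defn:defomred-alg} and the one induced by the adjoint action on $B(\mathcal H)$ --- and it also guarantees that $L$ lands inside the smooth subalgebra $B(\mathcal H)_\infty$ on which $\pi^\Theta$ is defined.
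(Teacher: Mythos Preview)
Your proof is correct and follows exactly the approach the paper intends: the paper does not give an explicit proof of this proposition, but the sentence preceding it (``The deformation map $\pi^\Theta: B(\mathcal H)_\infty \rightarrow B(\mathcal H)_\infty$ in section \ref{subsec:deform-oper} gives a new description of the algebra $C^\infty(M_\Theta)$ as a subalgebra of $B(\mathcal H)$'') makes clear that the result is meant to be read off from Lemma \ref{lem:def-ops-deform-oper-pi-theta-adjoint}, Lemma \ref{lem:deform-oper-theta+theta'}, and Proposition \ref{prop:def-op-deform-oper-pi-theta-prod}, together with the $\T^n$-equivariance of $L$. You have simply spelled this out carefully, including the key compatibility $L_{f_r}=(L_f)_r$.
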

 
%Due to equation \eqref{eq:pi_theta} and inevitability of $\pi^\Theta$, we may identify the algebra $\pi^\Theta(C^\infty(M))$ as an algebra whose underlying topological space is $C^\infty(M)$ while the multiplication is deformed into $\times_\Theta$. This interpretation is kind of crucial in this paper because most of the identities proven in later sections rely on pointwise properties of functions on the manifold. 
% The
%$\T^{2}$-action comes from  left translation $\alpha_{t}: \T^{2}
%\rightarrow \T^{2}: \alpha_{[t]}([s]) = [t+s]$, for $s,t\in \R^{2}$,
%where $[\cdot]$ means taking equivalent  class in the quotient
%$\R^{2}/\Z^{2}$. \par

As examples, we first compare two sets of notations on noncommutative two tori. 
\begin{eg}[Noncommutative Two Torus] 
    \label{eg:def_riegeo-non-comm-two-torus}
Let $M = \T^{2} = \R^{2}/\Z^{2}$ with the induce flat metric and $(x_{1},x_{2})$ be the coordinates on $\T^{2}$, put $e_{1}(x_{1},x_{2}) =
e^{2\pi i x_{1}}$, $e_{2}(x_{1},x_{2}) =e^{2\pi i x_{2}}$. By
elementary Fourier theory on $\T^{2}$,
$\set{e_{1}^{k}e_{2}^{l}}_{k,l\in\Z}$ serves as basis for the
$C^{\infty}(\T^{2})$, that is 
\begin{align}
  \label{eq:Fourier-Series}
f =\nsum{(k,l)\in\Z}{} f_{(k,l)}e_{1}^{k}e_{2}^{l}, \,\,\, f\in C^{\infty}(\T^{2}),
\end{align}
moreover the Fourier coefficients $\set{f_{(k,l)}}$ are of rapidly
decay in $(k,l)$. 
  For $t = (t_{1},t_{2})\in \R^{2}$, the torus action
is given by
\begin{align}
 \label{eq:-torus-action-nctori}
\alpha_{t}(e_{1}^{k}e_{2}^{l}) = e^{2\pi i t_{1}k+t_{2}l} e_{1}^{k}e_{2}^{l}. 
\end{align}
Hence, the right hand side of \eqref{eq:Fourier-Series} is the isotypical decomposition of the function $f$. 

Let $\theta \in \R\setminus\Q$, denote $\Theta = \brac{
  \begin{matrix}
    0&\theta\\ -\theta &0
  \end{matrix}
}$. The deformed algebra $C^{\infty}(\T^{2}_{\theta})$ is identical to
$C^{\infty}(\T^{2})$ as a topological vector space with the deformed
product
\begin{align}
\label{eq:deformed-product-nctorus}
  f \times_{\theta} g = \nsum{r,s\in\Z^{2}}{} e^{\pi i \abrac{r,\Theta
      s}} f_{r}g_{s} e_{1}^{r_{1}+s_{1}}e_{2}^{r_{2}+s_{2}} =
  \nsum{r,s\in\Z^{2}}{} e^{-\pi i \theta(r_{1}s_{2} - r_{2}s_{1})}
f_{r}g_{s} e_{1}^{r_{1}+s_{1}}e_{2}^{r_{2}+s_{2}},
\end{align}
where $r = (r_{1},r_{2})$, $s = (s_{1},s_{2})$, $f_{r}$ and $g_{s}$
are the Fourier coefficients of $f$ and $g$. Take $f = e_{1}$, $g =
e_{2}$ we see that 
\begin{align*}
  e_{1}\times_{\theta} e_{2} = e^{-2\pi i \theta}e_{2}\times_{\theta} e_{1}.
\end{align*}

Therefore $C^\infty(\T_\theta)$ is isomorphic to the smooth noncommutative two torus\footnote{see  \cite{MR3194491} for the definition.} $A^\infty_\theta$ as a topological algebra.

\end{eg}

%\begin{eg}[Noncommutative $n$-Torus]
%	\label{eg:def-rie_non-n-tours}
%	The general noncommutative $n$-torus is given by the deformation of $n$-torus acting on itself via translations as in example \ref{eg:def-rie-eg-torus}. Let $\Theta = (\theta)_{ij}$ be a skew symmetric matrix of dimension $n$ and then the deformed algebra $C^\infty(\T^n_\Theta)$ is generated by $e_j = e^{\pi i t_j}, j=1,2,\dots n$ subject to the relations 
%	\begin{align*}
%		e_j \times_\Theta e_k = e^{2\pi i \theta_{jk} }e_k \times_\Theta e_j, 
%	\end{align*}
%	where $t = (t_1, \dots t_n)$ are the coordinates on $\T^n$. Elements in $C^\infty(\T^n_\Theta)$ is of the form 
%	\begin{align*}
%		f = \sum_{r\in\Z^n} f_r E^r,
%	\end{align*}
%	where $f_r \in \C$ and $E^r = e_1^{r_1} \cdots e_n^{r_n}$ with $r = (r_1, \dots,r_n)$, while the sequence $\set{f_r}_{r\in\Z^n}$ is of rapidly decay in $r$. 
%\end{eg}
%
\begin{eg}[Noncommutative Four Sphere]\label{eq:def-rie-non-four-sphere}
	
  Let $M = S^4\subset \R^{5}$ be the  unit four sphere with the two-torus rotation action defined in example \ref{eg:def-rie-eg-four-sphere}. Let $\R^5 = \C \oplus \C \oplus \R$ with coordinates $(z_1,z_2,x)$, then the polynomial functions on $S^4$ is generated by the coordinate functions $z_1$, $z_2$ and $x$ with the relation 
  \begin{align*}
  	z_1z_1^* + z_2z_2^* + x^2 = 1.
  \end{align*}
  While the pull-back action on functions on generators  is given by
  \begin{align*}
  	z_1 \mapsto e^{2\pi i t_1} z_1, \,\, z_2 \mapsto e^{2\pi i t_2} z_2, \,\, x \mapsto x,  
  \end{align*}
  where $(e^{2\pi i t_1},e^{2\pi i t_2}) \in\T^2$. Choose $\Theta$ to be
  \begin{align*}
  \Theta = \brac{
  	\begin{matrix}
  	0& \theta \\ -\theta &0
  	\end{matrix}
  	}, \,\,\, \theta \in \R,
  \end{align*} 
  then the resulting multiplication $\times_\Theta$ is presented by the relations on generators: $z_1 \times_\Theta z_2 = e^{2\pi i \theta} z_2 \times_\Theta z_1$ while $x_0$ is central. 
\end{eg}

\begin{eg}
 [Higher dimensional noncommutative tori and spheres]
 Let $\Theta$ be a $n \times n$ skew symmetric matrix. Consider $\T^n$ acts on itself via translations, the resulting deformed algebra $C^\infty(\T^n_\Theta)$ is called a noncommutative $n$-torus. \par
To construct noncommutative spheres, we consider the rotation action of $\T^n$ on $\R^{2n}$ (resp. $\R^{2n+1}$):
\begin{align*}
 t = (t_1, \dots, t_n) \mapsto \brac{
 \begin{matrix}
 e^{2 \pi i t_1} &&\\ &\ddots&  \\ &&e^{2 \pi i t_n}
 \end{matrix}}, \,\,\, 
 %\text{resp.} \,\,\,
%  t = (t_1, \dots, t_n) \mapsto \brac{
%  \begin{matrix}
%  e^{2 \pi i t_1} && &\\ &\ddots& & \\ &&e^{2 \pi i t_n}\\ &&& 1
% \end{matrix}}. 
 \end{align*}
 resp.
 \begin{align*}
   t = (t_1, \dots, t_n) \mapsto \brac{
 \begin{matrix}
 e^{2 \pi i t_1} && &\\ &\ddots& & \\ &&e^{2 \pi i t_n}\\ &&& 1
\end{matrix}}.  
 \end{align*}
  The induced action on $S^{2n-1}$ (resp. $S^{2n}$) gives rise to the noncommutative sphere $C^\infty(S^{2n-1}_\Theta)$ (resp.   $C^\infty(S^{2n}_\Theta)$).
 
\end{eg}

% It is worth to remark that, to deform of $C^\infty(M)$ does not depend on the choice of the Riemannian metric $g$, as long as it is $\T^n$-invariant.  In fact, the deformed algebra $C^\infty(M_\Theta)$ reflects only the smooth structure of the underlying "space".

 The complete geometric space of $M_\Theta$ is given by a spectral triple $(C^\infty(M_\Theta), \mathcal H, D)$ (or a twisted version, cf. \cite[Sec. 1]{MR3194491}). For example, if we assume that the toric Riemannian manifold $M$ is moreover a toric spin manifold with the spinor bundle $\slashed S$ and the Dirac operator $\slashed D$, then  $(C^\infty(M_\Theta), L^2(\slashed S), \slashed D)$ is indeed a spectral triple. Moreover it satisfies all axioms of a noncommutative spin geometry
proposed by Connes (for instance, in \cite{MR1441908}). We shall not touch the detail construction of the spectral triple and examination of those axioms in this paper, 
and refer to \cite{MR1846904}, \cite{MR1937657}, \cite{2010LMaPh..94..263Y}.\par  

%Instead of a spectral triple, the metric data of the noncommutative manifold $M_\Theta$ is given by the Hilbert space $L^2(M)$ whose inner product is built on the measure on $M$ defined by the Riemmanian metric. For the replacement  of the Dirac type operator, we have the scalar Laplacian operator $\Delta$. In order to describe the symbol calculus for pseudo differential operators,  we will deform the Levi-Civita connection $\nabla$ and the underlying tensor calculus. 
%

%\subsection{The Levi-Civita connection on $M_\Theta$}
\subsection{Deformation of tensor calculus}

For a diffeomorphism $\varphi: M\rightarrow M$, one can lift it to the tesnor bundle over $M$ by its differential $d\varphi$ and  the dual $(d\varphi)^*$:
\begin{align}
    d\varphi_x:T_xM\rightarrow T_{\varphi(x)} M,\,\,\,
    (d\varphi_{x})^*: T_{\varphi(x)}^* M \rightarrow T_x^*M, \,\,\, \forall x\in M.
    \label{eq:def-riegeo-dvarphi}
\end{align}

For vector fields $X$ and one forms $\omega$ we have the similar pull-back action:
\begin{align}
    U_\varphi(X)|_p \defeq d\varphi_{\varphi^{-1}(x)} (X|_{\varphi^{-1}(p)}),\,\,\,\,
    U_\varphi(\omega)|_p \defeq (d\varphi_x^{-1})^* \omega|_{\varphi^{-1}(p)},
    \label{eq:def-riegeo-pullback-tensors}
\end{align}
where $p\in M$. We verify that the contraction is equivariant, indeed,
\begin{align*}
    U_\varphi(X) U_\varphi(\omega)|_p& = \omega|_{\varphi^{-1}(p)}\cdot \brac{
        d\varphi^{-1}\circ d\varphi(X|_{\varphi^{-1}(p)}) 
    } \\
    &=\omega|_{\varphi^{-1}(p)} \cdot X|_{\varphi^{-1}(p)} \\
    &= U_\varphi(\omega \cdot X)|_{p}.
\end{align*}
One extends $U_\varphi$ to all tensor fields $\Gamma(\mathcal T M)$ in a natural way such that the tensor product and the contraction are equivariant. In particular, if  $M$ is a toric Riemannian manifold and  $\mathcal T M$ be the tensor bundle of all ranks over $M$,
thus any tensor field $s$ admits a isotypical decomposition $s = \sum_{r\in\Z^n} s_r$.  If we treat the tensor product $\otimes$ and the contraction $\cdot$ as  generalized versions (to all tensor fields) of the multiplication between functions, then \eqref{eq:deformed-product} gives rise to a deformed version: $\otimes_\Theta$ and $\cdot_\Theta$ respectively.  \par

The essential component of the tensor calculus is a connection $\nabla$ with the Leibniz property (see prop. \ref{prop:def_riegeo-Leibnitz-rule}). For this purpose, we required that the torus acts as affine transformations on $M$. Such  transformations $\varphi$ is define by the property of  preserving linear connections. For example,  let $\nabla$ be a linear connection and $\varphi$ be
a affine transformation on $M$, for any vector
field  $X$ and $Y$, we have $\nabla_X Y  = \nabla_{U_\varphi(X)} U_\varphi(Y)$.  Rewrite this according to the notations above: $U_\varphi( \nabla Y) = \nabla U_\varphi(Y)$. In general, we have the following lemma.

% Let $\nabla$ be the Levi-Civita connection with respect to the given metric. Since the torus acting as isometries, we can expect the $\T^n$-equivariant property given in the commutative diagram:
%     \begin{align*}
%         \xymatrix{ \Gamma(\mathcal T^r_s M) \ar[r]^-{\nabla^j}\ar[d]_{U_\varphi} & \Gamma( (T^*M)^{\otimes^j}  \otimes \mathcal T^r_s M) \\
% \Gamma(\mathcal T^r_s M) \ar[r]^-{\nabla^j} & \ar[u]_{U_\varphi^{-1}}\Gamma((T^*M)^{\otimes^j}\otimes \mathcal T^r_s M)
% , }
%     \end{align*}
% here $U_\varphi$ is the pull-back action with respect to an isometry $\varphi$ on $M$. To be precise, we state it as a lemma. 
% 
% As the crucial consequence, the Leibniz's property holds in the deformed setting. To obtain the commutative diagram above, it is suffice to prove the case in which $j=1$:
% The main result of this section is proposition \ref{prop:def_riegeo-Leibnitz-rule}, the Leibniz property of the Levi-Civita connection $\nabla$ with respect to the deformed tensor product and contraction. \par
% 
%In fact,  the Leibniz property of $\nabla$ is equivalent to the $\T^n$-equivariant property demonstrated in the following 
\begin{lem}
	\label{lem:def-riegeo-equiv-nabla}
	 Given an affine transformation $\varphi: M \rightarrow M$ with the pull-back action $U_\varphi$, for any  tensor field $s \in \Gamma(\mathcal T M)$,
  we have:
\begin{align}
 U_\varphi( \nabla s) = \nabla U_\varphi(s). 
\end{align}
\end{lem}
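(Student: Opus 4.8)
The plan is to reduce the statement for an arbitrary tensor field to the two generating cases---functions and vector fields---and then propagate it through tensor products and contractions by naturality. First I would recall the defining property of an affine transformation: $\varphi$ is affine precisely when $U_\varphi(\nabla_X Y) = \nabla_{U_\varphi X}(U_\varphi Y)$ for all vector fields $X,Y$, which is exactly the displayed identity in the case $s = Y \in \Gamma(TM)$ (after absorbing the direction $X$, as in the discussion preceding the lemma). For $s = f \in C^\infty(M)$ the claim is just the equivariance of the exterior derivative, $U_\varphi(df) = d(U_\varphi f)$, which follows directly from $(d\varphi)^*$ being the dual of $d\varphi$ together with the chain rule applied to $f\circ\varphi^{-1}$.

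Next I would set up the induction on tensor rank. The key structural fact is that $\nabla$ on a general tensor field is determined by the Leibniz rule with respect to $\otimes$ and by commuting with contractions, and that $U_\varphi$ is, by construction, an algebra homomorphism for $\otimes$ and intertwines contractions (both facts are verified in the excerpt for the contraction and asserted for the extension to all of $\Gamma(\mathcal T M)$). So for a decomposable tensor $s = s_1 \otimes s_2$ with the claim known for $s_1$ and $s_2$, I would compute
\begin{align*}
U_\varphi(\nabla s) &= U_\varphi(\nabla s_1 \otimes s_2 + s_1 \otimes \nabla s_2) \\
&= U_\varphi(\nabla s_1)\otimes U_\varphi(s_2) + U_\varphi(s_1)\otimes U_\varphi(\nabla s_2) \\
&= \nabla(U_\varphi s_1)\otimes U_\varphi(s_2) + U_\varphi(s_1)\otimes \nabla(U_\varphi s_2) = \nabla(U_\varphi s).
\end{align*}
Since every tensor field is locally a $C^\infty(M)$-linear combination of tensor products of vector fields and one-forms, and since one-forms are handled either directly (dual to the vector field case) or via contraction against vector fields, the two base cases plus this Leibniz step cover everything; the $C^\infty$-linearity is absorbed using the function case and the Leibniz rule again, noting $U_\varphi(fs) = U_\varphi(f)U_\varphi(s)$.

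I expect the only genuine subtlety---rather than an outright obstacle---to be bookkeeping: making sure that the pull-back action on one-forms as defined in \eqref{eq:def-riegeo-pullback-tensors} really is compatible with the covariant derivative, which one can check either by the contraction identity $U_\varphi(\omega)\cdot U_\varphi(X) = U_\varphi(\omega\cdot X)$ together with $\nabla$ commuting with contractions, or by noting that an affine map preserves the connection on $T^*M$ dual to the one it preserves on $TM$. A cleaner route, which I would likely adopt to keep the argument short, is coordinate-based: pick normal coordinates or just any local frame, write $\nabla$ in terms of Christoffel symbols, and use that an affine transformation pulls the Christoffel symbols of the connection back to themselves (in adapted coordinates), so that the component formula for $\nabla s$ is manifestly equivariant. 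Either way the proof is essentially a one-line reduction to the definition of ``affine'' plus the already-established equivariance of $\otimes$ and contraction.
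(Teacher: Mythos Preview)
Your proposal is correct and follows the standard argument: reduce to the base cases of functions and vector fields (the latter being the very definition of affine), then propagate to all tensor fields via the Leibniz rule and naturality under contraction. There is nothing to compare against here, since the paper does not give a proof at all---it simply refers the reader to the textbooks \cite{opac-b1085301} and \cite{MR1393940}, which contain essentially the same argument you sketch.
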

The proof is skipped here. See  section 4 of chapter 1 of \cite{opac-b1085301} for a short exploration, the classical textbook on this subject is  \cite{MR1393940}.

A straightforward consequence of lemma \ref{lem:def-riegeo-equiv-nabla} is that the covariant derivative $\nabla$ preserves the isotypic decomposition, namely, for any tensor field  with its  isotypic decomposition $s = \sum_{r\in\Z^n} s_r$, we have:
\begin{align}
\label{eq:def_riegeo-isodec-connection}
  (\nabla^j s)_r = \nabla^j (s)_r. 
\end{align}
 
Now we are ready to prove the  Leibniz property. 
\begin{prop}[The Leibniz property]
    \label{prop:def_riegeo-Leibnitz-rule}
    Given two tensor fields on $M$, $s_1 \in \Gamma(\mathcal T^r_s M)$ and $s_2 \in \Gamma(\mathcal T^{r'}_{s'} M)$, we have 
    \begin{align}
        \nabla (s_1 \otimes_\Theta s_2) = \nabla s_1 \otimes_\Theta  s_2 + s_1 \otimes_\Theta \nabla s_2.
        \label{eq:mod-cur-productrule-tensors}
    \end{align}
    For the contraction map $\times_\Theta$, given a vector field $X$ and a one-form $\omega$, we get
     \begin{align}
        d (X \cdot_\Theta \omega) = (\nabla X) \cdot_\Theta\omega + X \cdot_\Theta \nabla \omega.
        \label{eq:mod-cur-productrule-contraction}
    \end{align}
\end{prop}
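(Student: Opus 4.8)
The plan is to reduce \eqref{eq:mod-cur-productrule-tensors} and \eqref{eq:mod-cur-productrule-contraction} to the case of homogeneous tensor fields, where the deformed operations $\otimes_\Theta$ and $\cdot_\Theta$ differ from the undeformed $\otimes$ and $\cdot$ only by a scalar factor, and then invoke the ordinary Leibniz rule for the connection. First I would assemble three ingredients. \emph{(i)} By Proposition~\ref{prop:deform-frech-algebr-smoothness}, every tensor field is the absolutely convergent sum of its isotypical components, so finite sums of homogeneous tensor fields form a dense subspace of $\Gamma(\mathcal T M)$. \emph{(ii)} The tensor product and the contraction are jointly continuous (the analogue of \eqref{eq:jointly-continuity} for section spaces), and since the bicharacter $\chi_\Theta$ takes values of modulus one, the deformed operations $\otimes_\Theta$ and $\cdot_\Theta$ are jointly continuous as well, exactly as in the remark following Definition~\ref{defn:defomred-alg}; moreover $\nabla$, being a first-order differential operator on the compact manifold $M$, is continuous on $\Gamma(\mathcal T M)$. \emph{(iii)} By \eqref{eq:def_riegeo-isodec-connection} (a consequence of Lemma~\ref{lem:def-riegeo-equiv-nabla}), $\nabla$ preserves the $\Z^n$-grading: if $s$ is homogeneous of weight $r$, then so is $\nabla s$.

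Granting these, both sides of \eqref{eq:mod-cur-productrule-tensors} are continuous bilinear maps in $(s_1,s_2)$, so by \emph{(i)} it suffices to prove the identity when $s_1$ and $s_2$ are homogeneous, say of weights $r$ and $l$. In that case the definition of the deformed product (the tensor-valued analogue of \eqref{eq:deformed-product}) gives $s_1\otimes_\Theta s_2=\chi_\Theta(r,l)\,s_1\otimes s_2$, and the ordinary Leibniz rule for the connection yields
\[
\nabla(s_1\otimes_\Theta s_2)=\chi_\Theta(r,l)\bigl(\nabla s_1\otimes s_2+s_1\otimes\nabla s_2\bigr).
\]
By \emph{(iii)}, $\nabla s_1$ is homogeneous of weight $r$ and $\nabla s_2$ of weight $l$, hence $\nabla s_1\otimes_\Theta s_2=\chi_\Theta(r,l)\,\nabla s_1\otimes s_2$ and $s_1\otimes_\Theta\nabla s_2=\chi_\Theta(r,l)\,s_1\otimes\nabla s_2$; adding these recovers the displayed right-hand side, which proves \eqref{eq:mod-cur-productrule-tensors}. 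The contraction identity \eqref{eq:mod-cur-productrule-contraction} is handled identically: the contraction of a weight-$r$ vector field against a weight-$l$ one-form is a weight-$(r+l)$ function, so $X\cdot_\Theta\omega=\chi_\Theta(r,l)\,X\cdot\omega$ on homogeneous elements, and one uses the classical compatibility $d(X\cdot\omega)=(\nabla X)\cdot\omega+X\cdot\nabla\omega$ together with the fact that $d$ on functions also preserves the grading.

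The one step I expect to need care in writing is the reduction to homogeneous sections built into \emph{(i)}--\emph{(ii)}: one must verify that $(s_1,s_2)\mapsto\nabla(s_1\otimes_\Theta s_2)$ and $(s_1,s_2)\mapsto\nabla s_1\otimes_\Theta s_2+s_1\otimes_\Theta\nabla s_2$ are genuinely continuous bilinear maps, so that their agreement on the product of a dense set forces agreement everywhere. This rests on the rapid decay of isotypical components from Proposition~\ref{prop:deform-frech-algebr-smoothness} together with the joint continuity of $\otimes$, $\cdot$ and the continuity of $\nabla$, and is the same mechanism that legitimizes the interchange-of-summation steps in the proofs of Propositions~\ref{prop:deformation-along-tn-associativity-theta-multi} and~\ref{prop:functorial-property}. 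Everything else is the ordinary tensor calculus on $M$.
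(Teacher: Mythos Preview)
Your proposal is correct and follows essentially the same approach as the paper: the paper computes directly with the full isotypical decomposition $s_1\otimes_\Theta s_2=\sum_{\mu,\nu}\chi_\Theta(\mu,\nu)(s_1)_\mu\otimes(s_2)_\nu$, pushes $\nabla$ through the sum by continuity, applies the ordinary Leibniz rule termwise, and then uses the equivariance $\nabla((s_i)_r)=(\nabla s_i)_r$ to reassemble the right-hand side. Your reduction to a single homogeneous pair via density is simply the same computation phrased one summand at a time, with the continuity/density step made explicit rather than implicit in the interchange of $\nabla$ with the infinite sum.
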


\begin{proof}
    The proof of \eqref{eq:mod-cur-productrule-tensors} and \eqref{eq:mod-cur-productrule-contraction} are almost identical, thus we only show the first one. \par
    \begin{align*}
        \nabla (s_1 \otimes_\Theta s_2)& =\nabla\brac{ 
            \sum_{\mu,\nu \in Z^n}\chi_\Theta(\mu,\nu) (s_1)_\mu \otimes (s_2)_\nu
        } \\
        &=   \sum_{\mu,\nu \in Z^n}\chi_\Theta(\mu,\nu)\brac{ \nabla (s_1)_\mu \otimes (s_2)_\nu
        + (s_1)_\mu \otimes \nabla (s_1)_\mu}, \\
        &=    \sum_{\mu,\nu \in Z^n}\chi_\Theta(\mu,\nu) (\nabla s_{1})_{\mu}\otimes (s_2)_\nu
        + \sum_{\mu,\nu \in Z^n}\chi_\Theta(\mu,\nu)  (s_{1})_{\mu} \otimes (\nabla s_{2})_{\nu}\\
         &= \nabla s_1 \otimes_\Theta  s_2 + s_1 \otimes_\Theta \nabla s_2.
    \end{align*}
   The crucial step is from the second equal sign to the third one, which requires the $\T^{n}$-equivariant property \eqref{eq:def_riegeo-isodec-connection} for the connection. Also we can switch $\nabla$ and the summation $\sum_{\mu,\nu \in Z^n}$ for $\nabla$ is a continuous map with respect to the smooth Fr\'echet topologies.
%    apply equation \eqref{eq:def_riegeo-isodec-connection}, we continue: 
%    \begin{align*}
%                \nabla (s_1 \otimes_\Theta s_2)& = 
%                \sum_{\mu,\nu \in Z^n}\chi_\Theta(\mu,\nu) \brac{
%                  (\nabla s_1)_\mu \otimes (s_2)_\nu
%              + (s_1)_\mu \otimes  (\nabla s_1)_\mu}\\
%        &= \nabla s_1 \otimes_\Theta  s_2 + s_1 \otimes_\Theta \nabla s_2.
%    \end{align*}
    %Here $\chi_\Theta(\mu,\nu) = e^{2\pi i\abrac{\mu, \Theta \nu}}$.
      
\end{proof}

\subsection{Lifting the tensor calculus to the cotangent bundle $T^*M$}
\label{subsec:lift-tensor-cal}
The symbol calculus of pseudo differential operator involves not only the smooth functions but also the whole pull-back tensor fields (smooth sections of pull-back tensor bundles)  on the cotangent bundle $T^*M$: 
%By "pull-back tensor bundles", we mean that the bundles are obtained by pulling back tensor bundles over $M$ via the natural projection $\pi:T^*M \rightarrow M$:
\begin{align}
\label{eq:def-riegeo-puall-back-tensors-defn}
	\mathcal B^r_s M \defeq \pi^* \mathcal T^r_s M \subset \mathcal T^r_z (T^*M),
\end{align}
where $\pi:T^*M \rightarrow M$ is the natural projection, and $r$, $s$ are the contravariant and the covariant rank respectively as before, $\mathcal B M$ denotes the collection of tensor fields of all ranks. From analytical point of view, the only difference between sections of $\mathcal B^r_s M$ and ordinary  tensor fields on $M$ (sections of $\mathcal T^r_s M)$) is that the base point coordinates of the former tensor fields  depend on $(x,\xi)$. \par

As in the previous section, we assume that $\varphi: M\rightarrow M$ is an affine transformation. Keep in mind that $d\varphi_x:T_xM \rightarrow T_{\varphi(x)}M$ for all $x\in M$. The ``inverse dual'' gives rise a lift of $\varphi$ to $T^*M$, denoted by $\varphi^*$
\begin{align}
    \label{eq:def-riegeo-lift-diff-to-cotangent}
 \varphi^*:\xi_x \in T_x^*M \mapsto (d\varphi^*_x)^{-1}(\xi_x) 
 =(d\varphi^{-1}_x)^*(\xi_x) \in T^*_{\varphi(x)}M,
\end{align}
for all $x \in M$. %It commutes with the projection map $\pi:T^*M\rightarrow M$.
For a  function $f\in C^\infty(T^*M)$  on $T^*M$, we define  $U_\varphi(f)(\xi_x) \defeq f((\varphi^*)^{-1}(\xi_x))$ and   similar to  \eqref{eq:def-riegeo-pullback-tensors}, we extend $U_\varphi$ to pull-back tensor fields:
\begin{align}
    U_\varphi(X)(\xi_x) \defeq d\varphi_{\varphi^{-1}x} \brac{X\Big|_{(\varphi^{-1})^*(\xi_x)}}, \,\,\,
U_\varphi(\omega) (\xi_x) \defeq  (d\varphi_x^{-1})^* \brac{
\omega|_{(\varphi^{-1})^* (\xi_x)} 
},
\label{eq:def-riegeo-U_f-tensorfields}
\end{align}
where $X$ is a pull back vector field, hence the evaluation $X|_{(\varphi^{-1})^*(\xi_x)}$ belongs to $T_{\varphi^{-1}x}M$, thus 
\begin{align*}
    d\varphi_{\varphi^{-1}x} \brac{
    X|_{(\varphi^{-1})^*(\xi_x)}
    } \in T_x^*M
\end{align*}
as expected. Similar explanation for the one form $\omega$. As a consequence, we can quickly verify that $U_\varphi(X) U_\varphi(\omega) = U_\varphi(X\cdot \omega)$, which means the natural pairing is equivariant. We extend $U_\varphi$ to  pull-back tensor fields of all ranks in such a way as before  that the pointwise tensor product and contraction are both equivariant. As a result, the deformed tensor product and contraction $\otimes_\Theta$ and $\cdot_\Theta$ can be
extended to all pull-back tensor fields. \par

Since we are on the cotangent bundle, the horizontal differential (along the fibers) and the vertical differential which is similar to the connection on the underlying manifold, appear naturally. Before that, we have to introduce another key ingredient for the calculus. \par
Consider a pseudo differential operator $P$ acting on $C_{c}^\infty(\R^n)$ with  symbol $p(x,\xi) \in C^\infty(\R^n,\R^n)$:
\begin{align}
    (P f)(x) = \int_{\R^n}e^{-i \xi\cdot (x -y)} p(x,\xi)f(y)dyd\eta
    ,\,\,\,  \forall f \in C_{c}^\infty(\R^n). 
\end{align}
The function $l(x,\xi,y) = \xi \cdot (y-x)$ plays a significant role in the quantization map above. Its generalization to manifolds is a smooth function $\ell(\xi_x,y) \in C^\infty(T^*M\times M)$. The linearity in $\xi$ becomes the linearity of $\ell$ on each fiber of $T^*M$, but, the linearity in $x$ has no straightforward analogy. However, when the manifold $M$ is equipped with a connection $\nabla$ (on the cotangent bundle of $M$), the linearity in $x$ can be
described as the vanishing of higher order ($\geq 2$) symmetrized covariant derivatives (along the $y$ variable) $\symd^k \ell(\xi_x,y)$ at $x = y$ for any $k\ge 2$.  \par

Another motivation is related to H\"omander's perspective on pseudo differential operators in \cite{MR0180740}, which is well-explained in \cite{MR2627820}.

\begin{defn}\label{defn:def_riegeo_defn-linearfun}
    Let $M$ be a smooth manifold with a connection $\nabla$ (on the cotangent bundle). Let $\symd^j \defeq \op{Sym}\circ\nabla^{j}$ 
    be the $j$-th symmetrized covariant derivative. A phase function with respect to a given connection is a real-valued smooth function $\ell(\xi_x,y)\in C^\infty(T^*M\times M)$ such that for fixed base point $x$, $\ell$ is linear in $\xi_{x} \in T^*_{x}M$ and such that for all $\xi_{x}$, the symmetrized covariant derivatives (along $y$) satisfies:
    \begin{align}
        \symd^j \ell(\xi_x,y) |_{y=x} = 
        \begin{cases}
            \xi_x, & j=1, \\ 0, & j\neq 1.
        \end{cases}
        \label{eq:def_riegoe-defn-linearfun}
    \end{align}
\end{defn}
The existence of such functions was proved in \cite[Proposition 2.1]{MR560744}. Phase functions defined by  \eqref{eq:def_riegoe-defn-linearfun} are, by no means unique.  Geometrical, $\ell$ can be constructed locally using the exponential map associated to  the given connection: $\ell(\xi_x,y) = \abrac{\xi_x, \exp^{-1}_x y}$, where $\exp$ is the exponential map associated to  the connection $\nabla$ (cf.\cite{MR2627820} and \cite{MR973171}).  Observe that the property
\eqref{eq:def_riegoe-defn-linearfun} is invariant under affine transformations. Namely, for an affine transformation $\varphi$ on $M$, we define the action on $C^\infty(T^*M\times M)$ in the following way to make things equivariant:
\begin{align}
    U_\varphi(\ell)(\xi_x,y) = \ell ((\varphi^{-1})^* \xi_x, \varphi^{-1} y),
    \,\,\, \ell \in C^\infty(T^*M\times M). 
   % \label{eq:def_riegeo-pullback-linearization}
\end{align}
Follows from lemma \ref{lem:def-riegeo-equiv-nabla}, $U_\varphi(\nabla^j \ell) = \nabla^j U_\varphi( \ell)$. In particular, if $\ell(\xi_x,y)$ satisfies \eqref{eq:def_riegoe-defn-linearfun}, so does $U_\varphi(\ell)$. Therefore when the torus acting as affine transformations $\T^n\subset \op{Affine}(M)$, we start with any phase function $\tilde \ell$, the average over $\T^n$
    \begin{align}
       \ell \defeq \int_{\T^n} U_t(\tilde \ell) dt
        \label{eq:def_riegeo-inv-l}
    \end{align}
is $\T^n$-invariant. Hence from now on, we simply assume that $\ell$ is invariant under the torus action. \par

Two important consequences follows from the invariant property. 
\begin{enumerate}
    \item The mixed derivatives are a family of invariant tensor fields on $T^*M$
\begin{align*}
    D^j\nabla^i\ell(\xi_x,x) \defeq D^j\nabla_y^i\ell(\xi_x,y) |_{y=x}.
\end{align*}
Therefore after deformation, they become center elements (commute with everything else) in the deformed tensor calculus.
\item In particular, $d\ell = \nabla \ell$ is invariant. Follows from \eqref{eq:def-riegeo-U_f-tensorfields} and \eqref{eq:def-riegeo-lift-diff-to-cotangent}, the pointwise version can be written as 
    \begin{align}
        \label{eq:def_riegeo-inv-l-pointwise}
        d\ell((\varphi^{-1})^*\xi_x,\varphi^{-1} y) = (\varphi^{-1})^* d\ell(\xi_x, y). 
    \end{align}
    The left hand side  is a covector in $T_{\varphi^{-1} y}^*M$, meanwhile, in the right hand side, we have $d\ell(\xi_x, y) \in T^*_yM$ and 
$(\varphi^{-1})^*: T^*_yM \rightarrow T_{\varphi^{-1} y}^*M$. 
\end{enumerate}

We are ready to define the horizontal and the vertical differential on pull-back tensor fields on $T^*M$ that play the role of $\partial^\alpha_x$ and $\partial^\beta_\xi$ respectively in equation \eqref{eq:def_riegeo-defn-sym-estimate}.

\begin{defn}
\label{defn:def_riegeo-vertical-D}
	The vertical derivative $D$ is the differential along the fibers of on $T^*M$. For any $x \in M$, $p\in \mathcal B M$ a pull-back tensor field, the $j$-th derivative $D^j p$ evaluate at point $\xi_x\in T^*_xM$ gives rise to a $j$-linear function on $T^*_xM$, thus $D^jp$ is a contravariant $j$-tensor. The precise definition is given as follows: for an integer $j\ge 1$, $D^j: C^\infty(T^*M) \rightarrow \Gamma(B^j_0 M)$
	\begin{align}
	\begin{split}
	\,\,&(D^j p)|_{\xi_x}\cdot (\omega_1 \otimes \cdots\otimes \omega_j)\\
	=& \,\, \frac{d}{ds_1}\Big|_{s_1=0}\dots\frac{d}{ds_j}\Big|_{s_j=0}
	p\brac{\xi_x + s_1\omega_1 + \cdots + s_j \omega_j
	},
	\end{split}
		\label{eq:def_riegeo-vertical-der}
	\end{align}
	where $p \in \mathcal B M$, $\omega_1, \dots, \omega_j \in T^*_x M$.
	
\end{defn}

    The vertical differential $D$ is $\T^n$-equivariant: 
\begin{prop}
        \label{prop:def_riegeo-equiv-vertical-der}
    The vertical differential $D$ is equivariant with respect to diffeomorphisms of $M$. Namely, let $\varphi: M \rightarrow M$ be a diffeomorphism, $U_\varphi$ is the induced pull-back action, then 
    \begin{align}
        D^j U_\varphi(p) = U_\varphi \brac{(D^j p)
        }
        \label{eq:def_riegeo-equiv-vertical-der}
    \end{align}
\end{prop}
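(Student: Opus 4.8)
The plan is to verify the equivariance identity \eqref{eq:def_riegeo-equiv-vertical-der} directly from the defining formula \eqref{eq:def_riegeo-vertical-der} for the vertical derivative, by unwinding both sides on an arbitrary test input $\omega_1\otimes\cdots\otimes\omega_j$ at a point $\xi_x\in T^*_xM$ and checking they agree. The essential observation is that $D^j$ is a fibrewise construction: at each base point $x$ it only sees the restriction of $p$ to the single fibre $T^*_xM$, where it is literally an ordinary $j$-th directional derivative of a function on the vector space $T^*_xM$. So the proof is really the statement that directional derivatives transform covariantly under linear isomorphisms of the fibre, combined with bookkeeping about how $U_\varphi$ acts on pull-back tensor fields.

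Concretely, I would first recall that for a diffeomorphism $\varphi:M\rightarrow M$, the lift to $T^*M$ in \eqref{eq:def-riegeo-lift-diff-to-cotangent} restricts on each fibre to the linear isomorphism $(d\varphi^{-1}_x)^*:T^*_xM\rightarrow T^*_{\varphi(x)}M$, and that on functions $U_\varphi(p)(\xi_x)=p\big((\varphi^*)^{-1}\xi_x\big)$ while on a pull-back $j$-contravariant tensor field $U_\varphi$ acts on the $i$-th slot by $d\varphi_{\varphi^{-1}x}$ (as in \eqref{eq:def-riegeo-U_f-tensorfields}, extended to all ranks). Then I would compute the left side of \eqref{eq:def_riegeo-equiv-vertical-der}: $(D^j U_\varphi(p))|_{\xi_x}\cdot(\omega_1\otimes\cdots\otimes\omega_j)$ equals $\partial_{s_1}\cdots\partial_{s_j}\big|_0\, U_\varphi(p)\big(\xi_x+\sum_i s_i\omega_i\big) = \partial_{s_1}\cdots\partial_{s_j}\big|_0\, p\big((\varphi^*)^{-1}\xi_x+\sum_i s_i (\varphi^*)^{-1}\omega_i\big)$, using linearity of $(\varphi^*)^{-1}=(d\varphi^{-1})^*$ restricted to the fibre. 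By the definition of $D^j p$ this is exactly $(D^j p)|_{(\varphi^*)^{-1}\xi_x}\cdot\big((\varphi^*)^{-1}\omega_1\otimes\cdots\otimes(\varphi^*)^{-1}\omega_j\big)$. On the other side, $U_\varphi\big((D^jp)\big)|_{\xi_x}\cdot(\omega_1\otimes\cdots\otimes\omega_j)$ unwinds, by the action of $U_\varphi$ on a pull-back contravariant $j$-tensor, to $(D^jp)|_{(\varphi^{-1})^*\xi_x}\cdot\big((d\varphi_{\varphi^{-1}x})^{\mathrm t}\omega_1\otimes\cdots\big)$, and one identifies $(d\varphi_{\varphi^{-1}x})^{\mathrm t}=(d\varphi^{-1}_x)^{*}$ on covectors, matching the previous expression.

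The only genuinely delicate point is getting the dualities and the location of base points consistent: there is real risk of an inverse or a transpose in the wrong place, since $U_\varphi$ on contravariant tensors pushes forward with $d\varphi$ while on the cotangent base variable $\xi_x$ it pulls back with $(d\varphi^{-1})^*$, and these must conspire correctly. I would handle this by being pedantic about which tangent or cotangent space each object lives in at each stage — writing $\xi_x\in T^*_xM$, $(\varphi^{-1})^*\xi_x\in T^*_{\varphi^{-1}x}M$, $\omega_i\in T^*_xM$ — and by invoking the already-established fact (the contraction computation just before \eqref{eq:def-riegeo-U_f-tensorfields}) that $U_\varphi(X)U_\varphi(\omega)=U_\varphi(X\cdot\omega)$, which encodes precisely the compatibility of the two dualities. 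One should also remark that both sides depend smoothly on $\xi_x$ and are manifestly multilinear and symmetric in the $\omega_i$, so it suffices to check the identity on decomposable inputs as above; and that the argument makes no use of a connection, so it holds for an arbitrary diffeomorphism, in particular for every $t\in\T^n$, which is what is needed to make $D$ compatible with the deformation $\pi^\Theta$ in the sequel.
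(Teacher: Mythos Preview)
Your proposal is correct and follows essentially the same approach as the paper: both unwind the two sides of \eqref{eq:def_riegeo-equiv-vertical-der} against test covectors at a point, reduce the left side via the linearity of the fibre map $(\varphi^{-1})^*$, and match against the right side using the definition of $U_\varphi$ on contravariant pull-back tensors. The paper carries this out only for $j=1$ and $p$ a function (computing the right side through the equivariance of contraction, $U_\varphi(Dp)\cdot\omega = U_\varphi(Dp\cdot U_{\varphi^{-1}}\omega)$, rather than by directly expanding the tensor action), and defers the general case; your write-up handles arbitrary $j$ in one pass, which is a mild improvement but not a different idea.
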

\begin{proof}
We only prove the case in which $p \in C^\infty(T^*M)$ is a smooth function and $j=1$. The general stituation can be handled in a similar way. \par
Let $\omega(\xi_x) = \omega(x)$ be a pull-back one form, where $\omega(x)$ is a one-form on $M$. We would like to show that $(D U_\varphi(p)) \cdot \omega = (U_\varphi(D p)) \cdot \omega$. For the left hand side, according to \eqref{eq:def_riegeo-vertical-der} and \eqref{eq:def-riegeo-lift-diff-to-cotangent}:
\begin{align*}
    \brac{(D U_\varphi(p)) \cdot \omega}\Big|_{\xi_x} 
   & = \frac{d}{ds}\Big|_{s=0} U_\varphi(p) \brac{
    \xi_x+ s\omega|_x
    }\\ 
    &= \frac{d}{ds}\Big|_{s=0} p\brac{
        (d\varphi^{-1})^*(\xi_x+ s\omega|_x)
    }.
\end{align*}
On the other hand,
\begin{align*}
    \brac{(U_\varphi(D p)) \cdot \omega}\Big|_{\xi_x}& = U_\varphi\brac{
        Dp \cdot U_{\varphi^{-1}}(\omega)
    }\Big|_{\xi_x}  = 
    \brac{
   Dp \cdot U_{\varphi^{-1}}(\omega) 
    }\Big|_{(d\varphi^{-1})^*\xi_x} \\
    &= \frac{d}{ds}\Big|_{s=0} p\brac{
    (d\varphi^{-1})^*\xi_x + U_{\varphi^{-1}}(s\omega) \Big|_{(d\varphi^{-1})^*\xi_x}
    }.
\end{align*}
From \eqref{eq:def-riegeo-lift-diff-to-cotangent}, we see that
\begin{align*}
   U_{\varphi^{-1}}(s\omega) \Big|_{(d\varphi^{-1})^*\xi_x} 
   =  (d\varphi^{-1})^* \brac{ s\omega\Big|_{d\varphi^* \circ (d\varphi^{-1})^*\xi_x}}
   =(d\varphi^{-1})^* (s\omega |_x).
\end{align*}
Therefore the proof is complete. 
%     Start with the right hand side, let $\omega_1, \dots, \omega_j \in T^*_x M$,
%     \begin{align*}
%         &  (D^j U_\varphi(p))|_{\xi_x}\cdot \omega_1 \otimes \cdots\otimes \omega_j\\
%         =&\,\,  \frac{d}{ds_1}\Big|_{s_1=0}\dots\frac{d}{ds_j}\Big|_{s_j=0}
% U_\varphi(p)\brac{\xi_x + s_1\omega_1 + \cdots + s_j \omega_j
%     }\\
%     =& \,\, \frac{d}{ds_1}\Big|_{s_1=0}\dots\frac{d}{ds_j}\Big|_{s_j=0}
%     p\brac{(\varphi^{-1})^*
%     \brac{ \xi_x + s_1\omega_1 + \cdots + s_j \omega_j
%     }
%     }
%     \end{align*}
%     while, the right hand side:
%     \begin{align*}
%         & U_\varphi\brac{ (D^j p)}\Big|_{\xi_x}\cdot \omega_1 \otimes \cdots\otimes \omega_j\\
%         =&\,\, (D^jp)\Big|_{(\varphi^{-1})^* \xi_x} \cdot (\varphi^{-1})^*\omega_1 \otimes \cdots\otimes (\varphi^{-1})^*\omega_j\\
%         =&\,\, \frac{d}{ds_1}\Big|_{s_1=0}\dots\frac{d}{ds_j}\Big|_{s_j=0}
%         p\brac{(\varphi^{-1})^* \xi_x+ s_1 (\varphi^{-1})^*\omega_1 \otimes \cdots\otimes s_j (\varphi^{-1})^*\omega_j
%         }
%     \end{align*}
%     Notice that $(\varphi^{-1})^*$ is linear on the fibers on $T^*M$, then the proof is complete.
\end{proof}

% $\partial_x$ in local coordinates $(x,\xi)$,  the horizontal covariant derivative affects both $x$ and $\xi$ directions. 
% Let $p(\xi_x) \in C^\infty(T^*M)$ be a smooth function on the cotangent bundle, to define the covariant derivative at $\xi_x \in T^*M$, we first extend the valule $p(\xi_x)$ to a small neighborhood of the base point $x$ by considering the following function in $y$: $p(d\ell(\xi_x,y)$.

Finding candidates for  horizontal derivative is not as straightforward as for the vertical one. As a price to pay for a coordinate free construction, the horizontal derivative involves both $x$ and $\xi$ in local coordinate. The two directions are linked by the phase function $\ell(\xi_x,y)$.\par

For a fixed $\xi_x \in T^*M$, the exterior derivative in $y \in M$: $d_y \ell (\xi_x,y)$ gives rise  a one form supported near by $x$, then 
for any smooth function $p \in C^\infty(T^*M)$, the evaluation $p(d\ell(\xi_x,y))$ produces  a smooth function in $y$, which extends the value $p(\xi_x)$ to a small neighborhood of $x$. Indeed, at $x=y$, $d\ell(\xi_x,y)|_{y=x} = \xi_x$. Hence the covariant derivatives $\nabla^{j}_y p(d\ell(\xi_x,y))$, $j=0,1,2,\dots$, make sense near by $x$.

% we define a one form $\omega|_{\xi_x}$ using $\ell$:
%  \begin{align*}
%  (\omega|_{\xi_x})(y) = d_y \ell (\xi_x,y)    
%  \end{align*}
% supported near by $x$. Let $p(\xi_{x}) \in C^{\infty}(T^{*}M)$ be a symbol of some pseudo differential operator, then
% $p(\omega|_{\xi_x}(y))$ is a function in $y$ supported near by $x$. 
 \begin{defn}
 	\label{defn:def_riegeo-horizontal-D}
 	Keep the notations as above. The $j$-th horizontal covariant derivative of a symbol $p$ is given by:
 	\begin{align}
 	(\nabla^{j} p)(\xi_x) = \nabla^{j}_y p(d\ell(\xi_x,y)) |_{y=x},
 	\,\,\,p(\xi_x) \in C^\infty(T^*M).
  	\label{eq:def_riegeo-horizontal-der}
 	\end{align}
 %, where $p(\xi_x)$ is a pull-back tensor field on $T^*M$. 
% 	$\nabla p(\xi_x)$  involves not only the $x$-variables but also the fiber part via the linearization function . 
%thus $p((\omega|_{\xi_x})(y)) \in C^\infty(M)$ supported near by $x$. We define $\nabla p$ at $\xi_x$ to be: 
 \end{defn}
 
\begin{rem} 
	\mbox{}
	\begin{enumerate}[1)]
	\item When evaluating at $y=x$, the value in the right hand side of \eqref{eq:def_riegeo-horizontal-der} does not depend on the choice of the phase function $\ell$ as long as the property \eqref{eq:def_riegoe-defn-linearfun} is fulfilled. 
	\item The vertical and horizontal derivatives $D$ and $\nabla$ can be extended to  all pull-back tensor fields (cf. \eqref{eq:def-riegeo-puall-back-tensors-defn}): $p(\xi_x) \in C^\infty(\mathcal B^r_s M)$, where $(r,s)$ is the rank.  
    \item The vertical $D$ and horizontal $\nabla$ derivatives commute with each other, thus the  \emph{mixed derivatives}  $D^j \nabla^l p(\xi_x)$ is well-defined. 
	\end{enumerate}
     
\end{rem}

%As an analogy of lemma \ref{lem:def_riegeo-compute-covder}, 
%\begin{lem}
%    \label{lem::def_riegeo-ver-coder-exp}
%    Keep the notations in lemma $\ref{lem:def_riegeo-equi-l-function}$.
%    Let $p \in C^\infty(T^*M)$, then the $j$-the vertical covariant derivative is given by 
%    \begin{align}
%       \begin{split}
%        & \,\, ( \nabla^j p )|_{\xi_x} \cdot (Y_1\otimes \cdots\otimes Y_j) \\
%       = & \,\, \frac{d}{ds_1}\Big|_{s_1=0}\dots\frac{d}{ds_j}\Big|_{s_j=0}
%       p\brac{(\omega|_{\xi_x})(\exp_x(s_1 Y_1 + \dots + s_j Y_j)
%       }
%       \end{split}
%        \label{eq:def_riegeo-ver-coder-exp}
%    \end{align}
%\end{lem}

Let us prove the equivariant property for the horizontal derivative $\nabla$.

%\begin{lem} \label{lem:def_riegeo-equi-l-function}
    Let $\varphi:M\rightarrow M$ be an affine transformation on $M$. According to \eqref{eq:def_riegeo-inv-l},  we  can assume that the phase function $\ell$ is $U_\varphi$-invariant:
\begin{align*}
\ell(\tilde\xi_{\tilde x},\varphi^{-1}(y)) = \ell(\xi_x,y), \,\,\,
\text{$y$ is near by $x$}. 
\end{align*}
It follows that $d\ell$ is $U_\varphi$-invariant as well: $U_\varphi(d\ell) (\xi_x, y)= d\ell (\xi_x, y)$. Pointwisely,
\begin{align}
    %\label{eq:def_riegeo-inv-l-pointwise}
    (d\varphi^{-1})^* \brac{
        d\ell \brac{ (d\varphi^{-1})^*\xi_x, \varphi^{-1} y} } = d\ell (\xi_x, y).
\end{align}
   %     and $\ell(\xi_x,y)$ is a $U_\varphi$-invariant phase function in the sense of \eqref{eq:def_riegeo-pullback-linearization}.   For each $\xi_{x}$, denote by $(\omega\Big|_{\xi_x})(y) = d_y \ell(\xi_x,y)$, the one-form supported near by the point $x \in M$, where $d_y$ is the exterior derivative on the $y$-variable. 
%     \begin{align}
%         (\varphi^{-1})^* \brac{(\omega\Big|_{\xi_x})(y)
%     } = (\omega\Big|_{(\varphi^{-1})^* \xi_x})(\varphi^{-1}(y)) 
%         \label{eq:def_riegeo-d-lin-fun}
%     \end{align}
%\end{lem}
\begin{proof}
 The one form $d\ell (\xi_x, y)$ can be treated as a pull-back one form via the projection $\op{pr_2}:T^*M \times M \rightarrow M$, similar to \eqref{eq:def-riegeo-U_f-tensorfields}, the $U_\varphi$ action looks like:
\begin{align*}
    U_\varphi(d\ell) (\xi_x, y) \defeq   (d\varphi^{-1})^* \brac{
        d\ell \brac{ (d\varphi^{-1})^*\xi_x, \varphi^{-1} y} }.
\end{align*}
Let $Y$ be a vector field on $M$ supported near by a point $x$.  Denote $\tilde x = \varphi^{-1}(x)$, $\tilde y = \varphi^{-1}(y)$ and $\tilde\xi_{\tilde x} = (d\varphi^{-1})^*(\xi_x)$.
\begin{align*}
    \brac{U_\varphi(d\ell) \cdot Y} \Big|_{(\xi_x, y)} &=
    (d\ell)(\tilde\xi_{\tilde x},\varphi^{-1} y) \cdot d\varphi^{-1}(Y|_y)\\
    &=
    \frac{d}{ds} \Big|_{s=0}\ell \brac{\tilde\xi_{\tilde x}, \exp_{\tilde y} \brac{
    sd\varphi^{-1}(Y|_y)
} }.
\end{align*}
Since $\varphi$ is affine transformation, $\exp_{\tilde y} \brac{
    sd\varphi^{-1}(Y|_y)
} = \varphi^{-1} \brac{\exp_y(sY|_y)}$. We continue the calculation and use the invaraint property of $\ell$:
\begin{align*}
   \brac{U_\varphi(d\ell) \cdot Y} \Big|_{(\xi_x, y)} & = \frac{d}{ds} \Big|_{s=0}\ell \brac{
       \tilde\xi_{\tilde x}, \exp_{\tilde y} \brac{
    sd\varphi^{-1}(Y|_y)
} }\\
&= \frac{d}{ds} \Big|_{s=0}\ell \brac{
(d\varphi^{-1})^*(\xi_x), \varphi^{-1} \brac{\exp_y(sY|_y)}
} \\
&= \frac{d}{ds} \Big|_{s=0}\ell(\xi_x,\exp_y(sY|_y)) =( d\ell \cdot Y) \Big|_{(\xi_x, y)}.
\end{align*}
    
    \end{proof}

% \begin{proof}
% We first prove that for any $Y \in T^*_{\varphi^{-1} (y)}M$, where $y$ belongs to a small neighborhood of $x$:
% \begin{align}
%    ( \omega\Big|_{\xi_x})(y) \cdot \varphi_* Y = \omega\Big|_{(\varphi^{-1})^* \xi_x}(\varphi^{-1}y) \cdot Y
%     \label{eq:def_riegeo-eq1}
% \end{align}
% The right hand side, by definition, is equal to
% \begin{align*}
%     \frac{d}{dt}\Big|_{t=0}\ell( (\varphi^{-1})^*\xi_x,\exp_{\varphi^{-1}(y)}tY)
%     & = \frac{d}{dt}\Big|_{t=0}\ell\brac{\xi_x, \varphi(\exp_{\varphi^{-1}(y)}tY)
%     } \\
%     &= \frac{d}{dt}\Big|_{t=0} \ell (\xi_x, \exp_y t\varphi_* Y)\\
%     &= \omega(\xi_x)|_y \cdot \varphi_* Y
% \end{align*}
% 
% To finish the proof, 
% \begin{align*}
%     (\varphi^{-1})^* \brac{
%     (\omega\Big|_{\xi_x})(y)
% } \cdot Y =( \omega\Big|_{\xi_x})(y)\cdot \varphi_* Y
%     =  \omega\Big|_{(\varphi^{-1})^* \xi_x}(\varphi^{-1}y)\cdot Y.
% \end{align*}
% 
% \end{proof}

\begin{prop}\label{prop:def-riegeo-equi-horder}
    For any integer $j\ge1$, the $j$-th  vertical covariant derivative $\nabla^j$ is equivariant with respect to the group of affine transformations on $M$. Namely, for any  $\varphi \in \op{Affine}(M)$:
    \begin{align}
        U_\varphi\brac{\nabla^j p
        } = \nabla^j U_\varphi(p),
        \label{eq:def_riegeo-equi-vertical-der}
    \end{align}
    where $p\in \mathcal{B}^r_s M$ is a pull-back tensor field over $T^*M$ of rank $(r,s)$. 
\end{prop}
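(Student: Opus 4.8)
The plan is to reduce the asserted identity to a pointwise statement at a fixed covector $\xi_x \in T^*M$ and then to combine two facts already established: the affine invariance of the phase function, \eqref{eq:def_riegeo-inv-l-pointwise} (coming from the averaging \eqref{eq:def_riegeo-inv-l}), and the equivariance of the covariant derivative of $M$ under affine maps, Lemma \ref{lem:def-riegeo-equiv-nabla}. Since the operator $\nabla^j$ in \eqref{eq:def_riegeo-horizontal-der} is just the covariant derivative of $M$ iterated $j$ times, it is enough to handle $j=1$ and then iterate; and the argument will be visibly independent of the rank $(r,s)$, so I would phrase it for an arbitrary pull-back tensor field $p$.

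First I would fix $\varphi \in \op{Affine}(M)$ and $\xi_x \in T^*_xM$, and set $\tilde x = \varphi^{-1}(x)$, $\tilde\xi_{\tilde x} = (\varphi^{-1})^*\xi_x \in T^*_{\tilde x}M$. Unwinding Definition \ref{defn:def_riegeo-horizontal-D}, the value $(\nabla p)(\xi_x)$ is obtained by forming the local section $y\mapsto p(d\ell(\xi_x,y))$ of $\mathcal T^r_s M$ near $x$, applying $\nabla_y$, and evaluating at $y=x$; and by \eqref{eq:def-riegeo-U_f-tensorfields} the pull-back $U_\varphi$ of a pull-back tensor field is pointwise transport, by the suitable tensor power of $d\varphi$, of its value at the $(\varphi^{-1})^*$-image covector. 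The crux, then, is the identity of local tensor fields on a neighbourhood of $x$
\[
U_\varphi(p)\bigl(d\ell(\xi_x,y)\bigr) = U_\varphi\!\left(y'\mapsto p\bigl(d\ell(\tilde\xi_{\tilde x},y')\bigr)\right)\!\Big|_y ,
\]
where on the right $U_\varphi$ is the pull-back action on tensor fields of $M$ of \eqref{eq:def-riegeo-pullback-tensors}. To see this I would invoke the invariance of $\ell$: by \eqref{eq:def_riegeo-inv-l-pointwise}, for $y$ near $x$ the covector $d\ell(\xi_x,y)\in T^*_yM$ is precisely the image of $d\ell(\tilde\xi_{\tilde x},\varphi^{-1}y)$ under the cotangent lift \eqref{eq:def-riegeo-lift-diff-to-cotangent} of $\varphi$; substituting this into the defining formula \eqref{eq:def-riegeo-U_f-tensorfields} for $U_\varphi(p)$, the $d\varphi$-transport appearing there cancels the cotangent lift, and the displayed identity drops out.

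Then I would apply $\nabla_y$ to both sides of the displayed identity and use Lemma \ref{lem:def-riegeo-equiv-nabla} to turn the right-hand side into $U_\varphi\bigl(\nabla_{y'}[p(d\ell(\tilde\xi_{\tilde x},y'))]\bigr)|_y$, and finally set $y=x$ (so $y'=\tilde x$): the left-hand side becomes $(\nabla U_\varphi(p))(\xi_x)$ by Definition \ref{defn:def_riegeo-horizontal-D}, while the right-hand side becomes $U_\varphi(\nabla p)(\xi_x)$ by the tensor pull-back rule \eqref{eq:def-riegeo-pullback-tensors} together with \eqref{eq:def-riegeo-U_f-tensorfields}. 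Iterating $j$ times yields \eqref{eq:def_riegeo-equi-vertical-der}, and the same reasoning applies verbatim to the mixed derivatives $D^l\nabla^j$ once combined with Proposition \ref{prop:def_riegeo-equiv-vertical-der}.

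I expect the main obstacle to be purely bookkeeping: matching up the transport maps in \eqref{eq:def-riegeo-U_f-tensorfields} with the cotangent lift \eqref{eq:def-riegeo-lift-diff-to-cotangent} so that the cancellation in the second step is exact, and making sure that \eqref{eq:def_riegeo-inv-l-pointwise} is used as an identity of germs of one-forms near $x$ rather than merely at $y=x$ — which is legitimate precisely because the averaging \eqref{eq:def_riegeo-inv-l} renders $\ell$ genuinely $\T^n$-invariant, not invariant only to leading order. No curvature or Christoffel-symbol computation is required; all the work of commuting $\nabla$ past $U_\varphi$ is supplied in one stroke by Lemma \ref{lem:def-riegeo-equiv-nabla}.
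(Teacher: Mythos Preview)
Your proof is correct and follows essentially the same route as the paper: reduce to $j=1$, use the invariance of $d\ell$ from \eqref{eq:def_riegeo-inv-l-pointwise}, and then invoke the affine equivariance of $\nabla$. The only organizational difference is that the paper pairs against a pull-back vector field $Y$ and computes the directional derivative via the exponential map (using $\exp_{\varphi^{-1}x}\bigl(s\,d\varphi^{-1}(Y|_x)\bigr)=\varphi^{-1}\exp_x(sY|_x)$ directly), whereas you first establish the identity of local sections $y\mapsto U_\varphi(p)(d\ell(\xi_x,y))$ and $y\mapsto U_\varphi\bigl(p(d\ell(\tilde\xi_{\tilde x},\cdot))\bigr)(y)$ and then cite Lemma~\ref{lem:def-riegeo-equiv-nabla} as a black box; the two arguments are equivalent.
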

\begin{proof}
    We will verify the case when $p \in C^\infty(T^*M)$ is only a function and $j=1$. The gerneral cases can be work out in a similar way with the Leibniz property of the connection. We also assume that $\ell$ is $\varphi$ invariant so that \eqref{eq:def_riegeo-inv-l-pointwise} holds. \par
Let $Y$ be a pull-back vector field and we will identify $Y|_{\xi_x}$ with $Y|_x$ in the rest of the computation. We need to show that  
\begin{align*}
    U_\varphi\brac{\nabla p
        } \cdot Y = \nabla U_\varphi(p) \cdot Y. 
\end{align*}
Start with the left hand side,
\begin{align*}
    & \brac{\nabla U_\varphi(p) \cdot Y}\Big|_{\xi_x} \\
   =&\,\,\, \frac{d}{ds}\Big|_{s=0} U_\varphi(p) \brac{
    d\ell(\xi_x, \exp_x s Y|_x)
    } 
    =\frac{d}{ds}\Big|_{s=0} p \brac{
        (\varphi^{-1})^* \brac{
       d\ell(\xi_x, \exp_x s Y|_x) 
        }
    } \\
    =&\,\,\,
    \frac{d}{ds}\Big|_{s=0} p \brac{
        d\ell \brac{
            (\varphi^{-1})^* \xi_x, \varphi^{-1} \exp_x s Y|_x
        }
    }.
\end{align*}
On the other hand,
\begin{align*}
    \brac{ U_\varphi(\nabla p) \cdot Y} \Big|_{\xi_x}& =
    U_\varphi\brac{
        \nabla p \cdot U_{\varphi^{-1}}(Y)
    }\Big|_{\xi_x} = \brac{
        \nabla p \cdot U_{(\varphi^{-1}}(Y)
    }\Big|_{\varphi^{-1})^* \xi_x} \\
    &=\frac{d}{ds}\Big|_{s=0} p \brac{
   d\ell \brac{
       (\varphi^{-1})^* \xi_x,  \exp_{\varphi^{-1}x} s U_{\varphi^{-1}}(Y) |_{\varphi^{-1}x}
        } 
    }.
\end{align*}
To finish the proof, we just have to observe that $U_{\varphi^{-1}}(Y) |_{\varphi^{-1}x} = d\varphi^{-1}(Y|_x)$ and 
\begin{align*}
    \exp_{\varphi^{-1}x} sd\varphi^{-1}(Y|_x) = \varphi^{-1} \exp_x sY|_x, \,\,\, s\ge0,
\end{align*}
provided that $\varphi$ is an affine transformation. 

\end{proof}

Similar to tensor fields on $M$, we define the deformed contraction $\cdot_\Theta$ and tensor product $\otimes_\Theta$ between pull-back tensor fields and 
 the Leibniz rule below follows from the equivariant property proved above. 
\begin{prop}
	 \label{prop:def_riegeo-leibnitz-rul-v-j-der}.
 Given two pull-back tensor fields $s_1, s_2 \in \mathcal B M$, we have  
 \begin{align}
 \begin{split}
 \nabla (s_1 \otimes_\Theta s_2) & = \nabla s_1 \otimes_\Theta  s_2 + s_1 \otimes_\Theta \nabla s_2, \\
  D (s_1 \otimes_\Theta s_2) & = D s_1 \otimes_\Theta  s_2 + s_1 \otimes_\Theta D s_2.
 \end{split}
 \label{eq:def_riegeo-leibnitz-rul-v-j-der}.	
 \end{align}
 Same results holds for the deformed contraction $\cdot_\Theta$ . 
\end{prop}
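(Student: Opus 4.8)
The plan is to imitate the proof of Proposition \ref{prop:def_riegeo-Leibnitz-rule} verbatim, with the connection $\nabla$ on $\Gamma(\mathcal T M)$ replaced by the horizontal and vertical differentials $\nabla$, $D$ on $\Gamma(\mathcal B M)$, and with the equivariance statements of Propositions \ref{prop:def_riegeo-equiv-vertical-der} and \ref{prop:def-riegeo-equi-horder} playing the role of Lemma \ref{lem:def-riegeo-equiv-nabla}. Two inputs are needed. First, each of $D$ and $\nabla$ must satisfy the \emph{ordinary} (undeformed, $\Theta=0$) Leibniz rule for the pointwise tensor product $\otimes$ and contraction $\cdot$ of pull-back tensor fields. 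Second, each of $D$ and $\nabla$ is $\T^n$-equivariant, hence preserves the isotypical decomposition: $(D^j s)_r = D^j(s_r)$ and $(\nabla^j s)_r = \nabla^j(s_r)$ for all $r \in \Z^n$, exactly as in \eqref{eq:def_riegeo-isodec-connection}; this is immediate from Propositions \ref{prop:def_riegeo-equiv-vertical-der} and \ref{prop:def-riegeo-equi-horder} by averaging over $\T^n$, as in the passage following \eqref{eq:def_riegeo-isodec-connection}.

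For the undeformed Leibniz rule, the vertical differential $D$ is a fibrewise directional derivative, so for $p = p_1 \otimes p_2$ one has $p(\xi_x + \sum_i s_i\omega_i) = p_1(\xi_x + \sum_i s_i\omega_i) \otimes p_2(\xi_x + \sum_i s_i\omega_i)$, and the ordinary product rule for $\tfrac{d}{ds}$ applied to Definition \ref{defn:def_riegeo-vertical-D} gives $D(p_1 \otimes p_2) = Dp_1 \otimes p_2 + p_1 \otimes Dp_2$; the same holds for $\cdot$. For the horizontal differential, Definition \ref{defn:def_riegeo-horizontal-D} expresses $\nabla^j p$ through $\nabla^j_y\,p(d\ell(\xi_x,y))|_{y=x}$, and since $(p_1 \otimes p_2)(d\ell(\xi_x,y)) = p_1(d\ell(\xi_x,y)) \otimes p_2(d\ell(\xi_x,y))$ as tensor fields in $y$ near $x$, the Leibniz rule for the Levi-Civita connection $\nabla_y$ on $M$ yields $\nabla(p_1 \otimes p_2) = \nabla p_1 \otimes p_2 + p_1 \otimes \nabla p_2$ after restriction to $y=x$; likewise for $\cdot$.

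Granting these, the proof of \eqref{eq:def_riegeo-leibnitz-rul-v-j-der} is the same three-line computation as in Proposition \ref{prop:def_riegeo-Leibnitz-rule}: expand $s_1 \otimes_\Theta s_2 = \sum_{\mu,\nu \in \Z^n} \chi_\Theta(\mu,\nu)\,(s_1)_\mu \otimes (s_2)_\nu$ following \eqref{eq:deformed-product}; move $\nabla$ (resp. $D$) inside the sum, which is legitimate because $\nabla$ and $D$ are continuous for the smooth Fr\'echet topology and the isotypical components decay rapidly (Proposition \ref{prop:deform-frech-algebr-smoothness}); apply the undeformed Leibniz rule termwise; use equivariance to rewrite $\nabla (s_1)_\mu = (\nabla s_1)_\mu$ and $\nabla (s_2)_\nu = (\nabla s_2)_\nu$; and finally recognize the two resulting sums as $\nabla s_1 \otimes_\Theta s_2$ and $s_1 \otimes_\Theta \nabla s_2$. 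The statement for $\cdot_\Theta$ is identical, with the exterior derivative $d$ replacing $\nabla$ on the left-hand side as in \eqref{eq:mod-cur-productrule-contraction}.

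The only genuine point requiring care is the undeformed Leibniz rule for the horizontal derivative: one must be sure that the phase-function construction of $\nabla$, restricted to pull-back tensor fields, is honestly a connection obeying the product rule. This is where one uses that $d\ell = \nabla\ell$ is $\T^n$-invariant, so that it behaves as a central element of the deformed tensor calculus (as noted after \eqref{eq:def_riegeo-inv-l}), together with the fact that $p \mapsto p(d\ell(\xi_x,\cdot))$ is a ring homomorphism for pointwise products before one differentiates in $y$. Everything else — interchanging the derivative with the infinite isotypical sum, and tracking the bicharacter $\chi_\Theta$ — is routine and already appears in the proof of Proposition \ref{prop:def_riegeo-Leibnitz-rule}.
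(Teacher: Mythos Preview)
Your proposal is correct and follows exactly the route the paper intends: the paper does not write out a proof for this proposition, but states just before it that ``the Leibniz rule below follows from the equivariant property proved above,'' i.e., from Propositions \ref{prop:def_riegeo-equiv-vertical-der} and \ref{prop:def-riegeo-equi-horder}, together with the same computation as in Proposition \ref{prop:def_riegeo-Leibnitz-rule}. Your write-up supplies precisely the details the paper leaves implicit, including the undeformed Leibniz rules for $D$ and $\nabla$ on pull-back tensor fields.
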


% !TEX root =  main.tex

\section{ Pseudo differential operators on toric noncommutative manifolds}
\label{sec:pse-ops-non-mflds}

Let $M$ be a closed  manifold with a torus action $\T^n \subset \op{Diff}(M)$ as before.  We would like to apply the construction in section \ref{subsec:deform-oper} to $\Psi(M)$, the algebra of pseudo differential operators acting on $C^\infty(M)$. Namely, the algebra of pseudo differential operators on the noncommutative manifold $M_{\Theta}$ is just the image of $\Psi(M)$ under the deformation map 
 $\pi^\Theta$ (see definition \ref{defn:def-ops-pitheta}). We shall describe a Fr\'echet topology for pseudo differential operators so that the convergences of  the series in \eqref{eq:deformed-product} make sense. \par
%  Since half of the pseudo differential operators are unbounded, the convergence of the twisted convolution in equation \eqref{eq:deformed-product} needs to be re-investigate in the Fr\'echet topology discussed in the previous section. \par
%%Let $M$ be a toric Riemannian manifold (cf. definition \ref{defn:def-rie-eg-toricmflds}) with a $\T^n$ action. The main goal of this section is to extend the deformation map $\pi^\Theta$ (cf. \eqref{eq:deform-oper-pi-theta-2}) to the whole algebra of pseudo differential operators. The essential technical obstacle is the  convergence of the isotypical decomposition of a pseudo differential operator $P = \sum_{r\in\Z^n} P_r$ when $P$ is unbounded.   
%

Denote by $\Psi^{j}(M)$, $j\in\Z$, consists of pseudo differential operators whose symbol $p(x,\xi)$, when localized on some open chart, belongs to $S\Sigma^{j}(M)$ defined in \eqref{eq:def_riegeo-defn-sym-estimate}. As usual, we put
    \begin{align}
        \Psi(M) = \ucup{d\in\Z}{} \Psi^d(M), \,\,\, \Psi^{-\infty}(M) = \icap{d\in\Z}{} \Psi^d(M).
        \label{eq:pse-op-wholeops-smoothingops}
    \end{align}

In this paper, we only need pseudo differential operators of integer orders. The following characterization (called Beals-Cordes type in \cite{MR1371144}) of zero-order pseudo differential operators was proved in \cite{MR0435933} with a correction \cite{MR523608}, also in \cite{MR0535694,MR0474431}.
%proposition was stated in \cite{MR1371144}, but the proof goes back to \cite{MR0435933} and \cite{MR1314815}.

\begin{prop} \label{prop:char-pseu-op-zero-order} 
  Let $M$ be a closed manifold and $\Psi^{0}(M)$ be the
  space of all zero order pseudo differential  operators. Given a  operator $P:C^\infty(M)\rightarrow
C^\infty(M)$, then $P \in \Psi^{0}(M)$ if and only if for any finite
  collection of first order differential operators $\mathfrak F =
  \set{F_{1},\cdots,F_{l}}$, we have  
  \begin{align}
    \label{eq:char-pseu-op-zero-order}
    \op{ad}F_{l}\cdots \op{ad}F_{1}(P)\in B(L^{2}(M))
  \end{align}
where $\op{ad}F_{j}(P) = [F_{j},P]$, $1\leq j\leq l$.
\end{prop}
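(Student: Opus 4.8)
The plan is to prove both implications of the Beals--Cordes characterization. The easy direction is that a zero-order pseudo differential operator $P \in \Psi^0(M)$ satisfies the iterated boundedness condition \eqref{eq:char-pseu-op-zero-order}. For this I would observe that the commutator of a first order differential operator $F$ with a zero-order $\Psi$DO is again a zero-order $\Psi$DO: working in a local chart and using the standard symbol calculus, if $F$ has symbol $a_1(x,\xi)$ (polynomial of degree $\le 1$ in $\xi$) and $P$ has symbol $p \in S\Sigma^0$, then $[F,P]$ has symbol whose asymptotic expansion has leading term $\tfrac{1}{i}\{a_1,p\}$, and since $\partial_\xi a_1$ has order $0$ and $\partial_\xi p$ has order $-1$ while $\partial_x a_1 \cdot \partial_\xi p$ mixes an order $\le 1$ factor with an order $-1$ factor, the Poisson bracket lands in $S\Sigma^0$; one patches the local computations with a partition of unity as usual. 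Iterating, $\op{ad}F_l \cdots \op{ad}F_1(P) \in \Psi^0(M) \subset B(L^2(M))$ by the $L^2$-boundedness of zero order operators (Calder\'on--Vaillancourt).

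The substantive direction is the converse: if $P: C^\infty(M) \to C^\infty(M)$ has the property that all iterated commutators with first order differential operators extend to bounded operators on $L^2(M)$, then $P \in \Psi^0(M)$. Here I would reduce to a local statement via a partition of unity subordinate to coordinate charts, so that it suffices to treat an operator on $\R^n$ (or a chart $\Omega \subset \R^n$), where the first order operators $F_j$ can be taken among the coordinate multiplications $x_k \cdot$ and the derivatives $\partial_{x_k}$ (plus a zeroth order term which does not affect commutators in an essential way). The goal then becomes: an operator all of whose iterated commutators $\op{ad}(x_{k_1})\cdots \op{ad}(\partial_{x_{j_1}})\cdots (P)$ are bounded on $L^2$ must be a $\Psi$DO of order zero. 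This is exactly the content of the cited references (\cite{MR0435933} with correction \cite{MR523608}, \cite{MR0535694,MR0474431}), and the mechanism is: boundedness of $\op{ad}(\partial_{x_j})(P) = [\partial_{x_j}, P]$ controls $\partial_{x_j}$-derivatives of the Schwartz kernel off the diagonal, boundedness of $\op{ad}(x_k)(P)$ controls decay in $x - y$, and the full family of iterated commutators yields precisely the Beals-type estimates $\|\op{ad}(x)^\alpha \op{ad}(D)^\beta P\|_{B(L^2)} < \infty$, which are equivalent to the symbol estimates \eqref{eq:def_riegeo-defn-sym-estimate} with $j = 0$ after a Fourier-analytic argument (testing against modulated/translated Gaussians or wave packets to extract pointwise symbol bounds). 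Since the statement of the proposition only asserts the equivalence and attributes the proof to the literature, in the paper I would record the reduction to the flat local model and then simply invoke those references for the hard estimates rather than reproving them.

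The main obstacle is the converse direction's quantitative heart: passing from the qualitative hypothesis ``every iterated commutator is \emph{some} bounded operator'' to the uniform symbol bounds. A priori one only knows each $\op{ad}F_l\cdots\op{ad}F_1(P)$ is bounded, with no control on the norms; one must first upgrade this to uniform bounds, which is typically done by a closed graph / uniform boundedness argument on the Fr\'echet space generated by the commutator seminorms, and only then run the wave-packet analysis to recover $|\partial_x^\alpha \partial_\xi^\beta p(x,\xi)| \le C_{\alpha,\beta}$. Since this is precisely where \cite{MR0435933} needed the correction \cite{MR523608}, I would not attempt to reproduce it and would instead cite it, noting only that the global statement on the closed manifold $M$ follows from the local one because conjugating $P$ by cutoff functions and coordinate diffeomorphisms preserves both the hypothesis and the conclusion, and because differential operators of first order on $M$ locally generate the module of vector fields together with $C^\infty(M)$.
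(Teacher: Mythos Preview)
The paper does not give any proof of this proposition at all: it simply states the result as known and attributes it to the cited references \cite{MR0435933,MR523608,MR0535694,MR0474431}. Your proposal is therefore consistent with the paper's treatment, and in fact goes further by sketching the easy direction and the reduction of the converse to the local flat model before invoking the same references for the analytical core; nothing you wrote conflicts with the paper's approach.
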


%\begin{proof}
%Given $P \in \Psi^d(M)$ and $F$ a differential operator of order one, it is well-known that the commutator $[F,P]$ is a pseudo differential operator of order $d+1-1 = d$. Therefore $\op{ad}F_{l}\cdots \op{ad}F_{1}(P) \in  \Psi^0(M)$ provided that $P \in  \Psi^d(M)$, thus the boundedness follows. \par
%Conversely, let us assume the boundedness in equation  \eqref{eq:char-pseu-op-zero-order}, we would like to show that $P$ belongs to $\Psi^0(M)$. Indeed, the problem is local: choose an finite open cover $\set{O_j}$ for $M$ with a partition of unity $\set{\varphi_j}$, then $P = \sum_j P_j = \sum_j \varphi_j P \varphi_j$, it is suffices to show that each $P_j$ is a pseudo differential operator on $\R^m$. Observe that each $(P_j)\indices{_{(\beta)}^{(\alpha)}}$ defined in  \eqref{eq:pse_op-Palphabeta} can be written as a finite sum of terms like $\op{ad}F_{l}\cdots \op{ad}F_{1}(P)$, hence is bounded on $L^{2}(M)$, apply proposition \ref{prop:pse_op-symtop-opnormtop-R^m}, we conclude that each $P_j$ is a pseudo differential in local coordinates. Hence the sum constitute a pseudo differential operator on the manifold $M$. 
%\end{proof}

The existence of elliptic operators allows us to extend the characterization above to pseudo differential operators of all integer orders. 
\begin{cor}\label{cor:char-pseu-op-arbitary-order}
  Let $M$ be a closed manifold with associated Sobolev
  spaces $\set{\mathcal H_{s}}_{s\in\R}$ and $\Psi^{d}(M)$ be the
  space of pseudo differential  operators of order $d \in \Z$. Given a continuous linear operator $P: C^\infty(M) \rightarrow C^\infty(M)$ that admits a bounded extension from $\mathcal H_{d}$ to  $\mathcal H_{0}$, $P$  belongs to $\Psi^{d}(M)$ if and only if
for any finite
  collection of first order differential operators $\mathfrak F =
  \set{F_{1},\cdots,F_{l}}$, we have 
\begin{align}
    \label{eq:char-pseu-op-arbitary-order}
    \op{ad}F_{l}\cdots \op{ad}F_{1}(P)\in B(\mathcal H_{d},\mathcal H_{0}).
  \end{align} 
\end{cor}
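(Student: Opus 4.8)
The plan is to reduce the statement to Proposition \ref{prop:char-pseu-op-zero-order} by conjugating $P$ with a suitable elliptic operator that shifts Sobolev orders by $d$. First I would fix a positive, invertible, elliptic pseudo differential operator $\Lambda \in \Psi^{1}(M)$ — for instance $\Lambda = (1+\Delta)^{1/2}$ for a Laplacian $\Delta$ associated to any Riemannian metric — so that $\Lambda^{d} \in \Psi^{d}(M)$ and $\Lambda^{-d} \in \Psi^{-d}(M)$ are inverse isomorphisms intertwining the Sobolev scale: $\Lambda^{s}: \mathcal H_{t} \to \mathcal H_{t-s}$ is bounded for all $s,t$. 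The key observation is that $P \in \Psi^{d}(M)$ if and only if $Q := \Lambda^{d} P \in \Psi^{0}(M)$, since $\Psi^{\bullet}(M)$ is a filtered algebra closed under such compositions; and the hypothesis that $P$ extends boundedly $\mathcal H_{d} \to \mathcal H_{0}$ is exactly what guarantees $Q = \Lambda^{d}P$ extends boundedly $\mathcal H_{0} \to \mathcal H_{0}$, i.e. $Q \in B(L^{2}(M))$, which is the $l=0$ case of the criterion for $Q$.

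Next I would apply Proposition \ref{prop:char-pseu-op-zero-order} to $Q$: we have $Q \in \Psi^{0}(M)$ iff $\op{ad}F_{l}\cdots\op{ad}F_{1}(Q) \in B(L^{2}(M))$ for every finite family of first-order differential operators $F_{j}$. It remains to translate the iterated commutator condition on $Q = \Lambda^{d}P$ into the corresponding condition on $P$. The mechanism is the derivation identity $\op{ad}F(\Lambda^{d}P) = \op{ad}F(\Lambda^{d})\,P + \Lambda^{d}\,\op{ad}F(P)$ together with the facts that $\op{ad}F(\Lambda^{d}) \in \Psi^{d}(M)$ (commuting a first-order differential operator with a classical $\Psi$DO of order $d$ drops the order by at most $1$, but in any case stays within the calculus of order $\le d$) and that the commutator of a first-order differential operator with any element of $\Psi^{d}(M)$ again maps $\mathcal H_{d} \to \mathcal H_{0}$ boundedly. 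Expanding $\op{ad}F_{l}\cdots\op{ad}F_{1}(\Lambda^{d}P)$ by the Leibniz rule produces a finite sum of terms of the shape $(\text{order-}d \text{ operator built from }\Lambda^{d}\text{ and the }F_j)\circ(\op{ad}F_{i_1}\cdots\op{ad}F_{i_k}(P))$; boundedness $L^2\to L^2$ of the whole expression is then equivalent, given the Sobolev mapping property of the prefactors, to boundedness $\mathcal H_{d}\to\mathcal H_{0}$ of each iterated commutator of $P$. Running this equivalence in both directions establishes that $P$ satisfies \eqref{eq:char-pseu-op-arbitary-order} iff $Q$ satisfies \eqref{eq:char-pseu-op-zero-order} iff $Q\in\Psi^{0}(M)$ iff $P\in\Psi^{d}(M)$.

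The main obstacle, and the place where one must argue with some care rather than wave hands, is the bookkeeping in the last paragraph: one needs that every operator appearing as a ``prefactor'' after expanding the iterated commutator — things like $\Lambda^{d}$, $\op{ad}F(\Lambda^{d})$, and their further commutators — is genuinely a classical pseudo differential operator of order $\le d$ (so that it maps $\mathcal H_{d}\to\mathcal H_{0}$), and conversely that no information is lost, i.e. the map $P \mapsto \Lambda^{d}P$ together with its commutator expansions is invertible in the appropriate sense (using $\Lambda^{-d}P' $ to go back). This is where one implicitly invokes that $\Psi^{\bullet}(M)$ is a filtered $*$-algebra stable under composition with elliptic operators and under commutators with differential operators — standard facts for the classical calculus on a closed manifold, which I would cite rather than reprove. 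The ellipticity of $\Lambda$ (equivalently, the existence of elliptic operators on the closed manifold $M$) is precisely what makes the order-shifting conjugation available, and that is why the corollary is phrased as a consequence of having elliptic operators at one's disposal.
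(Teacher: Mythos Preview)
Your approach---reduce to Proposition \ref{prop:char-pseu-op-zero-order} by composing with an elliptic order-shifting operator---is exactly what the paper has in mind (the paper gives no proof beyond the sentence ``The existence of elliptic operators allows us to extend the characterization above\ldots''). However, you have a sign error that propagates through the argument: with $\Lambda\in\Psi^{1}(M)$ and your convention $\Lambda^{s}:\mathcal H_{t}\to\mathcal H_{t-s}$, the operator $Q:=\Lambda^{d}P$ lies in $\Psi^{2d}(M)$ and maps $\mathcal H_{d}\to\mathcal H_{-d}$, not $\mathcal H_{0}\to\mathcal H_{0}$. You want $Q:=P\Lambda^{-d}$ (so that $\mathcal H_{0}\xrightarrow{\Lambda^{-d}}\mathcal H_{d}\xrightarrow{P}\mathcal H_{0}$), and then indeed $Q\in\Psi^{0}(M)\iff P\in\Psi^{d}(M)$.

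One further remark on your third paragraph: the Leibniz expansion of $\op{ad}F_{l}\cdots\op{ad}F_{1}(P\Lambda^{-d})$ gives, in one direction, that the commutator hypothesis on $P$ implies the commutator hypothesis on $Q$ (since every prefactor built from $\Lambda^{-d}$ and its commutators lies in $\Psi^{-d}(M)$, hence maps $\mathcal H_{0}\to\mathcal H_{d}$). But this is not an ``equivalence'' from a single expansion; for the converse direction you should run the same argument on $P=Q\Lambda^{d}$, or simply observe that the forward implication $P\in\Psi^{d}\Rightarrow\eqref{eq:char-pseu-op-arbitary-order}$ follows directly from the order-lowering property of commutators in the classical calculus, without touching $\Lambda$ at all. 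With these two fixes the proof is complete and matches the paper's intended route.
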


Corollary \ref{cor:char-pseu-op-arbitary-order} leads us to consider the following family of semi-norms on
$\Psi^{d}(M)$ indexed by $(j,\mathfrak F)$, where $j\in\Z$ and $\mathfrak F =
\brac{F_{1},\cdots,F_{k}}$ is a finite collection of first order 
differential operators. A crucial property for pseudo differential operators acting on functions is that the commutator of two operators $[P,Q]$ is of order $\op{ord}(P)+\op{ord}(Q)-1$. In particular, if $P \in \Psi^{d}(M)$,  the iterated commutator:
$[F_{k},\cdots,[F_{1}, P]] $ still belong to $\Psi^{d}(M)$, thus defines
a bounded operator on $\mathcal H_{j+d} \rightarrow \mathcal H_{j}$. 
We define a semi-norm $\norm{\cdot}_{(j,\mathfrak F)}$ on  $\Psi^{d}(M)$ as follows:
\begin{align}
  \label{eq:pse_op-pseops-op-seminorms}
  \norm{P}_{(j,\mathfrak F)} = \norm{[F_{k},\cdots,[F_{1}, P]]}_{j+d,j},
\end{align}
where on the right hand side, $\norm{\cdot}_{j+d,j}$ is the operator norm
from $\mathcal H_{j+d}(M) \rightarrow \mathcal H_{j}(M)$. \par
Corollary \ref{cor:char-pseu-op-arbitary-order} implies that any Cauchy sequence in $\Psi^{d}(M)$ with respect to the family of semi-norms above converges to a pseudo differential operator. Due to the compactness of $M$, one can find a a countable increasing subfamily in the seminorms that define the same 
Fr\'echet topology. We summarize the facts as below:
\begin{prop}\label{prop:pse-op-Frechetproperty-pseudo-ops}
%\mbox{}
 Keep the notations as above.
\begin{enumerate}[$1)$]
 \item For all $d\in\Z$, the semi-norms $\norm{\cdot}_{(j,\mathfrak F)}$ defined in \eqref{eq:pse_op-pseops-op-seminorms} make $\Psi^d(M)$ into a Fr\'echet space. 
 \item For $d_1< d_2 \in \Z$, then the inclusion 
   $(\Psi^{d_1}(M), \norm{\cdot}_{(d_1,\mathfrak F)}) \rightarrow  (\Psi^{d_2}(M), \norm{\cdot}_{(d_2,\mathfrak F)})$ is continuous, which makes $\Psi^{d_1}(M)$ into a closed sub-Fr\'echet space of $\Psi^{d_2}(M)$.
 \item The subspace of smooth operators $\Psi^{-\infty}(M) = \icap{s\in \R}{} \Psi^{s}$ is a two-sided closed ideal in  $\Psi(M) = \ucup{s\in \R}{} \Psi^{s}$. 
\end{enumerate}
    
%    In particular, if $d_1< d_2$, then the inclusion 
%    $(\Psi^{d_1}(M), \norm{\cdot}_{(d_1,\mathfrak F)}) \rightarrow  (\Psi^{d_2}(M), \norm{\cdot}_{(d_1,\mathfrak F)})$ is continuous. 
\end{prop}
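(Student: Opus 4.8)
The plan is to verify, for each of the three assertions, the standard package of properties, using Corollary \ref{cor:char-pseu-op-arbitary-order} throughout as the black box that detects membership in $\Psi^d(M)$.

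\emph{Part (1): $\Psi^d(M)$ is Fréchet.} The topology is given by a family of seminorms, hence is locally convex, so only metrizability and completeness remain. For metrizability I would cut the index set $(j,\mathfrak F)$ down to a countable one: since $M$ is compact, fix a finite atlas and a subordinate partition of unity, and let $\mathfrak G=\{G_1,\dots,G_N\}$ be the finite collection of coordinate vector fields and coordinate-function multiplication operators, cut off by the partition of unity, so that every first-order differential operator is a $C^\infty(M)$-combination of the $G_i$. Because $\op{ad}$ is a derivation, $\op{ad}$ of a multiplication operator lowers order by one, and multiplication by a smooth function is bounded on every $\mathcal H_s$, one checks that each iterated commutator $\op{ad}F_k\cdots\op{ad}F_1(P)$ is controlled, in the operator norm $\mathcal H_{j+d}\to\mathcal H_j$, by a finite sum of norms of iterated commutators of $P$ with words in $\mathfrak G$ of length at most $k$, for finitely many values of $j$; thus the countable subfamily indexed by $(j,\text{words in }\mathfrak G)$ defines the same topology. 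For completeness, let $\{P_\nu\}$ be Cauchy. Taking $\mathfrak F=\varnothing$ shows $\{P_\nu\}$ is Cauchy in $B(\mathcal H_s,\mathcal H_{s-d})$ for every $s$; the limits are compatible and assemble into a continuous operator $P\colon C^\infty(M)=\bigcap_s\mathcal H_s\to C^\infty(M)$ admitting bounded extensions $\mathcal H_s\to\mathcal H_{s-d}$. For general $\mathfrak F$, $\{\op{ad}F_k\cdots\op{ad}F_1(P_\nu)\}$ is Cauchy in the relevant operator-norm space and its limit agrees with $\op{ad}F_k\cdots\op{ad}F_1(P)$ on the dense subspace $C^\infty(M)$, hence everywhere; so Corollary \ref{cor:char-pseu-op-arbitary-order} gives $P\in\Psi^d(M)$, and $P_\nu\to P$ in the Fréchet topology by construction.

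\emph{Part (2): the inclusions.} For $d_1<d_2$, an operator $P\in\Psi^{d_1}(M)$ has, by Corollary \ref{cor:char-pseu-op-arbitary-order}, all iterated commutators bounded $\mathcal H_{j+d_1}\to\mathcal H_j$; composing with the continuous Sobolev embedding $\mathcal H_{j+d_2}\hookrightarrow\mathcal H_{j+d_1}$ shows they are bounded $\mathcal H_{j+d_2}\to\mathcal H_j$ as well, so $P\in\Psi^{d_2}(M)$ and, with the same embedding, $\norm{P}_{(j,\mathfrak F)}^{(d_2)}\le C_j\,\norm{P}_{(j,\mathfrak F)}^{(d_1)}$, i.e.\ the inclusion is continuous. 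For closedness one passes to the limit in the operator-norm bounds: if $P_\nu\in\Psi^{d_1}(M)$ and $P_\nu\to P$ in $\Psi^{d_2}(M)$, then for every $u\in C^\infty(M)$ and every iterated commutator $\op{ad}F_k\cdots\op{ad}F_1(P_\nu)u\to\op{ad}F_k\cdots\op{ad}F_1(P)u$ in $\mathcal H_j$, so lower semicontinuity of the operator norm gives $\norm{\op{ad}F_k\cdots\op{ad}F_1(P)}_{\mathcal H_{j+d_1}\to\mathcal H_j}\le\liminf_\nu\norm{\op{ad}F_k\cdots\op{ad}F_1(P_\nu)}_{\mathcal H_{j+d_1}\to\mathcal H_j}$; once one verifies this $\liminf$ is finite along the sequences at hand, Corollary \ref{cor:char-pseu-op-arbitary-order} yields $P\in\Psi^{d_1}(M)$.

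\emph{Part (3): $\Psi^{-\infty}(M)$.} Recall that $R\in\Psi^{-\infty}(M)$ precisely when all iterated commutators of $R$ with first-order differential operators are bounded $\mathcal H_s\to\mathcal H_{s'}$ for all $s,s'$. If $Q\in\Psi^d(M)$, then using the derivation property of $\op{ad}$ together with the mapping properties $Q\colon\mathcal H_s\to\mathcal H_{s-d}$ and $R\colon\mathcal H_s\to\mathcal H_{s'}$ (all $s,s'$), every iterated commutator of $RQ$ and of $QR$ is a finite sum of composites of iterated commutators of $R$ and of $Q$, hence bounded $\mathcal H_s\to\mathcal H_{s'}$ for all $s,s'$; so $RQ,QR\in\Psi^{-\infty}(M)$ and $\Psi^{-\infty}(M)$ is a two-sided ideal. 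Finally, inside $\Psi^d(M)$ one has $\Psi^{-\infty}(M)=\bigcap_{s\le d}\Psi^s(M)$, an intersection of subspaces each closed in $\Psi^d(M)$ by Part (2), hence closed in $\Psi^d(M)$; as this holds for every $d$, $\Psi^{-\infty}(M)$ is closed in the strict inductive limit $\Psi(M)$.

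\emph{Main obstacle.} I expect two technical pressure points. The first is the reduction to a countable generating family of seminorms in Part (1): this is the Beals--Cordes type bookkeeping of how $\op{ad}$ interacts with multiplication operators and with changes of order (cf.\ \cite{MR1371144}), and it is the only genuinely nontrivial step in establishing metrizability. The second is the closedness arguments in Parts (2)--(3): passing to the limit is immediate on the dense subspace $C^\infty(M)$, but one must control the finer ($\Psi^{d_1}$-) seminorms so that they do not escape to infinity along the approximating sequences, i.e.\ ensure the $\liminf$ above is finite; this is the delicate point and the place where the argument needs the most care.
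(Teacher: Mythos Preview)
The paper does not actually prove this proposition: it is stated immediately after a two--sentence sketch (``Corollary \ref{cor:char-pseu-op-arbitary-order} implies that any Cauchy sequence \dots\ converges to a pseudo differential operator. Due to the compactness of $M$, one can find a countable increasing subfamily \dots''). Your write--up fleshes out precisely these two hints for Part~(1), and your treatment of the two--sided ideal property in Part~(3) via the derivation identity for $\op{ad}$ is the standard argument. On those points your proposal is more complete than the paper itself.

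Where you are right to be uneasy is the closedness in Part~(2). Your $\liminf$ argument does \emph{not} close: convergence of $(P_\nu)$ in the $\Psi^{d_2}$--topology gives no uniform bound on the $\Psi^{d_1}$--seminorms $\norm{\op{ad}F_k\cdots\op{ad}F_1(P_\nu)}_{j+d_1,j}$, so there is no reason the $\liminf$ should be finite. Concretely, on $M=S^1$ one can take Fourier multipliers with symbols $a(n)=\log(1+|n|)$ and truncations $a_\nu(n)=a(\min(|n|,\nu))$; then $a_\nu\in\Psi^0$, $a\notin\Psi^0$, yet $a_\nu\to a$ in all the $\Psi^1$--type seminorms. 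So the closedness assertion, read literally, is problematic, and the paper offers no argument for it either. The honest resolution is either to weaken the statement (continuous inclusion of Fr\'echet spaces, which is what the rest of the paper actually uses) or to supply a genuinely different argument; the Beals--Cordes characterization alone will not do it. Your instinct that this is the pressure point is exactly correct.
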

It is well-known that pseudo differential operators on a closed manifold $M$ is stable under the action of the diffeomorphism group of $M$, more precisely, given any diffeomorphism $\varphi: M \rightarrow M$, let $U_\varphi: C^\infty(M) \rightarrow C^\infty(M)$ be the pull-back operator defined in \eqref{eq:deformation-pullback-functions}, for any $P \in \Psi^d(M)$, then the conjugation $U_\varphi P
U_\varphi^{-1}$ still belongs to $\Psi^d(M)$. Therefore the Fr\'echet spaces $\Psi^d(M)$ ($d\in\Z$) become $\T^n$-modules via the adjoint action: 
\begin{align}
    t\cdot P \defeq \op{Ad}_t(P) = U_t P U_{-t},\,\,\, \forall t \in \T^n, \,\,\,P \in \Psi^d(M).
    \label{eq:pse_op-adjoint-Psi-order-d}
\end{align}
%where $U_t: C^\infty(M) \rightarrow C^\infty(M)$ is the pull-back action (cf. \eqref{eq:deformation-pullback-functions}) with respect to an isometry $t$. 
%We need to show that the action is smooth so that the deformation can be applied. 

In order to apply the deformation machinary, we need to show that the function: 
$t \mapsto \op{Ad}_t(P)$ is smooth in $t \in \T^n$ with respect to the F\'echet topology on $\Psi^d(M)$ (cf. estimate \eqref{eq:smooth-topology-defn}), for all $d\in\Z$. 

Let $t = (t_1,\dots,t_n)$ be a coordinate system on  $\T^n$, and then $\set{\partial_{t_1},\dots, \partial_{t_n}}$ constitute a basis of the Lie algebra, the push-forward vector fields via the action on $M$ are denoted by $\set{X_1,\dots, X_n}$.

\begin{prop}
% Let $M$ be a toric Riemannian manifold with a $n$-torus action as
% before and $\mathcal H_{s}$ $(s\in\R)$ be the family of Sobolev spaces
% associated to functions on $M$. Let $t = (t_1,\dots,t_n)$ be a coordinate on the torus $\T^n$, and then $\set{\partial_{t_1},\dots, \partial_{t_n}}$ constitute a basis of its Lie algebra, the push-forward vector fields on $M$ are denoted by $\set{X_1,\dots, X_n}$.  
Keep the notations as above. Given $P\in \Psi^{d}(M)$ be a pseudo differential operator of order $d$, the operator-valued function $t \rightarrow \op{Ad}_{t} (P)$ is  smooth in $t$. Moreover, for any  finite collection of first order differential
 operators $\mathfrak F = \set{F_{1},\cdots,F_{k}}$ and any
 multi-index  $\mu = (\mu_{1},\cdots, \mu_{j})$, one can find  another finite collection of first order operators $\mathfrak
 F'$ such that: 
 \begin{align}
   \label{eq:smooth-topology-for-pseudo-op}
   \norm{\partial^{\mu}_{t}\op{Ad}_{t} (P)}_{(s,\mathfrak F)} \leq C \norm{P}_{(s,\mathfrak F')},
% \frac{1}{k!}\nsum{\abs\mu\leq k}{}
%    \norm{P}_{(s,\mathfrak F_{\mu})}, 
   \,\,\, t\in \T^{n}, \,\,\, s \in\R.
 \end{align}
 where the constant $C$ depends on $\mathfrak F$ and $\mu$. The subscript $s \in \R$ means that the operator norm is the one from the $s+d$-th to the $s$-th Sobolev space.  
\end{prop}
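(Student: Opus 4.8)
The plan is to reduce the statement to an estimate, uniform in $t\in\T^n$, for iterated commutators of $P$ against a \emph{fixed} finite collection of first-order differential operators, after which everything follows by combining the conjugation identity for $\op{Ad}_t$ with the Sobolev mapping properties of pseudodifferential operators and the compactness of the torus. I would first record the differentiation formula. Let $\delta_i(\cdot)=[X_i,\cdot]$ (up to sign) be the commutator with the generator of the $i$-th circle, $X_i$ being the vector fields introduced above; since $X_i$ has first order, the order-reduction property of commutators noted above shows $\delta_i\colon\Psi^d(M)\to\Psi^d(M)$. From $\op{Ad}_t(P)=U_tPU_{-t}$ and the group law one gets $\partial_{t_i}\op{Ad}_t(P)=\op{Ad}_t(\delta_iP)$, and by iteration $\partial^\mu_t\op{Ad}_t(P)=\op{Ad}_t(\delta^\mu P)$ for every multi-index $\mu$, where $\delta^\mu=\delta_1^{\mu_1}\cdots\delta_n^{\mu_n}$ and $\delta^\mu P\in\Psi^d(M)$.

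Making this rigorous in the Fr\'echet topology of $\Psi^d(M)$ also requires the continuity of $t\mapsto\op{Ad}_t(Q)$ in that topology; granting it, the identity $\op{Ad}_{t+he_i}(Q)-\op{Ad}_t(Q)=\int_0^h\op{Ad}_{t+re_i}(\delta_iQ)\,dr$---valid first in the strong operator topology on each $B(\mathcal H_{s+d},\mathcal H_s)$ and hence in each seminorm \eqref{eq:pse_op-pseops-op-seminorms}, because the integrand is uniformly bounded over a short interval---promotes the algebraic identity to a genuine $C^\infty$ statement, and the needed continuity is itself obtained by the conjugation argument below. Thus both the smoothness and the estimate \eqref{eq:smooth-topology-for-pseudo-op} reduce to a bound on $\norm{\op{Ad}_t(Q)}_{(s,\mathfrak F)}$, uniform in $t$, for $Q=\delta^\mu P$. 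For that I would use the conjugation identity $\op{ad}F\circ\op{Ad}_t=\op{Ad}_t\circ\op{ad}(\op{Ad}_{-t}F)$, which comes from $[F,U_tQU_{-t}]=U_t[\op{Ad}_{-t}F,Q]U_{-t}$; iterating over $\mathfrak F=\{F_1,\dots,F_k\}$ and unwinding \eqref{eq:pse_op-pseops-op-seminorms},
\[
\norm{\op{Ad}_t(Q)}_{(s,\mathfrak F)}=\norm{U_t\,\op{ad}(\op{Ad}_{-t}F_k)\cdots\op{ad}(\op{Ad}_{-t}F_1)(Q)\,U_{-t}}_{\mathcal H_{s+d}\to\mathcal H_s}.
\]
Since $t\mapsto U_t$ is strongly continuous and $\T^n$ is compact, $\sup_t\norm{U_t}_{\mathcal H_\sigma\to\mathcal H_\sigma}<\infty$ for every $\sigma$, so the outer factors cost only a constant.

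It remains to bound the inner iterated commutator uniformly in $t$ by $\norm{P}_{(s,\mathfrak F')}$. Each $\op{Ad}_{-t}(F_j)=U_{-t}F_jU_t$ is again a first-order differential operator; decomposing $F_j$ into a vector field plus a multiplication operator and pushing forward, one has $\op{Ad}_{-t}(F_j)=\sum_\alpha f_{j\alpha}(t,\cdot)\,W_\alpha+h_j(t,\cdot)$, where $\{W_1,\dots,W_N\}$ is a fixed finite set of vector fields generating $TM$ over $C^\infty(M)$ (such a set exists since $M$ is compact) and the $f_{j\alpha},h_j$ are smooth on $\T^n\times M$, hence have all $C^\ell$-norms bounded uniformly in $t$. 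Expanding the iterated commutator with the Leibniz rules $[fW,Q]=f[W,Q]+[f,Q]W$ and using the order-reduction property---so that each factor $[f,Q]$ has order $d-1$ and absorbs the derivative produced by the adjacent $W$---one rewrites it as a finite sum of composites of multiplication operators with uniformly bounded coefficients, the fixed $W_\alpha$, and iterated commutators of $Q$ against the $W_\alpha$ and against finitely many fixed multiplication operators. Corollary \ref{cor:char-pseu-op-arbitary-order} and the Sobolev mapping properties then bound each composite, uniformly in $t$, by finitely many seminorms $\norm{Q}_{(\cdot,\mathfrak G)}$ with $\mathfrak G$ ranging over a fixed finite collection; since $Q=\delta^\mu P$, these concatenate to $\norm{P}_{(\cdot,\mathfrak G\cup\{X_1,\dots,X_n\})}$, and a consolidation of the finitely many resulting seminorms (using Proposition \ref{prop:pse-op-Frechetproperty-pseudo-ops}) produces a single $\norm{P}_{(s,\mathfrak F')}$ with $\mathfrak F'$ and $C$ depending only on $\mathfrak F$ and $\mu$; applying the bound to all $t$-derivatives simultaneously also yields the asserted smoothness.

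The main obstacle is precisely this last reduction: the iterated commutators are formed against the $t$-dependent operators $\op{Ad}_{-t}(F_j)$, yet the estimate must be uniform in $t$ and the final collection $\mathfrak F'$ must be one fixed finite set. The two facts that make it work are the order-reduction of commutators $[f,\cdot]$, which prevents a net loss of Sobolev regularity when the coefficient functions are moved across, and the compactness of $\T^n\times M$, which turns the smooth-in-$t$ coefficients $f_{j\alpha},h_j$ into a uniformly bounded family. The continuity of $t\mapsto\op{Ad}_t(Q)$ needed to legitimize the differentiation formula, and the bookkeeping of Sobolev indices in the consolidation step, are routine but require some care.
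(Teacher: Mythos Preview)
Your approach is essentially the same as the paper's: establish $\partial_t^\mu\op{Ad}_t(P)=\op{Ad}_t(\delta^\mu P)$ via the generators $X_i$, then push the outer $\op{ad}(F_j)$'s through $\op{Ad}_t$ with the conjugation identity $\op{ad}(F)\circ\op{Ad}_t=\op{Ad}_t\circ\op{ad}(\op{Ad}_{-t}F)$ and drop the outer $\op{Ad}_t$ by unitarity. The paper then simply declares $\mathfrak F'=\{F_1,\dots,F_l,X_{\mu_1},\dots,X_{\mu_j}\}$, which glosses over exactly the $t$-uniformity issue you identify---the inner commutators are against the $t$-dependent operators $\op{Ad}_{-t}(F_j)$, not the $F_j$ themselves---so your expansion of $\op{Ad}_{-t}(F_j)$ against a fixed generating frame with uniformly bounded coefficients is in fact a more honest completion of the same argument.
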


\begin{proof}
     Apply the product rule onto \eqref{eq:pse_op-adjoint-Psi-order-d}, we see that 
\begin{align*}
  \partial_{t_i}\brac{\op{Ad}_{t}(P)} = \op{Ad}_{t}(\op{ad}(X_{i})(P)),
\end{align*}
where $\op{ad}(X_{i})$ is the commutator: $\op{ad}(X_{i})(P) =
[X_{i},P]$. Similarly, the higher order  partial derivatives are given by:
\begin{align}\label{eq:higher-order-derivatives}
    \partial_{t_{\mu_{1}}} \cdots \partial_{t_{\mu_{j}}}\brac{\op{Ad}_{t}(P)} =
 \op{Ad}_{t} \brac{\op{ad}(X_{\mu_{1}})\cdots \op{ad}(X_{\mu_{j}}) (P)}.
  % U_{t}[\op{ad}(X_{i_{1}},\cdots [X_{i_{j}}, P]]U_{-t} =\op{Ad}_{t}\brac{[X_{i_{1}},\cdots [X_{i_{j}}, P]]} .
\end{align}
Since the right hand side above is a pseudo differential operator, we have proved that the function $t \rightarrow \op{Ad}_{t} (P)$ is  smooth. \par
To show the estimate \eqref{eq:smooth-topology-for-pseudo-op}, we observe that for any vector field $F$, 
\begin{align*}
 \op{ad}(F) \circ \op{Ad}_t = \op{Ad}_t \circ \op{ad}( \op{Ad}_{-t}(F)).
\end{align*}
Thus for $\mathfrak F = \set{F_{1},\cdots,F_{l}}$, a finite collection of vector fields on $M$, 
\begin{align*}
  &\op{ad}(F_{1})\cdots \op{ad}(F_{l})\brac{\partial^{\mu}_{t} \op{Ad}_{t}(P)} \\
 =\,\, & \op{ad}(F_{1})\cdots \op{ad}(F_{l})\brac{\op{Ad}_{t} \brac{\op{ad}(X_{i_{1}})\cdots \op{ad}(X_{i_{j}})(P)}
} \\
=\,\, & \op{Ad}_{t}\brac{\op{ad}(\op{Ad}_{-t} (F_{1}))\cdots \op{ad}(\op{Ad}_{-t} (F_{1})) \op{ad}(X_{\mu_{1}})\cdots \op{ad}(X_{\mu_{j}}) (P)}.
\end{align*}

Since the torus action is unitary, we can drop $\op{Ad}_{t}$ when computing operator norms:
\begin{align*}
  \norm{\partial_t^\mu \op{Ad}_{t}(P)}_{(s,\mathfrak F)} \leq C \norm{ P }_{(s,\mathfrak F')} 
\end{align*}
with 
\begin{align*}
  \mathfrak F' = \set{F_1,\cdots, F_l, X_{\mu_{1}},\cdots,X_{\mu_{k}}}.
\end{align*}
%Same arguments works as well when $\partial_{i_{1}\cdots
%      i_{k}}$ is replaced by $\partial^{\mu}$ for any multi-index $\mu$.

% Combine \eqref{eq:higher-order-derivatives} and
% \eqref{eq:smooth-topology-seminorms}, the estimate
% \eqref{eq:smooth-topology-for-pseudo-op} follows directly. 
\end{proof}

Follows from the smoothness of the torus action, the right hand side of the isotypical decomposition 
\begin{align*}
    P = \sum_{r\in\Z^n} P_r, \,\,\,\,\, \forall P \in \Psi^d(M),
\end{align*}
with $P_{r} = \int_{\T^{n}} \op{Ad}_{t}(P) e^{-2\pi ir \cdot t}
dt$, converges to $P$ with respect to the Fr\'echet topology defined above. Moreover, for each semi-norm $\norm{\cdot}_{d,\mathfrak F}$, the sequence $\norm{P_r}_{d,\mathfrak F}$ is of rapidly decay in $r$.  
Fixed a skew symmetric $n\times n$ matrix $\Theta$, the definition  \ref{defn:def-ops-pitheta} of  the deformation map $\pi^\Theta$ is extended to pseudo differential operators of all orders, namely, 
for all $P \in \Psi(M)$, $\pi^\Theta (P)$ is defined by 
\begin{align}
  \pi^\Theta (P)(f) \defeq \sum_{r,l\in\Z^n} P_r (f_l),\,\,\,
  \forall f \in C^\infty(M). 
\end{align}
 Alternatively,
\begin{align}
\pi^\Theta (P) \defeq \sum_{r\in\Z^n} P_r U_{r.\Theta/2},
    \label{eq:pse-op-pi-theta-defn}
\end{align}
where $r \cdot \Theta/2$ denotes the matrix multiplication in which $r$ is a row vector. \par
For each $d\in\Z$, we denote by $\Psi^d(M_\Theta)$ the image of $\Psi^d(M)$ under $\pi^\Theta$,  $\Psi(M_\Theta)$ and $\Psi^{-\infty}(M_\Theta)$ are the union and intersection as in \eqref{eq:pse-op-wholeops-smoothingops} respectively. Due to the continuity of the map $\pi^\Theta$ (cf. \eqref{eq:deform-oper-frechet-semi-norms-estimate}) we see that the order of $P$ is stable under the deformation:
\begin{lem}
  \label{prop:deform-pseudo-diff-preserves-the-order}
  Let $P\in \Psi^{d}(M)$ is a pseudo differential operator on $M$ of
order $d \in \Z$. Then $\pi^{\Theta}(P): C^{\infty}(M) \rightarrow
C^{\infty}(M)$ define in \eqref{eq:pse-op-pi-theta-defn} extends to a
bounded operator from $\mathcal H_{s} \rightarrow \mathcal H_{s-d}$
for all $s \in \R$.
\end{lem}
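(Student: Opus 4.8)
The plan is to read off the conclusion from the explicit formula \eqref{eq:pse-op-pi-theta-defn}, $\pi^{\Theta}(P)=\sum_{r\in\Z^{n}}P_{r}U_{r\cdot\Theta/2}$, combined with the rapid decay of the isotypical components $P_{r}$ in the Fr\'echet topology of $\Psi^{d}(M)$ recorded just above the statement. The first step is to observe that, for an integer $s$, taking the empty collection $\mathfrak F=\emptyset$ in \eqref{eq:pse_op-pseops-op-seminorms} (so that the iterated commutator reduces to $P$ itself) and index $j=s-d$ makes $\norm{\cdot}_{(s-d,\emptyset)}$ literally the operator norm from $\mathcal H_{s}$ to $\mathcal H_{s-d}$; hence $\norm{P_{r}}_{\mathcal H_{s}\to\mathcal H_{s-d}}$ decays faster than any polynomial in $r$. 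For a non-integer $s$ the same follows by interpolation between the Sobolev spaces $\mathcal H_{\lfloor s\rfloor}$ and $\mathcal H_{\lceil s\rceil}$, since $P_{r}$ is bounded on both endpoint pairs with rapidly decaying norms.

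The second step is to bound $U_{r\cdot\Theta/2}$ on the Sobolev scale, uniformly in $r$. Each $U_{t}$, $t\in\T^{n}$, is the pull-back by a diffeomorphism of the compact manifold $M$, hence bounded on every $\mathcal H_{s}$; moreover $\sup_{t\in\T^{n}}\norm{U_{t}}_{\mathcal H_{s}\to\mathcal H_{s}}=:C_{s}<\infty$, because this operator norm is controlled by finitely many derivatives of the diffeomorphism and its inverse, which vary continuously over the compact group $\T^{n}$ (equivalently, by Banach--Steinhaus applied to the strongly continuous representation $t\mapsto U_{t}$ on $\mathcal H_{s}$). Combining the two steps,
\[
\sum_{r\in\Z^{n}}\norm{P_{r}U_{r\cdot\Theta/2}}_{\mathcal H_{s}\to\mathcal H_{s-d}}\;\le\;C_{s}\sum_{r\in\Z^{n}}\norm{P_{r}}_{\mathcal H_{s}\to\mathcal H_{s-d}}\;<\;\infty,
\]
so the series defining $\pi^{\Theta}(P)$ converges absolutely in $B(\mathcal H_{s},\mathcal H_{s-d})$ to a bounded operator.

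The third step is to identify this limit with the extension of $\pi^{\Theta}(P)$ acting on $C^{\infty}(M)$. For a $\T^{n}$-smooth $f\in C^{\infty}(M)$ with isotypical decomposition $f=\sum_{l}f_{l}$ one has $U_{r\cdot\Theta/2}f_{l}=\chi_{\Theta}(r,l)f_{l}$, whence $\sum_{r}P_{r}U_{r\cdot\Theta/2}f=\sum_{r,l}\chi_{\Theta}(r,l)P_{r}(f_{l})=\pi^{\Theta}(P)(f)$; the last double sum converges in $C^{\infty}(M)$, hence in $\mathcal H_{s-d}$, because $\norm{P_{r}}_{\mathcal H_{a}\to\mathcal H_{a-d}}$ and $\norm{f_{l}}_{\mathcal H_{a}}$ are rapidly decaying for every $a$. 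Since $C^{\infty}(M)$ is dense in $\mathcal H_{s}$, the a priori estimate $\norm{\pi^{\Theta}(P)f}_{\mathcal H_{s-d}}\le C_{s}\bigl(\sum_{r}\norm{P_{r}}_{\mathcal H_{s}\to\mathcal H_{s-d}}\bigr)\norm{f}_{\mathcal H_{s}}$ extends $\pi^{\Theta}(P)$ to a bounded operator $\mathcal H_{s}\to\mathcal H_{s-d}$ for every $s\in\R$.

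I do not expect a genuine obstacle here: the only points requiring a little care are the uniform-in-$t$ bound for $U_{t}$ along the Sobolev scale and the identification of $\norm{P_{r}}_{\mathcal H_{s}\to\mathcal H_{s-d}}$ with one of (or, for non-integer $s$, an interpolant of) the defining seminorms of $\Psi^{d}(M)$, so that the rapid-decay estimate applies. Everything else is just summation of a rapidly decaying sequence.
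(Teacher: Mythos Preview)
Your proof is correct and follows the same route the paper gestures at: the paper simply cites the continuity estimate \eqref{eq:deform-oper-frechet-semi-norms-estimate} for $\pi^{\Theta}$, which in the present context amounts precisely to your argument---rapid decay of $\norm{P_{r}}_{\mathcal H_{s}\to\mathcal H_{s-d}}$ (these are among the defining seminorms \eqref{eq:pse_op-pseops-op-seminorms} of $\Psi^{d}(M)$ with $\mathfrak F=\emptyset$) combined with uniform boundedness of $U_{t}$ on each Sobolev space. You supply two details the paper leaves implicit: the interpolation step for non-integer $s$ (since the seminorms are indexed by $j\in\Z$), and the observation that on $\mathcal H_{s}$ with $s\neq 0$ the action $U_{t}$ need not be unitary but is still uniformly bounded over the compact group $\T^{n}$; both are needed and both are handled correctly.
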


We summarize some crucial properties of $\Psi(M_\Theta)$ in the following proposition.

\begin{prop}\label{prop:theta-prod-well-defined}
Let $\Theta$ be a $n\times n$ skew symmetric matrix. The filtrated algebra $\Psi(M)$ of all pseudo differential operators admits the following deformation. For any pseudo
differential operators $P$ and $Q$, order $d_{1}$ and $d_{2}$
respectively, the $\times_{\Theta}$ multiplication 
\begin{align}
  \times_{\Theta}:& \Psi^{d_{1}}\times \Psi^{d_{2}} \rightarrow \Psi^{s_{1} + s_{2}}\\
  &(P,Q) \mapsto P\times_{\Theta}Q =  \nsum{r,l\in\Z^{n}}{}e^{\pi i
    \abrac{r,\Theta l}} P_{r}Q_{l}
    \label{eq:pse-op-theta-prod-pseops}
\end{align}
is  well-defined. Due to the skew symmetric property of $\Theta$, the $\times_\Theta$ multiplication is compatible with the original $*$-operation in $\Psi(M)$, namely:
\begin{align}
    \label{eq:pse-op-*-pseops-theta}
    ( P \times_\Theta Q)^* = Q^* \times_\Theta P^*.
\end{align}
Therefore $(\Psi(M),\times_\Theta)$ is a filtrated $*$-algebra. Follows from  lemma $\ref{lem:def-ops-deform-oper-pi-theta-adjoint}$ and
proposition $\ref{prop:def-op-deform-oper-pi-theta-prod}$, we obtain that the deformation map $\pi^\Theta$ makes $(\Psi(M_\Theta),\cdot)$ into a filtrated $*$-algebra, where $\cdot$ denotes the composition between operators. More explicitly, we have for any $P,Q \in \Psi(M)$,
\begin{align}
    \label{eq:pse-op-pitheta-algera-morphism}
    \pi^\Theta( P \times_\Theta Q) = \pi^\Theta(P) \pi^\Theta(P),\,\,\,\,
    \pi^\Theta(P^*) =  \pi^\Theta(P)^*.  
\end{align}
\end{prop}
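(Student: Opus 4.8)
The strategy is to realize $\Psi(M)$, with its filtration \eqref{eq:pse-op-wholeops-smoothingops} and the Beals--Cordes seminorms \eqref{eq:pse_op-pseops-op-seminorms}, as a filtered $\T^n$-smooth algebra in the sense of \eqref{eq:deformation-of-algebra-filtration}--\eqref{eq:deformation-of-algebra-multiplication-continuity-condition}, and then let the abstract results of Section~\ref{sec:deformation-along-tn} do the work. Smoothness of the adjoint torus action on each $\Psi^d(M)$, together with the required decay of the isotypical components $\{P_r\}$, was already obtained in the previous proposition (see \eqref{eq:smooth-topology-for-pseudo-op}). The one thing left to check at this stage is that composition $\Psi^{d_1}(M)\times\Psi^{d_2}(M)\to\Psi^{d_1+d_2}(M)$ is jointly continuous for these seminorms. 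For a finite family $\mathfrak F=\{F_1,\dots,F_k\}$ of first-order operators I would iterate the Leibniz identity $\op{ad}(F)(PQ)=\op{ad}(F)(P)\,Q+P\,\op{ad}(F)(Q)$ to expand $[F_k,\dots,[F_1,PQ]]$ as a finite sum of products $R_1R_2$, where $R_1$ is an iterated commutator of $P$ and $R_2$ one of $Q$; since bracketing with a first-order operator does not raise the order (the commutator of operators of orders $a,b$ has order $a+b-1$), $R_1$ still has order $d_1$ and $R_2$ order $d_2$. Composing the bounded maps $\mathcal H_{j+d_1+d_2}\xrightarrow{\,R_2\,}\mathcal H_{j+d_1}\xrightarrow{\,R_1\,}\mathcal H_{j}$ then gives $\norm{PQ}_{(j,\mathfrak F)}\le C\norm{P}_{(j,\mathfrak F')}\norm{Q}_{(j+d_1,\mathfrak F'')}$ for suitable subfamilies $\mathfrak F',\mathfrak F''\subset\mathfrak F$, which, after passing to a countable increasing cofinal subfamily of seminorms (available by compactness of $M$), is exactly \eqref{eq:deformation-of-algebra-multiplication-continuity-condition}.

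With this in hand, the well-definedness of the series \eqref{eq:pse-op-theta-prod-pseops} — its absolute convergence in the Fr\'echet topology of $\Psi^{d_1+d_2}(M)$ — and the associativity of $\times_\Theta$ are immediate consequences of Proposition~\ref{prop:deformation-along-tn-associativity-theta-multi} applied to $\Psi(M)$. For the compatibility \eqref{eq:pse-op-*-pseops-theta} with the formal adjoint, I would note that $P\mapsto P^*$ maps $\Psi^d(M)$ into $\Psi^d(M)$, is $\T^n$-equivariant because the torus acts by unitaries on $L^2(M)$ (so $\op{Ad}_t(P^*)=\op{Ad}_t(P)^*$ and hence $(P_r)^*=(P^*)_{-r}$), and is continuous for the seminorms \eqref{eq:pse_op-pseops-op-seminorms} after the routine Sobolev-duality bookkeeping (taking adjoints turns $[F_k,\dots,[F_1,P^*]]$ into the adjoint of an iterated commutator of $P$ with the $F_i^*$, and adjunction is an isometry $\mathcal H_s\to\mathcal H_{-s}$); the identity \eqref{eq:pse-op-*-pseops-theta} then follows from the very same computation as \eqref{eq:def-along-*-prop-timestheta}, where skew-symmetry of $\Theta$ enters through $\chi_\Theta(r,l)=\overline{\chi_\Theta(l,r)}$ and $\chi_\Theta(-l,-r)=\chi_\Theta(l,r)$. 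This establishes that $(\Psi(M),\times_\Theta)$ is a filtered $*$-algebra.

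For the statements about $\pi^\Theta$: Lemma~\ref{prop:deform-pseudo-diff-preserves-the-order} already gives that $\pi^\Theta$ sends $\Psi^d(M)$ to operators bounded $\mathcal H_s\to\mathcal H_{s-d}$, so $\pi^\Theta(\Psi^d(M))=\Psi^d(M_\Theta)$ is a filtered subalgebra of the operators on $C^\infty(M)$; the formula $\pi^\Theta(P)=\sum_r P_rU_{r\cdot\Theta/2}$ and the computation in Lemma~\ref{lem:deform-oper-theta+theta'} show $\pi^\Theta$ is a bijection onto its image with inverse $\pi^{-\Theta}$; and the ``formal'' argument of Proposition~\ref{prop:def-op-deform-oper-pi-theta-prod} (reading $\pi^\Theta(P)(v)$ as $P\times_\Theta v$ and invoking associativity of $\times_\Theta$), applied with the dense domain $C^\infty(M)\subset L^2(M)$, together with Lemma~\ref{lem:def-ops-deform-oper-pi-theta-adjoint}, yields $\pi^\Theta(P\times_\Theta Q)=\pi^\Theta(P)\pi^\Theta(Q)$ and $\pi^\Theta(P^*)=\pi^\Theta(P)^*$. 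Hence $(\Psi(M_\Theta),\cdot)$ is a filtered $*$-algebra, isomorphic as such to $(\Psi(M),\times_\Theta)$.

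I expect the genuine obstacle to be the joint-continuity estimate \eqref{eq:deformation-of-algebra-multiplication-continuity-condition} for composition: one has to organize the Leibniz expansion of the iterated commutator and track Sobolev indices so that the intermediate space in $R_1\circ R_2$ is consistent, and confirm at each step that commutators with first-order operators preserve the order, so that the operator-norm estimates defining the seminorms remain valid throughout. Everything else is bookkeeping layered on top of the abstract deformation lemmas of Section~\ref{sec:deformation-along-tn}.
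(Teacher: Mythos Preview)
Your proposal is correct and follows essentially the same route as the paper: establish joint continuity of composition $\Psi^{d_1}\times\Psi^{d_2}\to\Psi^{d_1+d_2}$ for the Beals--Cordes seminorms, combine it with the rapid decay of the isotypical components to get convergence of \eqref{eq:pse-op-theta-prod-pseops}, and then invoke Proposition~\ref{prop:deformation-along-tn-associativity-theta-multi}, Lemma~\ref{lem:def-ops-deform-oper-pi-theta-adjoint}, Proposition~\ref{prop:def-op-deform-oper-pi-theta-prod}, and Lemma~\ref{lem:deform-oper-theta+theta'} for the algebraic identities. In fact you supply more detail than the paper does on the joint-continuity step (the Leibniz expansion of the iterated commutator and the Sobolev bookkeeping), which the paper simply asserts without argument.
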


\begin{proof}

   Notice that composition of operators 
    \begin{align*}
  \Psi^{s_{1}}\times \Psi^{s_{2}} \rightarrow \Psi^{s_{1} + s_{2}} :
  (P,Q) \mapsto PQ, \,\,\,\, s_{1},s_{2} \in\R,
\end{align*}
satisfies the jointly continuity (cf. \eqref{eq:jointly-continuity}) with respect to the operator norms, plus the rapidly decay property in the components of the  isotypical decomposition, we conclude that  the infinite sum in the right hand side of \eqref{eq:pse-op-theta-prod-pseops} converges with respect to the Fr\'echet topology. \par %moreover, the limit is indeed a pseudo differential operator via corollary  \ref{cor:char-pseu-op-arbitary-order}.
After justifying the convergence,  \eqref{eq:pse-op-*-pseops-theta} is an instance of proposition \ref{prop:deformation-along-tn-associativity-theta-multi}, while \eqref{eq:pse-op-pitheta-algera-morphism}  is a straightforward generalization of lemma $\ref{lem:def-ops-deform-oper-pi-theta-adjoint}$ and proposition  \ref{prop:def-op-deform-oper-pi-theta-prod}. At last, the deformation $\pi^\Theta$ has an inverse $\pi^{-\Theta}$ (cf. \ref{lem:deform-oper-theta+theta'}), therefore, it is an filtrated $*$-algebra isomorphism
between $(\Psi(M),\times_\Theta)$ and $(\Psi(M_\Theta),\cdot)$. 
\end{proof}

% \begin{defn}\label{defn:deformed-pse-ops}
% The deformed algebra of pseudo differential operators 
% \begin{align}
%   \label{eq:deformed-pse-ops}
% \Psi^\infty(M_\Theta) = \set{\pi^{\Theta}(P) \,|\, P \in \Psi(M)}  
% \end{align}
% is a subalgebra of operators on
% Sobolev spaces $\set{\mathcal H_{s}}_{s\in\R}(M)$ with common domain
% $C^{\infty}(M)$. Similar to the deformation of $C^{\infty}(M)$, $\Psi^\infty(M_\Theta)$ can be identified with $\Psi(M)$ at the level of topological
% vector spaces endowed with the $\times_{\Theta}$-multiplication in
% Proposition  \ref{prop:deformed-product-of-ops}. Also from
% lemma \ref{prop:deform-pseudo-diff-preserves-the-order} and
% proposition \ref{prop:deformation-adjoint}, the filtration and
% $*$-operation on $\Psi^\infty(M)$ can be moved to $\Psi^\infty(M_\theta)$ by this
% identification. 
% \end{defn}

% \begin{rem}
%   Combine lemma \ref{lem:smoothing-ops} and lemma
% \ref{lem:deform-pseudo-diff-boundedness}, we conclude that the
% deformation map $\pi^{\Theta}$ maps $\Psi^{-\infty}(M)$ into
% itself. We denote by $\pi^{\Theta}(P)\backsim \pi^{\Theta}(Q)$ when $\pi^{\Theta}(P) - \pi^{\Theta}(Q)$ is a smoothing operator.
% \end{rem}

Similar to the commutative case, we define the deformed algebra of classical pseudo differential operators to be the  quotient $\pi^\Theta(\Psi(M)) / \pi^\Theta(\Psi^{-\infty}(M))$, where $\Psi^{-\infty}(M)$ is the space of smoothing operators.  Given a pseudo differential operator $P$, the deformation $\pi^\Theta(P)$  is not a pseudo differential operator anymore. Indeed, $\pi^\Theta(P)$ is not pseudo-local (cf. \cite[lemma 1.2.7]{gilkey1995invariance}) in general due to the fact that the support of $P$ is distorted by the torus action. However,  for smoothing operators, we do have $\pi^\Theta(\Psi^{-\infty}(M))
= \Psi^{-\infty}(M)$.

% \begin{lem}
%     \label{lem:pse_op-smoothing-ops-two-defns}
%     Let $M$ be a compact Riemannian manifold without boundary.   Let $P: C^\infty(M) \rightarrow C^\infty(M)$ be a continuous (with respect to the Fr\'echet topology given by the partial derivatives in local coordinate) linear operator. 
%     
%     
%     Then $P \in \Psi^{-\infty}(M)$ if and only if $P$ has a smooth the Swartz kernel $K(x,y)$ over $M \times M$. 
% \end{lem}

\begin{lem}  \label{lem:pse_op-smoothing-ops-two-defns}
    Let $M$ be a compact smooth manifold without boundary.    Given $P: C^\infty(M) \rightarrow C^\infty(M)$  a continuous $($with respect to the Fr\'echet topology given by the partial derivatives in local coordinate$)$ linear operator with the smoothing property that for any $s,t \in \R$, $P$ can be extended to a bounded operator between associated Sobolev spaces: from $\mathcal H^s$ to $\mathcal H^t$. In other words, there exits a constant $C_{s,t}$ such that  for all $f \in C^\infty(M)$
    \begin{align}
        \norm{P f}_s \le C_{s,t} \norm{f}_t.
        \label{eq:pse_op-smoothing-ops-two-defns}
    \end{align}
Then the   the distributional kernel of $P$, $K(x,y)$ over $M \times M$,  belongs to $C^\infty(M\times M)$. 
\end{lem}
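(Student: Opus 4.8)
The plan is to produce the Schwartz kernel of $P$ explicitly via a spectral expansion and to read off its smoothness directly from the Sobolev estimates in the hypothesis. First I would fix a positive elliptic operator, say $\Lambda = 1 + \Delta_g$, self-adjoint on $L^2(M)$ with discrete spectrum $1 = \lambda_1 \le \lambda_2 \le \cdots \to \infty$ and an orthonormal eigenbasis $\{\phi_j\}_{j \ge 1} \subset C^\infty(M)$. Recall that $\mathcal H^s$ is the domain of $\Lambda^{s/2}$, that $f = \sum_j a_j \phi_j$ lies in $\mathcal H^s$ exactly when $\sum_j |a_j|^2 \lambda_j^s < \infty$, and that (in the spectral normalization) $\|\phi_j\|_s = \lambda_j^{s/2}$. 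By Weyl's law $\lambda_j$ is comparable to $j^{2/\dim M}$, so $\sum_j \lambda_j^{-s}$ converges whenever $s > \tfrac12 \dim M$.

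The first step is to record rapid decay of the matrix coefficients $c_{jk} := \langle P\phi_k, \phi_j \rangle_{L^2}$. For each integer $N \ge 0$, hypothesis \eqref{eq:pse_op-smoothing-ops-two-defns} (with $s=N$, $t=-N$) supplies a bounded extension $P : \mathcal H^{-N} \to \mathcal H^N$, so that, using the duality of $\mathcal H^N$ and $\mathcal H^{-N}$ under the $L^2$ pairing,
\[
 |c_{jk}| \;\le\; \|P\phi_k\|_N \, \|\phi_j\|_{-N} \;\le\; C_N \, \|\phi_k\|_{-N}\,\|\phi_j\|_{-N} \;=\; C_N \,(\lambda_j\lambda_k)^{-N/2}.
\]
I would then introduce the candidate kernel $K(x,y) := \sum_{j,k\ge 1} c_{jk}\, \phi_j(x)\,\overline{\phi_k(y)}$, which a priori is an element of $L^2(M\times M)$. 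The products $\phi_j(x)\overline{\phi_k(y)}$ form an orthonormal basis of $L^2(M\times M)$ consisting of eigenfunctions of $1 + \Delta_{M\times M}$, with eigenvalues $\lambda_j + \lambda_k - 1$ satisfying $1 \le \lambda_j + \lambda_k - 1 \le \lambda_j\lambda_k$. Hence for every $\sigma \ge 0$,
\[
 \|K\|_{\mathcal H^\sigma(M\times M)}^2 \;=\; \sum_{j,k} |c_{jk}|^2 (\lambda_j + \lambda_k - 1)^\sigma \;\le\; C_N^2 \sum_{j,k} (\lambda_j\lambda_k)^{\sigma - N} \;=\; C_N^2 \Big( \sum_j \lambda_j^{\sigma - N} \Big)^{2},
\]
which is finite as soon as $N > \sigma + \tfrac12 \dim M$. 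Thus $K$ lies in $\mathcal H^\sigma(M\times M)$ for every $\sigma$, and the Sobolev embedding theorem forces $K \in C^\infty(M\times M)$.

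It then remains to identify $K$ with the distributional kernel of $P$. For $f = \sum_k a_k \phi_k \in C^\infty(M)$, with $a_k = \langle f,\phi_k\rangle$, I would compare, on the one hand, $(Pf)(x) = \sum_k a_k (P\phi_k)(x) = \sum_j \big( \sum_k c_{jk} a_k \big) \phi_j(x)$, and on the other hand $\int_M K(x,y) f(y)\, dg(y) = \sum_{j,k} c_{jk}\, a_k\, \phi_j(x)$, and these coincide. The rearrangements of the double series are legitimate because $(c_{jk})$ decays faster than any power of $\lambda_j\lambda_k$, the coefficients $(a_k)$ of the smooth $f$ decay rapidly in $\lambda_k$, and each eigenfunction obeys a polynomial bound $\|\phi_j\|_{C^r(M)} \le C_r \lambda_j^{N_r}$ (again by Sobolev embedding). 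This proves $(Pf)(x) = \int_M K(x,y) f(y)\, dg(y)$ with $K \in C^\infty(M\times M)$.

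I expect the only real obstacle to lie in the two-variable bookkeeping: one must verify carefully that the tensor-product eigenfunctions $\phi_j(x)\overline{\phi_k(y)}$ genuinely diagonalize the standard elliptic operator on $M\times M$ and that their $\mathcal H^\sigma(M\times M)$-norms are controlled by powers of $\lambda_j + \lambda_k$, so that belonging to all $\mathcal H^\sigma(M\times M)$ is equivalent to convergence of the displayed double series. The kernel estimate, the use of Weyl's law, and the final identification are then routine.
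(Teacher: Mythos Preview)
Your argument is correct. The rapid decay estimate on the matrix coefficients $c_{jk}$, the Sobolev norm computation for $K$ on $M\times M$ via the tensor eigenbasis, and the identification of $K$ with the Schwartz kernel of $P$ are all carried out cleanly; the two-variable bookkeeping you flag as a potential obstacle is handled correctly by your observation that $1\le \lambda_j+\lambda_k-1\le \lambda_j\lambda_k$ once $\lambda_j,\lambda_k\ge 1$.

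The paper takes a different, much terser route: it simply invokes the Sobolev lemma together with the kernel estimate recorded in \cite[Lemma~1.2.9]{gilkey1995invariance}. Gilkey's argument works by noting that for any differential operators $A,B$ on $M$, the composition $A P B$ still satisfies the hypothesis \eqref{eq:pse_op-smoothing-ops-two-defns}; one then controls $\partial_x^\alpha\partial_y^\beta K$ as the kernel of such a composition and appeals to Sobolev embedding pointwise. Your spectral-expansion proof is more self-contained and constructive---it produces the kernel explicitly and reads off smoothness from a single series estimate---whereas the paper's approach is shorter on the page but outsources the work to a reference. Both are standard; yours has the advantage of not requiring the reader to look up Gilkey, at the cost of setting up the eigenbasis and Weyl asymptotics.
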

\begin{proof}
    The proof can be achieved by applying  the Sobolev lemma to the estimate in \cite[Lemma 1.2.9]{gilkey1995invariance}
\end{proof}

Let $P \in \Psi^{-\infty}(M)$ be a smoothing operator, then the estimate \eqref{eq:pse_op-smoothing-ops-two-defns} holds for $\pi^\Theta(P)$ for all real number $s$. Hence the operator $\pi^\Theta(P)$ has a smooth Schwartz kernel by  lemma \ref{lem:pse_op-smoothing-ops-two-defns} above. We summarize the fact in the proposition below:
\begin{prop} \label{prop:pse_op-smoothing-ops-pi-theta}
    Let $\mathcal H$ be the Hilbert space of $L^2$-functions on a toric Riemannian manifold $M$. As a $\T^n$-smooth subspace of $B(\mathcal H)$, $\Psi^{-\infty}(M)$ is stable under the deformation map $\pi^\Theta$, that is
    \begin{align*}
        \pi^\Theta\brac{\Psi^{-\infty}(M)
        }  \subset \Psi^{-\infty}(M).
    \end{align*}
\end{prop}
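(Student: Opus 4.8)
The plan is to reduce the claim to Lemma~\ref{lem:pse_op-smoothing-ops-two-defns}, which characterizes smoothing operators by the property of being bounded $\mathcal H^s \to \mathcal H^t$ for all pairs of real numbers $s,t$. So it suffices to show that if $P \in \Psi^{-\infty}(M)$ satisfies the estimate $\norm{Pf}_s \le C_{s,t}\norm{f}_t$ for every $s,t\in\R$, then its deformation $\pi^\Theta(P)$ satisfies an estimate of the same form (with possibly different constants). Once that is established, Lemma~\ref{lem:pse_op-smoothing-ops-two-defns} gives that $\pi^\Theta(P)$ has a smooth Schwartz kernel on $M\times M$; but $\pi^\Theta(P)$ is also a continuous linear operator $C^\infty(M)\to C^\infty(M)$ — continuity in the Fréchet topology follows because $\pi^\Theta(P) = \sum_r P_r U_{r\cdot\Theta/2}$ is an absolutely convergent (rapidly decaying) series of compositions of $P_r \in \Psi^{-\infty}(M)$ with unitaries $U_{r\cdot\Theta/2}$, each of which preserves $C^\infty(M)$ — so the converse direction of the standard characterization identifies $\pi^\Theta(P)$ as a smoothing operator, i.e. an element of $\Psi^{-\infty}(M)$.

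The key step, then, is the Sobolev-boundedness of $\pi^\Theta(P)$. First I would use the formula \eqref{eq:pse-op-pi-theta-defn}, $\pi^\Theta(P) = \sum_{r\in\Z^n} P_r U_{r\cdot\Theta/2}$, together with the fact (from Proposition~\ref{prop:deform-frech-algebr-smoothness} applied to the torus action on each $\Psi^d(M)$, and in particular on $\mathcal H^s$) that the isotypical components $P_r$ decay rapidly in $r$ in \emph{every} relevant Fréchet seminorm, hence in particular the operator norms $\norm{P_r}_{\mathcal H^t\to\mathcal H^s}$ decay faster than any polynomial in $r$. Since each $U_{r\cdot\Theta/2}$ is unitary on $L^2(M)$, I need to be a little careful: the naive bound $\norm{P_r U_{r\cdot\Theta/2}}_{\mathcal H^t\to\mathcal H^s} \le \norm{P_r}_{\mathcal H^t\to\mathcal H^s}\cdot\norm{U_{r\cdot\Theta/2}}_{\mathcal H^t\to\mathcal H^t}$ requires knowing $U_t$ is bounded on $\mathcal H^t$ for all $t$, which holds because the torus acts by diffeomorphisms (smooth, hence Sobolev-bounded on the compact $M$, with norms bounded uniformly in $t\in\T^n$ by compactness). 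Then
\begin{align*}
\norm{\pi^\Theta(P)}_{\mathcal H^t\to\mathcal H^s} \le \sum_{r\in\Z^n} \norm{P_r U_{r\cdot\Theta/2}}_{\mathcal H^t\to\mathcal H^s} \le C \sum_{r\in\Z^n} \norm{P_r}_{\mathcal H^t\to\mathcal H^s} < \infty,
\end{align*}
the last sum converging because of rapid decay. This gives the required estimate with a finite constant for every $s,t$.

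The main obstacle is bookkeeping rather than anything deep: one has to make sure the rapid decay of $\norm{P_r}$ is available in the operator norm $\mathcal H^t\to\mathcal H^s$ for \emph{all} $s,t$ simultaneously, not just in one fixed Fréchet seminorm. This follows from Proposition~\ref{prop:deform-frech-algebr-smoothness} since $P$, being smoothing, is a $\T^n$-smooth vector in $B(\mathcal H^t,\mathcal H^s)_\infty$ for each fixed $(s,t)$ — the estimate \eqref{eq:peter-weyl-decom-estimate} is uniform over $r$ with a constant depending only on the chosen seminorm — so applying it pair by pair gives what is needed; the subtle point is merely that $P$ genuinely lies in each of these spaces, which is exactly the defining smoothing property \eqref{eq:pse_op-smoothing-ops-two-defns}. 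The remaining verifications — that $\pi^\Theta(P)$ maps $C^\infty(M)$ to $C^\infty(M)$ continuously, and that the converse direction of Lemma~\ref{lem:pse_op-smoothing-ops-two-defns} (smooth kernel $\Rightarrow$ smoothing operator in the pseudodifferential sense) applies — are routine.
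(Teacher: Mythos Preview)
Your proposal is correct and follows essentially the same route as the paper: reduce to Lemma~\ref{lem:pse_op-smoothing-ops-two-defns} by showing that $\pi^\Theta(P)$ is bounded $\mathcal H^t \to \mathcal H^s$ for all $s,t$, then conclude smoothness of the Schwartz kernel. The paper's version is terser because it invokes Lemma~\ref{prop:deform-pseudo-diff-preserves-the-order} (order is preserved under $\pi^\Theta$) for every $d\in\Z$ rather than re-deriving the Sobolev bounds from the isotypical decomposition as you do, but the underlying mechanism is identical.
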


We define the deformation of classical pseudo differential operators $\op{CL}(M_\Theta)$ to be the quotient:

 \begin{align}
     \op{CL}(M_\Theta) & = \Psi^\infty(M_\Theta)/ \Psi^{-\infty}(M) = \pi^\Theta(\Psi^\infty(M))/\Psi^{-\infty}(M)
     \\
     & = \pi^\Theta \brac{ \Psi^\infty(M)/ \Psi^{-\infty}(M)
     }   = \pi^\Theta(\op{CL}(M)).
     \label{eq:pse_op-def-classical-pseops}
 \end{align}

%%% Local Variables: 
%%% mode: latex
%%% TeX-master: "main"
%%% End: 

% !TEX root =  main.tex

\section{Widom's pseudo differential calculus}
\label{sec:wid's-cal}

In the literature, symbol calculus for pseudo differential operators on manifolds was developed by pasting Fourier integral operators  on open sets of $\R^n$. In such construction, only the leading symbol of an operator  is well-defined as a function on the cotangent bundle, the rest of them  depend heavily on the chosen local coordinates and the transformation rules between different coordinate systems are quite cumbersome. It is a natural question to ask for an invariant (independent of the choice of
coordinates) construction of such calculus. The construction was developed long time ago, in Widom's work  \cite{MR560744} and \cite{MR538027}, also \cite{MR0310708}, later various modifications were suggested \cite{MR2627820}, \cite{MR973171}, \cite{MR1425328}, \cite{MR728863}, \cite{MR2124544}. As a price to pay, the resulting symbol calculus is quite sophisticated geometrically and combinatorially: tensor calculus is heavily involved several multi-indices
are required in the expression of the product formula of two symbols. In this section, we will follows Widom's work, focus on explaining the notations appeared in the main results and refer most of the technical proofs to \cite{MR560744} and \cite{MR538027}. 

% We will briefly recall Widom's symbol calculus and its deformation to the noncommutative setting. The main result is the asymptotic formula  $p \star q$ for the product of two symbols. We shall focus on explaining the
% 
% All the proofs can be found in Widom's work  \cite{MR560744} and \cite{MR538027}, therefore won't be repeated here.  
\par
In this section, $M$ is a always a compact smooth manifold without boundary endowed with a torsion free connection $\nabla$.

%\subsection{The symbol calculus}
% \subsection{Connection and differential operators}
% 
% 
% We start with some basic facts about the symmetrized covariant derivatives $\symd$. 
% Let $\op{Sym}$ denote the symmetrization of tensors in all their indices, and denote by  $\stensor$, the symmetric tensor:
% \begin{align}
% u \stensor v \defeq \op{Sym}( u \otimes v)
% \label{eq:wpse_sym}
% \end{align}
% then one can quickly verify that
% \begin{align}
% \op{Sym}(\nabla^k (u \otimes v)) = \sum_{j=0}^k \binom ki (\op{Sym}(\nabla^j u))\stensor 
% \op{Sym}(\nabla^{k-j} v).
% \label{eq:wpse_sym-nabla}
% \end{align}
% For any integer $j\in \N$, the $j$-th symmetrized covariant derivative  $\symd^j$ is equal to $\nabla^j$ followed by a symmetrization:
% \begin{align}
% \symd^j u  = \op{Sym}(\nabla^j u).  
% \label{eq:wpse_sym_derivative}
% \end{align}
%  The higher order product rule will be needed later:
% \begin{align}
% \symd^j (u_1 \otimes \cdots \otimes u_r) =\sum\frac{j!}{i_1 \cdots i_r}\symd^{i_1} u_1 \stensor \dots \stensor \symd^{i_r} u_r,
% \label{eq:wpse_sym_derivative_prodrule}
% \end{align}
% where the sum is over $i_1 + \cdots i_r = k$. \par
%  Each symmetric contravariant tensor $\rho_\alpha$ can be identified with a  polynomial function  on $T^* M$: $\rho_\alpha(\xi_x)$, which is indeed a componet of the total  symbol of $P$. Such correspondent between operators and symbols will be extended to all pseudo differential operators in this section. \par
% 
  
  \subsection{The symbol map and the quantization map }
We start with an observation on differential operators. Using the connection $\nabla$, any differential operator $P$ on $M$ can be defined in a coordinate-free way as of the form of a finite sum:
\begin{align}
\label{eq:defn:wpse-diff-dop-coordfree}
  P(f) = \sum_\alpha \rho_\alpha \cdot (\nabla^\alpha f),
\end{align}
where each $ \rho_\alpha$ is a  contravariant tensor fields  such that the contraction $ \rho_\alpha \cdot (\nabla^\alpha f)$ gives rise a smooth function on $M$. If the tensor field $\rho_\alpha$ is symmetric, then the polarization gives rise a polynomial on the cotangent bundle, which leads to the classical notion of symbols of differential operators.  The  bottom line here is that if we replace the contraction in \eqref{eq:defn:wpse-diff-dop-coordfree} by the deformed version $\cdot_\Theta$, then we
recovered the deformed operator $\pi^\Theta(P)$.\par

Let $\ell(\xi_x,y) \in C^\infty(T^*M\times M)$ be a phase function as in definition \ref{defn:def_riegeo_defn-linearfun} with respect to the given connection $\nabla$.    
Then $\ell(\cdot,y)$ can be thought as vector field on $M$ (supported near by the point $y$). With the interpretation, we denote:
\begin{align}
    \ell(\cdot,y)^k = \ell(\cdot,y)\otimes \cdots \otimes \ell(\cdot,y),
    \label{eq:wpse_tensor_l}
\end{align}
which is a symmetric $k$-th order contravariant tensor field and so the pairing 
\begin{align}
    \nabla^k f(x_0) \cdot \ell(x_0,x)^k
\end{align}
is well-defined and, by the symmetry of $l(x_0,x)^k$, 
\begin{align}
   \nabla^k f(x_0)\cdot \ell(x_0,x)^k = \symd^k f(x_0) \ell(x_0,x)^k,
\end{align}
where $\symd^k$ is the symmetrization of $\nabla^k$ as before. An interesting  consequence of such construction is the analogy of  the Taylor's expansion formula on manifolds (cf.\cite[Prop. 2.2]{MR560744}):
\begin{align}
        f(x) \backsim \sum_{j=0}^\infty \frac{1}{j!}  \nabla^j f(x_0) \cdot \ell(x_0,x)^j, \,\,\, 
        f \in C^\infty(M). 
    \end{align}

%\begin{prop}
%    \label{prop:wpse_taylor_exp_on_M}
%    For each point $x_0 \in M$ and each integer $N$ the function 
%    \begin{align}
%        f(x) - \sum_{j=0}^N \frac{1}{j!}  \nabla^j f(x_0) \cdot \ell(x_0,x)^j 
%    \end{align}
%    vanished to order $N+1$ at $x_0$. 
%\end{prop}

\begin{defn}
    \label{defn:wpse-symbol-map}
    Let $M$ be a smooth manifold with a linear connection $\nabla$, and $\ell(\xi_x,y) \in C^\infty(T^*M \times M)$ be a phase function in definition \ref{defn:def_riegeo_defn-linearfun} with respect to $\nabla$. Denote by $\psi_\Delta \in C^\infty(M\times M)$ a cut-off function such that $\psi_\Delta = 1$ is equal to $1$ on a small neighborhood of the diagonal and also $\supp \psi_\Delta$ is still closed to the diagonal so that $\psi_\Delta(x,y) \neq 0$ implies $d_y
    \ell(\xi_x, y) \neq 0$ for all $\xi_x \neq 0$. 
    For any pseudo differential operator $P:C^\infty(M)\rightarrow C^\infty(M)$, the symbol $\sigma(P)$ of $P$ is a smooth function on $T^*M$:
\begin{align}
    \sigma(P)(\xi_x)  = P \psi_\Delta(x,y) e^{i \ell(\xi_x,y)} \Big|_{y=x},
    \label{eq:wpse_symmap}
\end{align}
where the operator $P$ acts on the $y$ variable. 
\end{defn}

\begin{rem}
	Up to smoothing operators, the symbol map $\sigma$ is independent of the choice of the cut-off function $\psi_\Delta$, and the choice of the connection $\nabla$ and the phase function $\ell(\xi_x,y)$.  
\end{rem}
 
On the other hand, pseudo differential operators are obtained by quantizing symbols. 
\begin{defn} \label{defn:wpse-diff-quantization}
% Recall that $\Psi(M) = \Psi^\infty(M)/\Psi^{-\infty}(M)$ is the space of classical pseudo differential operators on $M$, also for each $d \in \Z$, $\widetilde \Psi^d(M) =\Psi^d(M)/ \Psi^{-\infty}(M)$. 
 Let $\psi_\Delta$ be the cut-off function used in definition \ref{defn:wpse-symbol-map}.
 For any $f \in C^\infty(M)$, put $y = \exp_x Y$, the  quantization map $\op{Op}: S\Sigma^d(M) \rightarrow  \Psi^d(M)$, $d\in\Z^n$ is defined as follows:
\begin{align}
    (\op{Op}(p)f)(x) =\frac{1}{(2\pi)^m} \int_{T^*_xM}\int_{T_xM} e^{-i\abrac{\xi_x, Y}} p(\xi_x) \psi_\Delta(x,y) f(\exp_x Y) dY d\xi,
    \label{eq:wpse-diff-quantization-defn}
\end{align}
where $m$ is the dimension of the manifold $M$, $\abrac{\xi_x,Y}$ is the canonical pairing: $T_x^*M \times T_x M \rightarrow \R$, and $dY$, $d\xi$ denote the  densities that are dual to each other. Different choices of the cut-off function   $\psi_\Delta$ give rise the same quantization map modular smoothing operators. 
% The Lebesgue measure $dY$ and $d\xi$ are chosen such that the Fourier inversion holds:
% \begin{align*}
%     \phi(0) = \int_{T^*_xM}\int_{T_xM} e^{-i\abrac{\xi_x, Y}} dY d\xi
% \end{align*}
% where $\phi$ is a smooth functions on $T_xM$ supported in a small neighborhood of the origin. 
\end{defn}

It is well-known that $\sigma$ and $\op{Op}$ are inverse to each other upto smoothing operators.

As an exmaple, we compute the symbol of the scalar Laplacian $\Delta$.
\begin{lem}
    \label{lem:wpse-diff-sym-lap}
    The symbol of the scalar Laplacian $\Delta: C^\infty(M) \rightarrow C^\infty(M)$ is equal to $\sigma(\Delta) = \abs\xi^2$, where $\abs\xi^2$ is the squared length function on $T^*M$.
\end{lem}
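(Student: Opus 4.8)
The plan is to evaluate the defining formula \eqref{eq:wpse_symmap} directly, using the coordinate-free expression of $\Delta$ as a second-order differential operator together with the defining property \eqref{eq:def_riegoe-defn-linearfun} of the phase function $\ell$. First I would write $\Delta$ in the form \eqref{eq:defn:wpse-diff-dop-coordfree}; since $\Delta$ is the (positive, say) scalar Laplacian $-\mathrm{tr}_g \nabla^2$ associated to the Levi-Civita connection, it acts on a function $h$ by $\Delta h = -g^{-1}\cdot \nabla^2 h$, i.e. $\rho_2 = -g^{-1}$ (the inverse metric tensor, a symmetric contravariant $2$-tensor) and all other $\rho_\alpha$ vanish. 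Thus for the symbol we must compute
\begin{align*}
\sigma(\Delta)(\xi_x) = \Delta_y\bigl(\psi_\Delta(x,y) e^{i\ell(\xi_x,y)}\bigr)\Big|_{y=x},
\end{align*}
where the subscript $y$ indicates that $\Delta$ acts in the second variable. Near the diagonal $\psi_\Delta \equiv 1$, so the cut-off contributes nothing to the value at $y=x$ and we may replace $\psi_\Delta e^{i\ell}$ by $e^{i\ell(\xi_x,y)}$.

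Next I would apply $\Delta_y = -g^{-1}\cdot \nabla^2_y$ to $e^{i\ell(\xi_x,y)}$ and use the chain/Leibniz rule for covariant derivatives. One gets
\begin{align*}
\nabla_y\bigl(e^{i\ell}\bigr) &= i\,e^{i\ell}\,\nabla_y \ell,\\
\nabla^2_y\bigl(e^{i\ell}\bigr) &= i\,e^{i\ell}\,\nabla^2_y \ell \;-\; e^{i\ell}\,(\nabla_y\ell)\otimes(\nabla_y\ell).
\end{align*}
Evaluating at $y=x$: by \eqref{eq:def_riegoe-defn-linearfun} with $j=1$ we have $\nabla_y\ell(\xi_x,y)|_{y=x} = \symd^1\ell(\xi_x,y)|_{y=x} = \xi_x$ (the first covariant derivative of a function is automatically symmetric), and with $j=2$ we have $\symd^2\ell(\xi_x,y)|_{y=x} = 0$; since the Hessian $\nabla^2_y\ell$ of a scalar function is symmetric when $\nabla$ is torsion-free, $\symd^2\ell = \nabla^2\ell$, so $\nabla^2_y\ell(\xi_x,y)|_{y=x}=0$ as well. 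Therefore $\nabla^2_y(e^{i\ell})|_{y=x} = -\xi_x\otimes\xi_x$, and contracting with $-g^{-1}$ yields
\begin{align*}
\sigma(\Delta)(\xi_x) = -g^{-1}\cdot\bigl(-\xi_x\otimes\xi_x\bigr) = g^{-1}(\xi_x,\xi_x) = |\xi_x|^2,
\end{align*}
which is the squared length of $\xi_x$ with respect to the metric on $T^*M$, as claimed.

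I expect the only real subtlety to be bookkeeping around sign and normalization conventions: fixing the sign of $\Delta$ (geometer's vs. analyst's Laplacian) consistently with the sign used elsewhere in the paper, and checking that the torsion-freeness of $\nabla$ (assumed throughout this section) is exactly what is needed to identify $\nabla^2_y\ell$ with $\symd^2\ell$ so that the second-order term drops out. One should also note that lower-order ambiguities in $\ell$ (it is only determined up to terms vanishing to third order at the diagonal, cf. \eqref{eq:def_riegoe-defn-linearfun}) do not affect the computation, since only $\nabla_y\ell$ and $\nabla^2_y\ell$ at $y=x$ enter, and these are pinned down by \eqref{eq:def_riegoe-defn-linearfun}; this is also consistent with the remark that $\sigma$ is well-defined modulo smoothing operators. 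Everything else is a routine application of the Leibniz rule.
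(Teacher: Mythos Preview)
Your proof is correct and follows essentially the same route as the paper: write $\Delta = -g^{-1}\cdot\nabla^2$, apply it to $e^{i\ell}$ via the Leibniz rule, and use torsion-freeness together with the defining property \eqref{eq:def_riegoe-defn-linearfun} of $\ell$ to kill the $\nabla^2\ell$ term and reduce to $g^{-1}(\xi_x,\xi_x)$. The paper's version is just a terser, index-notation rendition of exactly this computation.
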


\begin{proof}
    In  local coordinates, 
 	\begin{align}
 	\Delta f = - (\nabla^2 f)_{ij}g^{ij}.
 	\label{eq:mod-cur-laplacian}
 	\end{align}
 To compute the symbol, we can ignore the cut-off function $\psi_\Delta$ because $\Delta$ is a differential operator:
 	\begin{align*}
 	\nabla^2 e^{i\ell(\xi_x,y)} = e^{i\ell(\xi_x,y)} \brac{i \nabla^2 \ell(\xi_x,y)
 		- \nabla \ell(\xi_x,y)\nabla \ell(\xi_x,y)}.
 	\end{align*}
    Note that $\nabla$ is torsion-free, thus $\nabla^2 $ is equal to its symmetrization $\symd^2$. Therefore by the definition of $\ell$.  $\nabla^2 \ell=0$ and  $\nabla_j \ell(\xi_x,y) = \xi_j$.  Pair $\nabla^2 e^{i\ell(\xi_x,y)}$ with the metric on $T^*M$: $g^{-1} = g^{ij}$, we obtain:
 	 	\begin{align}
 	\sigma(\Delta)(\xi_x) = \xi_i \xi_j g^{ij} = \abs\xi^2.
 	\label{eq:mod-cur-sym-delta}
 	\end{align}
 	where $(g^{ij})$ is the metric tensor on $T^*M$. 
\end{proof}

 Given two pseudo differential operators $P$ and $Q$ with symbols $p$ and $q$ respectively, the symbol of the composition $PQ$ is given by an asymptotic product
$\sum_j a_j(p,q)$, where the $a_j(\cdot,\cdot)$ are bi-differential operators. Similar to the case of  differential operators in $C^\infty(M)$ described \eqref{eq:defn:wpse-diff-dop-coordfree}, the bi-differential operators $a_j(p,q)$ on $C^\infty(T^*M) \times C^\infty(T^*M)$ are obtained as the contraction between mixed derivatives of $p$ and $q$ (cf. \eqref{defn:def_riegeo-horizontal-D}) and   tensor fields of suitable ranks so that the contraction produces a smooth function.  

\begin{prop}
    \label{prop:wpse-asym-sybproduct}
    
    Keep the notations as above, for any two pseudo differential operators $P$ ans $Q$ with $p = \sigma(P)$ and $q = \sigma(Q)$, then $\sigma(PQ) =p \star q $ has the following asymptotic form: 
\begin{align}
        p \star q   \backsim  \sum_{j=0}^\infty a_j(p,q),
        \label{eq:wpse-diff-startheta-asym-sybproduct}
    \end{align}
    where  $a_j(\cdot,\cdot)$ are bi-differential operators reducing the total degree by $j$, namely,  for any $d,d'\in\Z$, 
    \begin{align*}
    a_j(\cdot,\cdot): S\Sigma^d(M) \times S\Sigma^{d'}(M) \rightarrow S\Sigma^{d+d'-j}(M).
    \end{align*}
     Here $\backsim$ means that, if we truncate the sum to the first $K$ terms, then the remainder is of order $d+d' -K$, where $d$ and $d'$ are the order of $P$ and $Q$ respectively. \par
   %Taking the symbols $\rho_{(\alpha)_k,(\beta)_k}$ in equation \eqref{eq:wpse-diff-rho-alpha-beta-defn} into account, 
    The bi-differential operators $a_j$  are given by:
%  \begin{align}
%          a_j(p,q) = \sum_{\substack{k\ge 0,  (\beta)_k\ge 2\\
%              j = k-l-\abs{(\alpha)_k}-\abs{(\beta)_k}
%          }}
%          \frac{i^{j}}{k!(\alpha)_k! (\beta)_k! l!}
%         (D^{l + \abs{(\alpha)_k}} p ) (\symd^{l} D^{\abs{(\beta)_k}}q ) \cdot \rho_{(\alpha)_k,(\beta)_k}
%     \end{align}
    \begin{align}
        \begin{split}
       a_j(p,q) =& \sum 
       \frac{i^{-j}}{k!\alpha_0!\alpha_1! \cdots \alpha_k! \beta_1!\cdots \beta_k!} \\
        &(D^{\alpha_0+\sum_1^k \alpha_k}p) ( D^{\sum_1^k \beta_s} \nabla^{\alpha_0}q  )
        (\nabla^{\alpha_1 + \beta_1}\ell)\cdots(\nabla^{\alpha_k+\beta_k}\ell),     
        \end{split}
        \label{eq:wpse-diff-aj-defn} 
    \end{align}
    where the summation is taken over:
    \begin{align*}
        j = -(k - \alpha_0 -\sum_1^k (\alpha_s+\beta_s)) \ge 0, \,\,\, \alpha_0 \ge 0, \,\,\, \alpha_1, \dots \alpha_k \ge 1, \,\,\,
        \beta_1, \dots , \beta_k \ge 2. 
    \end{align*}
The operation between all factors in \eqref{eq:wpse-diff-aj-defn} is mixed type of tensor product and contraction bewteen tensor fields. The contraction occurs between the contravariant and covariant tensors with the same index, thus eventually yields a smooth function.
    
\end{prop}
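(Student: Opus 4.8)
The computation is entirely local near the diagonal of $M\times M$, so I would begin by reducing to symbols: modulo smoothing operators $P=\op{Op}(p)$ and $Q=\op{Op}(q)$, and since replacing $p,q$ by representatives differing by elements of $S\Sigma^{-\infty}$ alters $\sigma(PQ)$ only by a smoothing symbol, it suffices to produce the asymptotic series \eqref{eq:wpse-diff-startheta-asym-sybproduct} for $\sigma(\op{Op}(p)\op{Op}(q))$. The two structural inputs are the manifold Taylor expansion $f(x)\backsim\sum_j\frac1{j!}\nabla^jf(x_0)\cdot\ell(x_0,x)^j$ recalled before Definition~\ref{defn:wpse-symbol-map}, and the normalization \eqref{eq:def_riegoe-defn-linearfun} of the phase function, i.e. $\symd^j\ell(\xi_x,y)|_{y=x}=\xi_x$ for $j=1$ and $0$ otherwise, together with the invariant vertical and horizontal derivatives $D^j,\nabla^j$ of Definitions~\ref{defn:def_riegeo-vertical-D} and \ref{defn:def_riegeo-horizontal-D}.

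The plan is to organize $\sigma(PQ)(\xi_x)=P_z\big[(Qg_{\xi_x})(z)\big]\big|_{z=x}$, with $g_{\xi_x}(w)=\psi_\Delta(x,w)e^{i\ell(\xi_x,w)}$, into two asymptotic steps. \emph{Step A (action of $Q$ on an oscillatory amplitude).} Insert the quantization formula \eqref{eq:wpse-diff-quantization-defn} for $Q$ at base point $z$, writing $w=\exp_zY$; the phase becomes $-\abrac{\eta_z,Y}+\ell(\xi_x,\exp_zY)$. Expanding $\ell(\xi_x,\exp_zY)$ in $Y$ around $Y=0$ and using that the symmetrized $y$-derivatives of $\ell$ vanish beyond first order, one locates the stationary point $\eta_z=d_y\ell(\xi_x,z)$ and, by repeated integration by parts (each step contributing a factor $i^{-1}$ and one vertical derivative $D$ of $q$ paired against a horizontal derivative $\nabla^{\ge2}\ell$ of the phase), obtains $(Qg_{\xi_x})(z)\backsim e^{i\ell(\xi_x,z)}\,b(\xi_x,z)$, where $b$ is an asymptotic symbol with leading term $q(d_y\ell(\xi_x,z))$ and whose corrections are contractions of $D^{\sum\beta_s}q$ with tensors $\nabla^{\beta_s}\ell$, $\beta_s\ge2$. \emph{Step B (action of $P$, evaluation on the diagonal).} Apply $P_z$ to $e^{i\ell(\xi_x,\cdot)}\psi_\Delta(x,\cdot)\,b(\xi_x,\cdot)$ and set $z=x$; expanding $b(\xi_x,\cdot)$ by the manifold Taylor formula based at $x$ and using the defining identity \eqref{eq:wpse_symmap} of $\sigma(P)=p$ on $\psi_\Delta e^{i\ell}$ times tensor polynomials, each Taylor term of order $\alpha_0$ produces $\nabla^{\alpha_0}b(\xi_x,x)$ contracted against $D^{\alpha_0}p$, while the corrections from transporting the base point of $\ell$ contribute the remaining vertical derivatives on $p$ and the factors $\nabla^{\alpha_s+\beta_s}\ell$. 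Combining the two steps and relabelling the indices $(\alpha_0;\alpha_1,\dots,\alpha_k;\beta_1,\dots,\beta_k)$, with $\alpha_0\ge0$, $\alpha_s\ge1$, $\beta_s\ge2$, reproduces exactly the contraction \eqref{eq:wpse-diff-aj-defn}, the combinatorial weight being the product of the multinomial factors $1/k!$, $1/\alpha_0!$, $1/\alpha_s!$, $1/\beta_s!$ and the power $i^{-j}$ accumulated over the integrations by parts.

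Two bookkeeping checks then finish the argument. First, degrees: a vertical derivative $D^a$ lowers symbol order by $a$ and a horizontal $\nabla$ preserves it, while each factor $\nabla^{\alpha_s+\beta_s}\ell$ is linear in $\xi$ and so carries order $+1$; hence a term of \eqref{eq:wpse-diff-aj-defn} has order $(d-\alpha_0-\sum\alpha_s)+(d'-\sum\beta_s)+k=d+d'-j$ because $j=\alpha_0+\sum(\alpha_s+\beta_s)-k$, which is the claimed gain, and the constraint $j\ge0$ makes the sum a genuine asymptotic expansion with remainder of order $d+d'-K$ after truncation, by the standard oscillatory-integral remainder estimate for \eqref{eq:wpse-diff-quantization-defn}. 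Second, well-definedness of each homogeneous piece follows from the fact that a change of phase function or cut-off affects $\sigma$ only modulo $S\Sigma^{-\infty}$.

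The step I expect to be the real obstacle is the precise matching of Steps A and B: showing that the amplitude $b(\xi_x,z)$ produced by $Q$, once fed into $P$ and Taylor-expanded about the diagonal, regroups \emph{exactly} into the stated contraction with the stated multinomial coefficients. This requires careful tracking of how mixed covariant derivatives transform under transport between the base point $x$ and the intermediate point $z=\exp_xZ$, and of the non-symmetry of $\nabla^k\ell$ for $k\ge2$, through which the curvature of $\nabla$ enters the tensors $\nabla^{\alpha_s+\beta_s}\ell$. All of these combinatorial and geometric details are carried out in Widom's papers \cite{MR560744} and \cite{MR538027}, to which I refer for the verification.
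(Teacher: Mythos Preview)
The paper does not give its own proof of this proposition: as announced at the start of Section~\ref{sec:wid's-cal}, the technical arguments are referred to Widom's papers \cite{MR560744,MR538027}, and the proposition is stated with only a remark listing the first few $a_j$'s. Your proposal is therefore already more detailed than the paper; the two-step stationary-phase/Taylor scheme you outline (first expand $Qe^{i\ell}$, then feed into $P$ and evaluate on the diagonal) is precisely Widom's method, your degree count is correct, and your closing deferral to \cite{MR560744,MR538027} for the combinatorial bookkeeping matches the paper's own treatment.
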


\begin{rem} \mbox{}
	\begin{enumerate}[1)]
        \item The vertical and the horizontal differentials $D$ and $\nabla$ are defined in section \ref{subsec:lift-tensor-cal}.
		\item In \eqref{eq:wpse-diff-aj-defn}, the order of the multiplication of $p$, $q$ and  $\ell$ is arranged in such a way that it is works for pseudo differential operators on vector bundles. 
		\item The constrains $\alpha_1, \dots \alpha_k \ge 1$ and $\beta_1, \dots , \beta_k \ge 2$ gives great simplification for the first a few $a_j$'s.  
%             Notice that the vertical differential $D^k q$ always produces a symmetric $k$-tensor, therefore  one can replace each 
%             $\nabla^{\alpha_s+\beta_s}\ell$ by its symmetrization over $\alpha$ and $\beta$: $\symd^{\alpha_s} \symd^{\beta_s}\ell$
%             Since $\symd^j l(\xi_x,y)|_{y=x} =0 $ for $j\ge 1$,  the $ \rho_{(\alpha)_k,(\beta)_k}$ is equal to zero if one of the $\alpha$ components is zero. Hence the summation in \eqref{eq:wpse-diff-aj-defn} is over  $(\alpha)_k \in \Z^k_+$.
		\item The first a few $a_j$'s are listed below: 
		\begin{align}
		a_0(p,q) & = pq, \label{eq:wpse-a0}\\
		a_1(p,q) & = -i D p \nabla q, \label{eq:wpse-a1}\\
		a_2(p,q) & = -\frac 12 D^2 p \nabla^2 q - \frac 12  (D p) (\nabla^2 q) (\nabla^3 \ell).
		\label{eq:wpse-a2}
		\end{align}
		 
	\end{enumerate}

\end{rem}

\subsection{ the Schwartz Kernels and the trace formula}

% The Quantization map $\op{Op}$ is not unique in general, for example, it can be defined in local coordinates via Fourier integrals. However, we need one that is equivariant with respect to the action of diffeomorphisms. 
% 
The last piece we need in the symbol calculus is the trace formula of an operator from its symbol, provided the operator is of trace-class. Since the symbol only defines an operator upto smoothing operators. We measure the error by dilation parameter $t \in [1,\infty)$. 
% 
%  Let us state the result that links the Schwartz kernel  of a pseudo differential operator and its symbol. We refer to   \cite[Theorem 5.7]{MR538027} for proofs.   Strictly speaking,  the whole construction of the  symbol calculus works only between classical pseudo differential operators and classical symbols (``classical'' simply means upto smoothing operators or symbols) due to certain choices we have made, such as the connection, the phase function, the cut-off function, etc.
% Therefore there is no surprise that certain error appears when we try to recover the  Schwartz kernels from the symbols. The error is controlled by a new parameter $t \in (0,\infty)$, viewed as a dilation of symbols. 
  
  \begin{defn}\label{defn:wpse-diff-dilation-ops}
  	A dilation  of a pseudo differential operator $P$ is a family of pseudo differential operators $P_t$ with $t \in [1,\infty)$ given by dilation of the symbols functions, in local coordinate:
  	\begin{align}
  	p_t(x,\xi) = p(x,\xi/t).
  	\label{eq:wpse-diff-dilation-ops}
  	\end{align}
  \end{defn}

Let $(M,g)$ be a $m$ dimensional closed Riemannian manifold with the metric tensor $g$. The canonical $2m$-form on the cotangent bundle $T^*M$ is given in local coordinates $(x,\xi)$ by
\begin{align}
\label{eq:wpse-diff-canonical-twom-form}
	\Omega = dx_1 \wedge \cdots \wedge dx_m \wedge d\xi_1 \wedge \cdots \wedge d\xi_m,
\end{align}
while $\Omega = dg d\xi_{x,g^{-1}}$ with the volume form $dg = (\det g) dx_1 \wedge \cdots dx_m$ and 
\begin{align}
\label{eq:wpse-diff-measure-on-T^*_xM}
	d\xi_{x,g^{-1}} = (\det g)^{-1}d\xi_1 \wedge \cdots \wedge d\xi_m
\end{align}
defines a measure on the fiber $T^*_xM$. 

\begin{prop}\label{prop:wpse-diff-sym-to-kernel-dia}
Keep the notations as above, let $P$ be pseudo differential operator $P$ whose order is less than $m$, the dimension of the manifolds such that the Schwartz kernel
function $k_P(x,y)$ exists, then the Schwartz kernel of the dilation  $P_t$ $($see definition $\ref{defn:wpse-diff-dilation-ops}$$)$  on the diagonal is given by 
    \begin{align}
        k_{P_t}(x,x) = \frac{t^m}{(2\pi)^m} \int_{T^*_xM} \sigma(P) d\xi_{g^{-1}} + O(t^{-N}), \,\,\, \forall N \in\N.
        \label{eq:wpse-diff-sym-to-kernel-dia}
    \end{align}
 In particular, we obtain the trace formula for $P_t$:
 \begin{align}
     \Tr(P_t) = \frac{t^m}{(2\pi)^m} \int_{T^*M} \sigma(P) \Omega + O(t^{-N}), \,\,\, \forall N \in\N,
 \end{align}   
 where $\Omega = dg d\xi_{x,g^{-1}}$ is the canonical $2m$-form defined in \eqref{eq:wpse-diff-canonical-twom-form}. 
\end{prop}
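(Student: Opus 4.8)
The plan is to establish the on-diagonal kernel formula \eqref{eq:wpse-diff-sym-to-kernel-dia} first and then integrate to obtain the trace formula. I would start from the quantization formula \eqref{eq:wpse-diff-quantization-defn} applied to a symbol $p$ representing $P$ (modulo a smoothing operator, whose contribution to the on-diagonal kernel is $O(t^{-N})$ after dilation, since dilating the symbol of a smoothing operator keeps it smoothing with norms decaying faster than any power of $t$). The Schwartz kernel of $\op{Op}(p)$ is obtained by not setting $y=x$; in geodesic normal coordinates $Y = \exp_x^{-1}(y)$ centered at $x$ one has, up to the cut-off $\psi_\Delta$ which equals $1$ near the diagonal,
\begin{align*}
  k_{\op{Op}(p)}(x,y) = \frac{1}{(2\pi)^m}\int_{T^*_xM} e^{-i\abrac{\xi_x,Y}} p(\xi_x)\, d\xi.
\end{align*}
Replacing $p$ by $p_t$, where $p_t(\xi_x) = p(\xi_x/t)$, and substituting $\xi_x \mapsto t\xi_x$ in the fiber integral produces a factor $t^m$ and the phase $e^{-it\abrac{\xi_x,Y}}$; setting $y=x$, i.e. $Y=0$, gives the leading term $\frac{t^m}{(2\pi)^m}\int_{T^*_xM} p(\xi_x)\, d\xi_{g^{-1}}$, with $d\xi_{g^{-1}}$ the measure from \eqref{eq:wpse-diff-measure-on-T^*_xM} dual to $dY$ under the Riemannian identifications. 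Since only the leading symbol of $P$ is invariantly defined, and lower-order pieces contribute terms with more powers of $\xi$ that, after the substitution and restriction $Y=0$, carry strictly lower powers of $t$, in the limit only $\sigma(P)$ survives in the displayed coefficient; the remainder is $O(t^{-N})$ because the non-leading part of $p$ together with the smoothing error can, after dilation, be absorbed into symbols of arbitrarily negative order, whose kernels are uniformly controlled.

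Next, I would integrate \eqref{eq:wpse-diff-sym-to-kernel-dia} over $M$ against the Riemannian volume form $dg$. Since $P$ (hence $P_t$) has order $<m$, its Schwartz kernel is continuous and $\Tr(P_t) = \int_M k_{P_t}(x,x)\,dg(x)$ by Mercer-type considerations; combining with the fiberwise formula and Fubini gives
\begin{align*}
  \Tr(P_t) = \frac{t^m}{(2\pi)^m}\int_M\int_{T^*_xM}\sigma(P)\, d\xi_{g^{-1}}\, dg(x) + O(t^{-N})
           = \frac{t^m}{(2\pi)^m}\int_{T^*M}\sigma(P)\,\Omega + O(t^{-N}),
\end{align*}
using $\Omega = dg\, d\xi_{x,g^{-1}}$ from \eqref{eq:wpse-diff-canonical-twom-form}. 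The uniformity of the $O(t^{-N})$ error in $x$ (needed for Fubini and for pulling it outside the $M$-integral over the compact $M$) follows from the compactness of $M$ and the uniform symbol estimates \eqref{eq:def_riegeo-defn-sym-estimate}.

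The main obstacle is the rigorous justification of the $O(t^{-N})$ error: one must show that both the genuinely smoothing part of $P$ and the contribution of the non-principal part of the symbol, once the dilation $\xi \mapsto \xi/t$ and the normal-coordinate expansion of $\psi_\Delta$ and of the volume distortion are taken into account, produce on-diagonal kernels that decay faster than any power of $t$, uniformly on $M$. This is where the coordinate-free machinery — the phase function $\ell$, the mixed derivatives $D^j\nabla^i\ell$, and the asymptotic product \eqref{eq:wpse-diff-startheta-asym-sybproduct} — is needed to keep the estimates invariant; the technical content is essentially that of \cite[Prop. 2.2]{MR560744} and the stationary-phase / non-stationary-phase bookkeeping there, so I would carry out the delicate part following that reference and simply cite it for the estimates, presenting here only the change-of-variables computation that extracts the leading $t^m$ term.
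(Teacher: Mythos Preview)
The paper does not prove this proposition at all; it simply refers the reader to \cite[Theorem 5.7]{MR538027}. Since your proposal ultimately defers to the same reference for the technical estimates, the two are aligned at that level.

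However, the heuristic sketch you give before invoking Widom contains a genuine misconception about this calculus. In Widom's global framework, $\sigma(P)$ as defined in Definition~\ref{defn:wpse-symbol-map} is the \emph{full} invariant symbol, not merely the principal part; the entire point of the construction is that the complete symbol is coordinate-free. So the sentence ``only the leading symbol of $P$ is invariantly defined, and lower-order pieces \ldots\ in the limit only $\sigma(P)$ survives'' misidentifies what $\sigma(P)$ is. More seriously, your mechanism for the $O(t^{-N})$ error is incorrect: if $q\in S\Sigma^{d}$ then $q_t(\xi)=q(\xi/t)$ is again a symbol of order $d$, not of arbitrarily negative order, and after the change of variables one has $\int_{T^*_xM} q(\xi/t)\,d\xi = t^{m}\int_{T^*_xM} q\,d\xi$. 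Thus every piece of the symbol contributes the \emph{same} power $t^{m}$ to the diagonal kernel; there is no hierarchy of powers of $t$ coming from the symbol decomposition, and dilating a smoothing symbol does not produce $O(t^{-N})$ decay (indeed $p(\xi/t)\to p(0)$ as $t\to\infty$). The actual $O(t^{-N})$ remainder in Widom's theorem has a different origin, tied to the precise relationship between $\sigma$ and $\op{Op}$ in his calculus, and is exactly what \cite[Theorem 5.7]{MR538027} handles. Your change-of-variables computation extracting the $t^{m}$ coefficient is fine; it is only the surrounding justification of the error term that should be dropped in favor of the citation.
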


 See \cite[Theorem 5.7]{MR538027} for the proof.

% !TEX root =  main.tex

\subsection{Deformation of the symbol calculus}
\label{sec:def-wid'scal}
Now let us take the torus action into account. The symbol calculus depends on the choice of a linear connection, therefore if we assume that the torus acts as affine transformations $\T^n\subset \op{Affine}(M)$ so that the the connection is preserved as explain in section \ref{sec:def-of-rie-geo}, the all the constructions, such as the symbol map and the quantization map, are 
$\T^n$-equivariant.  The Fr\'echet topologies on symbols and pseudo differential operators are constructed in such a way that both the symbol map and the quantization map are continuous. As a consequence, for any pseudo differential operator $P = \sum_{r\in\Z^n} P_r$ and its symbol $p = \sum_{r\in\Z^n} p_r$ with their isotypical decomposition, we have
\begin{align*}
    \sigma(P_r) \backsim \sigma(P)_r \backsim p_r, \,\,\, \op{Op}(p_r) \backsim \op{Op}(p)_r \backsim P_r, 
\,\,\, \sigma(P) \backsim \sum_{r\in\Z^n} p_r, \,\,\, \op{Op}(q) \backsim \sum_{r\in\Z^n} P_r.
\end{align*}
Given two deformed pseudo differential operators $\pi^\Theta(P)$ and $\pi^\Theta(Q)$ with symbols $p$ and $q$ respectively, our goal is to find an deformed the asymptotic symbol product $\star_\Theta$ such that the error
\begin{align*}
    \pi^\Theta(P)\pi^\Theta(Q) - \pi^\Theta(\op{Op}(p \star_\Theta q))
\end{align*}
belongs to $\Psi^{-\infty}(M)$. Since $\pi^\Theta(P)\pi^\Theta(Q) = \pi^\Theta(P\times_\Theta Q)$, $p \star_\Theta q$ should be $\sigma(P\times_\Theta Q)$ upto a smoothing operator. We formally compute:
\begin{align*}
     \sigma(P \times_\Theta Q) &= \sum_{\mu,\nu\in\Z^n} \chi_\Theta(\mu,\nu) \sigma(P_\mu Q_\nu) 
 \backsim  \sum_{\mu,\nu\in\Z^n} \chi_\Theta(\mu,\nu) p_\mu \star q_\mu\\
 &\backsim  \sum_{\mu,\nu\in\Z^n}  \sum_{j=0}^\infty   \chi_\Theta(\mu,\nu)a_j(p_\mu,q_\nu)
 \backsim  \sum_{j=0}^\infty  \sum_{\mu,\nu\in\Z^n} \chi_\Theta(\mu,\nu)a_j(p_\mu,q_\nu)\\
 &=  \sum_{j=0}^\infty  a_j(p,q)_\Theta.
 \end{align*}
The summation $ \sum_{j=0}^\infty$ simply means if we truncate the sum to first $N$ terms, the remainder belongs to symbol of order $-J_N$, where $J_N \rightarrow \infty$ as $N \rightarrow \infty$.  Therefore  $ \sum_{j=0}^\infty$ behaves like a finite sum. \par

It remains to compute $a_j(p,q)_\Theta$. Recall from \eqref{eq:wpse-diff-aj-defn}, each $a_j(p,q)_\Theta$ is obtained by contraction and tensor product between tensor fields. Both operations are equivariant with respect to the torus action. Take $a_1$, for instance,
\begin{align*}
    a_1(p,q)_\Theta & = \sum_{\mu,\nu\in\Z^n} \chi_\Theta(\mu,\nu)a_1(p_\mu,q_\nu)
    = \sum_{\mu,\nu\in\Z^n} \chi_\Theta(\mu,\nu) (-i) (D p_r) (\nabla q_l) \\
    & \sum_{\mu,\nu\in\Z^n} \chi_\Theta(\mu,\nu) (-i) (Dp)_r (\nabla q)_l = (-i) (Dp) \cdot_\Theta (\nabla q).
\end{align*}
In general, for all $j\ge 0$, one can quickly verify that  $a_j(p,q)_\Theta$ is of the same form as $a_j(p,q)$ in  \eqref{eq:wpse-diff-aj-defn}, in which the tensor product and the contraction are replaced by the deformed version $\otimes_\Theta$ and $\cdot_\Theta$ respectively. 

We summarize the discussion above in the proposition:
\begin{prop}
        \label{prop:wpse-theta-startheta}
        Keep the notations in proposition $\ref{prop:wpse-asym-sybproduct}$. Let $M$ be a closed  manifold with a $n$-torus action  so that the previous deformation machinery applies.   
     Let  $\pi^\Theta(P)$ and $\pi^\Theta(Q)$ be the deformation of pseudo differential operators. Denote $p = \sigma(P)$ and $q = \sigma(Q)$, then 
     \begin{align*}
         \pi^\Theta(P)\pi^\Theta(Q) \backsim \pi^\Theta(\op{Op}( p\star_\Theta q),
     \end{align*}
     with 
     \begin{align*}
         p\star_\Theta q \backsim \sum_{j=0}^\infty a_j(p,q)_\Theta,
     \end{align*}
     where the bi-differential operators $a_j(\cdot,\cdot)$ are the deformation of $a_j(\cdot,\cdot)$ in proposition $\ref{prop:wpse-asym-sybproduct}$:
     \begin{align*}
      a_j(\cdot,\cdot)_\Theta=   \sum_{\mu,\nu\in\Z^n}\chi_\Theta(\mu,\nu) a_j(p_\mu,q_\nu),\,\,\,\, \chi_\Theta(\mu,\nu) = e^{i \abrac{\mu,\Theta\nu}}.
     \end{align*}
     Precisely, $ a_j(p,q)_\Theta$ can be obtained  from  $ a_j(p,q)$ $($in equation \eqref{eq:wpse-diff-aj-defn}$)$ by replacing the pointwise tensor product and contraction by the deformed version $\otimes$ and $\times_\Theta$ $($cf. sec.$\ref{subsec:deformation-funtens}$$)$: 
     \begin{align}
         \begin{split}
       a_j(p,q)_\Theta =& \sum 
       \frac{i^{-j}}{k!\alpha_0!\alpha_1! \cdots \alpha_k! \beta_1!\cdots \beta_k!} \\
        &(D^{\alpha_0+\sum_1^k \alpha_k}p)\cdot_\Theta ( D^{\sum_1^k \beta_s} \nabla^{\alpha_0}q  ) 
        (\nabla^{\alpha_1 + \beta_1}\ell) \cdots(\nabla^{\alpha_k+\beta_k}\ell),     
        \end{split}
        \label{eq:wpse-theta-aj-theta}
     \end{align}
     where the summation is  over:
    \begin{align*}
        j = -(k - \alpha_0 -\sum_1^k (\alpha_s+\beta_s)) \ge 0, \,\,\, \alpha_0 \ge 0, \,\,\, \alpha_1, \dots \alpha_k \ge 1, \,\,\,
        \beta_1, \dots , \beta_k \ge 2. 
    \end{align*}
%      \begin{align}
%           a_j(p,q)_\Theta = \sum_{\substack{k\ge 0,  (\beta)_k\ge 2\\
%              j = k-l-\abs{(\alpha)_k}-\abs{(\beta)_k}
%          }}
%          \frac{i^{j}}{k!(\alpha)_k! (\beta)_k! l!}
%         \rho_{(\alpha)_k,(\beta)_k} \otimes_\Theta \symd^{l} D^{\abs{(\beta)_k}}q \otimes_\Theta D^lD^{\abs{(\alpha)_k}} p,
%          \label{eq:wpse-theta-aj-theta}
%      \end{align}
%      here we use  $\otimes$ to denote both deformed tensor product and contraction between mixed tensors.  Also as the same as in the commutative situation, for each $j\ge 0$, $a_j(p,q)_\Theta$ reduce the total degree of $p$ and $q$ by $j$.
 \end{prop}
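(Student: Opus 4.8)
The plan is to reduce the statement to the already-established algebra isomorphism property of $\pi^\Theta$ together with the classical product formula of Proposition \ref{prop:wpse-asym-sybproduct}, and then to reorganize the resulting double sum using the equivariance of the tensor operations. First I would invoke Proposition \ref{prop:theta-prod-well-defined} (a special case of Proposition \ref{prop:def-op-deform-oper-pi-theta-prod}), which gives $\pi^\Theta(P)\pi^\Theta(Q) = \pi^\Theta(P \times_\Theta Q)$; hence it suffices to compute $\sigma(P \times_\Theta Q)$ modulo $S\Sigma^{-\infty}(M)$. Writing the isotypical decompositions $P = \sum_\mu P_\mu$ and $Q = \sum_\nu Q_\nu$, the definition of $\times_\Theta$ gives $P \times_\Theta Q = \sum_{\mu,\nu \in \Z^n} \chi_\Theta(\mu,\nu) P_\mu Q_\nu$, the series converging in the Fr\'echet topology of $\Psi^{d+d'}(M)$ by the rapid decay \eqref{eq:peter-weyl-decay}. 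Applying the symbol map, which is continuous and $\T^n$-equivariant, yields $\sigma(P \times_\Theta Q) \backsim \sum_{\mu,\nu} \chi_\Theta(\mu,\nu)\, \sigma(P_\mu Q_\nu) \backsim \sum_{\mu,\nu} \chi_\Theta(\mu,\nu)\, (p_\mu \star q_\nu)$, where $p_\mu = \sigma(P)_\mu$ and $q_\nu = \sigma(Q)_\nu$.

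Next I would expand each $p_\mu \star q_\nu$ by Proposition \ref{prop:wpse-asym-sybproduct} into $\sum_{j\ge 0} a_j(p_\mu, q_\nu)$ and interchange the $j$-summation with the $(\mu,\nu)$-summation, obtaining $\sigma(P \times_\Theta Q) \backsim \sum_{j\ge 0} a_j(p,q)_\Theta$ with $a_j(p,q)_\Theta \defeq \sum_{\mu,\nu}\chi_\Theta(\mu,\nu)\, a_j(p_\mu,q_\nu)$; this defines $p \star_\Theta q$. Finally I would identify $a_j(p,q)_\Theta$ with the expression \eqref{eq:wpse-theta-aj-theta}: each $a_j$ in \eqref{eq:wpse-diff-aj-defn} is a fixed finite combination of pointwise tensor products and contractions among the mixed derivatives $D^\bullet\nabla^\bullet p$, $D^\bullet\nabla^\bullet q$ and the $\T^n$-invariant tensors $\nabla^{\alpha_s+\beta_s}\ell$. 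Since $D$ and $\nabla$ commute with the isotypical projections (Propositions \ref{prop:def_riegeo-equiv-vertical-der} and \ref{prop:def-riegeo-equi-horder}), one has $(D^a\nabla^b p)_\mu = D^a\nabla^b p_\mu$, and similarly for $q$; moreover each $\nabla^{\alpha_s+\beta_s}\ell$ lies in the invariant part, hence is central and contributes a trivial bi-character. Substituting these into $\sum_{\mu,\nu}\chi_\Theta(\mu,\nu)\, a_j(p_\mu,q_\nu)$ and using the definition of $\otimes_\Theta$ and $\cdot_\Theta$ — which insert precisely the factor $\chi_\Theta$ of the homogeneity bidegrees of the two outer tensorial factors — collapses the double sum to formula \eqref{eq:wpse-theta-aj-theta}, exactly as carried out for $a_1$ in the discussion preceding the statement.

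The \emph{main obstacle} I expect is making the double asymptotic precise: $p \star q$ is only an asymptotic series, so both $\sigma(P_\mu Q_\nu) - p_\mu \star q_\nu$ and the tails of $\sum_j a_j$ must be controlled uniformly enough in $(\mu,\nu)$ that, after summation against $\chi_\Theta$ and against the rapid decay of $\{p_\mu\}$, $\{q_\nu\}$ in the symbol seminorms, the reorganized series still has the property that truncating at $j = N$ leaves a remainder in $S\Sigma^{-J_N}(M)$ with $J_N \to \infty$. This requires two uniformities that follow from the explicit bi-differential form of $a_j$: (i) the remainder estimates of Proposition \ref{prop:wpse-asym-sybproduct} are bounds in terms of finitely many symbol seminorms of $p$ and $q$, which survive under isotypical projection with rapid decay; and (ii) each $a_j(\cdot,\cdot)$ is continuous for the symbol seminorms, so that the series defining $a_j(p,q)_\Theta$ converges in $S\Sigma^{d+d'-j}(M)$. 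Granting these, the remainder of the argument is routine bookkeeping with the seminorm estimates already recorded for the deformation of filtered algebras in Section \ref{subsec:deform-frech-algebr}.
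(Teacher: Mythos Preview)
Your proposal is correct and follows essentially the same route as the paper's own argument: reduce to $\sigma(P\times_\Theta Q)$ via $\pi^\Theta(P)\pi^\Theta(Q)=\pi^\Theta(P\times_\Theta Q)$, expand the isotypical double sum through the classical $a_j$, interchange the summations, and then use the $\T^n$-equivariance of $D$, $\nabla$ together with the invariance of $\nabla^k\ell$ to rewrite each $a_j(p,q)_\Theta$ in the deformed tensor-calculus form. Your explicit attention to the uniformity needed to justify the interchange of the asymptotic $j$-sum with the $(\mu,\nu)$-sum is in fact more careful than the paper, which treats this step only formally.
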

 \begin{rem}
     Notice that  the phase function $\ell$ is $\T^n$-invariant, so are the covariant derivatives $\nabla^k \ell$ ($k=0,1,2,\dots$), as a result, when  $\nabla^k \ell$ are involved, the deformed and undeformed tensor product and contraction make no difference. 
 \end{rem}

\section{Heat kernel asymptotic}
\label{sec:heat-ker-asym}

\subsection{Resolvent approximation}
\label{subsec:resolvent-app}
The deformation symbol calculus can be quickly upgraded  to the parametric version. Let $\pi^\Theta(P)$ be the deformation of a second order differential operator $P$, whose symbol equals $\sum_0^2 p_j$ with $p_j$ of degree $j$ ($j=0,1,2$). Let $\lambda$ be the resolvent parameter that lies in some cone region in the complex plane, denote $p_2(\lambda) = p_2 - \lambda$, and $p_j(\lambda) = p_j$ for $j=0,1$. We would like to solve the resolvent equation
\begin{align*}
  (p_{2}(\lambda) + p_{1} + p_{0} )\star_{\Theta} (b_{0}(\lambda) +
  b_{1}(\lambda) + \cdots) \backsim 1,
\end{align*}
where $b_\kappa(\lambda)$ are parametric symbols of order $-2-\kappa$, $\kappa=0,1,2,\dots$. With the symbol calculus in hand, if the inverse of $p_2(\lambda)$ exists, which will be  $b_0(\lambda)$, that is $a_0(b_0(\lambda),p_2(\lambda))_\Theta \backsim a_0(p_2(\lambda),b_0(\lambda))_\Theta \backsim 1$,  then $b_\kappa$ can be constructed inductively as follows:
\begin{align}\label{eq:wpse-theta-inductive-formula-for-rev-approximation-2}
      b_{\kappa}(\lambda) = -b_0(\lambda) \times_\Theta \brac{\sum_{\substack{-\kappa = \mu-2-\nu-j \\
     \nu < \kappa}}{}a_{j}\brac{ p_{\mu}(\lambda) , b_{\nu}(\lambda)  }_{\Theta} }
\end{align}
or
\begin{align}\label{eq:wpse-theta-inductive-formula-for-rev-approximation}
  b_{\kappa}(\lambda) = -\brac{\sum_{\substack{-\kappa = \mu-2-\nu-j \\
     \nu < \kappa}}{}a_{j}\brac{b_{\nu}(\lambda), p_{\mu}(\lambda) }_{\Theta} }
\times_{\Theta} b_{0}(\lambda),
\end{align}
 where $\mu = 0,1,2$ and $\nu = 0,1,2,\cdots$.\par
 We will need $b_1$ and $b_2$ later:
% For example, we write down $b_1$ and $b_2$ explicitly, which will be needed later: 
\begin{align}
    b_1 & =  ((-i) Db_0 \times_\Theta \nabla p_2 + b_0 \times_\Theta p_1) \times_\Theta (-b_0)
    \label{eq:wpse-theta-b1}
    \\
    b_2  & =   \left(\phantom{\frac12} b_0 \times_\Theta p_0 + p_1 \times_\Theta b_1(\lambda) + (-i)Db_0 \times_\Theta \nabla p_1 + (-i)Db_1 \times_\Theta \nabla p_2
   \nonumber \right.  \\ 
  & - \left. \frac12 D^2 b_0 \times_\Theta \nabla^2 p_2 
- \frac12 (\nabla^3 \ell)Db_0 \times_\Theta D^2 p_2 \right)
\times_\Theta (-b_0).
    \label{eq:wpse-theta-b2}
\end{align}

\subsection{Perturbation of the scalar Laplacian $\Delta$ via a Weyl factor}
\label{subsec:perturbation-of-delta}
%Before talking about curvatures, one needs a notion of metrics. 
From now on, we shall focus a specific family of deformed elliptic geometric differential operators which represents a family of noncommutative metrics. Ellipticity simply means we know how to inverse its leading symbol in the deformed symbol algebra. \par

 Parallel to the work on noncommutative tori (cf.\cite{MR2907006, MR3194491, MR3148618, MR3369894}), the new family of noncommutative metrics is obtained by a noncommutative conformal change of the Riemannian metric  that we started with before the deformation. In terms of spectral point of view, the new metrics we are looking at in this paper  are given by a perturbation of the scalar Laplacian Weyl factor in the noncommutative coordinate algebra. 
%The resulting operators are still elliptic in our noncommutative setting, namely, the leading symbols are
%invertible (up to a smoothing symbol) in the deformed algebra $S\Sigma(M_\Theta, \Lambda)$. 

\begin{defn} \label{defn:wpse_diff-defn-Weylfactor}
    Let $M$ be a 
    toric  Riemannian manifold as in definition $(\ref{defn:def-rie-eg-toricmflds})$ and $C^\infty(M_\Theta) = \pi^\Theta\brac{C^\infty(M)}\subset B(\mathcal H)$ is the algebra of smooth functions on its deformation $M_\Theta$ with respect to a  skew-symmetric $n\times n$ matrix $\Theta$. Here, $\mathcal H$ is the Hilbert space of $L^2$-functions on $M$ as before. Denote by $C(M_\Theta)$ the $C^*$-algebra of the operator norm completion of $C^\infty(M_\Theta)$. A \emph{Weyl factor} $\pi^\Theta(k)$ is an  element in $C^\infty(M_\Theta) \subset C(M_\Theta)$ (that is $k \in C^\infty(M)$) which is invertible and positive.
\end{defn}

The Weyl factor $k$ is always of the form $k = e^h$ for some self-adjoint operator $h$. To be precise, let $h \in C^\infty(M)$ real-valued smooth function so that is it self-adjoint as an operator. Follows from lemma \ref{lem:def-ops-deform-oper-pi-theta-adjoint},  $\pi^\Theta(h)$ is also self-adjoint. Let $k =\exp_\Theta(h)$ so that $\pi^\Theta(k) = e^{\pi^\Theta(h)}$. In the rest of the computation, we shall drop $\pi^\Theta(\cdot)$ and apply the deformed calculus on to smooth
function $k$ and $h$ directly.

\begin{defn}
    \label{defn:wpse_theta-pert-laplacian}
 Fixed a Weyl factor $\pi^\Theta(k)$ with $k \in C^\infty(M)$, a deformed differential operator  $\pi^\Theta(P_k)$ is called of  perturbed Laplacian type if its  spectrum is contained in $[0,\infty)$, and $P_k \in \Psi^2(M)$ is a differential operator whose symbol is of the form $\sigma(P_k) = p_2 + p_1 + p_0$,
when $j=0,1$, $p_j(\xi_x)\in S\Sigma^j(M)$ is polynomial in $\xi$ of degree $j$. For $j=2$, we require the leading symbol is of the form:
\begin{align*}
    p_2(\xi_x) =k \abs\xi^2,
\end{align*}
where $\abs\xi^2 = \abrac{\xi,\xi}_{g^{-1}}$ is the squared length function on $T^{*}M$ with respect to the given Riemannian metric $g^{-1}$. 
\end{defn}
%Originally, $p_2(\xi_x)$ is defined to be $k \times_\Theta \abs\xi^2$, since 
Notice that the function $\abs\xi^2$ is $\T^{n}$-invariant,  we have $k \times_\Theta \abs\xi^2=k \abs\xi^2$. Hence $p_2(\xi_x)$ is the symbol of the operator $\pi^\Theta(k) \Delta$. When the complex parameter $\lambda$ is off the positive real line,  $k \abs\xi^2 - \lambda$ is invertible in $S\Sigma(M_\Theta,\Lambda)$, the inverse is denoted by $b_{0}(\xi,\lambda) = (k \abs\xi^2 - \lambda)^{-1}$. 
For the smoothness of the exposition, we move the detail construction of $b_{0}(\xi,\lambda)$ to  appendix \ref{App:inverseof_p2}. 

\subsection{Some estimates}
Parametric pseudo differential operators is more that just a map $\lambda \mapsto P(\lambda)$. For instance, we require that our families of operators is holomorphic in $\lambda$, moreover, when parametric symbols are considered, differentiating its symbol in the parameter reduces the order of the operator:
    \begin{align}
        \abs{D^\alpha_x D^\beta_\xi D^j_\lambda p(x,\xi,\lambda)
        } \le C_{\alpha,\beta,j}\brac{1+ \abs\xi^2 + \abs\lambda
        }^{(d - \abs\beta-2j)/2}.
        \label{eq:wpse_theta-syml-class-para}
    \end{align}

As a result, the operator norm has some control in the resolvent parameter: 

% Let us follow  the \cite{gilkey1995invariance} to outline the existence of the heat kernel asymptotic.
% The operator norm estimate in terms of $\lambda$ is essential for our discussion. Recall \cite[Lemma 1.7.1 (b)]{gilkey1995invariance}:

\begin{lem}%[Lemma 1.7.1 (b) in \cite{gilkey1995invariance}]
    \label{prop:wpse-theta-opnorm-symnorm}
    Given any positive integer $k$, one can find another integer $K > 0$ such that for any $\pi^\Theta(Q(\lambda)) \in \Psi^{-K}(M_\Theta,\Lambda)$, the operator norm of $\pi^\Theta(Q(\lambda)) \in B(\mathcal H^{-k},\mathcal H^{k})$ has the estimate:
    \begin{align}
        \norm{\pi^\Theta(Q(\lambda))}_{-j,j} \le C (1 + \abs\lambda)^{-j}.
        \label{eq:wpse-theta-opnorm-symnorm}
    \end{align}
\end{lem}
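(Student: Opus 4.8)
The plan is to reduce the bound to an $L^{2}\to L^{2}$ operator-norm estimate by conjugating with a fixed $\T^{n}$-invariant elliptic operator, then to strip off the deformation $\pi^{\Theta}$ by passing to isotypical components, and finally to control the $L^{2}$-norm of a classical parametric pseudodifferential operator of very negative order by a crude Schwartz-kernel bound combined with a rescaling in $\xi$. First I would put $\Lambda\defeq(1+\Delta)^{1/2}$, a classical $\psi$DO of order one which is $\T^{n}$-invariant because the metric is; hence $\pi^{\Theta}(\Lambda^{k})=\Lambda^{k}$, and since $\chi_{\Theta}(0,\cdot)=\chi_{\Theta}(\cdot,0)=1$ Proposition \ref{prop:theta-prod-well-defined} gives $\Lambda^{k}\,\pi^{\Theta}(Q(\lambda))\,\Lambda^{k}=\pi^{\Theta}\brac{\Lambda^{k}Q(\lambda)\Lambda^{k}}$. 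Writing $u=\Lambda^{k}v$ yields the elementary identity $\norm{A}_{-k,k}=\norm{\Lambda^{k}A\Lambda^{k}}_{0,0}$ for any operator $A$ (up to the usual equivalence of Sobolev norms), so it suffices to show $\norm{\pi^{\Theta}(R(\lambda))}_{0,0}\le C(1+\abs\lambda)^{-k}$, where $R(\lambda)\defeq\Lambda^{k}Q(\lambda)\Lambda^{k}$ is a classical parametric $\psi$DO of order $2k-K$.

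To remove $\pi^{\Theta}$, note that by \eqref{eq:pse-op-pi-theta-defn} and unitarity of the operators $U_{r\cdot\Theta/2}$ on $L^{2}(M)$ one has $\norm{\pi^{\Theta}(R(\lambda))}_{0,0}\le\sum_{r\in\Z^{n}}\norm{R(\lambda)_{r}}_{0,0}$. Integrating by parts $N$ times in $t$ in $R(\lambda)_{r}=\int_{\T^{n}}\op{Ad}_{t}(R(\lambda))e^{-2\pi i r\cdot t}\,dt$ and using that $\op{Ad}_{t}$ is conjugation by a unitary gives $\norm{R(\lambda)_{r}}_{0,0}\le\abs{Q_{N}(r)}^{-1}\sup_{t}\norm{\partial_{t}^{N}\op{Ad}_{t}(R(\lambda))}_{0,0}$; by \eqref{eq:higher-order-derivatives} the quantities $\partial_{t}^{\mu}\op{Ad}_{t}(R(\lambda))$ have the same $L^{2}$-norm as iterated commutators $\op{ad}(X_{i_{1}})\cdots\op{ad}(X_{i_{|\mu|}})(R(\lambda))$ with the torus-generating vector fields. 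Choosing $N$ so large that $\sum_{r}\abs{Q_{N}(r)}^{-1}<\infty$, the problem reduces to an $L^{2}$-bound for parametric $\psi$DOs of order $2k-K$, because commuting with the $\lambda$-independent, order-one fields $X_{i}$ preserves both the order and the parametric symbol class \eqref{eq:wpse_theta-syml-class-para}, with constants uniform in $t\in\T^{n}$ and $\lambda$.

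For this $L^{2}$-bound I would argue directly from the kernel: if $S(\lambda)$ is a classical parametric $\psi$DO of order $d<-m$ obeying \eqref{eq:wpse_theta-syml-class-para}, then by the quantization formula \eqref{eq:wpse-diff-quantization-defn} (modulo a parametric smoothing remainder, negligible here as it obeys the same estimate for every $d$) its Schwartz kernel satisfies
\begin{align*}
    \abs{k_{S(\lambda)}(x,y)}\le C\int_{T^{*}_{x}M}\abs{\sigma(S(\lambda))(\xi_{x})}\,d\xi_{g^{-1}}\le C'\int_{\R^{m}}\brac{1+\abs\xi^{2}+\abs\lambda}^{d/2}\,d\xi .
\end{align*}
The substitution $\xi=(1+\abs\lambda)^{1/2}\eta$ turns the last integral into $C'(1+\abs\lambda)^{(d+m)/2}\int_{\R^{m}}\brac{1+\abs\eta^{2}}^{d/2}\,d\eta$, and the $\eta$-integral converges precisely because $d<-m$; since $M$ is compact, Schur's test then yields $\norm{S(\lambda)}_{0,0}\le C''(1+\abs\lambda)^{(d+m)/2}$. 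It now suffices to choose $K\defeq 4k+m$: then $R(\lambda)$ and all the iterated commutators above have order $d=2k-K=-2k-m<-m$ with $(d+m)/2=-k$, and combining the three reductions gives $\norm{\pi^{\Theta}(R(\lambda))}_{0,0}\le C(1+\abs\lambda)^{-k}$, hence the asserted estimate for $\norm{\pi^{\Theta}(Q(\lambda))}_{-k,k}$.

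The \emph{main obstacle} is not a single deep point but the uniform bookkeeping of the parametric symbol estimates \eqref{eq:wpse_theta-syml-class-para}: one must check that they survive — with constants independent of $\lambda$ and of $t\in\T^{n}$ — both under iterated commutation with the torus-generating vector fields and under the fixed conjugation by $\Lambda^{k}$. This is routine parametric pseudodifferential calculus, but it is where all the care is concentrated.
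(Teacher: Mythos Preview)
Your argument is correct and follows essentially the same route as the paper: both reduce the deformed estimate to the undeformed one via the isotypical decomposition $\pi^{\Theta}(R)=\sum_{r}R_{r}U_{r\cdot\Theta/2}$, integrate by parts in $t$ to gain rapid decay in $r$, and observe that the iterated commutators $\op{ad}(X_{i_{1}})\cdots\op{ad}(X_{i_{N}})(R(\lambda))$ remain parametric $\psi$DOs of the same order with $\lambda$-uniform constants. The paper then simply quotes the classical parametric bound $\norm{Q(\lambda)}_{-j,j}\le C(1+\abs\lambda)^{-j}$ from Gilkey's text and applies it directly in the Sobolev scale, whereas you insert an extra reduction (conjugation by the invariant elliptic $\Lambda^{k}$ to pass to $L^{2}\to L^{2}$) and then prove the resulting $L^{2}$ bound from scratch via a pointwise kernel estimate plus Schur's test, arriving at the explicit choice $K=4k+m$. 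This makes your argument more self-contained but also slightly longer; the paper's version trades that detail for a reference. Your remark about the ``main obstacle'' is apt and matches exactly what the paper glosses over in one sentence.
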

\begin{proof}
    This fact is well-known, see Lemma 1.7.1 (b) in \cite{gilkey1995invariance} for instance,   for parametric pseudo differential operators on closed smooth manifolds. Namely, we have $Q(\lambda) \in \Psi^{-K}$   implies $\norm{Q(\lambda)}_{-j,j} \backsim O(\abs\lambda^{-j})$ as  $\abs\lambda \rightarrow \infty$. Now we take the torus action into account, notice that for any  partial derivative in $t\in\T^n \mapsto \op{Ad}_t(Q(\lambda))$, the resulting operator
    $\partial_t^\mu\op{Ad}_t(Q(\lambda))$ with a multi-index $\mu$,  still lies in $\Psi^{-K}$ if $Q(\lambda)$ does. In particular,  
\begin{align*}
    \norm{ \partial_t^\mu\op{Ad}_t(Q(\lambda))}_{-j,j} \backsim O(\abs\lambda^{-j})\,\,\,
    \text{as $\abs\lambda \rightarrow \infty$}, \,\,\, \forall t\in\T^n.
\end{align*}
From the standard Fourier theory on torus, 
\begin{align*}
    \norm{ \pi^\Theta(Q)}_{-j,j} \le C \sup_{t\in\T^n} \norm{ \partial_t^\mu\op{Ad}_t(Q(\lambda))}_{-j,j},
\end{align*}
provided that the total order of the partial derivative $\mu$ is greater that the dimension of the torus. The proof is complete. 
\end{proof}

Let us consider a smooth family $k_s = \exp_\Theta(sh)$ of Weyl factors, where $s\in[0,1]$ and $h$ is a real valued function on $M$ so that it is a self-adjoint operator. Recall that we have dropped the deformation map $\pi^\Theta(\cdot)$ when dealing with $k$ and its logarithm $h$,  the calculation is taken in the deformed version. So we will simply write $k_s = e^{sh}$ in the rest of the computation. \par
We fix a $h$  and consider the family of Laplacian operators perturbed by $k_s = e^{sh}$:
\begin{align}
    \label{eq:wpse-theta-def_Ps}
    P_s = k_s \Delta + \text{lower order terms},  
\end{align}
where $\Delta$ is the scalar Laplacian operator. In our application, the lower order terms involve the Weyl factor $k$ upto its second covariant derivative. In particular, the symbol of $P_s$ depends smoothly in $s$ and differentiating in $s$ do not increase the order of the symbol (or the operator).  \par

% Let $\pi^\Theta(P_k(s))$ be a smooth family of perturbed Laplacian with $s\in[0,\varepsilon)$, namely, for fixed $s$, $\pi^\Theta(P_k(s))$ is a  perturbed Laplacian as in definition $\ref{defn:wpse_theta-pert-laplacian}$. 
    
Denote the resolvent by $R(\lambda,s) = ( P_s - \lambda)^{-1}$. Base on the fact that the spectrum of $P_s$ is contained in $[0,\infty)$ for all $s\in[0,1]$, one can repeat the argument in \cite{gilkey1995invariance} lemma 1.6.6 to show that for any integer $j\ge 0$,
    \begin{align*}
        \norm{R(\lambda,s) }_{-j,j} \backsim O(\abs\lambda^{l(j)}),\,\,\, \text{as $\abs\lambda \rightarrow \infty$.}
    \end{align*}
    The power $l(j)$ is positive and uniform with respect to $s$. \par
    The resolvent approximation in section \ref{subsec:resolvent-app} gives us a sequence of deformed pseudo differential operators $\set{R_j(\lambda,s)}_{j=0}^\infty$ such that the difference $R - R_j$ is of order $-j-1$ for each $j$. With lemma  \ref{prop:wpse-theta-opnorm-symnorm},  arguing like in \cite{gilkey1995invariance} lemma 1.7.2, we conclude that for any integer $j\ge 0$,one can choose $l$ larger enough so that 
\begin{align}
\norm{
        R(\lambda,s) - R_l(\lambda,s)
    }_{-j,j} \backsim O(\abs\lambda^{-j}), \,\,\,
    \text{as $\abs\lambda \rightarrow \infty$.}
    \label{eq:wpse-opnorm-estimate-resolvent}
    \end{align}

We also need the variation in $s$:
\begin{align*}
    \frac{d}{ds} \brac{ R(\lambda,s) - R_l(\lambda,s) } &=
   R(\lambda,s)  \brac{I - (P_s - \lambda)R_l(\lambda,s)} \\
  &- R(\lambda,s) \frac{d P_s}{ds} R(\lambda,s) \brac{I - (P_s - \lambda)R_l(\lambda,s)}.
\end{align*}
When apply the operator norm $\norm{\cdot}_{-j,j}$ ($j\ge 0$) on both sides, the error term has the estimate:
\begin{align*}
    \norm{I - (P_s - \lambda)R_l(\lambda,s)}_{-j,j}    \backsim O(\abs\lambda^{-j}),
\end{align*}
provide $l$ large enough. As a result, 
\begin{align}
    \label{eq:wpse-opnorm-estimate-resolvent-ds}
   \norm{
    \frac{d}{ds} \brac{ R(\lambda,s) - R_l(\lambda,s) }
    }_{-j,j} \backsim O(\abs\lambda^{-j}), \,\,\,
    \text{as $\abs\lambda \rightarrow \infty$.}
\end{align}
For the complete argument, see \cite{branson1986conformal} equations (3.12) to (3.14). 

\subsection{Heat kernels and the variation}
% Now we are ready to establish the heat kernel expansion of the perturbed Laplacian $\pi^\Theta(P_k)$. By the assumption the spectrum of $\pi^\Theta(P_k)$ is contained in $[0,\infty)$, thus the heat operator 
% Keep the notations as in the previous section.

Notice that the spectrum of $P_s$ is contained in $[0,\infty)$, therefore  the heat operator 
    $e^{-tP_s}$ can be defined by a contour integral:
    \begin{align}
        \label{eq:wpse-theta-heat-op}
        e^{-t P_s } = \frac{1}{2\pi i} \int_C e^{-t\lambda}(P_s - \lambda)^{-1} d\lambda,  
    \end{align}
    where $C$ is a curve in the complex plane that circle around $[0,\infty)$ in such a way that 
        \begin{align*}
            e^{-ts} = \frac{1}{2\pi i} \int_C e^{-t\lambda}(s - \lambda)^{-1}ds,\,\,\, \forall s\ge 0.
        \end{align*}

If one replaces the resolvent $(P_s - \lambda)^{-1}$ by its $l$-th approximation $R_l(\lambda,s)$ in the integral above, denote $E_l(t,s) = \int_C e^{-t\lambda} R_l(\lambda,s)d\lambda$, then the estimate \eqref{eq:wpse-opnorm-estimate-resolvent} gives rise to 
\begin{align} \label{eq:wpse-theta-opnorm-error-est}
    \norm{e^{-t P_s } - E_l(t,s)}_{-j,j}   \backsim o(t^{j}) \,\,\,
    \text{as $t \rightarrow 0$,}
\end{align}
provided $l$ large enough. In order to study the trace, it is easier to  consider the associated pseudo differential operators before the deformation, for which the Schwartz kernel functions make sense. Write $e^{-t P_s }$ and $E_l(t,s)$  as
\begin{align*}
    e^{-t P_s } = \pi^\Theta(\tilde E(t,s)), \,\,\, E_l(t,s) = \pi^\Theta\brac{
    \tilde E_l(t,s)
    },
\end{align*}
where $\tilde E$ and $\tilde E_l$ are smoothing operators (means of order $-\infty$) on $M$. Thanks to the continuity of $\pi^\Theta$, \eqref{eq:wpse-theta-opnorm-error-est} implies that: $\norm{ \tilde E_s - \tilde E_l(\lambda,s)}_{-j,j}   \backsim o(t^{j})$, in terms of their Schwartz kernel functions (see \cite{gilkey1995invariance}, lemma 1.7.3): we have 
\begin{align*}
    \abs{
        H_{\tilde E_s}(x,y,t) - H_{\tilde E_l(t,s)}(x,y,t)
    }_{L^2_j} \backsim o(t^{j}) \,\,\,
    \text{as $t \rightarrow 0$.}
\end{align*}
In particular, for the $L^2$-trace,
\begin{align*}
    \Tr \brac{
   \tilde E_s(t) -\tilde E_l(\lambda,s)
    } \backsim o(t^{j}), \,\,\,
    \text{as $t \rightarrow 0$.}
\end{align*}
%Combine this with the fact that the deformation map preserves the trace (lemma \ref{lem:def-ops-inv-of-trace}),
Since the trace is preserved under the deformation process (lemma \ref{lem:def-ops-inv-of-trace}),
we have proved that $\Tr e^{-tP_s}$ has an asymptotic expansion as $t\rightarrow 0$ given by $\Tr E_l(t,s)$, $l=0,1,2,\dots$. To be precise: for given integer $j\ge0$, one can find $l$ large enough such that
\begin{align}
    \Tr \brac{
    e^{-t P_s } - E_l(t,s)
    } \backsim o(t^{j}) , \,\,\,
    \text{as $t \rightarrow 0$.}
    \label{eq:wpse-trace-asym}
\end{align}

The variation in $s$ can be performed in a similar fashion. Start with  \eqref{eq:wpse-opnorm-estimate-resolvent-ds}, we have the operator norm estimate:
    \begin{align*}
    \norm{
        \frac{d}{ds} \brac{
            e^{-t P_s } - E_l(t,s)}
    }_{-j,j}   \backsim o(t^{j}) \,\,\,
    \text{as $t \rightarrow 0$,}
\end{align*}
which leads to the Schwartz kernel estimate:
\begin{align*}
    \abs{ \frac{d}{ds} \brac{
        H_{\tilde E_s}(x,y,t) - H_{\tilde E_l(t,s)}(x,y,t) }
    }_{L^2_j} \backsim o(t^{j}) \,\,\,
    \text{as $t \rightarrow 0$.}  
\end{align*}
Finally we can conclude that the heat kernel asymptotic can be differentiated term by term in the parameter $s$:
\begin{align}
    \Tr\brac{
        \frac{d}{ds} \brac{ 
       e^{-t P_s } - E_l(t,s) 
        }
    }\backsim o(t^{j}),\,\,\,
    \text{as $t \rightarrow 0$,}
    \label{eq:wpse-trace-asym-ds}
\end{align}
provide $l$ large enough. More details can be found in \cite{branson1986conformal}. Again, \eqref{eq:wpse-trace-asym-ds} means that the heat kernel asymptotic defined by $E_l(t,s)$ can be differentiate term by term in $s$.

% Follows from \eqref{eq:wpse-opnorm-estimate-resolvent-ds}, we have similar estimates:   
% \begin{align*}
%     \norm{
%         \frac{d}{ds} \brac{
%             e^{-t P_s } - E_l(\lambda,s)}
%     }_{-j,j}   \backsim o(t^{j}) \,\,\,
%     \text{as $t \rightarrow 0$,}
% \end{align*}
% and 
% \begin{align*}
%     \abs{ \frac{d}{ds} \brac{
%         H_{\tilde E_s}(x,y,t) - H_{E_l(\lambda,s)}(x,y,t) }
%     }_{L^2_j} \backsim o(t^{j}) \,\,\,
%     \text{as $t \rightarrow 0$.}  
% \end{align*}
% 
% In particular, for the $L^2$-trace, 
% \begin{align*}
%     \Tr \brac{
%     e^{-t P_s } - E_l(\lambda,s)
%     } \backsim o(t^{j}), \,\,\,\,
%     \Tr\brac{
%         \frac{d}{ds} \brac{ 
%        e^{-t P_s } - E_l(\lambda,s) 
%         }
%     }\backsim o(t^{j}) 
% \end{align*}
% as $t\rightarrow 0$, for all $j\ge 0$, provided $l$ large enough. \par

To complete the formula of the heat kernel asymptotic, it remains to compute the trace: $\Tr(E_l) = \Tr(\tilde E_l)$ for each $l=0,1,2,\dots$.  Let $\set{b_j(\lambda,s)}_{j=0}^\infty$ be the sequence of symbols defined inductively in \eqref{eq:wpse-theta-inductive-formula-for-rev-approximation-2} with respect to the perturbed Laplacian $P_s$. Although the symbols $b_j$ are constructed out of deformed calculus, the following homogeneity (of degree $-2-j$) in $(\xi,\lambda)$ is preserved as in the classical situation:
\begin{align}
    b_j(c\xi,c^2\lambda) = c^{-2-j} b_j(\xi,\lambda), \,\,\, \forall c>0, \,\, j=0,1,2\dots.
    \label{eq:wpse-homegen-prop-b_j}
\end{align}
As a consequence (see \cite{gilkey1995invariance} section 1.7),  the  following symbols 
\begin{align}
    \tilde e_j(t,s) = \frac{1}{2\pi i}\int_C e^{-t\lambda} b_j(\lambda,s) d\lambda,
    \,\,\, j=0,1,2\dots,
    \label{eq:wpse-theta-int-bj-over-C}
\end{align}
belong to $S\Sigma^{-\infty}$. We define $\tilde E_l = \sum_{j=1}^l \op{Op}(e_j)$ and $E_l = \pi^\Theta(\tilde E_l)$. When taking the homogeneity property \eqref{eq:wpse-homegen-prop-b_j} into account, the trace formula in proposition \ref{prop:wpse-diff-sym-to-kernel-dia} becomes:
\begin{align*}
    H_{b_j(\lambda)}(x,x,\lambda) = \frac{1}{(2\pi)^m}\int_{T_x^*M} b_j(\xi_x,\lambda) d\xi_{x,g^{-1}} + O(\abs\lambda^{-N})
\end{align*}
for all positive integer $N$. Here $H_{b_j(\lambda)}$ is the Schwartz kernel of the quantization operator of $b_j$. After the contour integral, we get, for any positive integer $N$:
     \begin{align*}
    H_{e_j(\lambda)}(x,x,t) = \frac{1}{2\pi i}\frac{1}{(2\pi)^m}
    \int_C e^{-t\lambda}\int_{T_x^*M} b_j(\xi_x,\lambda) d\xi_{x,g^{-1}} d\lambda + o(t^{-N}), \,\,\, \text{as $t\rightarrow 0$.}
\end{align*}
Use the homogeneity property \eqref{eq:wpse-homegen-prop-b_j} again, one can perform a substitution $ t \lambda \rightarrow \lambda$ to show that
\begin{align*}
    \int_C e^{-t\lambda}\int_{T_x^*M} b_j(\xi_x,\lambda) d\xi_{x,g^{-1}} d\lambda
    = t^{(j-n)/2} \int_C e^{-\lambda}\int_{T_x^*M} b_j(\xi_x,\lambda) d\xi_{x,g^{-1}} d\lambda.
\end{align*}
Finally, we have proved that 
\begin{align*}
H_{\tilde E_l}(x,x,t) = \sum_{j=0}^l t^{(j-n)/2}V_j(x) + o(t^{-N}), \,\,\, \forall N \in\N,
\end{align*}
as $t\rightarrow 0$, with 
\begin{align*}
    V_j(x) = \frac{1}{2\pi i}\frac{1}{(2\pi)^m}
    \int_C e^{-\lambda}\int_{T_x^*M} b_j(\xi_x,\lambda) d\xi_{x,g^{-1}} d\lambda.
\end{align*}

We summerize the long dicussion above into the theorem below. 
  \begin{thm}
	    \label{thm:wpse_theta-heatkernel-asym}
        Let $P_s$ with $s \in[0,\varepsilon)$ for small $\varepsilon>0$ be a family of perturbed Laplacians defined in \eqref{eq:wpse-theta-def_Ps}. 
For any $f = \pi^\Theta(\tilde f)$ with $ \tilde f \in C^{\infty}(M)$, viewed as a deformed zero-order pseudo
 differential operator by left-multiplication, then  
\begin{align}
    \label{eq:wpse_theta-heatkernel-asym}
    \Tr\brac{ f e^{-t P_s }} \backsim \nsum{j=0}{\infty}
    t^{(j-m)/2} V_j(f,P_s) 
  \end{align}
where upto a factor $(2\pi)^{-m}$, $m = \dim M$, 
\begin{align}
  \label{eq:wpse_theta-heatkernel-asym-V(f,Delta)}
  \begin{split}
  V_{j}(f,\pi^{\Theta}(P_k)) &= \int_{M}f(x) V_{j}(x) dg = \int_{T^*M} 
  \brac{
  	\frac{1}{2\pi i} \int_{ C} e^{-\lambda} b_{j}(\xi_x,\lambda)d\lambda
  } \Omega, \\
  & = \int_{M}\int_{T^{*}_{x}M}\brac{\frac{1}{2\pi i}
  	 \int_{C}e^{-\lambda} b_{j}(\xi_x,\lambda)d\lambda  } d\xi_{x,g^{-1}
  }dg,
  \end{split}
\end{align}
here $\Omega$ is the canonical volume form on $T^*M$ defined in \eqref{eq:wpse-diff-canonical-twom-form} and $C$ is the contour that defines the heat operator. \par
Moreover, the heat asymptotic \eqref{eq:wpse_theta-heatkernel-asym} can be differentiated term by term in $s$:
\begin{align}
            \frac{d}{ds} \Tr(f e^{-t P_s})
            \backsim \sum_{j=0}^\infty t^{(j-m)/2}
            \frac{d}{ds} V_j(f,P_s).
            \label{eq:wpse_theta-variation-heatkernel}
        \end{align}
        
\end{thm}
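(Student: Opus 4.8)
The statement is essentially a synthesis of the estimates and computations already assembled in this section, the one new element being the presence of the bounded left-multiplication operator $f = \pi^\Theta(\tilde f)$. First I would record that, since $\tilde f \in C^\infty(M)$, the operator $f$ maps every Sobolev space $\mathcal H^j$ to itself boundedly, so multiplying the operator-norm estimates \eqref{eq:wpse-theta-opnorm-error-est} and the $s$-derivative estimate obtained from \eqref{eq:wpse-opnorm-estimate-resolvent-ds} on the left by $f$ preserves them:
\begin{align*}
\norm{ f\brac{ e^{-tP_s} - E_l(t,s)}}_{-j,j} &\backsim o(t^j), \\
\norm{ \frac{d}{ds}\, f\brac{ e^{-tP_s} - E_l(t,s)}}_{-j,j} &\backsim o(t^j),
\end{align*}
provided $l$ is chosen large enough (uniformly in $s\in[0,\varepsilon)$). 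Passing to Schwartz kernels on the undeformed manifold $M$ exactly as in the derivation of \eqref{eq:wpse-trace-asym} and \eqref{eq:wpse-trace-asym-ds} — using the continuity of $\pi^\Theta$ and Lemma \ref{lem:def-ops-inv-of-trace} to transfer the bound and then the Sobolev lemma to control the diagonal kernel — this yields $\Tr\brac{ f\brac{ e^{-tP_s} - E_l(t,s)}} \backsim o(t^j)$ together with its term-by-term $s$-differentiated counterpart.

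It therefore remains to expand $\Tr\brac{ f E_l(t,s)}$. Writing $E_l(t,s) = \pi^\Theta(\tilde E_l(t,s))$ with $\tilde E_l(t,s) = \sum_{j\le l}\op{Op}(\tilde e_j(t,s))$ a smoothing operator on $M$, and noting $f E_l(t,s) = \pi^\Theta(L_{\tilde f})\,\pi^\Theta(\tilde E_l(t,s))$, Corollary \ref{cor:deform-oper-pi-theta-trace-prop} gives $\Tr\brac{ f E_l(t,s)} = \Tr\brac{ L_{\tilde f}\, \tilde E_l(t,s)}$ with $L_{\tilde f}$ ordinary left multiplication. The Schwartz kernel of $L_{\tilde f}\tilde E_l$ on the diagonal is $\tilde f(x)\, H_{\tilde E_l}(x,x,t)$, and the homogeneity \eqref{eq:wpse-homegen-prop-b_j} together with Proposition \ref{prop:wpse-diff-sym-to-kernel-dia} already gave $H_{\tilde E_l}(x,x,t) = \sum_{j=0}^l t^{(j-m)/2} V_j(x) + o(t^{-N})$ with $V_j(x)$ the fibre integral of the contour integral of $b_j$ displayed above. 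Integrating against $dg$ over the compact $M$ (so the $o(t^{-N})$ term survives integration) produces
\begin{align*}
\Tr\brac{ f e^{-tP_s}} = \sum_{j=0}^l t^{(j-m)/2}\int_M \tilde f(x) V_j(x)\, dg + o(t^j),
\end{align*}
for $l$ large. This is the expansion \eqref{eq:wpse_theta-heatkernel-asym} with $V_j(f,\pi^\Theta(P_k)) = \int_M \tilde f(x) V_j(x)\, dg$, and substituting the formula for $V_j(x)$ gives \eqref{eq:wpse_theta-heatkernel-asym-V(f,Delta)} (up to the overall factor $(2\pi)^{-m}$ of Proposition \ref{prop:wpse-diff-sym-to-kernel-dia}).

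For the term-by-term differentiability \eqref{eq:wpse_theta-variation-heatkernel} I would observe that $P_s$ depends smoothly on $s$ and that $s$-differentiation does not raise symbol orders, so the inductive formula \eqref{eq:wpse-theta-inductive-formula-for-rev-approximation-2} shows each resolvent symbol $b_j(\lambda,s)$, hence each $\tilde e_j(t,s)$ and each density $V_j(x,s)$, is smooth in $s\in[0,\varepsilon)$. Thus the finite sum above may be differentiated in $s$ term by term, and combining this with the $o(t^j)$ estimate for $\frac{d}{ds}\Tr\brac{ f\brac{ e^{-tP_s} - E_l(t,s)}}$ from the first paragraph gives \eqref{eq:wpse_theta-variation-heatkernel}.

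There is no single hard step here: the heat asymptotic on the commutative side and the resolvent machinery are already in place, and the trace-invariance under deformation has been proved. The point requiring care is the bookkeeping of uniformity in $s$ — the powers in the resolvent bounds and the choice of $l$ in \eqref{eq:wpse-opnorm-estimate-resolvent}--\eqref{eq:wpse-opnorm-estimate-resolvent-ds} must be taken uniform over $s\in[0,\varepsilon)$ — and the correct use of the identity $\Tr\brac{ \pi^\Theta(A)\pi^\Theta(B)} = \Tr(AB)$, which is precisely what allows the diagonal-kernel computation of $\Tr\brac{ f E_l(t,s)}$ to be performed on the commutative manifold $M$ where Schwartz kernels are available.
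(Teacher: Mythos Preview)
Your proposal is correct and follows essentially the same approach as the paper: the paper's proof is the discussion preceding the theorem in Section~\ref{sec:heat-ker-asym}, which assembles the resolvent estimates \eqref{eq:wpse-opnorm-estimate-resolvent}--\eqref{eq:wpse-opnorm-estimate-resolvent-ds}, passes through $\pi^\Theta$ to Schwartz kernels on $M$, invokes trace invariance (Lemma~\ref{lem:def-ops-inv-of-trace}), and then uses homogeneity and Proposition~\ref{prop:wpse-diff-sym-to-kernel-dia} to extract the $t^{(j-m)/2}$ coefficients. In fact you are slightly more explicit than the paper on one point: the paper's discussion is written for the case $f=1$, and the presence of $f=\pi^\Theta(\tilde f)$ in the theorem statement is left implicit; your use of Corollary~\ref{cor:deform-oper-pi-theta-trace-prop} to reduce $\Tr\brac{f E_l(t,s)}$ to $\Tr\brac{L_{\tilde f}\,\tilde E_l(t,s)}$ and then read off the diagonal kernel $\tilde f(x)H_{\tilde E_l}(x,x,t)$ is exactly the missing step, handled correctly.
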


We  end this section with a quick application of the technical fact \eqref{eq:wpse_theta-variation-heatkernel}.

\subsection{Zeta functions and conformal indices}
The original notion of conformal index for manifolds was introduced in \cite{branson1986conformal}, which admits a generalization in the setting of spectral triples \cite{MR3194491}. As an instance, let us state the result for toric noncommutative manifolds. Denote $k_s = e^{sh}$ with $s\in[0,1]$ and $h = h^* \in C^\infty(M_\Theta)$.
\begin{thm}
    Consider the perturbed Laplacian in \eqref{eq:wpse-theta-def_Ps} without lower order terms $P_s = k_s \Delta$. We assume that $m = \dim M$ is even and denote
$V_j(P_s) \defeq V_j(1,P_s)$ defined in  \eqref{eq:wpse_theta-heatkernel-asym-V(f,Delta)}, then 
    \begin{align*}
        \frac{d}{ds}V_{m}(P_s) = 0, \,\,\, \forall s\in[0,1].
    \end{align*}
     In particular, at $j=m$, the coefficient
     \begin{align}
         \label{eq:wpse-theta-V_j(kD)}
        V_m(\Delta) = V_m(P_0)  = V_m(P_1)= V_m(k\Delta),
     \end{align}
     that is  the value $V_m(k\Delta)$ does not depend on the Weyl factor $k$.
\end{thm}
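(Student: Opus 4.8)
The plan is to exploit the variational formula \eqref{eq:wpse_theta-variation-heatkernel} with $f=1$, so that $\frac{d}{ds}V_m(P_s)$ equals, up to the universal factor $(2\pi)^{-m}$, the $t^0$-coefficient in the small-$t$ asymptotic of $\frac{d}{ds}\Tr(e^{-tP_s})$. Since $P_s = k_s\Delta$ with $k_s = e^{sh}$, we have $\frac{dP_s}{ds} = (\frac{d}{ds}e_\Theta^{sh})\Delta = (h\times_\Theta k_s)\Delta$ — more precisely $\frac{d}{ds}k_s = h\times_\Theta k_s = k_s\times_\Theta h$ since $h$ and $k_s$ commute in the deformed algebra (they are built from the same commutative $h$). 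Using Duhamel's principle, $\frac{d}{ds}\Tr(e^{-tP_s}) = -t\,\Tr\!\big(\frac{dP_s}{ds}e^{-tP_s}\big)$, and after writing $\frac{dP_s}{ds} = P_s\times_\Theta h$ — here I would use that $k_s\Delta\times_\Theta h$ agrees with $k_s h\Delta$ modulo lower-order terms, or better, work with the exact identity $\frac{dP_s}{ds}=h^{1/2}\times_\Theta P_s\times_\Theta h^{1/2}$ type symmetrization — one gets $\frac{d}{ds}\Tr(e^{-tP_s}) = -t\,\Tr(h\, P_s e^{-tP_s}) = t\frac{d}{dt}\Tr(h\,e^{-tP_s})$, using the trace property from Corollary \ref{cor:deform-oper-pi-theta-trace-prop} to cycle the factors.

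From $\frac{d}{ds}\Tr(e^{-tP_s}) = t\frac{d}{dt}\Tr(h\,e^{-tP_s})$ and the asymptotic $\Tr(h\,e^{-tP_s})\backsim \sum_j t^{(j-m)/2}V_j(h,P_s)$ (Theorem \ref{thm:wpse_theta-heatkernel-asym} applied to $f=\pi^\Theta(\tilde h)$), we compute $t\frac{d}{dt}\Tr(h\,e^{-tP_s})\backsim \sum_j \frac{j-m}{2}t^{(j-m)/2}V_j(h,P_s)$. The $t^0$-term corresponds to $j=m$, but its coefficient is $\frac{m-m}{2}V_m(h,P_s) = 0$. Hence $\frac{d}{ds}V_m(P_s) = 0$, which is exactly the claim; integrating from $s=0$ to $s=1$ gives $V_m(\Delta) = V_m(k\Delta)$, and the intermediate equalities follow since $P_0 = \Delta$ and $P_1 = k\Delta$.

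The one technical point requiring care is the precise form of $\frac{dP_s}{ds}$ and the legitimacy of the Duhamel/cyclicity manipulation in the deformed setting: one must check that $\frac{d}{ds}e^{-tP_s}$ can be written via a convergent integral $-\int_0^t e^{-(t-u)P_s}\frac{dP_s}{ds}e^{-uP_s}\,du$ with all operators trace-class after multiplication by the smoothing factor, and that cycling $h$ under the trace is justified — this is where Corollary \ref{cor:deform-oper-pi-theta-trace-prop} (the trace is insensitive to $\times_\Theta$ vs.\ ordinary composition and is cyclic) does the work. A subtlety is that $P_s$ is \emph{not} self-adjoint on $\mathcal H = L^2(M)$ (it is $k\Delta$, conjugate to the self-adjoint $k^{1/2}\Delta k^{1/2}$); one should either pass to the self-adjoint model by conjugation, noting that heat traces and the $V_j$ are conjugation-invariant, or verify directly that the contour-integral construction of $e^{-tP_s}$ in \eqref{eq:wpse-theta-heat-op} together with the estimates \eqref{eq:wpse-opnorm-estimate-resolvent}--\eqref{eq:wpse-opnorm-estimate-resolvent-ds} already yields everything needed. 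The main obstacle, then, is bookkeeping rather than conceptual: making the Duhamel identity and the term-by-term differentiation \eqref{eq:wpse_theta-variation-heatkernel} interact cleanly so that the vanishing coefficient $\frac{m-m}{2}$ is the whole story.
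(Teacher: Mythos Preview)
Your argument is correct and is precisely the conformal index computation the paper invokes by citing \cite[Section 2.1]{MR3194491}: Duhamel plus cyclicity give $\frac{d}{ds}\Tr(e^{-tP_s}) = t\frac{d}{dt}\Tr(h\,e^{-tP_s})$, and the $t^0$-coefficient on the right vanishes because of the factor $\frac{m-m}{2}$. One small clean-up: the detour through ``$\frac{dP_s}{ds} = P_s\times_\Theta h$ modulo lower order terms'' is unnecessary and slightly misleading---since $\frac{d}{ds}k_s = h\times_\Theta k_s$ and the operator is $\pi^\Theta(k_s)\Delta$, you get the exact identity $\frac{dP_s}{ds} = \pi^\Theta(h)\,P_s$ with $h$ on the \emph{left}, which is exactly what you use in the next line anyway.
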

After verifying the technical assumption that the heat kernel asymptotic can be differentiated term by term in $s$ (eq. \eqref{eq:wpse_theta-variation-heatkernel}), the theorem is a special case of section 2.1 in \cite{MR3194491}. \par
The result can be rephrased using zeta functions. Again, let $P_k = k \Delta \in \Psi^2(M_\Theta)$. To define the complex power $P_k^z$ where $z$ is a complex number, we need to remove zero from the spectrum of $P_k$. To do so, we consider 
\begin{align*}
    \widetilde{\mathcal P_k} \defeq P_k(I-\mathcal P_{\ker P_k}),
\end{align*}
where $\mathcal P_{\ker P_k}$ is the projection on to the kernel of $P_k$. For $z\in\C$, the complex power is  defined by the contour integral
\begin{align*}
  P_k^z \defeq \frac{1}{2\pi i} \int_C \lambda^z (\widetilde{\mathcal P_k} - \lambda)^{-1} d\lambda.
\end{align*}

For $\Re z$ large enough, $P_k^{-z}$ is of trace-class, so the corespondent zeta function is well-defined:
\begin{align*}
    \zeta(P_k,z) = \Tr P_k^{-z}.
\end{align*}
It is well-known that, for instance, see \cite{gilkey1995invariance} lemma 1.10.1, the heat kernel asymptotic \eqref{eq:wpse_theta-heatkernel-asym} gives rise  to a meromorphic extension of  $\zeta(P_k,z)$ to $\C$ with at most simples poles. Moreover, the coefficients $V_j(P_k)$ correspond to the value or the residue of the zeta function at $z_j = (m-j)/2$, $j=0,1,2,\dots$, where $m =\dim M$. In particular,  at $z = 0$, the zeta function is regular and
\begin{align*}
    \zeta(k\Delta,0) = V_j(k\Delta)-\dim \ker k\Delta.
\end{align*}
Since $k$ is invertible, $\dim \ker k\Delta = \dim \ker \Delta$, combine this with \eqref{eq:wpse-theta-V_j(kD)}, we conclude:
\begin{thm}
	Let $k$ be a Weyl factor, then the zeta function of $k\Delta$ at zero is independent of $k$, that is   
	\begin{align}
		\zeta_{k\Delta}(0) =\zeta_{\Delta}(0). 
	\end{align}
\end{thm}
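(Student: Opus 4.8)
The plan is to reduce the statement to the conformal-index identity established in the theorem just above, together with the standard dictionary relating heat-trace coefficients to special values of the zeta function. First I would recall that, by Theorem~\ref{thm:wpse_theta-heatkernel-asym}, the deformed heat trace $\Tr\bigl(e^{-tP_k}\bigr)$ with $P_k = k\Delta$ has the asymptotic expansion $\sum_{j\ge 0} t^{(j-m)/2} V_j(P_k)$ as $t\searrow 0$, where $m = \dim M$ is even. Taking the Mellin transform and arguing as in \cite[Lemma~1.10.1]{gilkey1995invariance}, the function $\zeta(P_k,z) = \Tr P_k^{-z}$ extends meromorphically to $\C$, is regular at $z=0$, and satisfies
\[
  \zeta(P_k,0) = V_m(P_k) - \dim\ker P_k,
\]
the subtraction recording that the complex power $P_k^{-z}$ is defined only after removing the kernel.

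Next I would observe that the kernel term does not see the Weyl factor: since $k\in C^\infty(M_\Theta)$ is invertible and positive, $k\Delta f = 0$ if and only if $\Delta f = 0$, so $\dim\ker(k\Delta) = \dim\ker\Delta$. It then remains to compare the critical coefficients $V_m(k\Delta)$ and $V_m(\Delta)$, which is precisely the content of the preceding theorem: writing $k = \exp_\Theta(h)$ with $h = h^* \in C^\infty(M_\Theta)$, $k_s = e^{sh}$, and $P_s = k_s\Delta$, one has $\tfrac{d}{ds}V_m(P_s) = 0$ for all $s\in[0,1]$, hence $V_m(\Delta) = V_m(P_0) = V_m(P_1) = V_m(k\Delta)$. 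Combining the three facts,
\[
  \zeta_{k\Delta}(0) = V_m(k\Delta) - \dim\ker(k\Delta) = V_m(\Delta) - \dim\ker\Delta = \zeta_{\Delta}(0),
\]
which is the claim.

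As for where the substance sits: the assembly above is routine, and the genuine content has already been put in place in the earlier sections. The first nontrivial ingredient is that the heat asymptotic may be differentiated term by term in $s$, equation~\eqref{eq:wpse_theta-variation-heatkernel}, which rests on the resolvent and operator-norm estimates of Section~\ref{sec:heat-ker-asym} and on the invariance of the trace under $\pi^\Theta$ from Lemma~\ref{lem:def-ops-inv-of-trace}. The second is the vanishing $\tfrac{d}{ds}V_m(P_s) = 0$, which in the classical and noncommutative-torus cases is \cite[Sec.~2.1]{MR3194491}, and whose transfer to toric noncommutative manifolds is exactly what the deformed pseudodifferential calculus of Sections~\ref{sec:wid's-cal}--\ref{sec:heat-ker-asym} is designed to supply. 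If I had to single out the main obstacle, it is checking that the hypotheses of that variational identity — the uniform-in-$s$ resolvent bounds and the holomorphy of the parametric families $\pi^\Theta(P_s)$ — genuinely hold in the deformed setting; this has been arranged in Section~\ref{subsec:perturbation-of-delta} and in the estimates preceding Theorem~\ref{thm:wpse_theta-heatkernel-asym}, so the present theorem follows by collecting those facts.
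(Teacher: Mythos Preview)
Your proposal is correct and follows essentially the same route as the paper: both use the standard Mellin-transform relation $\zeta(P_k,0)=V_m(P_k)-\dim\ker P_k$ (citing \cite[Lemma~1.10.1]{gilkey1995invariance}), the observation that $\dim\ker(k\Delta)=\dim\ker\Delta$ by invertibility of $k$, and the conformal-index identity $V_m(k\Delta)=V_m(\Delta)$ from the preceding theorem. Your additional commentary on where the substantive analytic work resides is accurate and matches the paper's organization.
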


\section{Modular curvature}
\label{sec:modcur} 

\begin{defn}
    Let $P_k$   be a  perturbed Laplacian via  a Weyl factor $k$ as before, which stands for a noncommutative metric for the noncommutative manifold $M_\Theta$. Via analogy, we define  the associated  modular  curvature to be the functional density  of the second heat coefficient. Precisely, the modular curvature $\mathcal R = \pi^\Theta( \mathcal{\tilde R})$ with with $\mathcal{\tilde R} \in C^\infty(M)$, is defined by the property: for any $f = \pi^\Theta(\tilde f)$, 
 \begin{align*}
     V_2( f, P_k ) = \int_{M} \tilde f \times_{\Theta} \mathcal{\tilde R}, \,\,\,
 \forall f \in C^{\infty}(M). 
\end{align*}
 
\end{defn}
We have  shown in theorem \ref{thm:wpse_theta-heatkernel-asym} that 
 \begin{align*}
     \mathcal R(x) = (2\pi)^{-\dim M}	\int_{T^*_x M} \frac{1}{2\pi i} \int_{\mathcal C} e^{-\lambda} b_2 (\xi_x, \lambda)d\lambda d\xi_{x,g^{-1}},\,\,\, x \in M,
 \end{align*}
 where $b_{2}$ is the second term in the resolvent approximation of $(P_k - \lambda)^{-1}$.
% $P_k$ is a pseudo differential operator on $M$ whose
% resolvent has an approximation by a sequence of symbols $\set{b_j(\lambda)}_{j=0}^\infty$ with $b_j(\lambda) \in S\Sigma^{-j-2}(M,\lambda)$. According to \eqref{eq:wpse_theta-heatkernel-asym-V(f,Delta)}, the density function is given by $\tilde{\mathcal R}(x)  = \pi^\Theta( \mathcal R)$ with $\mathcal R \in C^\infty(M)$:
%  The calculation is proceed as follows.   We first compute the $b_2$ term explicitly, the result in given in    proposition \ref{prop:mod-cur-prop-b2term}.  We break the integration down is break $d\xi_{x,g^{-1}}$ into polar coordinates:
 We will process the integration as follows:
\begin{align}
 	\tilde{\mathcal R}(x) = \int_{0}^{\infty}\frac{1}{2\pi i} \int_{\mathcal C}	 \brac{ 
	\int_{S^*_x M} e^{-\lambda}b_2 (\xi_x, \lambda)d\omega_{x,g^{-1}} }  d\lambda(r^{m-1}dr),
\label{eq:mod-cur-scalarcur-setup}
 \end{align}
 where $d\omega_{x,g^{-1}}$ is the volume form on the unit cosphere inside $T^*_xM$ associated to the metric $g^{-1}$ and $m$ is the dimension of the underlying manifold. 
% Since the unit cosphere in each $T^*_xM$ is compact, we perform  integration against $d\omega_{x,g^{-1}}$ first, while $d\lambda dr$ is handled by lemma \ref{lem:mod-cur-int-lambda} and the rearrangement lemma developed in \cite{MR2907006}, \cite{MR3194491} and \cite{2014arXiv1405.0863L}. 
% 

 At the end, we showed that the results agree with the previous work  \cite{Lesch:2015aa}, \cite{MR3194491} and  \cite{MR3359018}. In the rest of the computation, the tensor calculus is always the deformed version, we will suppress  all $\otimes_\Theta$, $\times_\Theta$ to simplify the notations.
 
\subsection{Set up and the complete expression of the  $b_2$ term}
  We perform the calculation with respect to a perturbed Laplacian $\pi^\Theta(P_k)$ (cf. definition \ref{defn:wpse_theta-pert-laplacian})  whose symbol is of the form:
 \begin{align}
 \sigma(\pi^\Theta(P_k))= k \abs\xi^2 + p_1(\xi_x) + p_0(\xi_x).
 \label{eq:mod-cur-laptpye-sym}
 \end{align}
 Here $p_1(\xi_x)$ and $p_0(\xi_x)$ are the degree one and zero parts respectively whose explicit expressions will be determined in specific examples. Let us consider, at first, the simplest perturbation $\pi^\Theta(k)\Delta$, whose symbol is the leading term of all the perturbed Laplacians appeared in the previous work \cite{MR3194491}, \cite{MR3148618} and \cite{MR3359018}:
% which is the degree zero Laplacian $\Delta_\varphi = k \Delta k$ appeared in \cite[Lemma 1.11]{MR3194491} upto a conjugation.   
 \begin{lem}
 	\label{lem:mod-cur-symbolof-P_k}
    The symbol of the perturbed Laplacian $k \Delta$ is equal to 
    \begin{align*}
        k \times_\Theta \abs\xi^2 = k \abs\xi^2,
    \end{align*}
    where $\abs\xi$ is the length function on $T^*M$.
%  	Let $k \Delta$ be a perturbed Laplacian with $P_k\in \Psi^2(M)$, a differential operator on $M$. Then the  symbol of $P_k$ is equal to $k \times_\Theta \abs\xi^2 = k \abs\xi^2$, where $\abs\xi$ is the length function on $T^*M$. 
\end{lem}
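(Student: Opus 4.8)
The plan is to reduce the statement to the already-known symbol $\sigma(\Delta)=\abs{\xi}^2$ of Lemma \ref{lem:wpse-diff-sym-lap} together with the $\T^n$-invariance of the Riemannian metric. First I would record that $\Delta$ commutes with the torus action (the $\T^n$ acts by isometries), so $\pi^\Theta(\Delta)=\Delta$ and hence, writing $L_k$ for multiplication by $k$, $\pi^\Theta(k)\Delta=\pi^\Theta(L_k)\pi^\Theta(\Delta)=\pi^\Theta(L_k\times_\Theta\Delta)$ by \eqref{eq:pse-op-pitheta-algera-morphism} of Proposition \ref{prop:theta-prod-well-defined}. Thus the symbol to compute is the deformed symbol product $k\star_\Theta\abs{\xi}^2$, where $k\in C^\infty(M)\subset C^\infty(T^*M)$ is viewed as a fiber-constant function on $T^*M$ (its symbol as a zero-order operator).

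Next I would expand $k\star_\Theta\abs{\xi}^2\backsim\sum_{j\ge0}a_j(k,\abs{\xi}^2)_\Theta$ using Proposition \ref{prop:wpse-theta-startheta} and argue that only the $j=0$ term survives. The point is that $k$ is constant along the fibers of $T^*M$, so the vertical derivatives $D^\alpha k$ vanish for every $\abs\alpha\ge1$, hence $D^\alpha k_\mu=(D^\alpha k)_\mu=0$ for all isotypical components. Inspecting formula \eqref{eq:wpse-theta-aj-theta}, each $a_j$ carries a factor $D^{\alpha_0+\alpha_1+\cdots+\alpha_k}k$, where the summation runs over $j=\alpha_0+\sum_1^k(\alpha_s+\beta_s)-k\ge0$ with $\alpha_1,\dots,\alpha_k\ge1$ and $\beta_1,\dots,\beta_k\ge2$. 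If the number of $\ell$-factors is $k\ge1$ then $\alpha_0+\alpha_1+\cdots+\alpha_k\ge\sum_1^k\alpha_s\ge k\ge1$, while $k=0$ forces $\alpha_0=j$; so for every $j\ge1$ the factor $D^{(\ge1)}k$ occurs and the term vanishes, and for $j=0$ one is forced to $k=0$, $\alpha_0=0$, giving $a_0(k,\abs{\xi}^2)_\Theta=k\times_\Theta\abs{\xi}^2$. Since $k\Delta$ is a differential operator the asymptotic sum is finite, so this identity is exact, not merely modulo $\Psi^{-\infty}$.

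Finally I would invoke the $\T^n$-invariance of $\abs{\xi}^2$ — already observed just after Definition \ref{defn:wpse_theta-pert-laplacian} — namely that $\abs{\xi}^2$ lies entirely in the degree-zero isotypical component, so $k\times_\Theta\abs{\xi}^2=\sum_{r\in\Z^n}\chi_\Theta(r,0)k_r\abs{\xi}^2=\sum_{r\in\Z^n}k_r\abs{\xi}^2=k\abs{\xi}^2$, the bicharacter being trivial. This yields the claim. There is no genuine obstacle here; the only bookkeeping is checking that the vanishing of the vertical derivatives of $k$ kills all higher terms $a_j$, which is immediate from the index constraints in \eqref{eq:wpse-theta-aj-theta}. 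An even shorter alternative bypasses the symbol product: $L_k\times_\Theta\Delta$ is the ordinary differential operator $f\mapsto k\,\Delta f=-(\nabla^2 f)\cdot(k\,g^{-1})$, and repeating the computation in the proof of Lemma \ref{lem:wpse-diff-sym-lap} verbatim but pairing $\nabla^2 e^{i\ell(\xi_x,y)}$ against $k\,g^{-1}$ instead of $g^{-1}$ gives $k\,\xi_i\xi_j g^{ij}=k\abs{\xi}^2$.
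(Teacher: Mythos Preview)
Your proof is correct and follows essentially the same approach as the paper: use $\sigma(\Delta)=\abs{\xi}^2$, observe that $k$ being fiber-constant kills all $a_j(k,\cdot)_\Theta$ for $j\ge1$, and then invoke the $\T^n$-invariance of $\abs{\xi}^2$ to reduce $k\times_\Theta\abs{\xi}^2$ to $k\abs{\xi}^2$. Your write-up is more detailed than the paper's (which dispatches the vanishing of the higher $a_j$ in a single sentence), and the alternative direct computation you sketch at the end is a nice bonus, but the substance is the same.
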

 \begin{proof}
 We have seen in lemma \ref{lem:wpse-diff-sym-lap}  that $\sigma(\Delta) = \abs\xi^2$. Since $k$ is independent of the fiber direction variable $\xi$, $a_j(k,\cdot) =0$ for all $j > 0$, thus, 
\begin{align*}
 k \star_\Theta \abs\xi^2 = a_0(k,\abs\xi^2)_\Theta = k \times_\Theta \abs\xi^2 =
 k\abs\xi^2,  
\end{align*}
the last equal sign holds because $\abs\xi^2$ is $\T^n$-invariant. 
     
     	 \end{proof}

Denote by
 \begin{align} 
 p_2(\xi_x,\lambda) = k \abs\xi^2 - \lambda
 \label{eq:mod-cur-p2}
 \end{align}
 the parametric leading symbol, and its inverse in the deformed algebra $C^\infty(T^*M_\Theta)$.
 \begin{align*}
 b_0(\xi_x,\lambda) = (k\abs\xi^2 -\lambda)^{-1}.
 \end{align*}
The  construction of $b_0$ is explained in appendix \eqref{App:inverseof_p2} in detail. Recall \eqref{eq:wpse-theta-b1} and \eqref{eq:wpse-theta-b2}, the explicit expressions of $b_1$ and $b_2$ can be calculated by repeatedly  applying the Leibniz property of $D$ and $\nabla$. We shall leave the lengthy calculation in appendix \ref{app-modcur-b2term} and \ref{App:int_over_cosphere}, instead, we start with proposition \ref{prop:app-int_b2-cosphsere}: in \eqref{eq:mod-cur-scalarcur-setup}, the integral over the unit sphere $\int_{S^{m-1}} b_2  d\sigma_{S^{m-1}}$ is equal to, up to an overall factor $\op{Vol}(S^{m-1})$:
\begin{align}
    \label{eq:mod-cur-b2term-int-sphere}
    \begin{split}
 &   \,\,\,\,\frac 4m 2 b_0^3 k^2 (\nabla k) b_0 (\nabla k) b_0 \abs\xi^6 g^{-1}
     -(2+\frac4m) b_0^2 k  (\nabla k) b_0 (\nabla k) b_0 \abs\xi^4 g^{-1}\\
     &+ \frac4m b_0^2k   (\nabla k) b_0^2 k (\nabla k) b_0 \abs\xi^6 g^{-1} 
     - b_0^2k (\nabla^2 k) b_0 \abs\xi^2 g^{-1}
     +  \frac4m b_0^3 k^2 ( \nabla^2 k) b_0\abs\xi^4 g^{-1}\\
     & + \frac1m \frac23 b_0^2 k^2  \mathcal S_{\Delta} b_0 \abs\xi^2.        
    \end{split}
\end{align}
    \subsection{Integration in $\lambda$}
%    Due to the homogeneity of $b_2$ in $(\xi,\sqrt{\lambda})$, the result of the contour integral 
%    $\frac{1}{2\pi i}\int_C e^{-t\lambda}b_2(\xi_x,\lambda) d\lambda$ is equal to $b_2(\xi_x,-1)$ when the dimension of the manifold equals two, this was explained in \cite[section 6]{MR3194491}, the argument can be modified to higher dimensions. \par
% 

As explained in \cite[section 6]{MR3194491}, the resolvent parameter can be taken to be $-1$ due to the homogeneity of the symbol. The argument works in higher dimensions  in the following way. \par

    Let $m =\dim M$ be  even. We fix a point $x \in M$ and identify $T^*_xM \cong \R^m$ so that the Riemannian metric is the usual Euclidean metric.  Put
    \begin{align}
    	b_2(r,\lambda) \defeq \int_{S^{m-1}} b_2 (\xi,\lambda) d\sigma_{S^{m-1}}, 
    \end{align}  
 where $r\in [0,\infty)$ and $\lambda \in \Lambda$, a cone region in $\C$ in which $\sqrt{\lambda}$ is well-defined. 
% the right hand side is computed in \eqref{eq:mod-cur-b2term-int-sphere}. 
    We would like to switch the order of $d\lambda(r^{m-1}dr)$ in \eqref{eq:mod-cur-scalarcur-setup}. 
    Integration by parts gives us: for any integer $j>0$,
   \begin{align*}
      &\,\, \int_{0}^{\infty}  \frac{1}{2\pi i}\int_C e^{-\lambda}	b_2(r,\lambda) d\lambda (r^{m-1}dr) \\
      =&\,\,
      \int_{0}^{\infty}  \frac{1}{2\pi i}\int_C e^{-\lambda} 	\frac{d^j}{d\lambda^j} b_2(r, \lambda) d\lambda  (r^{m-1}dr).
     % \,\,\, \forall j \in \Z_{\ge 0}.
   \end{align*}
Recall that  by saying ``$b(r,\lambda)$ is homogeneous in $(r,\sqrt\lambda)$ of degree $d \in \Z$'' we mean that for any $c>0$, $b(cr, c^{2}\lambda) = c^{d} b(r,\lambda)$.  Observe that differentiating in $\lambda$ lower the homogeneity by $2$, thus  if we take $j$ to be $j_{0} = m/2 -1$, the smallest integer so that  the homogeneity of $\frac{d^j}{d\lambda^j} b_2(r, \lambda)$ is strictly less than $-m$, then integral
\begin{align*}
B_{j_{0}}(\lambda) =  \int_{0}^{\infty} \frac{d^j_{0}}{d\lambda^j_{0}} b_2(r, \lambda) (r^{m-1}dr)
\end{align*}
exists and $B_{j_{0}}(\lambda)$ is  homogeneous in $\lambda$ of degree $-1$, that is, $B_{j_{0}}(c\lambda)=c^{-1} B_{j_{0}}(\lambda)$ for any $c>0$.  Indeed, 
    	\begin{align*}
    	B_{j_0}(c \lambda) &= \int_0^\infty (\frac{d^{j_0}}{d\lambda^{j_0}} b_{2})(\sqrt c \frac{r}{\sqrt c},c\lambda) r^{m-1}dr \\
    	&= \int_0^\infty c^{\frac{-4-2j_0}{2}} (\frac{d^{j_0}}{d\lambda^{j_0}} b_{2})(\frac{\xi}{\sqrt c},\lambda) r^{m-1}dr \\
    	&= c^{-2-j_0 + m/2} \int_0^\infty (\frac{d^{j_0}}{d\lambda^{j_0}} b_{2})(r,\lambda) r^{m-1}dr,
    	\end{align*}
and $-2-j_0 + m/2 =-1$ since $j_{0} = m/2 -1$. 
%    When $m=2$, this was  explained in \cite[section 6]{MR3194491}, it is valid to swap the order, moreover, due to the homogeneity of $b_2$, we have
%    \begin{align*}
%    	\frac{1}{2\pi i}\int_C e^{-t\lambda}  (\int_{0}^{\infty} b_2(r,\lambda)rdr) d\lambda  
%    	= \int_{0}^{\infty} b_2(r,-1) rdr.
%    \end{align*} 
%    \begin{align*}
%    \frac{1}{2\pi i} \int_{T^*_xM} \int_{C}e^{-t\lambda} b_{2}(\xi,\lambda)d\lambda d\xi_{g^{-1}}.
%    \end{align*}
%   For higher dimensions, $\int_{0}^{\infty} b_2(r,\lambda)r^{m-1}dr$ diverges, however, integration by parts gives us:
%   \begin{align*}
%      &\,\, \int_{0}^{\infty}  \frac{1}{2\pi i}\int_C e^{-\lambda}	b_2(r,\lambda) d\lambda (r^{m-1}dr) \\
%      =&\,\,
%      \int_{0}^{\infty}  \frac{1}{2\pi i}\int_C e^{-\lambda} 	\frac{d^j}{d\lambda^j} b_2(r, \lambda) d\lambda  (r^{m-1}dr),
%      \,\,\, \forall j \in \Z_{\ge 0},
%   \end{align*}
%   in which the homogeneity of  $\frac{d^j}{d\lambda^j} b_2(r, \lambda)$ is equal to $-4-2j$. We choose $j$ large enough   so that  switching the order of $d\lambda dr$ is valid. 
   %but the issue can be solved by applying integration by parts on $d\lambda$.  
%    One can not switch the order of integration simply because the $\xi$ integral diverges when the dimension of the manifold is greater or equal than $4$ (recall that $b_2$ is a symbol of order $-4$). The issue is not essential, 
%    
    \begin{lem}
    	\label{lem:mod-cur-int-lambda}
  Keep the notations as above. Assume that  $m = \dim M$ is even and set $j_{0} = m/2 -1$. We have
      	\begin{align}
    	\frac{1}{2\pi i}  \int_{C}e^{-\lambda} B_{j_0}(\lambda)d\lambda = B_{j_0}(-1).
    	\label{eq:mod-cur-int-Bj0-dlambda}
    	\end{align}
	Therefore:
\begin{align}
 \frac{1}{2\pi i}\int_C e^{-t\lambda}  (\int_{0}^{\infty} b_2(r,\lambda)r^{m-1}dr) d\lambda  
 = \int_{0}^{\infty}  \frac{d^j_{0}}{d\lambda^j_{0}}\Big|_{\lambda =-1} b_2(r,\lambda) r^{m-1}dr.
    	\label{eq:mod-cur-put-lamda-to-1}
\end{align}

%    	
%    	Let $m$ be the dimension of the manifold $M$ which is even, $r\in [0,\infty)$ and the resolvent parameter $\lambda$ belong to  a conic region  $\Lambda \subset \C$ in which $\sqrt{\lambda}$ is well-defined. Given a function $b_2(r,\lambda)$ which is homogeneous in $(r, \sqrt{\lambda})$ is the following way, for any integer $j\ge 0$,
%    	\begin{align*}
%    	\frac{d^j}{d\lambda^j} b_2(\xi_x, \lambda)	
%    	\end{align*}
%    	 is homogeneous in $(r, \sqrt{\lambda})$ of degree $-4-2j$. 
%    	%    	Since $b_2(\lambda)$ is a parametric symbol, $d/d\lambda$ will lower the order of $b_2$ by two, namely, $\forall j \in \N$,
%%    	\begin{align*}
%%    	\frac{d^j}{d\lambda^j} b_2(\xi_x, \lambda) \in S\Sigma^{-4-2j}(M,\Lambda).
%%    	\end{align*}
%    	Let $j_0 = m/2 -1$ (the smallest integer such that $4+2j>m$), which assures  the existence of the integral
%    	\begin{align}
%    	B_{j_0}(\lambda) = \int_{0}^\infty \frac{d^j_0}{d\lambda^j_0} b_2(r,\lambda) r^{m-1}dr,
%    	\label{eq:mod-cur-B_j0}
%    	\end{align}
%    	then 
%    	\begin{align}
%    	\frac{1}{2\pi i}  \int_{C}e^{-\lambda} B_{j_0}(\lambda)d\lambda = B_{j_0}(-1)
%    	\label{eq:mod-cur-int-Bj0-dlambda}
%    	\end{align}
 \end{lem}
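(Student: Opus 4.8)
The plan is to observe that, once the integration by parts in $\lambda$ has been performed (as above), the quantity $B_{j_0}(\lambda)$ is a \emph{holomorphic} function on the cone $\Lambda$ which has already been shown to be homogeneous of degree $-1$; a holomorphic, degree-$(-1)$-homogeneous function must be a constant multiple of $\lambda^{-1}$, and then \eqref{eq:mod-cur-int-Bj0-dlambda} collapses to the elementary contour identity $\tfrac{1}{2\pi i}\int_C e^{-\lambda}\lambda^{-1}\,d\lambda=-1$. So the work splits into: (i) justifying that $B_{j_0}$ is well defined and holomorphic; (ii) the ``$B_{j_0}(\lambda)=c/\lambda$'' deduction; (iii) the contour identity and assembling \eqref{eq:mod-cur-put-lamda-to-1}.

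For (i): since $k>0$ and $\lambda$ runs over the resolvent cone, $b_0(r,\lambda)=(kr^2-\lambda)^{-1}$ is holomorphic in $\lambda$ and smooth in $r\in[0,\infty)$, hence so is $b_2(r,\lambda)$, being built from $b_0$, the lower-order symbols and the mixed derivatives of $\ell$ through the deformed calculus. By the homogeneity \eqref{eq:wpse-homegen-prop-b_j}, $\tfrac{d^{j_0}}{d\lambda^{j_0}}b_2(r,\lambda)$ is homogeneous of degree $-4-2j_0=-m-2$ in $(r,\sqrt\lambda)$, so that $\bigl|\tfrac{d^{j_0}}{d\lambda^{j_0}}b_2(r,\lambda)\bigr|\le C_\lambda\,r^{-m-2}$ as $r\to\infty$ with $C_\lambda$ locally bounded on $\Lambda$; combined with smoothness at $r=0$ this makes $r\mapsto\tfrac{d^{j_0}}{d\lambda^{j_0}}b_2(r,\lambda)\,r^{m-1}$ absolutely integrable, locally uniformly in $\lambda$, so $B_{j_0}$ is holomorphic on $\Lambda$ by differentiation under the integral sign. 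For (ii), I would set $F(\lambda)=\lambda\,B_{j_0}(\lambda)$: it is holomorphic on the connected set $\Lambda$ and, by the degree-$(-1)$ homogeneity of $B_{j_0}$, satisfies $F(c\lambda)=F(\lambda)$ for all $c>0$; differentiating in $c$ at $c=1$ gives $\lambda F'(\lambda)=0$, so $F\equiv c_0$ is constant, and evaluating at $\lambda=-1$ yields $c_0=-B_{j_0}(-1)$, i.e. $B_{j_0}(\lambda)=-B_{j_0}(-1)\,\lambda^{-1}$.

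Then (iii): plugging $s=0$ (and $t=1$) into the defining property $e^{-ts}=\tfrac{1}{2\pi i}\int_C e^{-t\lambda}(s-\lambda)^{-1}\,d\lambda$ of the contour $C$ gives $\tfrac{1}{2\pi i}\int_C e^{-\lambda}\lambda^{-1}\,d\lambda=-1$, so $\tfrac{1}{2\pi i}\int_C e^{-\lambda}B_{j_0}(\lambda)\,d\lambda=-B_{j_0}(-1)\cdot(-1)=B_{j_0}(-1)$, which is \eqref{eq:mod-cur-int-Bj0-dlambda}. For \eqref{eq:mod-cur-put-lamda-to-1} I would first integrate by parts $j_0$ times in $\lambda$ along $C$ (the boundary terms vanishing because $e^{-t\lambda}$ and all its $\lambda$-antiderivatives decay at the ends of $C$), giving $\int_C e^{-t\lambda}b_2(r,\lambda)\,d\lambda=t^{-j_0}\int_C e^{-t\lambda}\tfrac{d^{j_0}}{d\lambda^{j_0}}b_2(r,\lambda)\,d\lambda$ for each fixed $r$; then interchange $\int_0^\infty r^{m-1}\,dr$ with $\tfrac{1}{2\pi i}\int_C\,d\lambda$ (legitimate now by the absolute, locally uniform convergence from (i)), producing $t^{-j_0}\tfrac{1}{2\pi i}\int_C e^{-t\lambda}B_{j_0}(\lambda)\,d\lambda$; and finally reduce to the $t=1$ case by the substitution $\lambda\mapsto t\lambda$, which deforms $C$ into itself without crossing a singularity since $t>0$. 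The power $t^{-j_0}=t^{(2-m)/2}$ is precisely the one multiplying $V_2$ in \eqref{eq:wpse_theta-heatkernel-asym}, so the statement matches.

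The one point demanding care — the ``main obstacle'' such as it is — is the order-of-integration bookkeeping in the last step: the naive radial integral $\int_0^\infty b_2(r,\lambda)\,r^{m-1}\,dr$ diverges once $m\ge4$, so one must perform the $\lambda$-integration by parts \emph{first} (thereby replacing $b_2$ by its $j_0$-th $\lambda$-derivative and lowering the homogeneity into the absolutely convergent range) before invoking Fubini; everything else is the standard Cauchy/Hankel-type argument packaged in steps (ii)–(iii).
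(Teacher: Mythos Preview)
Your proof is correct and follows essentially the same strategy as the paper: reduce $B_{j_0}(\lambda)$ to $-B_{j_0}(-1)\,\lambda^{-1}$ via its degree-$(-1)$ homogeneity, then invoke the contour identity $\tfrac{1}{2\pi i}\int_C e^{-\lambda}(-\lambda)^{-1}\,d\lambda=1$. Your step (ii) --- setting $F(\lambda)=\lambda B_{j_0}(\lambda)$ and concluding $F'\equiv 0$ from holomorphy plus scale-invariance --- is in fact cleaner than the paper's route, which instead deforms the contour $C$ into a narrow cone $\{\pi-\delta<\arg\lambda<\pi+\delta\}$ and argues that $e^{i\theta}B_{j_0}(e^{i\theta})\to -B_{j_0}(-1)$ as $\delta\to 0$; your version makes the identity $B_{j_0}(\lambda)=-B_{j_0}(-1)/\lambda$ exact on all of $\Lambda$ rather than a limiting statement along $C$, and also makes the justification of Fubini and the well-definedness of $B_{j_0}$ explicit where the paper leaves them implicit.
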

    
    \begin{proof}
    Let $C \in \C$ be a contour  around $[0,\infty)$ used before to define the heat operator via holomorphic functional calculus, then  
    	\begin{align}
	\label{eq:mod-cur-int-heatintegral-scalar}
    	\frac{1}{2\pi i} \int_C e^{-t\lambda} \frac{1}{s - \lambda} d\lambda = e^{-ts}, \,\,\, \forall s,t\ge 0.
    	\end{align}
	Upto homotopy equivalence in the region $\C \setminus [0,\infty)$, we can force the contour $C$ to be contained in a cone region $U_{\delta} = \set{z \in \C \,| \, \pi-\delta <\arg z < \pi+\delta}$ for any $\delta>0$. Write $\lambda = r e^{i\theta}$ in its polar form, due to the homogeneity:
	\begin{align*}
 B_{j_0}(\lambda) = \frac{1}{r e^{i\theta}}e^{i\theta} B_{j_{0}}(e^{i\theta}) = \frac1\lambda e^{i\theta} B_{j_{0}}(e^{i\theta}),
\end{align*}
	where $\theta$ can be chosen to be contained in $(\pi-\delta,\pi+\delta)$ for any $\delta >0$, hence: 
    	    	\begin{align*}
    	\frac{1}{2\pi i} \int_C e^{-t\lambda} B_{j_0}(\lambda) d\lambda = 
    	\brac{ \frac{1}{2\pi i} \int_C e^{-t\lambda} (-\frac1\lambda) d\lambda } B_{j_0}(-1) = B_{j_0}(-1).
    	\end{align*}
	Here we have used \eqref{eq:mod-cur-int-heatintegral-scalar} to conclude that $ \frac{1}{2\pi i} \int_C e^{-t\lambda} (-\frac1\lambda) d\lambda = e^{0} =1$.
    \end{proof}
        
    \subsection{The rearrangement lemma}   
    %----------Rearrangement lemma -----------------------------------
   Integration in $r$ is handle by the rearrangement lemma which will be explained below. We will feel free to use the notations in \cite{Lesch:2015aa} and \cite{2014arXiv1405.0863L}. 
    From now on,  the parameter $\lambda$ is taken to be $-1$.  Put $r = \abs\xi$.   After a substitution $r \mapsto r^2$, the summands in \eqref{eq:mod-cur-b2term-int-sphere}  contain two types:
    \begin{align}
        \label{eq:mod-cur-ty1-int}
    	k f_0(rk) \rho f_1(rk) \,\, \text{or}\,\,
    	k f_0(rk) \rho_1 f_1(rk) \rho_2 f_2(rk),
    \end{align}
    here $k$ is the Weyl factor and $f_j$'s are some smooth functions on $\R_+$, while $\rho_j$'s are tensor fields over $M$ on which $C^\infty(M_\Theta)$ acts from both sides. Introduce the modular operator $\modop$:
    \begin{align}
        \modop(\rho) \defeq  k^{-1} \rho k, 
           \label{eq:mod-cur-modop}  
        \end{align}
    then the rearrangement lemma (cf. \cite[Lemma 6.2]{MR3194491}, \cite[Corollary 3.9]{2014arXiv1405.0863L}) yields:
    \begin{align}
    \label{eq:mod-cur-relem-onevar}
  &\int_0^\infty k f_0(rk) \rho f_1(rk) dr = \mathcal K(\modop)(\rho), \,\, \text{with}\\ 	
  & \mathcal K(s) =\int_0^\infty f_0(r) f_1(rs)dr, \,\, s\in (0,\infty).  	
    \label{eq:mod-cur-relem-funs-onevar}
    \end{align}
  For the second type,
  \begin{align}
  \label{eq:mod-cur-relem-twovar}
  	&\int_0^\infty k f_0(rk) \rho_1 f_1(rk) \rho_2 f_2(rk) dr 
  	= \mathcal G(\modop_{(1)}, \modop_{(2)})(\rho_1 \cdot \rho_2), \,\, \text{with}\\ 	
  	\label{eq:mod-cur-relem-funs-twovar}
  	& \mathcal G(s , t) = \int_0^\infty f_0(r) f_1(rs) f_2(rst) dr, \,\, s,t\in (0,\infty), 
  \end{align}
  where $\modop_{(j)}$ indicates that $\modop$ acts on the $j$-th factor with $j=1,2$. We introduce the following families of modular curvature functions:
  \begin{align}
      \label{eq:mod-cur-familyOneV}
      K_{(p,q)}(s,r) &= r^{p+q-2} (r+1)^{-p} (sr+1)^{-q}, \\
\label{eq:mod-cur-familyTwoV}
     H_{(p,q,l)}(s,t,r) &=r^{p+q+l-2} (r+1)^{-p} (sr+1)^{-q} (str+1)^{-l},
  \end{align}
  where the parameter $r\in [0,\infty)$ and the arguments $s,t \in (0,\infty)$. For instance, when applying the lemma to $b_0^2k (\nabla^2 k) b_0 \abs\xi^2 g^{-1}$  in \eqref{eq:mod-cur-b2term-int-sphere}, the associated function is $K_{(2,1)}(s,r)$. For the term $b_0^3 k^2 (\nabla k) b_0 (\nabla k) b_0 \abs\xi^6 g^{-1}$, the function is $H_{(3,1,1)}(s,t,r)$.

\subsection{Modular curvature on noncommutative two tori}
Let $M = \T^2$ be the two torus with the Euclidean metric, thus  $m = \dim M =2$ and the scalar curvature function $\mathcal S_\Delta =0$. Recall that the modular curvature $\mathcal R$ is obtained by applying $\int_0^\infty(\cdot) rdr$ to all terms in \eqref{eq:mod-cur-b2term-int-sphere} while the resolvent parameter $\lambda$ is replaced by $-1$. After a substitution $r \mapsto r^2$, $rdr$ becomes $dr/2$. View $ r \mapsto k\abs\xi^2$, then for example, the $b_0$ yields the function $f_1(r) = (r+1)^{-1}$. We would like to apply \eqref{eq:mod-cur-ty1-int} to $2b_0^3 k^2 ( \nabla^2 k)
b_0\abs\xi^4 g^{-1} =k^{-1} 2k b_0^3 k^2 ( \nabla^2 k)
b_0\abs\xi^4 g^{-1}$, therefore 
\begin{align*}
  k^{-1}  \int_0^\infty \brac{2k b_0^3 k^2 ( \nabla^2 k)
      b_0\abs\xi^4 g^{-1} } (dr/2) = \mathcal K_1(\modop)(\nabla^2 k) g^{-1}
\end{align*}
with $\mathcal K_1(s) = \int_0^\infty K_{(3,1)}(s,r) dr$. Other terms in \eqref{eq:mod-cur-b2term-int-sphere} can be handled in a similar way, for instance, $b_0^2k   (\nabla k) b_0^2 k (\nabla k) b_0 \abs\xi^6 g^{-1}$ a term yields a modular function with two variables: we first bring the $k$ in the middle of $\nabla k$ in front via the modular operator, that is: $(\nabla k) k = k \modop(\nabla k)$, so we rewrite 
\begin{align*}
    b_0^2k   (\nabla k) b_0^2 k (\nabla k) b_0 \abs\xi^6 g^{-1} = k^{-2} k b_0^2k^3  \modop(\nabla k) b_0^2  (\nabla k) b_0 \abs\xi^6 g^{-1} 
\end{align*}
and apply \eqref{eq:mod-cur-relem-twovar}:
\begin{align*}
    k^{-2}  \int_0^\infty k b_0^2k^3  \modop(\nabla k) b_0^2  (\nabla k) b_0 \abs\xi^6 g^{-1} (dr/2)
   & = k^{-2} \tilde{\mathcal G}_1 (\modop_{(1)},\modop_{(2)})\brac{
    \modop(\nabla k) k
} g^{-1} \\
&= k^{-2} \mathcal G_1 (\modop_{(1)},\modop_{(2)}) \brac{\nabla k \nabla k}g^{-1},
\end{align*}
with $\mathcal G_1(s,t) = s \tilde{\mathcal G}_1(s,t)$ and $\tilde{\mathcal G}_1(s,t) = \int_0^\infty H_{(2,2,1)}(s,t,r) (dr/2)$.

%-------------------
 We collect the terms that involve $\nabla^2 k$ and their associated functions as below: , which are  $\frac4m b_0^3 k^2 ( \nabla^2 k)
b_0\abs\xi^4 g^{-1}$ and $- b_0^2k (\nabla^2 k) b_0 \abs\xi^2 g^{-1}$,
\begin{align}
    \begin{split}
    \frac4m b_0^3 k^2 ( \nabla^2 k)b_0\abs\xi^4 g^{-1} , \,\,\, & \frac4m K_{(3,1)}(s,r) \\
    - b_0^2k (\nabla^2 k) b_0 \abs\xi^2 g^{-1} , \,\,\, & - K_{(2,1)}(s,r),
    \end{split}
    \label{eq:mod-cur-terms-to-onevarfun}
    \end{align}
they yield the following term in \eqref{eq:mod-cur-scalarcur-twotorus}:
\begin{align*}
    k^{-1} \mathcal K(\modop)(\nabla^2 k) g^{-1},
\end{align*}
where
\begin{align*}
    \mathcal K(s) =\frac12 \int_0^\infty \frac4m K_{(3,1)}(s,r) - K_{(2,1)}(s,r)  dr. 
\end{align*}
The constant $1/2$ comes from the substitution $r \mapsto r^2$. Plug in \eqref{eq:mod-cur-familyOneV} and $m=2$, we obtained 
\begin{align}
   \mathcal K(s) = \frac{-2 s+(s+1) \log (s)+2}{2 (s-1)^3}. 
    \label{eq:mod_cur_K-explicit}
\end{align}
Similarly, according to \eqref{eq:mod-cur-relem-twovar} and \eqref{eq:mod-cur-relem-funs-twovar},
 we collect terms in  \eqref{eq:mod-cur-b2term-int-sphere} that contribute to  the function $\mathcal G$ in \eqref{eq:mod-cur-scalarcur-twotorus} in the following table:
\begin{align}
    \begin{split}
     \frac4m 2 b_0^3 k^2 (\nabla k) b_0 (\nabla k) b_0 \abs\xi^6 g^{-1}, \,\,\, & \frac4m H_{(3,1,1)}(s,t,r), \\
     -(2+\frac4m)b_0^2 k  (\nabla k) b_0 (\nabla k) b_0 \abs\xi^4 g^{-1}, \,\,\,  & -(2+\frac4m) H_{(2,1,1)}(s,t,r), \\
     \frac4m b_0^2k   (\nabla k) b_0^2 k (\nabla k) b_0 \abs\xi^6 g^{-1}, \,\,\,  & \frac4m sH_{(2,2,1)}(s,t,r).   
    \end{split}
    \label{eq:mod-cur-terms-to-twovarfun}
\end{align}
%where the second column stands for the functions needed to be integrated over $[0,\infty)$ in $r$.
%     The functions $H_{(p,q,l)}$ are defined in \eqref{eq:mod-cur-familyTwoV}. Notice that in the third row, there is a $k$ between two $\nabla k$, we write $(\nabla k) k = k \modop(\nabla k)$, which yields the function $s$ in front of $H_{(2,2,1)}(s,t,r)$. 
When $m=2$, the two variable function $\mathcal G$ is given by:
    \begin{align*}
        \mathcal G(s,t) =\frac12 \int_0^\infty \brac{
        4H_{(3,1,1)} -4H_{(2,1,1)}+4sH_{(2,2,1)}
        }(s,t,r) dr
    \end{align*}
The explicit expression is given by:
\begin{align}
    \mathcal G(s,t) 
    = {\scriptstyle 
        \frac{(s t-1)^3 \log (s)-(s-1) ((t-1) (s (t-2)+1) (s t-1)+(s-1) (s t (2 t-1)-1) \log (s t))}{(s-1)^2 s (t-1)^2 (s t-1)^3} }.
    \label{eq:mod_cur-calG-explicit}
\end{align}
Finally, taking two overall factors into account: $\mathrm{Vol}(S^{m-1})$ and $(2\pi)^{-m}$ (cf. \eqref{eq:mod-cur-b2term-int-sphere} and Theorem $\ref{thm:wpse_theta-heatkernel-asym}$), we have proved the following theorem. 
\begin{thm} \label{thm:mod-cur-scalarcur-twotorus}
    Let $M_\Theta = \T^2_\Theta$, the noncommutative two torus and $P_k = k  \Delta$. Then the functional $V_2(\cdot,P_k)$ of the second heat coefficient  can be express in in the following way: $\forall f \in C^\infty(M)$, put $f = \pi^\Theta(\tilde f)$,
    \begin{align}
        V_2(f,P_k ) = \int_M \tilde f\times_\Theta \mathcal{\tilde R} dg.
        \label{eq:mod_cur_defn_mathcalR}
    \end{align}
   Keep the notations as in \eqref{eq:mod-cur-modop}-\eqref{eq:mod-cur-relem-funs-twovar}, then upto a constant factor $(2\pi)^{-1}$, %$($cf. Theorem $\ref{thm:wpse_theta-heatkernel-asym}$$)$,
    \begin{align}
         \mathcal{\tilde R} =   k^{-1} \mathcal K (\modop)(\nabla^2 k) g^{-1} + k^{-2} \mathcal G(\modop_{(1)},\modop_{(2)}) \brac{\nabla k \nabla k}g^{-1},
        \label{eq:mod-cur-scalarcur-twotorus}
    \end{align}
   where the modular curvature functions $\mathcal K$ and $\mathcal G$ are given in \eqref{eq:mod_cur_K-explicit} and \eqref{eq:mod_cur-calG-explicit}. 
%     can be written as linear combinations of simple\footnote{That means at most the third divided difference occurs.} divided differences of the $\log$ function. The explicit expressions 
\end{thm}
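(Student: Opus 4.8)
The plan is to specialize the general heat-coefficient formula of Theorem \ref{thm:wpse_theta-heatkernel-asym} and the explicit form of the $b_2$ symbol to the case $M=\T^2$. First I would record that for $M=\T^2$ with its flat metric one has $m=\dim M=2$ and the Riemannian scalar curvature $\mathcal S_\Delta$ vanishes identically; hence the last summand in \eqref{eq:mod-cur-b2term-int-sphere} drops out, and after inserting $m=2$ into the coefficients $4/m$ and $2+4/m$ one is left with five concrete terms built from $b_0$, $\nabla k$, $\nabla^2 k$ and powers of $\abs\xi$. By the definition of $\mathcal R$ and Theorem \ref{thm:wpse_theta-heatkernel-asym}, proving the theorem amounts to evaluating $\int_{T^*_xM}\frac{1}{2\pi i}\int_{\mathcal C}e^{-\lambda}b_2(\xi_x,\lambda)\,d\lambda\,d\xi_{x,g^{-1}}$ and recognizing the result as the right-hand side of \eqref{eq:mod-cur-scalarcur-twotorus}.

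Next I would perform the $\lambda$-integration. For $m=2$ the integer $j_0=m/2-1$ equals $0$, so Lemma \ref{lem:mod-cur-int-lambda} says that applying $\frac{1}{2\pi i}\int_C e^{-\lambda}(\cdot)\,d\lambda$ and then $\int_0^\infty(\cdot)\,r^{m-1}dr$ to $b_2(r,\lambda)$ amounts to evaluating at $\lambda=-1$ and integrating against $r\,dr$. The substitution $r\mapsto r^2$ turns $r\,dr$ into $\tfrac12\,dr$ and accounts for the overall factor $1/2$ in the formulas for $\mathcal K$ and $\mathcal G$; with $\lambda=-1$ every occurrence of $b_0=(k\abs\xi^2-\lambda)^{-1}$ becomes a function of the form $(1+rk)^{-1}$. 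After moving the stray Weyl factor sitting between the two copies of $\nabla k$ to the front by means of $(\nabla k)k=k\,\modop(\nabla k)$, each summand of \eqref{eq:mod-cur-b2term-int-sphere} takes one of the two shapes $kf_0(rk)\rho f_1(rk)$ or $kf_0(rk)\rho_1 f_1(rk)\rho_2 f_2(rk)$ in \eqref{eq:mod-cur-ty1-int}.

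Then I would apply the rearrangement lemma \eqref{eq:mod-cur-relem-onevar}--\eqref{eq:mod-cur-relem-funs-twovar} term by term. The two summands carrying $\nabla^2 k$, organized as in \eqref{eq:mod-cur-terms-to-onevarfun}, collapse to $k^{-1}\mathcal K(\modop)(\nabla^2 k)g^{-1}$ with $\mathcal K$ the stated one-variable integral of the family $K_{(p,q)}$ from \eqref{eq:mod-cur-familyOneV}, while the three summands quadratic in $\nabla k$, organized as in \eqref{eq:mod-cur-terms-to-twovarfun}, collapse to $k^{-2}\mathcal G(\modop_{(1)},\modop_{(2)})(\nabla k\,\nabla k)g^{-1}$ with $\mathcal G$ the stated two-variable integral of the family $H_{(p,q,l)}$ from \eqref{eq:mod-cur-familyTwoV}. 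Carrying out these elementary integrals of rational functions (whose primitives involve only $\log$) produces the closed forms \eqref{eq:mod_cur_K-explicit} and \eqref{eq:mod_cur-calG-explicit}. Finally, restoring the two global prefactors $\mathrm{Vol}(S^{m-1})=\mathrm{Vol}(S^1)=2\pi$ and $(2\pi)^{-m}=(2\pi)^{-2}$ yields the claimed overall constant $(2\pi)^{-1}$, and the trace identity of Proposition \ref{prop:general-set-up-prop-trace} together with the invariance of the trace under $\pi^\Theta$ (Lemma \ref{lem:def-ops-inv-of-trace}) identifies the computed density with $\tilde{\mathcal R}$ in $V_2(f,P_k)=\int_M\tilde f\times_\Theta\tilde{\mathcal R}\,dg$.

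\textbf{Main obstacle.} I expect the hard part to be the explicit evaluation of the $b_2$ symbol leading to \eqref{eq:mod-cur-b2term-int-sphere}: this requires repeatedly applying the Leibniz rules of Proposition \ref{prop:def_riegeo-leibnitz-rul-v-j-der} for $D$ and $\nabla$ to the inductive formula \eqref{eq:wpse-theta-b2}, expanding $b_0$ and its vertical and horizontal derivatives, using $\sigma(k\Delta)=k\abs\xi^2$ (Lemma \ref{lem:mod-cur-symbolof-P_k}), and then integrating over the unit cosphere in $T^*_xM$. This is precisely the content of the appendices and is where essentially all of the bookkeeping lives. Once \eqref{eq:mod-cur-b2term-int-sphere} is available, the remaining steps -- the $\lambda$-integration, the rearrangement lemma, and the evaluation of the one- and two-variable integrals -- are routine, modulo careful tracking of the combinatorial constants and of the convention that the radial variable after the substitution is $\abs\xi^2$.
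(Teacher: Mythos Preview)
Your proposal is correct and follows essentially the same approach as the paper: specialize \eqref{eq:mod-cur-b2term-int-sphere} to $m=2$ (dropping the $\mathcal S_\Delta$ term), use Lemma~\ref{lem:mod-cur-int-lambda} with $j_0=0$ to reduce the $\lambda$-integral to evaluation at $\lambda=-1$, substitute $r\mapsto r^2$, apply the rearrangement lemma term by term as organized in \eqref{eq:mod-cur-terms-to-onevarfun} and \eqref{eq:mod-cur-terms-to-twovarfun}, and collect the constants $\mathrm{Vol}(S^1)(2\pi)^{-2}=(2\pi)^{-1}$. Your identification of the derivation of \eqref{eq:mod-cur-b2term-int-sphere} (carried out in Appendices~\ref{app-modcur-b2term} and~\ref{App:int_over_cosphere}) as the real computational burden is accurate.
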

\begin{rem}\mbox{}
    \begin{enumerate}[(1)]
        \item  $\mathcal R = \pi^\Theta(\mathcal{\tilde R}) \in C^\infty(M_\Theta)$ is the associated modular curvature. 
        \item Originally, the integrand of the right hand side of \eqref{eq:mod_cur_defn_mathcalR} should be $\overline{\mathcal R} \times_\Theta \tilde f$, but since $\mathcal R$ is self-adjoint and $\int_M$ is a  trace with respect to the deformed product, we see that 
    \begin{align*}
    \int_M \overline{\mathcal R} \times_\Theta \tilde f dg = \int_M \tilde f\times_\Theta \mathcal R dg.
    \end{align*}
\item One can verify that  the functions $\mathcal K$ and $\mathcal G$ agree with the modular curvature functions $-F_{0,0}(s)$ and $ G_{0,0}^{\mathfrak R}(s,t)$ in \cite[Thm 3.2]{Lesch:2015aa} in the following way:
    \begin{align*}
        \mathcal K(s) =   -F_{0,0}(s),  \,\,\,\,\,\, 
        \mathcal G(s,t) = \frac1s G_{0,0}^{\mathfrak R}(s,t).
    \end{align*}
    The appearance of the negative sign in front of  $F_{0,0}$ is due to the fact that $(\nabla^2 k)g^{-1} = - \Delta k$. In \cite[Thm 3.2]{Lesch:2015aa}, the quadratic form  is defined as $(k^{-1}\nabla k) (k^{-1}\nabla k) g^{-1}$, compare to our quadratic form $k^{-2}(\nabla k)(\nabla k) g^{-1}$. The factor $1/s$ in front of $G_{0,0}^{\mathfrak R}$ that stands for the inverse modular operator $\modop^{-1} = k (\cdot ) k^{-1}$ is exactly the price to pay to  move  $k^{-1}$ in front of $\nabla k$. 

%---------------------
\item The Laplacian $k \Delta$ considered in the theorem is related to  the degree zero Laplacian with complex structure $\sqrt{-1}$ appeared in \cite{MR3194491} via the conjugation by $k$: $k^2 \Delta \mapsto k \Delta k$. After converting the Weyl factor $k$ and the modular operator $\modop$ to their own logarithms: $h = \log k$ (defined by $k =e^h$) and $\logmodop = \log \modop = -[h,\cdot]$, our modular curvature functions  agree with those in \cite{MR3194491}. This issue is explained in
    \cite[sec. 4.7]{Lesch:2015aa} in detail. Therefore our calculation gives a new confirmation of the results for noncommutative two tori which is independent of the aid of CAS. 
    
% In \cite{Lesch:2015aa}, section 3 and 4  show that after converting the Weyl factor $k$ and the modular operator $\modop$ to their own logarithms: $h = \log k$ and $\logmodop = \log \modop = -[h,\cdot]$, also taking the conjugation by $k$ into account: $k^2 \Delta \mapsto k \Delta k$, our modular curvature functions  agree with  those  for the degree zero Laplacian with complex structure $\sqrt{-1}$ appeared in \cite{MR3194491}.    
    \end{enumerate}
    \end{rem}

\subsection{Modular curvature for even dimensional toric noncommutative manifolds}
Now let us assume that $M$ is even dimensional and $m = \dim M \ge 4$. From lemma \ref{lem:mod-cur-int-lambda}, is modular curvature is obtained by integrating $b_2$ defined in \eqref{eq:mod-cur-b2term-int-sphere} in the following way: set $j_0 = m/2-1$,
\begin{align*}
     \mathcal{\tilde R}= \int_0^\infty \brac{\frac{d^{j_0}}{d\lambda^{j_0}} b_2} \Big|_{\lambda=-1} (r^{m-1}dr).
\end{align*}
As before, we perform replace $r$ by $r^2$ so that the volume form $r^{m-1}dr$ becomes $r^{m/2-1}dr/2$. Recall the functions $K_{(p,q)}(s,r)$ and $H_{(p,q,l)}(s,t,r)$ defined in \eqref{eq:mod-cur-familyOneV} and \eqref{eq:mod-cur-familyTwoV}, take again the term $\frac4m b_0^3 k^2 ( \nabla^2 k)b_0\abs\xi^4 g^{-1}$ as an example, it leads to the function in $s$:
\begin{align*}
    \frac4m  \frac12 \int_0^\infty \frac{d^{j_0}}{d\lambda^{j_0}}\Big|_{\lambda=-1} \frac{r^2}{(r-\lambda)^3}
    \frac{1}{sr-\lambda} r^{m/2-1}dr, \,\,\,\, j_0 = m/2-1.
\end{align*}
Let us consider in general
\begin{align}
    \int_0^\infty \frac{d^{j_0}}{d\lambda^{j_0}}\Big|_{\lambda=-1}
    \frac{r^{p+q-2}}{(r-\lambda)^p} \frac{1}{(sr-\lambda)^q} r^{m/2-1}dr,
    \,\,\,\, j_0 = m/2-1.
\end{align}
Via a substitution $u = 1/r$, the integral becomes:
\begin{align*}
   & \int_0^\infty  u^{-j_0} \frac{d^{j_0}}{d\lambda^{j_0}}\Big|_{\lambda=-1} \frac{1}{(1-u\lambda)^p}
    \frac{1}{(s-u\lambda)^q}
    (-du)\\
    =\,\, &\int_0^\infty \frac{d^{j_0}}{dx^{j_0}}\Big|_{x=-u} \frac{1}{(1-x)^p} \frac{1}{(s-x)^q} (-du)
    \\
    =\,\,& \int_0^\infty \frac{d^{j_0}}{du^{j_0}} 
    \frac{1}{(1-u)^p} \frac{1}{(s-u)^q} (-du)
    = \brac{\frac{d}{du}}^{j_0-1}\Big|_{u=0} \frac{1}{(1-u)^p} \frac{1}{(s-u)^q}, 
\end{align*}
here we need the fact that  $p,q$ are both positive integers so that the limit at infinity equals zero. Due to the homogeneity of $b_2$ in $r$, all the terms in \eqref{eq:mod-cur-b2term-int-sphere} can be handle in a similar way, therefore, we upgrade functions in \eqref{eq:mod-cur-familyOneV} and \eqref{eq:mod-cur-familyTwoV} as follows: 
\begin{align}
 \label{eq:mod-cur-familyOneV-dimfour}
    \tilde{K}_{(p,q)}(s,m) & = \brac{\frac{d}{du}}^{m/2-2}\Big|_{u=0} \frac{1}{(1-u)^p} \frac{1}{(s-u)^q}, \\
    \tilde H_{(p,q,l)}(s,t,m) &= \brac{\frac{d}{du}}^{m/2-2}\Big|_{u=0} \frac{1}{(1-u)^p}
    \frac{1}{(s-u)^q} \frac{1}{(st-u)^l}. 
    \label{eq:mod-cur-familyTwoV-dimfour}
\end{align}
Base on \eqref{eq:mod-cur-terms-to-onevarfun} and \eqref{eq:mod-cur-terms-to-twovarfun}, we can write down the modular curvature function for dimension $m\ge 4$: 
\begin{align}
    \mathcal K(s,m) &= \frac12\brac{
    \frac4m \tilde{K}_{(3,1)}(s,m) - \tilde{K}_{(2,1)}(s,m)
    },
    \label{eq:mod-cur-dim4-calK}
    \\
    \label{eq:mod-cur-dim4-calG}
    \mathcal G(s,t,m) &= \frac12\brac{
    \frac8m  \tilde H_{(3,1,1)} - (2+\frac4m) \tilde H_{(2,1,1)}
    +\frac4m s \tilde H_{(2,2,1)}
    }(s,t,m). 
\end{align}

The only term left in \eqref{eq:mod-cur-b2term-int-sphere} is $\frac1m \frac23 b_0^2 k^2 \mathcal S_\Delta b_0 \abs\xi^2$. Apply the rearrangement lemma, we see that it   becomes $\frac1m \frac23 k^{-m/2+1} F(\modop)(\mathcal S_\Delta)$ after integration, where the function $F(s) = \frac12 \tilde{K}_{(2,1)}(s,m)$. Since $\mathcal S_\Delta$ is $\T^n$-invariant, in particular, it commutes with $k$, in other words, $\modop(\mathcal S_\Delta) = \mathcal S_\Delta$, therefore $F(\modop)(\mathcal S_\Delta) = F(1) \mathcal
S_\Delta$ and we denote
\begin{align}
    F(1) &= \frac12 \tilde{K}_{(2,1)}(1,m) = \frac12 ((1-u)^{-3})^{(m/2-2)} \Big|_{u=0}\nonumber \\
& = \begin{cases}
     \frac14 (-1)^{m/2-2} (m/2)! & m=6,8,10,\dots, \\
     \frac12 & m=4 .
 \end{cases}
 \label{eq:mod-cur-dim4-cm}
\end{align}

\begin{thm}
    \label{thm:mod_cur-anyevendim-mflds}
    Let $M_\Theta$ be a noncommutative toric manifold whose dimension is an even integer $m$ and  $\pi^\Theta(P_k) = \pi^\Theta(k) \Delta$. Then the associated modular curvature $\mathcal{\tilde R}$,
    %$\mathcal R(k)$ $($the density function of the second heat coefficient functional, cf. \eqref{eq:mod_cur_defn_mathcalR}$)$,
    upto an overall constant $\op{Vol}(S^{m-1})(2\pi)^{-m}$,  is of the form:   
     \begin{align}
          \label{eq:mod-cur-scalarcur-Mdim-m}
         \mathcal{\tilde R} &=  \brac{  k^{-m/2} \mathcal K (\modop,m)(\nabla^2 k) + k^{-m/2-1} \mathcal G(\modop_{(1)},\modop_{(2)},m) \brac{\nabla k \nabla k}}g^{-1} \\
        &+   c_m k^{-m/2+1 }\mathcal S_{\Delta}. 
        \nonumber
     \end{align}
     When $m\ge 4$, the modular curvature functions $\mathcal K$ and $\mathcal G$  are given in \eqref{eq:mod-cur-dim4-calK}, \eqref{eq:mod-cur-dim4-calG} respectively.  
    While the constant $c_m$ is equal to $\frac1m \frac23 F(1)$, where $F(1)$ is calculated in  \eqref{eq:mod-cur-dim4-cm}. 
%     where $c_m$ is a constant depending only on the dimension of the manifold and $\mathcal S_{\Delta} \in C^\infty(M)$ is the scalar curvature function.  The modular curvature functions $\mathcal K$ and $\mathcal G$ are given by 
%     \eqref{eq:mod_cur-mathcalK-m} and \eqref{eq:mod_cur-mathcalG-m} respectively. As before, they are  linear combinations of simple divided differences of the $\log$ function.
\end{thm}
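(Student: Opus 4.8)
The plan is to assemble the theorem from the pieces already set up in this section, so the ``proof'' is mostly a matter of organizing the bookkeeping. Starting from the formula
\[
\mathcal{\tilde R}(x) = (2\pi)^{-m}\int_{T_x^*M}\frac{1}{2\pi i}\int_{\mathcal C} e^{-\lambda} b_2(\xi_x,\lambda)\,d\lambda\,d\xi_{x,g^{-1}},
\]
I would first split the fiber integral into radial and angular parts as in \eqref{eq:mod-cur-scalarcur-setup}, and quote Proposition~\ref{prop:app-int_b2-cosphsere} to replace the angular integral $\int_{S_x^*M} b_2\,d\omega_{x,g^{-1}}$ by the explicit six-term expression \eqref{eq:mod-cur-b2term-int-sphere}, carrying along the overall factor $\op{Vol}(S^{m-1})$. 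At this point everything is a finite sum of terms of the form $b_0^{a}k^{c}(\nabla k)b_0^{b}(\nabla k)b_0\,|\xi|^{N}g^{-1}$, $b_0^{a}k^{c}(\nabla^2 k)b_0\,|\xi|^{N}g^{-1}$, and one curvature term $b_0^2k^2\mathcal S_\Delta b_0|\xi|^2$.

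The second step is the $\lambda$-integration. Here I would invoke Lemma~\ref{lem:mod-cur-int-lambda}: integrate by parts in $\lambda$ exactly $j_0 = m/2-1$ times, use the homogeneity \eqref{eq:wpse-homegen-prop-b_j} of $b_2$ in $(r,\sqrt\lambda)$ to see that the resulting radial integral $B_{j_0}(\lambda)$ is homogeneous of degree $-1$, and conclude $\frac{1}{2\pi i}\int_{\mathcal C} e^{-\lambda}B_{j_0}(\lambda)\,d\lambda = B_{j_0}(-1)$. Thus the resolvent parameter can be frozen at $\lambda=-1$ at the cost of applying $(d/d\lambda)^{j_0}|_{\lambda=-1}$ to the integrand. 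The third step is the radial ($r$-)integration: substitute $r\mapsto r^2$ so the measure becomes $r^{m/2-1}dr/2$, then a further substitution $u=1/r$ converts $\int_0^\infty (d/d\lambda)^{j_0}|_{\lambda=-1}(\cdots)r^{m/2-1}dr$ into $(d/du)^{m/2-2}|_{u=0}$ of a rational function — this is the computation displayed just before \eqref{eq:mod-cur-familyOneV-dimfour}, and it produces the functions $\tilde K_{(p,q)}(s,m)$ and $\tilde H_{(p,q,l)}(s,t,m)$ in \eqref{eq:mod-cur-familyOneV-dimfour}–\eqref{eq:mod-cur-familyTwoV-dimfour}. The key point justifying dropping boundary terms is that $p,q,l$ are positive integers, so the limits at $0$ and $\infty$ vanish.

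The fourth step is to apply the rearrangement lemma, equations \eqref{eq:mod-cur-relem-onevar}–\eqref{eq:mod-cur-relem-funs-twovar}: each term with a single $\nabla^2 k$ gets rewritten, after pulling out $k^{-m/2}$, as $k^{-m/2}\mathcal K(\modop,m)(\nabla^2 k)g^{-1}$ with $\mathcal K$ the combination \eqref{eq:mod-cur-dim4-calK} dictated by the coefficients in \eqref{eq:mod-cur-terms-to-onevarfun}; each term with two $\nabla k$ factors, after pulling out $k^{-m/2-1}$ and moving the interior $k$ through a $\modop$, becomes $k^{-m/2-1}\mathcal G(\modop_{(1)},\modop_{(2)},m)(\nabla k\,\nabla k)g^{-1}$ with $\mathcal G$ as in \eqref{eq:mod-cur-dim4-calG}; and the curvature term becomes $c_m k^{-m/2+1}\mathcal S_\Delta$, using that $\mathcal S_\Delta$ is $\T^n$-invariant so $\modop(\mathcal S_\Delta)=\mathcal S_\Delta$ and $F(\modop)(\mathcal S_\Delta)=F(1)\mathcal S_\Delta$ with $F(1)$ evaluated in \eqref{eq:mod-cur-dim4-cm}. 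Collecting the three pieces and absorbing $\op{Vol}(S^{m-1})(2\pi)^{-m}$ into the overall constant gives \eqref{eq:mod-cur-scalarcur-Mdim-m}.

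**Main obstacle.** Conceptually nothing here is deep once Proposition~\ref{prop:app-int_b2-cosphsere} (the expansion of $b_2$ and its cosphere integral) is granted; the genuine difficulty is hidden in that proposition — namely the lengthy Leibniz-rule expansion of $b_1$ and $b_2$ via \eqref{eq:wpse-theta-b1}–\eqref{eq:wpse-theta-b2}, keeping track of where $k$, $\nabla k$, $\nabla^2 k$, the curvature $\mathcal S_\Delta$, and the phase-function tensors $\nabla^j\ell$ land, and then doing the angular integral over $S^{m-1}$ (which replaces $\xi_i\xi_j$ by $\tfrac1m|\xi|^2 g_{ij}$ and higher moments accordingly). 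That is deferred to the appendices. Within the proof of this theorem itself, the only subtlety worth flagging is making sure the $m$-dependence in the exponents of $k$ and in the definitions of $\tilde K, \tilde H$ is consistent — in particular that freezing $\lambda=-1$ after $m/2-1$ derivatives and the substitution $u=1/r$ together produce exactly the $(d/du)^{m/2-2}|_{u=0}$ appearing in \eqref{eq:mod-cur-familyOneV-dimfour}, and that for $m=4$ the operator $(d/du)^{0}$ is the identity, recovering the known four-dimensional vanishing of $\mathcal K$ and $\mathcal G$ for $k\Delta$ noted in the introduction.
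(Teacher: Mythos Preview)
Your proposal is correct and follows essentially the same route as the paper: the theorem is not given a separate proof in the paper but is instead the summary of the computation carried out in the preceding subsections, namely the cosphere integral (Proposition~\ref{prop:app-int_b2-cosphsere}), the $\lambda$-integration via Lemma~\ref{lem:mod-cur-int-lambda}, the $u=1/r$ substitution yielding $\tilde K_{(p,q)}$ and $\tilde H_{(p,q,l)}$, and the rearrangement lemma together with the $\T^n$-invariance of $\mathcal S_\Delta$. Your identification of the appendix computation of $b_2$ as the place where the real work is hidden is also accurate.
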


In dimension four, $m/2-2 =0$, thus no differentiation involves when computing $\tilde K_{(p,q)}$ and $\tilde H_{(p,q,l)}$. So one can quickly compute $\tilde{K}_{(3,1)}(s,4) = \tilde{K}_{(1,1)}(s,4) = 1/s$, therefore $\mathcal K(s,4) = 0$. Meanwhile, 
\begin{align*}
\tilde H_{(3,1,1)}(s,t,4) = \tilde H_{(2,1,1)}(s,t,4) = \frac{1}{s^2t}, \,\,\,
\tilde H_{(2,2,1)}(s,t,4) = \frac{1}{s^3t},
\end{align*}
as a result,  $\mathcal G(s,t,4) = 0$.

\begin{cor}
	Let $m = \dim M = 4$.  For the perturbed Laplacian $\pi^\Theta(P_k) = \pi^\Theta(k) \Delta$, the modular  curvature is simply: 
	\begin{align}
	\label{eq:mod_cur-modcur-dim4-kDelta}
    \mathcal R(k) = c k^{-1} \mathcal S_{\Delta}. 
	\end{align}
    where
    \begin{align*}
        c = (4\pi)^{-2} \frac16.
    \end{align*}
\end{cor}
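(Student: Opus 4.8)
The plan is to obtain the corollary as the $m=4$ specialization of Theorem~\ref{thm:mod_cur-anyevendim-mflds}, so the only work is to evaluate the universal functions $\mathcal K(\,\cdot\,,4)$ and $\mathcal G(\,\cdot\,,\cdot\,,4)$, verify that they vanish identically, and then pin down the numerical constant together with the overall normalization $\op{Vol}(S^{3})(2\pi)^{-4}$.

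First I would observe that for $m=4$ the order of differentiation $m/2-2$ occurring in \eqref{eq:mod-cur-familyOneV-dimfour} and \eqref{eq:mod-cur-familyTwoV-dimfour} is zero, so $\tilde K_{(p,q)}$ and $\tilde H_{(p,q,l)}$ reduce to plain evaluation at $u=0$: one gets $\tilde K_{(p,q)}(s,4)=s^{-q}$ and $\tilde H_{(p,q,l)}(s,t,4)=s^{-q}(st)^{-l}$, independently of $p$. In particular $\tilde K_{(3,1)}(s,4)=\tilde K_{(2,1)}(s,4)=1/s$, while $\tilde H_{(3,1,1)}(s,t,4)=\tilde H_{(2,1,1)}(s,t,4)=(s^{2}t)^{-1}$ and $\tilde H_{(2,2,1)}(s,t,4)=(s^{3}t)^{-1}$.

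Next I would substitute these into \eqref{eq:mod-cur-dim4-calK} and \eqref{eq:mod-cur-dim4-calG}. For $\mathcal K$ the prefactor $4/m$ equals $1$, so the two contributions cancel and $\mathcal K(s,4)=\tfrac12(1/s-1/s)=0$. For $\mathcal G$ the prefactors are $8/m=2$, $2+4/m=3$, $4/m=1$; since the first two $\tilde H$'s coincide and $s\,\tilde H_{(2,2,1)}(s,t,4)=(s^{2}t)^{-1}$ as well, the bracket collapses to $(2-3+1)(s^{2}t)^{-1}=0$, whence $\mathcal G(\,\cdot\,,\cdot\,,4)=0$. Consequently, in \eqref{eq:mod-cur-scalarcur-Mdim-m} the only surviving term is the scalar-curvature term $c_m k^{-m/2+1}\mathcal S_{\Delta}=c_4 k^{-1}\mathcal S_{\Delta}$.

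Finally I would compute the constant. Theorem~\ref{thm:mod_cur-anyevendim-mflds} gives $c_m=\tfrac1m\tfrac23 F(1)$, and \eqref{eq:mod-cur-dim4-cm} gives $F(1)=\tfrac12$ in the case $m=4$, so $c_4=\tfrac14\cdot\tfrac23\cdot\tfrac12=\tfrac1{12}$. Reinstating the overall factor $\op{Vol}(S^{3})(2\pi)^{-4}=2\pi^{2}/(16\pi^{4})=1/(8\pi^{2})$ then yields $\mathcal R(k)=c\,k^{-1}\mathcal S_{\Delta}$ with $c=\tfrac1{12}\cdot\tfrac1{8\pi^{2}}=\tfrac1{96\pi^{2}}=(4\pi)^{-2}\tfrac16$, as claimed. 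There is no conceptual obstacle here; the only delicate point is the bookkeeping of the scalar prefactors entering the final constant --- the factor $1/2$ from the substitution $r\mapsto r^{2}$, the sphere volume $\op{Vol}(S^{m-1})$, and the $(2\pi)^{-m}$ of Theorem~\ref{thm:wpse_theta-heatkernel-asym} --- all of which must be tracked consistently so that $c$ comes out exactly $(4\pi)^{-2}/6$.
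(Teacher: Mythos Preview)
Your proof is correct and follows essentially the same approach as the paper: the vanishing of $\mathcal K(\cdot,4)$ and $\mathcal G(\cdot,\cdot,4)$ via evaluation at $u=0$ is carried out in the paper just before the corollary is stated, and the proof proper then only computes the constant $c$ exactly as you do, using $F(1)=\tfrac12$, $\op{Vol}(S^3)=2\pi^2$, and the overall factor $(2\pi)^{-4}$. Your bookkeeping of the prefactors is accurate and matches the paper's.
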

\begin{rem}
    The value of $c$ agrees with the classical result: upto a factor $(4\pi)^{-\dim M/2}$, the density of the second heat coefficient for the scalar Laplacian operator equals $1/6$ times the scalar curvature.  
\end{rem}

\begin{proof}
It remains to determine the coefficient for the scalar curvature term. We recall 
\begin{align*}
    F(1) = \frac12, \,\,\, \op{Vol}(S^3) = \frac{2 \pi^{4/2}}{\Gamma(2)} = 2 \pi^2,     
\end{align*}
thus
\begin{align*}
    c = \op{Vol}(S^3) (2\pi)^{-4} \frac1m \frac23 F(1) = (4\pi)^{-2} \frac16.
\end{align*}

% 	Using \eqref{eq:mod_cur-mathcalK-m} and \eqref{eq:mod_cur-mathcalG-m}, or \eqref{eq:mod_cur-mathcalKdim4} and \eqref{eq:mod_cur-mathcalGdim4}, we conclude that  both modular functions $\mathcal{K}(s)$ and $\mathcal G(s_1,s_2)$ are zero in dimension four, therefore only the scalar curvature term survives.
\end{proof}

\subsection{Comparison with \cite{MR3359018}  and \cite{MR3369894}}
%This is not surprising due to the result in \cite{2013arXiv1301.6135F} and \cite{Fathizadeh:2014aa}.
In this section, we would like to reproduce the results on noncommutative four tori in \cite{MR3359018}  and \cite{MR3369894}. The perturbed Laplacian $\pi^\Theta(P_k)$ considered in  \cite[lemma 3.3]{MR3359018} has the following symbol: $\sigma(P_k) = p_2 + p_1 + p_0$ with 
\begin{align}
\label{eq:mod_cur_lowerorder-symbols}
p_2  = k \abs\xi^2, \,\, p_1 = \frac{-i}{2}(\nabla k) D \abs\xi^2, \,\, p_0 = -\Delta k + (\nabla k) k^{-1} (\nabla k) g^{-1}, 
\end{align}
where the Laplacian $\Delta$ is associated to the flat metric on $\T^4$.  
Based on the previous computation, the contribution to the modular curvature functions $\mathcal K$ and $\mathcal G$ from the leading term $p_2$  is equal to zero. It remains to count the contribution from $p_1$ and $p_0$ which is in the last line of the $b_2$ term in \eqref{eq:mod-cur-prop-b2term}: 
\begin{align}
\label{eq:mod_cur_b2from-lowerorder-symbols}
	- iD(b_0 p_1 b_0) (\nabla p_2) b_0 -  b_0 p_0 b_0 - b_1 p_1 b_0 + i Db_0 (\nabla p_1) b_0.
\end{align} 
Similar to the computation in appendix  \ref{app-modcur-b2term},  we list the result of each summand as below:
For $- iD(b_0 p_1 b_0) (\nabla p_2) b_0$:
\begin{align*}
   & \frac12\brac{
	b_0^2 k(\nabla k) b_0 (\nabla k) b_0 
+ b_0 (\nabla k) b_0^2 k ( \nabla k) b_0 }(D\abs\xi^2)^2 \abs\xi^2 \\
    -& \frac12  (b_0 (\nabla k) b_0  (\nabla k) b_0 (D^2\abs\xi^2)\abs\xi^2).
\end{align*}
For $- b_1 p_1 b_0$:
\begin{align*}
   \frac12 b_0^2k (\nabla k) b_0 (\nabla k) b_0 (D\abs\xi^2)^2 \abs\xi^2
		+ \frac14 b_0(\nabla k) b_0 (\nabla k) b_0 (D\abs\xi^2)^2. 
\end{align*}
For $-  b_0 p_0 b_0$:
\begin{align*}
    b_0(\Delta k) b_0 - b_0(\nabla k) k^{-1} (\nabla k) b_0 g^{-1}.
\end{align*}
The last one $i Db_0 (\nabla p_1) b_0 = \frac12 b_0^2 k (\nabla^2k) b_0 (D\abs\xi^2)^2$.

Sum up the terms above and perform integrating over the cosphere bundle, namely the following substitution: 
\begin{align*}
    (D\abs\xi^2)^2 \mapsto\frac{4\abs\xi^2}{m}\op{Vol}(S^{m-1})g^{-1},& \,\,\,
   D^2\abs\xi^2 \mapsto 2 \op{Vol}(S^{m-1})g^{-1},
\end{align*}
We group the sum into two parts  associated to the modular curvature function $\mathcal K(s)$ and $\mathcal G(s,t)$ respectively. More precisely, \eqref{eq:mod_cur_b2from-lowerorder-symbols} can be written as  $I_1 + I_2$
upto an overall factor $\mathrm{Vol}(S^3)$,  with
\begin{align}
    I_1 =  b_0(\Delta k) b_0 + \frac12 \frac4m b_0^2 k (\nabla^2k) b_0 \abs\xi^2 g^{-1},
\end{align}
and:
\begin{align}
    \begin{split}
   I_2  = & \,\,\frac4m b_0^2 k(\nabla k) b_0 (\nabla k) b_0 \abs\xi^4 g^{-1}+ \frac12 \frac4m b_0 (\nabla k) b_0^2 k ( \nabla k) b_0 \abs\xi^4g^{-1} \\  
   +&\,\, (\frac14 \frac4m -1) b_0(\nabla k) b_0 (\nabla k) b_0 \abs\xi^2 g^{-1}
   - b_0(\nabla k) k^{-1} (\nabla k) b_0 g^{-1}.
    \end{split}
    \end{align}
  %The two parts $I_1$ and $I_2$ yield the modular curvature function $\mathcal K$ and $\mathcal G$ respectively.  

Notice that $\Delta k = -(\nabla^2 k)g^{-1}$, therefore $I_1$ is of the form 
    \begin{align*}
        I_1 \mapsto k^{-2} \mathcal K(\modop)(\nabla^2 k) g^{-1}
    \end{align*}
   after integration,  with
    \begin{align}
        \mathcal K(s) = \frac12 \brac{-K_{(1,1)}(s,4) + \frac12 \frac4m K_{(2,1)}(s,4)}
        = -\frac14\frac1s.
        \label{eq:mod_cur-calK-nc4tori}
    \end{align}
Meanwhile,  
\begin{align*}
    I_2 \mapsto  k^{-3} \mathcal G(\modop_{(1)},\modop_{(2)})(\nabla k \nabla k)
\end{align*}
where $\mathcal G(s,t)$ equals $(8s^2t)^{-1}$, which comes from the sum:
\begin{align*}
    \frac12 \brac{
        \brac{ \frac4m H_{(2,1,1)}+ \frac12 \frac4m s H_{(1,2,1)} + 
    (\frac14\frac4m -1) H_{(1,1,1)}
}(s,t,4) - \frac1s K_{(1,1)}(st,4)
    },   
\end{align*}
here we have used the fact that
\begin{align*}
    H_{(2,1,1)}(s,t,4) = s H_{(1,2,1)}(s,t,4) = H_{(1,1,1)}(s,t,4) = \frac1s K_{(1,1)}(st,4)=\frac{1}{s^2t}.
\end{align*}

We summarise the computation as below.
\begin{thm}
    Let $M_\Theta = \T^4_\Theta$ be a noncommutative four torus. With respect to the perturbed Laplacian $\pi^\Theta(P_k)$  whose symbol is defined in eq. \eqref{eq:mod_cur_lowerorder-symbols}, the modular curvature is of the form:
    \begin{align*}
        \mathcal R =  k^{-2} \mathcal K(\modop)(\nabla^2 k) g^{-1} + k^{-3} \mathcal G(\modop_{(1)},\modop_{(2)})(\nabla k \nabla k)
    \end{align*}
    upto an overall factor $\mathrm{Vol}(S^3)(2\pi)^{-4}$,   where 
    \begin{align}
    \mathcal K(s) = \frac1{4s}, \,\,\, \mathcal G(s,t)  = - \frac{1}{8s^2 t}.
    \label{eq:mod-cur-calKG}
\end{align}
\end{thm}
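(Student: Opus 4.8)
The plan is to run exactly the same computational pipeline already established for the general even-dimensional case (Theorem \ref{thm:mod_cur-anyevendim-mflds}), but now feeding in the full symbol $\sigma(P_k) = p_2 + p_1 + p_0$ from \eqref{eq:mod_cur_lowerorder-symbols} rather than just the leading part $p_2 = k|\xi|^2$. The starting point is the observation already recorded above: since the contribution of $p_2$ alone to the two modular functions $\mathcal K$ and $\mathcal G$ vanishes in dimension four (the $m=4$ case of Theorem \ref{thm:mod_cur-anyevendim-mflds}), the entire modular curvature comes from the lower-order symbols, i.e.\ from the four terms displayed in \eqref{eq:mod_cur_b2from-lowerorder-symbols}: $-iD(b_0 p_1 b_0)(\nabla p_2) b_0 - b_0 p_0 b_0 - b_1 p_1 b_0 + i Db_0(\nabla p_1) b_0$. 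So the first step is to expand each of these four operator-products using the Leibniz rule for $D$ and $\nabla$ (Proposition \ref{prop:def_riegeo-leibnitz-rul-v-j-der}), together with the formulas for $b_0$, $b_1$ from \eqref{eq:wpse-theta-b1} and the identities $Db_0 = -b_0 (D|\xi|^2) k b_0$ etc., exactly as in Appendix \ref{app-modcur-b2term}. This produces the list of summands already quoted in the excerpt for each of the four pieces.

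The second step is to integrate over the unit cosphere $S^{3} \subset T^*_xM$ using the substitution rules $(D|\xi|^2)^2 \mapsto \tfrac{4|\xi|^2}{m}\op{Vol}(S^{m-1}) g^{-1}$ and $D^2|\xi|^2 \mapsto 2\op{Vol}(S^{m-1}) g^{-1}$ with $m=4$, which collapses all the tensor-index bookkeeping into scalar multiples of $g^{-1}$. After collecting terms, the result is the decomposition $I_1 + I_2$ displayed above, where $I_1$ gathers the $\nabla^2 k$ (equivalently $\Delta k$) contributions and $I_2$ gathers the quadratic $\nabla k\,\nabla k$ contributions. The third step is to integrate in $\lambda$ and then in $r$: by Lemma \ref{lem:mod-cur-int-lambda} with $j_0 = m/2 - 1 = 1$, the $\lambda$-integral amounts to setting $\lambda = -1$ and differentiating once; and since $m/2 - 2 = 0$ in dimension four, the functions $\tilde K_{(p,q)}$ and $\tilde H_{(p,q,l)}$ of \eqref{eq:mod-cur-familyOneV-dimfour}--\eqref{eq:mod-cur-familyTwoV-dimfour} require no differentiation at all — one simply evaluates the rational functions at $u=0$. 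Applying the rearrangement lemma \eqref{eq:mod-cur-relem-onevar}--\eqref{eq:mod-cur-relem-funs-twovar} then converts $I_1$ into $k^{-2}\mathcal K(\modop)(\nabla^2 k)g^{-1}$ and $I_2$ into $k^{-3}\mathcal G(\modop_{(1)},\modop_{(2)})(\nabla k\,\nabla k)$, with
\begin{align*}
\mathcal K(s) = \tfrac12\brac{-K_{(1,1)}(s,4) + \tfrac12\tfrac4m K_{(2,1)}(s,4)} = \tfrac{1}{4s},
\end{align*}
and, using $H_{(2,1,1)}(s,t,4) = sH_{(1,2,1)}(s,t,4) = H_{(1,1,1)}(s,t,4) = \tfrac1s K_{(1,1)}(st,4) = \tfrac{1}{s^2 t}$,
\begin{align*}
\mathcal G(s,t) = \tfrac12\brac{\brac{\tfrac4m H_{(2,1,1)} + \tfrac12\tfrac4m s H_{(1,2,1)} + (\tfrac14\tfrac4m - 1)H_{(1,1,1)}}(s,t,4) - \tfrac1s K_{(1,1)}(st,4)} = -\tfrac{1}{8s^2 t}.
\end{align*}

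I do not expect a genuine conceptual obstacle here: every ingredient — the $b_2$ formula, the Leibniz expansion, the cosphere integration substitutions, the $\lambda$- and $r$-integrations via Lemma \ref{lem:mod-cur-int-lambda} and the rearrangement lemma — has already been set up in the preceding sections, and the dimension-four specialization is in fact the \emph{easiest} case arithmetically because $m/2-2=0$ kills all the differentiations in $u$. The main place where care is needed is the bookkeeping in the first step: correctly expanding $-iD(b_0 p_1 b_0)(\nabla p_2)b_0$ and $-b_1 p_1 b_0$ requires tracking several applications of the Leibniz rule and the non-commutativity of $k$ with the tensor fields $\nabla k$, and it is easy to drop a factor of $k$ or mis-order a product when passing $k$'s across via the modular operator $\modop$. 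A secondary point is verifying that the leading-symbol contribution really is zero in dimension four — but that is precisely the content of the corollary to Theorem \ref{thm:mod_cur-anyevendim-mflds} already proved, so it can simply be cited. Finally one records the overall normalization factor $\op{Vol}(S^3)(2\pi)^{-4}$, matching \cite{MR3359018} and \cite{MR3369894}, to complete the proof.
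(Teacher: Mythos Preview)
Your proposal is correct and follows essentially the same route as the paper: cite the vanishing of the leading-symbol contribution in dimension four, expand the four lower-order pieces of $b_2$ from \eqref{eq:mod_cur_b2from-lowerorder-symbols} via the Leibniz rule, integrate over $S^3$ with the substitution rules, split into $I_1+I_2$, and apply the rearrangement lemma with the dimension-four simplification $m/2-2=0$ to read off $\mathcal K$ and $\mathcal G$. The only thing to watch is the sign bookkeeping when converting $b_0(\Delta k)b_0$ into the $(\nabla^2 k)g^{-1}$ form (your displayed expression $\tfrac12(-K_{(1,1)}+\tfrac12 K_{(2,1)})$ literally evaluates to $-\tfrac{1}{4s}$, so make sure the sign convention you adopt for $(\nabla^2 k)g^{-1}$ versus $\Delta k$ is consistent with the final statement).
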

The result should be compared with \cite[Eq. (1)]{MR3369894}.

% !TEX root =  main.tex
\appendix
\section{Inverse of the leading symbol $(k \abs\xi^{2} - \lambda)$} 
\label{App:inverseof_p2}
% the \\ insures the section title is centered below the phrase: AppendixA

For $\lambda \in \C \setminus [0,\infty)$, we would like to show that the leading symbol of a perturbed Laplacian $k \abs\xi^2 - \lambda$ (see definition \ref{defn:wpse_diff-defn-Weylfactor} and \ref{defn:wpse_theta-pert-laplacian}) is invertible  in the symbol algebra $S\Sigma(M_\Theta, \Lambda)$, upto smoothing symbols. \par
    First, we recall a technical result.  Given a toric Spin manifold $M$ with spinor bundle $\slashed S$ and the Dirac operator $\slashed D$, the deformed spectral triple $(C^\infty(M_\Theta), L^2(\slashed S), D)$ is a regular spectral triple in the sense of, for instance, \cite{MR1789831}, \cite{MR1995874}. In particular, the subalgebra $C^\infty(M_\Theta)$ inside $B(L^2(\slashed S))$ is closed under holomorphic functional calculus. We refer to \cite[Prop. 5]{2010LMaPh..94..263Y}
    for the proof. \par    
   Given a Weyl factor  $\pi^\Theta(k)$ (with $k\in C^\infty(M)$), whose spectrum is contained in $(0,\infty)$, thus  for any $s >0$, and $\lambda \in \C \setminus (0,\infty)$, the resolvent $(s\pi^\Theta(k) - \lambda)^{-1}$ still lies in $C(M_\Theta) = \pi^\Theta( C^\infty(M))$. In particular, there exists a unique smooth function in $C^\infty(M)$,
denoted by $(sk - \lambda)^{-1}$ (here the $-1$ power stands for the deformed inverse, other than the reciprocal of a function), such that $\pi^\Theta((sk - \lambda)^{-1}) = (s \pi^\Theta(k) - \lambda)^{-1}$.
% Since $\pi^\Theta(k)$ (with $k\in C^\infty(M)$) is  a positive invertible element in the $C^*$-algebra $C(M_\Theta)$, then for any $s >0$, and $\lambda \in \C \setminus [0,\infty)$, the resolvent $(s\pi^\Theta(k) - \lambda)^{-1}$ is well-defined in $C(M_\Theta)$. 
% Due to lemma \ref{lem:wpse-theta-holomorophically-closed},  the resolvent $(s\pi^\Theta(k) - \lambda)^{-1}$ indeed, belongs to $C^\infty(M_\Theta) = \pi^\Theta(C^\infty(M))$. Therefore there exists a unique smooth function in $C^\infty(M)$,
% denoted by $(sk - \lambda)^{-1}$ (here the $-1$ power stands for the inverse of an operator other than the reciprocal of a function), such that $\pi^\Theta((s\pi^\Theta(k) - \lambda)^{-1}) = (s \pi^\Theta(k) - \lambda)^{-1}$. 
As a $C^\infty(M)$-valued function, $(s,\lambda) \mapsto (sk - \lambda)^{-1}$ is smooth in $s \in (0,\infty)$ and holomorphic in $\lambda \in \C \setminus (0,\infty)$. To avoid the singularity caused by $s$ being zero, we choose a cut-off function 
\begin{align}
\rho(s) = 
\begin{cases}
0 & \text{for $t\le 1/2$} \\ 1 & \text{for $t\ge 1$},
\end{cases}
\label{eq:wpse-theta-cutoff-rho}
\end{align}  
and denote  
\begin{align*}
r(s,\lambda) = \rho(s)(s k - \lambda)^{-1}.
\end{align*}
Finally, we claim that the inverse (upto smoothing symbols) of $k\abs\xi^2 -\lambda$ in $S\Sigma(M_\Theta)/S\Sigma^{-\infty}$ is given by composing $r(s,\lambda)$ with the squared length function on $T^*M$:  
\begin{align}
b_0(\xi_x,\lambda) = r(\abs{\xi_x}^2,\lambda),  \,\,\, \xi_x \in T^*_xM,
\label{eq:wpse-theta-b0}
\end{align}
which is a well-defined smooth functions on $T^*M$.

\begin{prop}
	\label{prop:wpse-theta-b_0}
	The function $b_0(\xi_x,\lambda)$ defined in \eqref{eq:wpse-theta-b0} belongs to $S\Sigma^{-2}(M_\Theta,\lambda)$ and serves as the inverse of $p_2(\xi_x,\lambda) = k\abs\xi^2 - \lambda$ in the deformed symbol algebra $S\Sigma(M_\Theta,\Lambda)$, that is $p_2 \times_\Theta b_0 \backsim 1$ and $b_0 \times_\Theta p_2 \backsim 1$. Here $\backsim$ means  ``upto smoothing symbols''.    
\end{prop}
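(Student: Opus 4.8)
The plan is to verify the two defining properties of an inverse in the deformed symbol algebra: that $b_0 = r(\abs{\xi_x}^2,\lambda)$ is a legitimate parametric symbol of order $-2$, and that $p_2 \times_\Theta b_0 \backsim 1 \backsim b_0 \times_\Theta p_2$. The first task is to show $b_0 \in S\Sigma^{-2}(M_\Theta,\Lambda)$. Since $\pi^\Theta$ is an algebra isomorphism onto its image and $C^\infty(M_\Theta)$ is closed under holomorphic functional calculus (as recalled from \cite{2010LMaPh..94..263Y}), the deformed inverse $(sk-\lambda)^{-1}$ is a genuine smooth function of $x$, smooth in $s\in(0,\infty)$ and holomorphic in $\lambda \in \C\setminus(0,\infty)$. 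Composing with the squared-length function $\abs{\xi_x}^2$ and inserting the cut-off $\rho$ removes the singularity at $\xi=0$, so $b_0$ is smooth on all of $T^*M$. The symbol estimates \eqref{eq:wpse_theta-syml-class-para} follow by differentiating the identity $(sk-\lambda)\times_\Theta(sk-\lambda)^{-1}=1$ repeatedly in $s$ to express $\partial_s^j(sk-\lambda)^{-1}$ as a sum of deformed products of copies of $(sk-\lambda)^{-1}$ and $k$, each factor being bounded (in operator norm, or equivalently in the relevant Fréchet seminorms) by a negative power of $(1+\abs\xi^2+\abs\lambda)$; then applying the vertical derivative $D$ (chain rule in $s=\abs\xi^2$) and the horizontal derivative $\nabla$ (which only hits $k$, since $\abs\xi^2$ and $\ell$ are handled by the tensor calculus of section \ref{subsec:lift-tensor-cal}) produces exactly the bound $(1+\abs\xi^2+\abs\lambda)^{(-2-\abs\beta-2j)/2}$.

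Next I would establish $p_2 \times_\Theta b_0 \backsim 1$. By the deformed symbol product of Proposition \ref{prop:wpse-theta-startheta}, $p_2 \star_\Theta b_0 \backsim \sum_{j\ge 0} a_j(p_2,b_0)_\Theta$. The leading term is $a_0(p_2,b_0)_\Theta = p_2 \times_\Theta b_0$. Here the key point is that $p_2(\xi_x,\lambda) = k\abs\xi^2 - \lambda = k\times_\Theta\abs\xi^2 - \lambda$ since $\abs\xi^2$ is $\T^n$-invariant (Lemma \ref{lem:mod-cur-symbolof-P_k}), and $b_0 = r(\abs\xi^2,\lambda)$ is, fiberwise for each fixed $\xi_x$, the deformed inverse of $k\abs{\xi_x}^2-\lambda$ in $C^\infty(M_\Theta)$ multiplied by the scalar $\rho(\abs{\xi_x}^2)$; so $p_2\times_\Theta b_0 = \rho(\abs\xi^2)\cdot 1$, which differs from $1$ only on the region $\abs\xi^2\le 1$ where it is compactly supported in $\xi$, hence a smoothing symbol. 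For $j\ge 1$, I would argue that $a_j(p_2,b_0)_\Theta$ is again a symbol of strictly lower order: each $a_j$ lowers total degree by $j$ (Proposition \ref{prop:wpse-asym-sybproduct}), so the remainder after truncating is of arbitrarily negative order. One then feeds these lower-order terms back into the recursive scheme of section \ref{subsec:resolvent-app} — this is precisely what the $b_\kappa$ of \eqref{eq:wpse-theta-inductive-formula-for-rev-approximation-2} accomplish — to conclude that $p_2$ is invertible modulo $S\Sigma^{-\infty}$ with leading inverse $b_0$. The identity $b_0\times_\Theta p_2 \backsim 1$ follows symmetrically, using the other recursion \eqref{eq:wpse-theta-inductive-formula-for-rev-approximation}, or simply because a left inverse and right inverse modulo smoothing symbols must agree modulo smoothing symbols.

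The main obstacle I anticipate is the bookkeeping in the symbol estimate \eqref{eq:wpse_theta-syml-class-para}: one must show the mixed derivatives $D^\alpha_x D^\beta_\xi D^j_\lambda b_0$ decay with the correct joint power of $(1+\abs\xi^2+\abs\lambda)$, uniformly in $\lambda$ in the cone $\Lambda$. The honest way is to write, for $\lambda$ in the cone, $sk - \lambda = \lambda(\lambda^{-1}sk - 1)$ and use that $\lambda^{-1}sk$ has spectrum in a region bounded away from $1$ (since $\spec k \subset (0,\infty)$ and $\arg\lambda$ is bounded away from $0$), giving a Neumann-type resolvent bound $\norm{(sk-\lambda)^{-1}} \le C(1+s+\abs\lambda)^{-1}$; this is where the cone restriction on $\lambda$ and the positivity of $k$ are essential, and it is the one place where a careful operator-norm estimate, rather than a purely formal manipulation, is needed. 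The passage from operator-norm bounds to the Fréchet seminorm bounds defining $S\Sigma(M_\Theta,\Lambda)$ uses Lemma \ref{prop:wpse-theta-opnorm-symnorm} and the fact that applying $\op{ad}$ of first-order operators to $(sk-\lambda)^{-1}$ again produces deformed polynomial expressions in $(sk-\lambda)^{-1}$ and covariant derivatives of $k$. Everything else — the structure of the deformed product, the role of the cut-off $\rho$, the recursion — is routine given the machinery already developed in sections \ref{sec:wid's-cal} and \ref{subsec:resolvent-app}.
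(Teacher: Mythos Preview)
Your core computation is right, but you have conflated two different products and in consequence done far more work than the proposition requires. The symbol $\times_\Theta$ in the statement is the deformed \emph{pointwise} product on $S\Sigma(M_\Theta,\Lambda) = (S\Sigma(M),\times_\Theta)$ (section~\ref{subsec:deformation-funtens}), not the full composition product $\star_\Theta$ of Proposition~\ref{prop:wpse-theta-startheta}. The paper's proof is accordingly very short: because $\abs\xi^2$ is $\T^n$-invariant, the pointwise product $(p_2 \times_\Theta b_0)\big|_{\xi_x}$ reduces to the $C^\infty(M_\Theta)$-product $(sk-\lambda)*_\Theta r(s,\lambda)$ evaluated at $s=\abs{\xi_x}^2$, which equals $1$ whenever $s\ge 1$ by construction of $r$; hence $p_2\times_\Theta b_0 - 1$ is supported in $\{\abs\xi\le 1\}$ and is smoothing. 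You establish exactly this in your second paragraph (``$p_2\times_\Theta b_0 = \rho(\abs\xi^2)\cdot 1$''), and at that point the proof of the inverse claim is finished. Your subsequent discussion of the higher $a_j(p_2,b_0)_\Theta$ and the recursion of section~\ref{subsec:resolvent-app} is aimed at showing $p_2 \star_\Theta b_0 \backsim 1$, which is a different (and stronger) statement; moreover, invoking that recursion here would be circular, since equations~\eqref{eq:wpse-theta-inductive-formula-for-rev-approximation-2}--\eqref{eq:wpse-theta-inductive-formula-for-rev-approximation} presuppose the existence of $b_0$ as the $\times_\Theta$-inverse of $p_2$, which is precisely the content of the present proposition.

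For the order claim, the paper simply observes the homogeneity $b_0(c\xi_x,c^2\lambda) = c^{-2}b_0(\xi_x,\lambda)$ for $\abs{\xi_x}$ large (where $\rho\equiv 1$), and a smooth function with this joint homogeneity automatically lies in $S\Sigma^{-2}(M,\Lambda)$. Your route via explicit resolvent bounds $\norm{(sk-\lambda)^{-1}}\le C(1+s+\abs\lambda)^{-1}$ and repeated differentiation of $(sk-\lambda)\times_\Theta(sk-\lambda)^{-1}=1$ is correct and would yield the same conclusion, but the homogeneity shortcut makes that bookkeeping unnecessary.
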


\begin{proof}
Let $\times_\Theta$ and $*_\Theta$ denote the multiplication in $S\Sigma(M_\Theta,\Lambda)$ and $C^\infty(M_\Theta)$ respectively. Since the length function  $\abs\xi$ is $\T^n$-invariant, let $s \in (0,\infty)$  as before and fix a $\lambda$, we see that 
\begin{align*}
(p_2 \times_\Theta b_0) \big|_{\xi_x} = \brac{ (sk - \lambda)*_\Theta r(s,\lambda)}\big|_{s = \abs{\xi_x}^2}.
\end{align*}	
In the right hand side above, $(sk - \lambda)$ and $r(s,t)$ are functions in $s$ valued in $C^\infty(M)$, thus the $*_\Theta$-multiplication makes sense. By the construction of $r(s,\lambda)$, $(sk - \lambda)*_\Theta r(s,\lambda) = 1$ when $s\geq 1$, hence $(p_2 \times_\Theta b_0) \big|_{\xi_x} = 1$ for all $\abs{\xi_x}\geq 1$, thus $p_2 \times_\Theta b_0 \backsim 1$. Same argument shows that $b_0 \times_\Theta p_2 \backsim 1$. \par
One can quickly verify that for any $c>0$, $b_0(c \xi_x, c^2 \lambda) = c^{-2} b_0(\xi_x,\lambda)$, provided $\abs{\xi_x}$ large enough. Hence $b_0$ belongs to $S\Sigma^{-2}(M_\Theta,\lambda)$.

\end{proof}

    \section{Computation of the $b_2$ term} \label{app-modcur-b2term}
    The goal of this section is to compute the full expression of the $b_2$ term in the resolvent aproximation. Recall from \eqref{eq:wpse-theta-b1} and \eqref{eq:wpse-theta-b2}, we need to compute:
\begin{align}
    b_1 & = ( b_0 p_1 + (-i)D b_0  \nabla p_2)  (-b_0),
    \label{eq:app-mod-cur-b1term-v1}
\\
b_2 & = \left(
\vphantom{\frac12}
b_0 p_0 + b_1 p_1 + (-i)D b_0 \nabla p_1 + (-i)Db_1   \nabla p_2 \right. \\
&\left.   - \frac12 D^2 b_0  \nabla^2 p_2 
- \frac12 Db_0  D^2 p_2 ( \nabla^3 \ell) \right)
 (-b_0),
\label{eq:app-mod-cur-b2term-v2}
\end{align}
where $b_0 = p_2^{-1} = (k\abs\xi^2 -\lambda)^{-1}$.

 Since $\nabla \abs\xi^2 =0$ (proved in lemma \ref{lem:mod-cur-xisquared-derivatives}), the Leibniz rule gives us
\begin{align}
    \nabla  k \abs\xi^2 & = (\nabla k)\abs\xi^2, 
    \label{eq:mod-cur-hord-p2-1}
    \\
    \nabla^2 k \abs\xi^2 & = (\nabla^2 k) \abs\xi^2 + k (\nabla^2 \abs\xi^2).
    \label{eq:mod-cur-hord-p2-2}
\end{align}

The derivatives of $b_0 = (k\abs\xi^2 -\lambda)^{-1}$ are obtained by applying the standard resolvent identity:
\begin{align*}
    (Db_0) =   - b_0 (Dp_2) b_0, 
\end{align*}
which holds as well when $D$ is replaced by $\nabla$. We notice that $D p_2$ commutes with $b_0$, but $\nabla p_0$ does not, because 
$\nabla k$ and $k$ do not commute. We continue the computation:
    \begin{align}
         D b_0 = -(b_0  b_0) D p_2 = -(b_0^2   k) D \abs\xi^2,
        \label{eq:mod-cur-Db_0}
    \end{align}
    and
    \begin{align}
        \begin{split}
             D^2 b_0 & = D(-b_0 b_0  D p_2)\\
    &= -b_0  b_0  D^2 p_2  
    + 2 b_0  b_0  b_0  D p_2  D p_2\\
    & = - b_2^2  k D^2 \abs\xi^2 + 2 b_0^3  k^2 (D \abs\xi^2)^2,
        \end{split}
        \label{eq:mod-cur-D^2b_0}
    \end{align}
%\end{lem}
Combine  \eqref{eq:app-mod-cur-b1term-v1}, \eqref{eq:mod-cur-Db_0} and \eqref{eq:mod-cur-hord-p2-1}, we get:
%The $b_1$ term defined in equation \eqref{eq:mod-cur-b1term-v1} can be further computed:
\begin{align}
     b_1 = -i b_0^2 k (D\abs\xi^2 \cdot \nabla k) b_0 \abs\xi^2 - b_0p_1b_0.
    \label{eq:mod-cur-b_1-vesion2}
\end{align}

\begin{prop}
     \label{prop:mod-cur-prop-b2term}
    The $b_2$ term is given by:
    \begin{align}
        \begin{split}
       b_2  &= 2 b_0^3 k^2 ( \nabla k) b_0 (\nabla k) b_0 \abs\xi^4 (D\abs\xi^2)^2\\
   & - b_0^2 k  (\nabla k) b_0 (\nabla k) b_0 \brac{
   \abs\xi^2(D\abs\xi^2)^2 + \abs\xi^4 D^2\abs\xi^2}\\
   & +  b_0^2k   (\nabla k) b_0^2 k (\nabla k) b_0 \abs\xi^4 (D\abs\xi^2)^2\\
   &-\frac12 (b_0^3 k^2) ( \nabla^3 \ell)(D\abs\xi^2) (D^2 \abs\xi^2) 
   -\frac12 b_0^3 k^2 D^2\abs\xi^2 \nabla^2 \abs\xi^2 \\
& -\frac12 b_0^2k (\nabla^2 k) b_0 \abs\xi^2 D^2\abs\xi^2 +   b_0^3 k^2 (\nabla^2 k) b_0\abs\xi^2 (D\abs\xi^2)^2\\
&- iD(b_0 p_1 b_0) (\nabla p_2) b_0 -  b_0 p_0 b_0 - b_1 p_1 b_0+ iD b_0 (\nabla p_1) b_0.      
        \end{split}
       \label{eq:mod-cur-prop-b2term}
    \end{align}
   
\end{prop}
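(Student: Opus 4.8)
The statement is a bookkeeping computation: we must expand the defining formula \eqref{eq:app-mod-cur-b2term-v2} for $b_2$ by substituting the known symbols of the perturbed Laplacian $p_2 = k\abs\xi^2 - \lambda$, $p_1$, $p_0$, and then repeatedly applying the Leibniz rule for the vertical derivative $D$ and the horizontal derivative $\nabla$ (Proposition \ref{prop:def_riegeo-leibnitz-rul-v-j-der}) together with the resolvent identities for $b_0$. The plan is to process \eqref{eq:app-mod-cur-b2term-v2} term by term, in the order the six summands appear, keeping $p_1$ and $p_0$ symbolic until the very end (their explicit forms depend on the example, so they are simply carried along as $-b_0 p_0 b_0$, $-b_1 p_1 b_0$, etc.), and to fully expand only those summands that involve $p_2$, which is where all the geometry enters.

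\textbf{Step 1: assemble the ingredients.} First I would record the derivatives we need: $\nabla\abs\xi^2 = 0$ (Lemma \ref{lem:mod-cur-xisquared-derivatives}), hence from the Leibniz rule $\nabla (k\abs\xi^2) = (\nabla k)\abs\xi^2$ and $\nabla^2(k\abs\xi^2) = (\nabla^2 k)\abs\xi^2 + k\nabla^2\abs\xi^2$ (equations \eqref{eq:mod-cur-hord-p2-1}, \eqref{eq:mod-cur-hord-p2-2}); the vertical derivatives $D b_0 = -b_0^2 k\, D\abs\xi^2$ and the second one $D^2 b_0 = -b_0^2 k\, D^2\abs\xi^2 + 2 b_0^3 k^2 (D\abs\xi^2)^2$ (equations \eqref{eq:mod-cur-Db_0}, \eqref{eq:mod-cur-D^2b_0}); and the expression \eqref{eq:mod-cur-b_1-vesion2} for $b_1$. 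The key structural point is the resolvent identity $D(b_0) = -b_0 (Dp_2) b_0$ and $\nabla(b_0) = -b_0(\nabla p_2) b_0$, where one must be careful that $D p_2 = k\, D\abs\xi^2$ commutes with $b_0$ (both are functions of $\abs\xi^2$ and $k$ only) while $\nabla p_2 = (\nabla k)\abs\xi^2$ does \emph{not} commute with $b_0$ because $\nabla k$ and $k$ fail to commute in the deformed algebra.

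\textbf{Step 2: expand the $p_2$-dependent summands.} I would compute $\nabla^2 p_2 = (\nabla^2 k)\abs\xi^2 + k\nabla^2\abs\xi^2$, then $-\tfrac12 D^2 b_0\,\nabla^2 p_2$ using \eqref{eq:mod-cur-D^2b_0}; this produces the $b_0^3 k^2 (\nabla^2 k) b_0 \abs\xi^2 (D\abs\xi^2)^2$ and $-\tfrac12 b_0^2 k (\nabla^2 k) b_0 \abs\xi^2 D^2\abs\xi^2$ lines plus the $-\tfrac12 b_0^3 k^2 D^2\abs\xi^2\,\nabla^2\abs\xi^2$ piece. The term $-\tfrac12 D b_0\, D^2 p_2\,(\nabla^3\ell)$ is immediate from \eqref{eq:mod-cur-Db_0} and gives the $-\tfrac12 b_0^3 k^2 (\nabla^3\ell)(D\abs\xi^2)(D^2\abs\xi^2)$ line (here $D^2 p_2 = k\, D^2\abs\xi^2$ since $D^2 k = 0$). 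For the term $(-i) D b_1\,\nabla p_2$ I would differentiate \eqref{eq:mod-cur-b_1-vesion2} vertically using the Leibniz rule on each of the two summands, then contract against $\nabla p_2 = (\nabla k)\abs\xi^2$; this is the messiest part because $b_1$ itself contains $D\abs\xi^2$, so $D b_1$ brings in $D^2\abs\xi^2$ and nested products of $b_0$, $k$, $\nabla k$ — this generates the first three lines of \eqref{eq:mod-cur-prop-b2term} (the $2b_0^3 k^2(\nabla k) b_0 (\nabla k) b_0 \abs\xi^4 (D\abs\xi^2)^2$ term, the $-b_0^2 k(\nabla k)b_0(\nabla k)b_0[\abs\xi^2(D\abs\xi^2)^2 + \abs\xi^4 D^2\abs\xi^2]$ term, and the $b_0^2 k(\nabla k) b_0^2 k(\nabla k)b_0\abs\xi^4(D\abs\xi^2)^2$ term). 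Finally, multiply everything on the right by $-b_0$ and collect.

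\textbf{Step 3: collect the remaining terms.} The summands $b_0 p_0$, $b_1 p_1$, and $(-i) D b_0\,\nabla p_1$ from \eqref{eq:app-mod-cur-b2term-v2}, after right-multiplication by $-b_0$, contribute $-b_0 p_0 b_0$, $-b_1 p_1 b_0$, $+ i D b_0 (\nabla p_1) b_0$; and from the expansion of $(-i) D b_1\,\nabla p_2 \cdot (-b_0)$ one also gets the $-i D(b_0 p_1 b_0)(\nabla p_2) b_0$ term once the $b_0 p_1 b_0$ part of $b_1$ in \eqref{eq:mod-cur-b_1-vesion2} is tracked separately (it is cleaner to keep $b_1 = -i b_0^2 k (D\abs\xi^2\cdot\nabla k) b_0 \abs\xi^2 - b_0 p_1 b_0$ and not expand $p_1$). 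Assembling all contributions yields exactly \eqref{eq:mod-cur-prop-b2term}.

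\textbf{Main obstacle.} There is no conceptual difficulty: every step is an application of the (deformed) Leibniz rule and the resolvent identity, both already established. The real hazard is combinatorial — keeping the ordering of the noncommuting factors $b_0$, $k$, $\nabla k$ correct throughout, since in the deformed tensor calculus $\cdot_\Theta$ is noncommutative and the position of each $b_0$ relative to each $\nabla k$ matters. The scalar-invariant factors $\abs\xi^2$, $D\abs\xi^2$, $D^2\abs\xi^2$, $\nabla^2\abs\xi^2$, $\nabla^3\ell$ are $\T^n$-invariant and hence central (by the remark following Proposition \ref{prop:wpse-theta-startheta} and the discussion in section \ref{subsec:lift-tensor-cal}), so they may be freely moved; the burden is therefore to track only the relative order of $b_0$ and $\nabla k$ (and the single $k$ that does not cancel). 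I would organize the computation so that at each stage every monomial is written in the normal form (powers of $b_0$ interleaved with $\nabla k$'s, with scalar factors pulled to the right), which makes the final collection into \eqref{eq:mod-cur-prop-b2term} mechanical.
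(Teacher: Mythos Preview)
Your proposal is correct and follows essentially the same approach as the paper's own proof: expand $D^2 b_0\,\nabla^2 p_2$, $(\nabla^3\ell)\,Db_0\,D^2 p_2$, and $(-i)Db_1\,\nabla p_2$ using the resolvent identity and the Leibniz rule, keep the $p_0$- and $p_1$-terms symbolic, and finally shuffle the $\T^n$-invariant scalar factors ($\abs\xi^2$, $D\abs\xi^2$, $D^2\abs\xi^2$, $\nabla^2\abs\xi^2$, $\nabla^3\ell$) to the right by centrality. The only minor point you do not mention explicitly is that the term $b_0^3 k^3 (D\abs\xi^2)^2\,\nabla^2\abs\xi^2\, b_0$ arising from $D^2 b_0\,\nabla^2 p_2$ drops out because the full contraction $(D\abs\xi^2)^2\cdot\nabla^2\abs\xi^2$ vanishes by the antisymmetry of the curvature tensor, but this is a routine step you would encounter when carrying out the computation.
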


\begin{proof}
    We first compute the last two terms (terms with $1/2$ in front) in equation \eqref{eq:app-mod-cur-b2term-v2}.
    From \eqref{eq:mod-cur-D^2b_0} and \eqref{eq:mod-cur-hord-p2-2}:
\begin{align}
    \begin{split}
         &\,\, D^2 b_0 \times_\Theta \nabla^2 p_2 \\
     = &\,\, (- b_2^2  k D^2 \abs\xi^2 + 2 b_0^3 k^2 (D \abs\xi^2)^2)
    ((\nabla^2 k) \abs\xi^2 + k (\nabla^2 \abs\xi^2))\\
    = & \,\,-b_0^2k (D^2 \abs\xi^2 \cdot \nabla^2 k)\abs\xi^2 - b_0^2 k^2 D^2\abs\xi^2 \nabla^2 \abs\xi^2 \\
    &\,\; + 2 b_0^3 k^2 ( (D\abs\xi^2)^2 \nabla^2k)\abs\xi^2 + 2b_0^3 k^3 (D\abs\xi^2)^2 \nabla^2 \abs\xi^2. 
  \end{split}
    \label{eq:mod-cur-D^b0-symd^2p2}
   \end{align}
Combine \eqref{eq:mod-cur-Db_0} and \eqref{eq:mod-cur-hord-p2-2}:
\begin{align}
    \begin{split}
         ( \nabla^3 \ell) Db_0 D^2p_2
 & =  ( \nabla^3\ell) (-b_0^2 k) (D\abs\xi^2)k (D^2 \abs\xi^2) \\
 & = (-b_0^2 k^2) ( \nabla^3\ell)(D\abs\xi^2) (D^2 \abs\xi^2).
    \end{split}
    \label{eq:mod-cur-l-Db0-D^2p2}
\end{align}
Apply $D$ onto \eqref{eq:mod-cur-b_1-vesion2}, 
\begin{align*}
    D b_1 & = -i\left [
    D(b_0^2k) (D\xi^2 \cdot \nabla k) b_0 \xi^2 + b_0^2 k D(D\abs\xi^2 \cdot \nabla k) b_0 \abs\xi^2 \right.\\
& \left. 
    + b_0^2 k (D\abs\xi^2 \cdot \nabla k) (Db_0) \abs\xi^2 
+ b_0^2 k (D\abs\xi^2 \cdot \nabla k) b_0 D\abs\xi^2 \right] - D(b_0 p_1 b_0)\\
& = -i\left[
-2 b_0^3 k^2 D\abs\xi^2 (D\abs\xi^2 \cdot \nabla k) b_0  \abs\xi^2 + b_0^2 k (D\abs\xi^2 \cdot \nabla k)b_0 D \abs\xi
\right.\\& \left.
+ b_0^2 k (D^2\abs\xi^2 \cdot \nabla k) b_0\abs\xi^2 
 - b_0^2 k (D\abs\xi^2 \cdot \nabla k) b_0^2 k (D \abs\xi^2)\abs\xi^2
\right] \\
&- D(b_0 p_1 b_0),
\end{align*}
thus
\begin{align}
    \begin{split}
         &\; D b_1  \nabla p_2\\
    =&\; -i \left[-2 b_0^3 k^2 (D\abs\xi^2 \cdot \nabla k) b_0 (D\abs\xi^2 \cdot \nabla k) \abs\xi^4
        \right. \\ 
        & \left. \; +
        b_0^2 k (D\abs\xi^2 \cdot \nabla k) b_0 (D\abs\xi^2 \cdot \nabla k)\abs\xi^2
    \right. \\ 
        & \left. \; +
b_0^2 k (D^2\abs\xi^2 \cdot \nabla k) b_0 (\nabla k) \abs\xi^4
    \right. \\ 
        & \left. \;     
    - b_0^2 k (D\abs\xi^2 \cdot \nabla k) b_0^2 k (D\abs\xi^2 \cdot \nabla k)\abs\xi^4 
    \right]\\
    &- D(b_0 p_1 b_0) \nabla p_2. 
    \end{split}
    \label{eq:mod-cur-Db1-symdp2}
\end{align}

We substitute \eqref{eq:mod-cur-D^b0-symd^2p2}, \eqref{eq:mod-cur-l-Db0-D^2p2} and  \eqref{eq:mod-cur-Db1-symdp2} into \eqref{eq:app-mod-cur-b2term-v2}:
\begin{align*}
     b_2 & = 2 b_0^3 k^2 (D\abs\xi^2 \cdot \nabla k) b_0 (D\abs\xi^2 \cdot \nabla k) b_0 \abs\xi^4\\
    & -  b_0^2 k (D\abs\xi^2 \cdot \nabla k) b_0 (D\abs\xi^2 \cdot \nabla k) b_0 \abs\xi^2 \\
    & -   b_0^2 k (D^2\abs\xi^2 \cdot \nabla k) b_0 (\nabla k)b_0 \abs\xi^4\\
& + b_0^2 k (D\abs\xi^2 \cdot \nabla k) b_0^2 k (D\abs\xi^2 \cdot \nabla k) b_0 \abs\xi^4 \\
&+ \frac12 (-b_0^2 k^2) (\nabla^3 \ell)(D\abs\xi^2) (D^2 \abs\xi^2) b_0 \\
&-\frac12 b_0^2k (D^2 \abs\xi^2 \cdot \nabla^2 k) b_0 \abs\xi^2\\
&-\frac12 b_0^2 k^2 D^2\abs\xi^2 \nabla^2 \abs\xi^2 b_0\\
&+  b_0^3 k^2 ( (D\abs\xi^2)^2\nabla^2k) b_0 \abs\xi^2\\
&- iD(b_0 p_1 b_0) (\nabla p_2) b_0 -  b_0 p_0 b_0 - b_1 p_1 b_0 + iD b_0 (\nabla p_1) b_0.
\end{align*}
To  obtain \eqref{eq:mod-cur-prop-b2term}, we just need to move  the vertical and the horizontal derivatives of $\abs\xi^2$ to the right end for each summand above. This operation is valid because of the $\T^n$-invariant property of the function $\abs\xi^2$.

\end{proof}

\section{Integration over the cosphere bundle $S^*M$} 
\label{App:int_over_cosphere}

Using symmetries, one can quickly compute the surface integral of the function $x^2$ over the unit sphere $x^2+y^2+z^2 =1$ in $\R^3$:
\begin{align*}
    \int_{x^2+y^2+z^2 =1} x^2 dS = \frac13 \int_{x^2+y^2+z^2 =1}(x^2+y^2+z^2)   dS = \frac13 \mathrm{Vol}(S^2).
\end{align*}
In general, we have
\begin{lem} 
    \label{lem:mod-cur-int-sphere-symmetries}
    Let $\xi = (\xi_1,\dots,\xi_m)$ be a coordinate system of $\R^m$ and $S^{m-1}$ is the unit sphere with  induced measure $d\sigma_{S^{m-1}}$. Then
%     Let $(x,\xi)$ be a  local coordinate system around $x$ such that the metric tensor $g^{ij} = \delta_{ij}$
%     at $x$. Thus the unit sphere coincides with the Euclidean  unit sphere in $\R^m$, we denote the induced measure by $d\sigma_{S^{m-1}}$ and $\xi_s$, $\xi_a$ are components of $\xi$, then
    \begin{align}
    \int_{S^{m-1}} \xi_s\xi_a d\sigma_{S^{m-1}} =
    \begin{cases}
        0 & \text{when $s\neq a$}\\ \frac{\abs\xi^2}{m} \op{Vol}(S^{m-1}),
        & \text{when $s = a$.}
    \end{cases}
    \label{eq:mod-cur-int-sphere-asym}
\end{align}
\end{lem}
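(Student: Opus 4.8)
The plan is to obtain both cases from the invariance of the induced surface measure $d\sigma_{S^{m-1}}$ under the orthogonal group $O(m)$ acting linearly on $\R^m$: every $T\in O(m)$ is an isometry of $\R^m$ that restricts to an isometry of $S^{m-1}$, hence preserves $d\sigma_{S^{m-1}}$. Both equalities in the lemma are then pure symmetry statements about averages of quadratic monomials.

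First I would treat the off-diagonal case $s\neq a$. Let $\tau\in O(m)$ be the reflection $\xi_s\mapsto -\xi_s$ fixing all other coordinates. Pulling back along $\tau$ and using measure-invariance gives $\int_{S^{m-1}}\xi_s\xi_a\,d\sigma_{S^{m-1}} = \int_{S^{m-1}}\tau^*(\xi_s\xi_a)\,d\sigma_{S^{m-1}} = -\int_{S^{m-1}}\xi_s\xi_a\,d\sigma_{S^{m-1}}$, so this integral vanishes.

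For the diagonal case I would use that the coordinate transposition $\xi_s\leftrightarrow\xi_a$ also lies in $O(m)$, so $I\defeq\int_{S^{m-1}}\xi_s^2\,d\sigma_{S^{m-1}}$ does not depend on $s$. Summing over $s=1,\dots,m$ and using $\sum_{s=1}^m\xi_s^2=\abs\xi^2$, which equals the constant $1$ on the unit sphere, yields $mI=\int_{S^{m-1}}\abs\xi^2\,d\sigma_{S^{m-1}}=\op{Vol}(S^{m-1})$, hence $I=\op{Vol}(S^{m-1})/m$. Retaining the symbol $\abs\xi^2$ in the statement (which is $1$ on $S^{m-1}$, or the homogeneous weight $\abs\xi^2$ if one rescales to the sphere of radius $\abs{\xi_x}$ as in the application) gives the claimed formula.

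There is no genuine obstacle here; the only point deserving (routine) care is the classical fact that the Riemannian volume density on $S^{m-1}$ is invariant under linear orthogonal maps, together with the bookkeeping of how $\abs\xi^2$ is carried as a formal weight. In the main text this lemma is applied by substituting the averages $\xi_s\xi_a \mapsto \frac{\abs\xi^2}{m}\op{Vol}(S^{m-1})\,g^{-1}$ (written index-free) into the cosphere-bundle integral $\int_{S^{m-1}} b_2\,d\sigma_{S^{m-1}}$, together with the analogous averages of the second-order contractions $D^2\abs\xi^2$ and $(D\abs\xi^2)^2$, which are obtained in the same way from the invariance of $d\sigma_{S^{m-1}}$.
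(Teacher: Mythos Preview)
Your proposal is correct and follows essentially the same symmetry approach as the paper: the paper does not give a formal proof but only illustrates the diagonal case via the $\R^3$ example $\int_{S^2} x^2\,dS = \tfrac{1}{3}\int_{S^2}(x^2+y^2+z^2)\,dS = \tfrac{1}{3}\op{Vol}(S^2)$, and you fill in exactly this argument together with the standard reflection argument for the off-diagonal vanishing.
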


In an orthonormal local coordinate system: $g^{ij} = \xi_{ij}$ and $\abs\xi^2 = \sum_j \xi_j^2$, hence
\begin{align*}
    ((D\abs\xi^2)^2)_{ij} = ((D\abs\xi^2) \otimes (D\abs\xi^2))_{ij} = 4\xi_i \xi_j, \,\,\,
    (D^2\abs\xi^2)_{ij} = 2 \delta_{ij}
\end{align*}

Apply the lemma above, one can quickly conclude:
\begin{lem}
   Let $S^{m-1} \subset T_x^*M \cong \R^m$ be the unit sphere with respect to the Riemannian metric $g^{-1}$ and $d\sigma_{S^{m-1}}$ be the induced measure as before. Then
    \begin{align}
         \int_{S^*_xM} (D\abs\xi^2)^2 d\sigma_{S^{m-1}}   
     & = \frac{4\abs\xi^2}{m}\op{Vol}(S^{m-1})(g^{-1} \Big|_x),
     \label{eq:mod-cur-int-sphere-(Dxi)^2}\\
      \int_{S^*_xM} D^2\abs\xi^2 d\sigma_{S^{m-1}}  & = 2 \op{Vol}(S^{m-1})(g^{-1} \Big|_x).
     \label{eq:mod-cur-int-sphere-Dxi^2}
    \end{align} 
\end{lem}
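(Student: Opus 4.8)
The plan is to reduce both identities to the elementary symmetry integral of Lemma~\ref{lem:mod-cur-int-sphere-symmetries} by computing everything in a $g^{-1}$-orthonormal frame on the fibre. Fixing $x\in M$, I would choose linear coordinates $(\xi_1,\dots,\xi_m)$ on $T^*_xM$ in which $g^{ij}|_x=\delta_{ij}$, so that $S^*_xM$ is identified with the Euclidean unit sphere $S^{m-1}\subset\R^m$, $d\sigma_{S^{m-1}}$ is the standard round measure, and $\abs\xi^2=\xi_1^2+\cdots+\xi_m^2$. In this frame the vertical derivatives of $\abs\xi^2$ have the components already recorded before the statement, namely $((D\abs\xi^2)^2)_{ij}=((D\abs\xi^2)\otimes(D\abs\xi^2))_{ij}=4\xi_i\xi_j$ and $(D^2\abs\xi^2)_{ij}=2\delta_{ij}$, which follows directly from Definition~\ref{defn:def_riegeo-vertical-D} since $D$ is differentiation along the linear fibre coordinates.

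Next I would integrate component by component. For \eqref{eq:mod-cur-int-sphere-(Dxi)^2}, Lemma~\ref{lem:mod-cur-int-sphere-symmetries} applied to each pair $(i,j)$ gives
\[
\int_{S^{m-1}}\bigl((D\abs\xi^2)^2\bigr)_{ij}\,d\sigma_{S^{m-1}}
=4\int_{S^{m-1}}\xi_i\xi_j\,d\sigma_{S^{m-1}}
=\frac{4\abs\xi^2}{m}\op{Vol}(S^{m-1})\,\delta_{ij},
\]
and since $\delta_{ij}=(g^{-1}|_x)_{ij}$ in this frame this is exactly the claimed formula. For \eqref{eq:mod-cur-int-sphere-Dxi^2}, integrating the constant tensor $2\delta_{ij}$ over $S^{m-1}$ yields $2\op{Vol}(S^{m-1})\,\delta_{ij}=2\op{Vol}(S^{m-1})(g^{-1}|_x)_{ij}$.

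Finally I would observe that both sides of each identity are honest tensors at $x$ — the left-hand sides are fibre integrals of pull-back tensor fields, the right-hand sides are scalar multiples of $g^{-1}|_x$ — so an equality valid in one $g^{-1}$-orthonormal frame is valid in every frame, which finishes the proof. I do not anticipate a genuine obstacle: the argument is a one-line symmetry computation once the component formulas for $D\abs\xi^2$ and $D^2\abs\xi^2$ are in place. The only point worth a word of care is that the factor $\abs\xi^2$ appearing on the right of \eqref{eq:mod-cur-int-sphere-(Dxi)^2} equals $1$ on $S^{m-1}$ and is kept only as a bookkeeping device, so that the formula can be fed directly into the radial integration in \eqref{eq:mod-cur-scalarcur-setup}.
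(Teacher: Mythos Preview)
Your proposal is correct and follows exactly the approach the paper takes: the paper simply records the component formulas $((D\abs\xi^2)^2)_{ij}=4\xi_i\xi_j$ and $(D^2\abs\xi^2)_{ij}=2\delta_{ij}$ in an orthonormal frame and then says ``Apply the lemma above, one can quickly conclude,'' referring to Lemma~\ref{lem:mod-cur-int-sphere-symmetries}. Your write-up is a faithful and slightly more detailed version of this, including the remark that both sides are tensors so the orthonormal-frame computation suffices.
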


%-----------------------------------------
%Let us start with the covariant derivatives of the phase function $\ell$.
Now we have to deal with the horizontal derivatives $\nabla^j \abs\xi^2$ with $j=1,2$ which leads to the appearance of the scalar curvature function $\mathcal S_\Delta$. Start with $\nabla^3 \ell$, the following lemma was proved in \cite{MR538027}, which is recalled here for completeness:
\begin{lem}
    \label{lem:wpse-diff-jetsof-phafun}
In local coordinates at a given point $x \in M$, 
\begin{align*}
   (  \nabla^3 \ell)_{ijk} = -\frac13 \sum_p \xi_p \brac{
        R\indices{^p_{ikj}}+R\indices{^p_{jki}}    }. 
\end{align*}

%  Let us denote:
% \begin{align}
%     \rho_{\ell}^{j,l} = \nablagc^l \symd^j \ell,\,\,\, j,l\ge 1,
%     \label{eq:mod-cur-tensor-rho}
% \end{align}
% and we assume that the connection $\nabla$ is torsion free, then 
% \begin{align}
%     \rho_{\ell}^{2,1} &=\rho_{\ell}^{\abrac{ij},\abrac{l}} = (\nablagc \symd^2 \ell)_{ijl} = -\frac{1}{3}
%     \xi_p\brac{R\indices{^p_{ilj}} + R\indices{^p_{jli}}}
%     \label{eq:mod-cur-rho-2-1},\\
%     \rho_{\ell}^{1,2}& =\rho_{\ell}^{\abrac{l},\abrac{ij}}
%      = (\nablagc^2 \symd \ell)_{lij} = -\frac16
% \xi_p\brac{R\indices{^p_{jil}} + R\indices{^p_{ijl}}}.
%     \label{eq:mod-cur-rho-1-2}
% \end{align}
% 
\end{lem}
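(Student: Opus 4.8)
The plan is to reduce everything to the behaviour of Christoffel symbols near the point $x$. By the remark following Definition~\ref{defn:def_riegeo_defn-linearfun} we may take $\ell(\xi_x,y)=\langle\xi_x,\exp_x^{-1}y\rangle$ locally, and work in Riemann normal coordinates centred at $x$, so that $\exp_x^{-1}y$ has components $y^a$, the function $\ell(\xi_x,y)=\sum_a(\xi_x)_a\,y^a$ is \emph{linear} in $y$, and $\Gamma^m_{jk}(x)=0$. All covariant derivatives below are taken in $y$ and then evaluated at $y=x$. Expanding $\nabla^3\ell=\nabla(\nabla(\nabla\ell))$ componentwise, every term containing an undifferentiated $\Gamma$, or a third partial derivative of the linear function $\ell$, drops out at $y=x$, leaving only
\begin{align}
\label{eq:app-nabla3-reduction}
(\nabla^3\ell)_{ijk}\big|_{y=x}=-\big(\partial_i\Gamma^m_{jk}\big)\big|_{x}\,(\xi_x)_m .
\end{align}

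Next I would insert the standard normal-coordinate expansion $\Gamma^m_{jk}(y)=-\tfrac13\big(R\indices{^m_{jki}}+R\indices{^m_{kji}}\big)y^i+O(|y|^2)$, giving $\partial_i\Gamma^m_{jk}|_x=-\tfrac13\big(R\indices{^m_{jki}}+R\indices{^m_{kji}}\big)$; substituting into \eqref{eq:app-nabla3-reduction} yields $(\nabla^3\ell)_{ijk}=\tfrac13\sum_m(\xi_x)_m\big(R\indices{^m_{jki}}+R\indices{^m_{kji}}\big)$, which, after adjusting to the curvature sign convention of the paper and reordering the lower indices using the antisymmetry $R\indices{^m_{qab}}=-R\indices{^m_{qba}}$ together with the first Bianchi identity, is the asserted formula. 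As a cross-check I would also give the coordinate-free derivation: since $\nabla\ell|_{y=x}=\xi_x$, $\nabla^2\ell|_{y=x}=\symd^2\ell|_{y=x}=0$ and $\symd^3\ell|_{y=x}=0$ by Definition~\ref{defn:def_riegeo_defn-linearfun} and torsion-freeness, the tensor $T_{ijk}=(\nabla^3\ell)_{ijk}|_{y=x}$ is symmetric in its last two slots and has vanishing total symmetrization; feeding this and the Ricci identity for the one-form $\nabla\ell$, evaluated via $(\nabla\ell)_p|_{y=x}=(\xi_x)_p$, gives $3T_{ijk}=-\sum_p(\xi_x)_p\big(R\indices{^p_{kij}}+R\indices{^p_{jik}}\big)$, the same tensor in disguise.

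Neither step is deep; the one requiring genuine care is the bookkeeping of conventions. One must fix the sign convention for $R$ and the index ordering in the normal-coordinate expansion of $\Gamma$ so that the output is precisely the combination $R\indices{^p_{ikj}}+R\indices{^p_{jki}}$ in the statement, and matching typically costs one use of the first Bianchi identity plus the antisymmetry of $R$ in its last pair. I would therefore present the normal-coordinate computation \eqref{eq:app-nabla3-reduction} as the main argument and keep the Ricci-identity derivation only as an independent verification of the final signs.
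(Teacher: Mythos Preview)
Your ``cross-check'' via the Ricci identity is in fact exactly the paper's proof: the paper never touches normal coordinates, but argues directly from
\[
(\nabla^3\ell)_{ikj}-(\nabla^3\ell)_{ijk}=\big((\nabla_j\nabla_k-\nabla_k\nabla_j)\nabla\ell\big)_i=\xi_p\,R\indices{^p_{ikj}},
\]
torsion-freeness (symmetry in the \emph{first} two indices), and $\symd^3\ell=0$, and then solves the resulting three-term linear system for $(\nabla^3\ell)_{ijk}$. So your secondary argument is the primary one in the paper, and your primary normal-coordinate computation is a genuinely different route.

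The normal-coordinate approach is perfectly valid and arguably more concrete, since \eqref{eq:app-nabla3-reduction} reduces everything to the textbook expansion of $\Gamma$ in geodesic coordinates; its cost is exactly the convention headache you anticipate. Note one such mismatch already: you state that $T_{ijk}$ is symmetric in its last two slots, whereas the paper's convention (visible from the displayed Ricci identity) has it symmetric in the first two, i.e.\ $(\nabla^3\ell)_{ijk}=\nabla_k\nabla_j(\nabla\ell)_i$. Under that convention your reduction becomes $(\nabla^3\ell)_{ijk}|_{y=x}=-(\partial_k\Gamma^m_{ij})|_x\,\xi_m$ rather than $-(\partial_i\Gamma^m_{jk})|_x\,\xi_m$. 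This is purely bookkeeping and does not affect correctness, but if you present the normal-coordinate argument you should align the index ordering with the paper's from the outset; otherwise the Bianchi juggling at the end becomes needlessly error-prone. The Ricci-identity argument, by contrast, is convention-robust and shorter, which is presumably why the paper chose it.
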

\begin{proof}
    We adopt the following convention for the curvature tensor on the tangent bundle $TM$:
\begin{align*}
    (R(X,Y)Z)^l = R\indices{^l_{ijk}} X^j Y^k Z^l,
\end{align*}
where $X$, $Y$ and $Z$ are vector fields. 
In local coordinates:
\begin{align*}
    (\nabla^3 \ell)_{ikj} - (\nabla^3 \ell)_{ijk}& = \brac{(\nabla_j\nabla_k - \nabla_k\nabla_j)(\nabla \ell)}_i
   \\ & = -(\nabla \ell)_p R\indices{^p_{ijk}} = \xi_p R\indices{^p_{ikj}}.
\end{align*}
The   negative sign appears because the covariant derivatives are taken on the cotangent bundle. 
% Therefore:
% \begin{align*}
%    (\nabla^3 \ell)_{ikj}  = (\nabla^3 \ell)_{ijk} + \xi_p R\indices{^p_{ikj}}.
% \end{align*}
Since the connection is torsion free, $(\nabla^3 \ell)$ is symmetric in the first two indices, thus 
\begin{align*}
   (\nabla^3 \ell)_{jki} = (\nabla^3 \ell)_{jik} +  \xi_p R\indices{^p_{jki}}
   = (\nabla^3 \ell)_{ijk} + \xi_p R\indices{^p_{jki}}.
\end{align*}
Again, due to the symmetry in the first two indices,  all the components of  the tensor $\nabla^3 \ell$ fall into $\set{(\nabla^3 \ell)_{ijk}, (\nabla^3 \ell)_{ikj}, (\nabla^3 \ell)_{jki}}$. Since $\symd^3 \ell = 0$, summing up the three terms gives us
\begin{align*}
    0 &= 3 (\nabla^3 \ell)_{ijk}  +\xi_p R\indices{^p_{ikj}} + \xi_p R\indices{^p_{jki}},\\
    (\nabla^3 \ell)_{ijk}&=
    - \frac13 \brac{
    \xi_p R\indices{^p_{ikj}} + \xi_p R\indices{^p_{jki}}
    }. 
\end{align*}
% The equations \eqref{eq:mod-cur-rho-2-1} and \eqref{eq:mod-cur-rho-1-2} follow quickly from partial  symmetrization of $(\nabla^3 \ell)_{ijk}$. 
\end{proof}

\begin{lem}
    \label{lem:mod-cur-xisquared-derivatives}
   Evaluating at at given point $x \in M$,  the horizontal and the vertical derivatives of $\abs\xi^2$ are given by:
    \begin{align}
        \nabla \abs\xi^2 & = 0
        \label{eq:mod-cur-1sthor-xisquared}
        \\
        (\nabla^2\abs\xi^2) & = 
        (\nabla^3 \ell)(\nabla \ell) g^{-1} + (\nabla \ell) (\nabla^3 \ell) g^{-1}
                \label{eq:mod-cur-2ndhor-xisquared}
    \end{align}
where the contraction is implemented in the following way:
\begin{align*}
    (\nabla^2\abs\xi^2)_{jk}  = (\nabla^3 \ell)_{kij} (\nabla \ell)_a g^{ka} + 
    (\nabla \ell)_a (\nabla^3 \ell)_{kij}g^{ak}.
\end{align*}
In particular, when $g^{ij} = \delta_{ij}$, 
\begin{align}
    \label{eq:mod-cur-2ndhor-xisquared-complete}
    (\nabla^2\abs\xi^2)_{jk} = \frac23 \sum_{p,i} \xi_p \xi_i R_{pjik}.
\end{align}
%     while
%     \begin{align}
%         (D^2 \abs\xi^2)^{ij} = 2 g^{ij}\,\,\,\, (D\abs\xi^2)^j = 2\xi_i g^{ij},
%         \label{eq:mod-cur-xisquared-verder}
%     \end{align}
%     where $g^{ij}$ are the tensor components of $g^{-1}$, which is the metric tensor on $T^*M$.
\end{lem}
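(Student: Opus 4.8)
The plan is to compute the horizontal derivatives $\nabla^j \abs\xi^2$ directly from Definition \ref{defn:def_riegeo-horizontal-D}, which expresses $(\nabla^j p)(\xi_x)$ as $\nabla^j_y p(d\ell(\xi_x,y))|_{y=x}$. Write $\abs\xi^2 = \abrac{\xi,\xi}_{g^{-1}}$, so that $\abs\xi^2(\eta_x) = g^{ij}(x)\,(\eta_x)_i(\eta_x)_j$ in local coordinates. Substituting $\eta_x = d\ell(\xi_x,y)$ and letting the base point vary, we must differentiate $y \mapsto g^{ij}(y)\,(d\ell(\xi_x,y))_i (d\ell(\xi_x,y))_j$ covariantly in $y$. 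Here the key input is the defining property \eqref{eq:def_riegoe-defn-linearfun} of the phase function: $\symd^k \ell(\xi_x,y)|_{y=x}$ equals $\xi_x$ for $k=1$ and vanishes for $k\neq 1$; since $\nabla$ is torsion-free, $\nabla^2 \ell$ coincides with its symmetrization $\symd^2\ell$ and hence vanishes at $y=x$, so $\nabla_y (d\ell(\xi_x,y))|_{y=x} = \nabla^2 \ell|_{y=x} = 0$. Because $g^{-1}$ is covariantly constant, the first covariant derivative of the quadratic expression in $y$ is a sum of terms each carrying one factor $\nabla_y d\ell(\xi_x,y)$; evaluating at $y=x$ kills every term, giving \eqref{eq:mod-cur-1sthor-xisquared}: $\nabla\abs\xi^2 = 0$.

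For the second derivative I would apply the Leibniz rule once more. Differentiating $g^{ij}\,(\nabla\ell)_i(\nabla\ell)_j$ twice in $y$ and evaluating at $y=x$, the only surviving contributions are those in which one $\nabla\ell$ factor is hit by $\nabla^3\ell$ (recall $\nabla^3\ell|_{y=x}$ is generally nonzero, by Lemma \ref{lem:wpse-diff-jetsof-phafun}) while the other $\nabla\ell$ factor contributes $\xi_x$ via \eqref{eq:def_riegoe-defn-linearfun}; all terms containing a factor of $\nabla^2\ell|_{y=x}=0$ drop out, and the covariant constancy of $g^{-1}$ means no derivative lands on the metric. This yields precisely the symmetric combination in \eqref{eq:mod-cur-2ndhor-xisquared}, with the contraction pattern $(\nabla^2\abs\xi^2)_{jk} = (\nabla^3\ell)_{kij}(\nabla\ell)_a g^{ka} + (\nabla\ell)_a (\nabla^3\ell)_{kij} g^{ak}$ obtained by bookkeeping the indices. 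Finally, to get \eqref{eq:mod-cur-2ndhor-xisquared-complete}, substitute the formula of Lemma \ref{lem:wpse-diff-jetsof-phafun}, namely $(\nabla^3\ell)_{ijk} = -\tfrac13 \sum_p \xi_p (R\indices{^p_{ikj}} + R\indices{^p_{jki}})$, together with $(\nabla\ell)_i = \xi_i$ at $y=x$ and $g^{ij} = \delta_{ij}$; using the symmetries of the Riemann tensor to combine the two mirror terms produces the coefficient $\tfrac23$ and the stated contraction $\tfrac23 \sum_{p,i}\xi_p\xi_i R_{pjik}$.

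The main obstacle is purely combinatorial rather than conceptual: correctly tracking which index of $\nabla^3\ell$ is contracted against the metric and which against the surviving $\nabla\ell = \xi$ factor, and then showing that the two ``mirror'' terms coming from the Leibniz expansion, after inserting the (already symmetrized) expression from Lemma \ref{lem:wpse-diff-jetsof-phafun} and applying the first Bianchi identity and the pair symmetry $R_{pjik} = R_{ikpj}$, collapse to the single clean term with coefficient $\tfrac23$. One must also be slightly careful that the covariant derivative in Definition \ref{defn:def_riegeo-horizontal-D} acts on the cotangent bundle, which is the source of the sign already noted in the proof of Lemma \ref{lem:wpse-diff-jetsof-phafun}; keeping that convention consistent is what makes the curvature term come out with the correct sign.
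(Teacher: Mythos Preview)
Your proposal is correct and follows essentially the same route as the paper: rewrite $\abs\xi^2$ as $d\ell\otimes d\ell\cdot g^{-1}$, apply the Leibniz rule using $\nabla g^{-1}=0$ and $\nabla^2\ell|_{y=x}=\symd^2\ell|_{y=x}=0$ (torsion-freeness) to obtain \eqref{eq:mod-cur-1sthor-xisquared} and \eqref{eq:mod-cur-2ndhor-xisquared}, and then insert Lemma~\ref{lem:wpse-diff-jetsof-phafun} together with the symmetries of the Riemann tensor to reach \eqref{eq:mod-cur-2ndhor-xisquared-complete}. The only cosmetic difference is that the paper dispatches the final index computation by noting that one of the two curvature terms in $(\nabla^3\ell)_{ijk}$ vanishes upon contraction with the symmetric tensor $\xi_p\xi_i$ (by antisymmetry of $R$ in its first and last pairs), rather than invoking the Bianchi identity as you suggest; either bookkeeping works.
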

\begin{proof}
    According to the definition of the horizontal differential $\nabla$, we have to compute:
    \begin{align*}
        \nabla^k \abs{ d_y \ell(\xi_x,y)}^2 \Big|_{y=x}
        = \nabla^k \abrac{ d_y \ell(\xi_x,y) , d_y \ell(\xi_x,y)}_{g^{-1}}
        , \,\,\, k=1,2.
    \end{align*}
    Set $d \ell = d_y \ell(\xi_x,y)$, we rewrite the metric pairing $\abrac{d \ell, d\ell}_{g^{-1}}$ as $d\ell\otimes d\ell \cdot g^{-1}$.   Using the Leibniz's rule and the fact that the connection is Levi-Civita: $\nabla g^{-1} =0$:
    \begin{align*}
    \nabla \abs\xi^2 & = \nabla \brac{d\ell\otimes d\ell \cdot g^{-1}} = (2 \nabla d\ell )\otimes d\ell\cdot g^{-1} \\    
    & =\nabla^2 \ell \otimes \ell \cdot g^{-1}.
\end{align*}
Notice that $\nabla$ is torsion-free, when evaluating at $y=x$
    \begin{align*}
        ( \nabla^2 \ell) |_{y=x} = (\symd^2 \ell) |_{y=x}=0, 
    \end{align*}
    here $\symd^2$ is the symmetrization of $\nabla^2$. 
    Thus we have proved \eqref{eq:mod-cur-1sthor-xisquared}. However, the second covariant derivative is nonzero. Again, since $\nabla g^{-1} =0$:
    \begin{align*}
    \nabla^2( (d\ell \otimes d\ell)\cdot g^{-1}) 
   & = (\nabla^2 d\ell) \otimes d\ell \cdot g^{-1} + d\ell \otimes (\nabla^2 d\ell) \cdot g^{-1} \\
&+ (\nabla d\ell) \otimes (\nabla d\ell) \cdot g^{-1}. 
\end{align*}
At $y=x$: $(\nabla d\ell)= \nabla^2 \ell = 0$ as before, thus we have reached \eqref{eq:mod-cur-2ndhor-xisquared}. \par
At last, we need to compute the contraction \eqref{eq:mod-cur-2ndhor-xisquared-complete}. When $g^{ij} =\delta_{ij}$,  apply lemma \ref{lem:wpse-diff-jetsof-phafun}:
% \begin{align*}
%     (  \nabla^3 \ell)_{ijk} = -\frac13 \xi_p \brac{
%           R_{pikj}+R_{pjki}  } 
% \end{align*}
%  The contraction is implemented as follows:
\begin{align*}
    (\nabla^3 \ell)_{ijk}(d\ell)_a g^{aj} = -\frac13 \sum_{i,p} \xi_p \xi_i \brac{
        R_{pikj}+R_{pjki}}.
\end{align*}
Since the curvature tensor $R_{pikj}$ is anti-symmetric in $(p,i)$ and in $(k,j)$ respectively, the first term summed to zero and  for the second term, we use the minus sign to switch $k$ and $j$, thus the result becomes:
\begin{align*}
    (\nabla^3 \ell)_{ijk}(d\ell)_a g^{aj} = \frac13 \sum_{i,p} \xi_p \xi_i R_{pjik}.
\end{align*}
The second term in \eqref{eq:mod-cur-2ndhor-xisquared} provides the same answer, therefore we have proved \eqref{eq:mod-cur-2ndhor-xisquared-complete}. 

% After substituting the explicit expression of $\nablagc^2 \symd \ell$  in lemma  \ref{lem:wpse-diff-jetsof-phafun},  we have proved  \eqref{eq:mod-cur-1sthor-xisquared} and \eqref{eq:mod-cur-2ndhor-xisquared}. \par
% The vertical derivative $D$ is the partial derivatives in the $\xi$ variable, thus the calculation of \eqref{eq:mod-cur-xisquared-verder} is straightforward. 
\end{proof}

%\int_{S^*_xM} (D\abs\xi^2)^2 d\sigma_{S^{m-1}}

\begin{cor}
    In a orthonormal local coordinate system, we compute the following contractions:
    \begin{align}
        \label{eq:mod-cur-loc-symdsymd^2l}
        (D\abs\xi^2) (D^2\abs\xi^2) (\nabla^3\ell) &= -\frac83\sum_{k,p,i} \xi_p\xi_kR_{piki}\\
    (D^2 \abs\xi^2)(\nabla^2 \abs\xi^2)  &=\frac43\sum_{k,p,i} \xi_p\xi_kR_{piki}  \label{eq:mod-cur-loc-nabla^2xi}
    \\
     (D\abs\xi^2)^2 (\nabla^2 \abs\xi^2)  &= 0
    \end{align}
\end{cor}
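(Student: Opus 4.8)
The plan is to reduce all three identities to component computations in an orthonormal frame at a fixed point and then exploit the symmetries of the Riemann tensor. First I would fix $x\in M$ and work in normal coordinates centred at $x$, so that at $x$ one has $g^{ij}=\delta^{ij}$ and every contraction between a contravariant and a covariant tensor becomes an ordinary Euclidean trace. All the inputs are already assembled: the component formulas $(D\abs\xi^2)_i = 2\xi_i$ and $(D^2\abs\xi^2)_{ij} = 2\delta_{ij}$ from the paragraph preceding the corollary; the expression $(\nabla^3 \ell)_{ijk} = -\tfrac13\sum_p \xi_p\brac{R\indices{^p_{ikj}}+R\indices{^p_{jki}}}$ from Lemma~\ref{lem:wpse-diff-jetsof-phafun}; and $(\nabla^2\abs\xi^2)_{jk} = \tfrac23\sum_{p,i}\xi_p\xi_i R_{pjik}$ from \eqref{eq:mod-cur-2ndhor-xisquared-complete} in Lemma~\ref{lem:mod-cur-xisquared-derivatives}. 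Beyond these, the only facts I need are the antisymmetry of $R$ in its first two and in its last two indices, together with the elementary identity $\sum_b R_{abbc} = -\Ric_{ac}$ valid in an orthonormal frame.

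For the third identity the computation is immediate: substituting the components turns $(D\abs\xi^2)^2(\nabla^2\abs\xi^2)$ into $\tfrac83\sum_{j,k,p,i}\xi_j\xi_k\xi_p\xi_i R_{pjik}$, which vanishes because $R_{pjik}$ is antisymmetric under $p\leftrightarrow j$ while $\xi_p\xi_j$ is symmetric. For the first two identities I would plug in $(D^2\abs\xi^2)=2\delta$ first; this identifies two slots of $\nabla^3 \ell$ (respectively of $\nabla^2\abs\xi^2$) into a single summation, which by antisymmetry of $R$ in its last pair collapses to a Ricci contraction via $\sum_b R_{abbc}=-\Ric_{ac}$. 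Carrying the remaining factor $2\xi$ through, $(D\abs\xi^2)(D^2\abs\xi^2)(\nabla^3\ell)$ becomes $-\tfrac83\sum_{k,p,i}\xi_p\xi_k R_{piki}$, and $(D^2\abs\xi^2)(\nabla^2\abs\xi^2)$ becomes $\tfrac43\sum_{p,i}\xi_p\xi_i\Ric_{pi}=\tfrac43\sum_{k,p,i}\xi_p\xi_k R_{piki}$, which are the claimed formulas \eqref{eq:mod-cur-loc-symdsymd^2l} and \eqref{eq:mod-cur-loc-nabla^2xi}.

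The one point that genuinely requires care — and the main obstacle — is the index bookkeeping: I must be consistent about which slot of $\nabla^3 \ell$ is contracted against $D\abs\xi^2$ and which two against $D^2\abs\xi^2$ (this is fixed by the shape of $a_2$ in Proposition~\ref{prop:wpse-asym-sybproduct} together with the symmetry of $\nabla^3 \ell$ in its first two indices), and about the sign in $\sum_b R_{abbc}=-\Ric_{ac}$, which traces back to the fact that in Lemma~\ref{lem:wpse-diff-jetsof-phafun} the covariant derivatives of the phase function are taken on the cotangent bundle. Once the contraction pattern and this sign are pinned down, the three equalities follow from the substitutions just described, with no further subtlety.
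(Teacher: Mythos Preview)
Your proposal is correct and follows essentially the same route as the paper: both arguments reduce to component computations in an orthonormal frame using $(D\abs\xi^2)_i=2\xi_i$, $(D^2\abs\xi^2)_{ij}=2\delta_{ij}$, the formula for $(\nabla^3\ell)_{ijk}$ from Lemma~\ref{lem:wpse-diff-jetsof-phafun}, and $(\nabla^2\abs\xi^2)_{jk}$ from \eqref{eq:mod-cur-2ndhor-xisquared-complete}, then invoke the antisymmetries of $R$. Your explicit mention of the contraction pattern (which slot of $\nabla^3\ell$ pairs with $D\abs\xi^2$ versus $D^2\abs\xi^2$) is a useful clarification that the paper leaves implicit.
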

\begin{proof}
    Since  $(D^2 \abs\xi^2)_{ij} = 2g^{ij} = 2\delta_{ij}$, take  \eqref{eq:mod-cur-2ndhor-xisquared-complete} into account:
    \begin{align*}
        (D^2\abs\xi^2) (\nabla^2\abs\xi^2) =\frac43 \sum_{p,i,k} \xi_p \xi_i R_{pkik},
    \end{align*}
    which is \eqref{eq:mod-cur-loc-nabla^2xi}. For \eqref{eq:mod-cur-loc-symdsymd^2l}, we need $(D\abs\xi^2)_j = 2\xi_j$ and lemma \ref{lem:wpse-diff-jetsof-phafun}:
    \begin{align*}
        (D\abs\xi^2)_k (D^2\abs\xi^2)_{ij} (\nabla^3\ell)_{ijk} &= -\frac13 (2\xi_k) (2\delta_{ij}) \sum_p\xi_p
        \brac{R_{pikj} + R_{pjki}}\\
       & =-\frac83\sum_{p} \xi_p\xi_kR_{piki}.
    \end{align*}
    At last, 
    \begin{align*}
        ((D\abs\xi^2)^2)_{ij} (\nabla^2 \abs\xi^2)_{ij} = 
        \sum_{p,l}( 4 \xi_i\xi_j) \frac23  (\xi_{p} \xi_l R_{pilj}).
    \end{align*}
    Due to the anti-symmetries of the curvature tensor,  the right hand side  vanishes when summing over all indices $i,l,p,j$.
\end{proof}

\begin{cor}
    Keep the notations as above, 
%     \begin{align}
%          \int_{S^*M} D^2\abs\xi^2 \cdot \nabla^2 k  d\sigma_{S^{m-1}}  &=  2 \op{Vol}(S^{m-1}) \Delta k
%          \label{eq:mod-cur-int-sphere-symd^k-(Dxi)^2}
%          \\
%      \int_{S^*M} (D\abs\xi^2)^2  \cdot \nabla^2 k d\sigma_{S^{m-1}}    &=
%      \frac{4\abs\xi^2}{m}\op{Vol}(S^{m-1})\Delta k
%  \label{eq:mod-cur-int-sphere-symd^k-Dxi^2}
% \end{align}
\begin{align}
    \label{eq:mod-cur-int-sphere-Dxi^2-nabla^2}
    \int_{S^*M} (D\abs\xi^2) (D^2\abs\xi^2) (\nabla^3\ell) d\sigma_{S^{m-1}}&=
    -\frac{8}{3m}\mathrm{Vol}(S^{m-1}) \mathcal S_\Delta
    \\
    \int_{S^*M} (D^2 \abs\xi^2)(\nabla^2 \abs\xi^2)  d\sigma_{S^{m-1}
}&=\frac{4}{3m}\mathrm{Vol}(S^{m-1}) \mathcal S_\Delta
\label{eq:mod-cur-int-sphere-D^2xi-nabla^2} 
\end{align}
\end{cor}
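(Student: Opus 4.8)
The plan is to reduce both identities to the pointwise contraction formulas established in Lemma \ref{lem:mod-cur-xisquared-derivatives} and its preceding corollary, together with the symmetry averaging over the cosphere in Lemma \ref{lem:mod-cur-int-sphere-symmetries}. The key point is that after fixing $x\in M$ and choosing an orthonormal coordinate system for $g^{-1}$ at $x$, the integrands $(D\abs\xi^2)(D^2\abs\xi^2)(\nabla^3\ell)$ and $(D^2\abs\xi^2)(\nabla^2\abs\xi^2)$ have already been computed to be quadratic polynomials in $\xi$, namely $-\tfrac83\sum_{k,p,i}\xi_p\xi_k R_{piki}$ and $\tfrac43\sum_{k,p,i}\xi_p\xi_k R_{piki}$ respectively (eq. \eqref{eq:mod-cur-loc-symdsymd^2l} and \eqref{eq:mod-cur-loc-nabla^2xi}). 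So the cosphere integral is just the integral of a homogeneous degree-two polynomial over $S^{m-1}$.

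First I would invoke Lemma \ref{lem:mod-cur-int-sphere-symmetries}: for a quadratic form $\sum_{p,k} c_{pk}\xi_p\xi_k$, the off-diagonal terms integrate to zero over $S^{m-1}$ and each diagonal term contributes $\tfrac{1}{m}\op{Vol}(S^{m-1})$ (after normalizing $\abs\xi=1$ on the sphere). Hence
\begin{align*}
\int_{S^{m-1}} \sum_{p,k}c_{pk}\xi_p\xi_k\, d\sigma_{S^{m-1}} = \frac{1}{m}\op{Vol}(S^{m-1})\sum_p c_{pp}.
\end{align*}
Applying this to $c_{pk}=\sum_i R_{piki}$, the trace $\sum_p c_{pp} = \sum_{p,i}R_{pipi}$, which is exactly the scalar curvature $\mathcal S_\Delta$ at $x$ in an orthonormal frame (with the sign conventions fixed in Lemma \ref{lem:wpse-diff-jetsof-phafun} for the curvature tensor on $TM$; one should double-check that $\sum_{p,i}R_{pipi} = \mathcal S_\Delta$ rather than $-\mathcal S_\Delta$, tracing through the conventions). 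Then \eqref{eq:mod-cur-int-sphere-Dxi^2-nabla^2} follows from the factor $-\tfrac83$ and \eqref{eq:mod-cur-int-sphere-D^2xi-nabla^2} from the factor $\tfrac43$.

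The main obstacle — really the only nontrivial bookkeeping — is to make sure that the contraction $\sum_{p,i}R_{piki}$, which appears in the integrand formulas with $i$ already summed, genuinely produces the Ricci-then-scalar trace when the remaining indices $p,k$ are contracted by the cosphere averaging, and that the sign matches the definition of $\mathcal S_\Delta$ used elsewhere in the paper (recall that in \eqref{eq:mod-cur-b2term-int-sphere} the scalar curvature enters with a definite sign, and $\Delta$ here is the positive Laplacian with $\sigma(\Delta)=\abs\xi^2$). I would verify this by writing $R_{piki}$ in terms of the Ricci tensor $\Ric_{pk}=\sum_i R_{pik i}$ (or its negative, depending on convention), so that $\sum_{p}R_{pipi} = \sum_p \Ric_{pp} = \mathcal S_\Delta$, and confirm consistency with the flat case ($\mathcal S_\Delta=0$, both sides vanish) and with the round-sphere normalization used in Example \ref{eg:def-rie-eg-four-sphere}. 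Once the sign and the trace identification are pinned down, the two displayed equations are immediate.

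Finally I would remark that these cosphere integrals are exactly the pieces feeding into the last line of \eqref{eq:mod-cur-b2term-int-sphere} (the term $\tfrac1m\tfrac23 b_0^2 k^2 \mathcal S_\Delta b_0 \abs\xi^2$), obtained by combining $-\tfrac12$ times \eqref{eq:mod-cur-int-sphere-Dxi^2-nabla^2} with $-\tfrac12$ times \eqref{eq:mod-cur-int-sphere-D^2xi-nabla^2} as they appear in Proposition \ref{prop:mod-cur-prop-b2term}: $-\tfrac12(-\tfrac{8}{3m}) - \tfrac12(\tfrac{4}{3m}) = \tfrac{4}{3m}-\tfrac{2}{3m} = \tfrac{2}{3m}$, which matches the stated coefficient $\tfrac1m\tfrac23$. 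This consistency check is itself a useful sanity test that no sign has been dropped.
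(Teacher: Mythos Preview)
Your proposal is correct and follows essentially the same route as the paper: reduce to the local contraction formulas \eqref{eq:mod-cur-loc-symdsymd^2l} and \eqref{eq:mod-cur-loc-nabla^2xi}, apply the sphere-averaging Lemma \ref{lem:mod-cur-int-sphere-symmetries} to the quadratic form $\sum_{p,k}(\sum_i R_{piki})\xi_p\xi_k$, and identify the diagonal trace $\sum_{p,i}R_{pipi}$ with $\mathcal S_\Delta$. The consistency check against the coefficient $\tfrac{1}{m}\tfrac{2}{3}$ in \eqref{eq:mod-cur-b2term-int-sphere} is a nice addition but not part of the paper's argument.
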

\begin{proof}
    According to \eqref{eq:mod-cur-loc-symdsymd^2l} and \eqref{eq:mod-cur-loc-nabla^2xi}, it suffices to show that 
    \begin{align*}
       \int_{S^*_xM}\brac{ \sum_{p,i,k} \xi_p \xi_i R_{pkik}}  d\sigma_{S^{m-1}} 
       =\frac1m \mathrm{Vol}(S^{m-1}) \mathcal S_\Delta, 
    \end{align*}
    here $\mathcal S_\Delta = \sum_{p,k} R_{pkpk}$ is the trace of the Ricci tensor. Indeed,
  we apply lemma \ref{lem:mod-cur-int-sphere-symmetries} again, 
\begin{align*}
    \int_{S^*_xM}\brac{ \sum_{p,i,k} \xi_p \xi_i R_{pkik}}  d\sigma_{S^{m-1}}& 
    = \int_{S^*_xM} \brac{\sum_{p,k} \xi_p^2 R_{pkpk}}d\sigma_{S^{m-1}}
    =  \frac1m \mathrm{Vol}(S^{m-1}) \sum_{p,k}  R_{pkpk} 
   \\
   &= \frac1m \mathrm{Vol}(S^{m-1}) \mathcal S_\Delta.
\end{align*}
\end{proof}

Apply the following substitution rules (eq. \eqref{eq:mod-cur-int-sphere-Dxi^2-nabla^2},  \eqref{eq:mod-cur-int-sphere-D^2xi-nabla^2},  \eqref{eq:mod-cur-int-sphere-(Dxi)^2} and \eqref{eq:mod-cur-int-sphere-Dxi^2})
\begin{align*}
   (D\abs\xi^2) (D^2\abs\xi^2) (\nabla^3\ell) \mapsto -\frac{8}{3m}\mathrm{Vol}(S^{m-1}) \mathcal S_\Delta, & \,\,\, (D^2 \abs\xi^2)(\nabla^2 \abs\xi^2) \mapsto \frac{4}{3m}\mathrm{Vol}(S^{m-1}) \mathcal S_\Delta \\
   (D\abs\xi^2)^2 \mapsto\frac{4\abs\xi^2}{m}\op{Vol}(S^{m-1})(g^{-1} \Big|_x),& \,\,\,
   D^2\abs\xi^2 \mapsto 2 \op{Vol}(S^{m-1})(g^{-1} \Big|_x),
\end{align*}
to the  $b_2$ term in proposition \ref{prop:mod-cur-prop-b2term}. The result is summerized below. 
\begin{prop}
    \label{prop:app-int_b2-cosphsere}
       Keep the notations as above. Assume that the lower order symbols of the Laplacian is zero, that is $p_1 = p_0 =0$. Along the fiber $T^*_xM$ for some $x \in M$, the integral over the unit sphere $\int_{S^{m-1}} b_2  d\sigma_{S^{m-1}}$ is equal to, up to an overall factor $\op{Vol}(S^{m-1})$:
%        that is $p_1 = p_0 =0$, substitute the results in corollary $\ref{cor:mod-cur-int-sphere-xi^2}$ and  $\ref{cor:mod-cur-int-sphere-scalar-curvature}$ into $b_2$  $($in equation $\eqref{eq:mod-cur-prop-b2term}$$)$, we obtain that    
       \begin{align}
           \begin{split}
              & \,\,\,\,\frac 4m 2 b_0^3 k^2 (\nabla k) b_0 (\nabla k) b_0 \abs\xi^6 g^{-1}
     -(2+\frac4m) b_0^2 k  (\nabla k) b_0 (\nabla k) b_0 \abs\xi^4 g^{-1}\\
     &+ \frac4m b_0^2k   (\nabla k) b_0^2 k (\nabla k) b_0 \abs\xi^6 g^{-1} 
     - b_0^2k (\nabla^2 k) b_0 \abs\xi^2 g^{-1}
     +  \frac4m b_0^3 k^2 ( \nabla^2 k) b_0\abs\xi^4 g^{-1}\\
     & + \frac1m \frac23 b_0^2 k^2  \mathcal S_{\Delta} b_0 \abs\xi^2,
           \end{split}
           \label{eq:app-mod-cur-b2term-int-sphere}
       \end{align}
       where $m$ is the dimension of the manifold. 
    \end{prop}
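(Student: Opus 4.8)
\textbf{Proof proposal for Proposition \ref{prop:app-int_b2-cosphsere}.}

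The plan is to start from the closed expression for $b_2$ established in Proposition \ref{prop:mod-cur-prop-b2term}, specialize it to the case $p_1 = p_0 = 0$ (which kills the entire last line of \eqref{eq:mod-cur-prop-b2term}, namely $-iD(b_0p_1b_0)(\nabla p_2)b_0 - b_0p_0b_0 - b_1p_1b_0 + iDb_0(\nabla p_1)b_0$, as well as the $b_0p_1b_0$ tail of $b_1$), and then integrate the surviving seven terms over the unit cosphere $S^{m-1}\subset T^*_xM$ term by term. The key observation, which is already exploited throughout Appendix \ref{app-modcur-b2term}, is that all vertical and horizontal derivatives of $\abs\xi^2$ are $\T^n$-invariant tensors, hence commute with every factor of $k$, $b_0$, $\nabla k$, etc.; this lets me pull each factor $(D\abs\xi^2)^2$, $D^2\abs\xi^2$, $(D\abs\xi^2)(D^2\abs\xi^2)(\nabla^3\ell)$, and $(D^2\abs\xi^2)(\nabla^2\abs\xi^2)$ to the far right of each monomial and replace it, under $\int_{S^{m-1}}(\cdot)\,d\sigma_{S^{m-1}}$, by its average value.

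Concretely, I would invoke the substitution rules already proved in Appendix \ref{App:int_over_cosphere}: Lemma \ref{lem:mod-cur-int-sphere-symmetries} and its corollaries give
$(D\abs\xi^2)^2 \mapsto \tfrac{4\abs\xi^2}{m}\op{Vol}(S^{m-1})\,g^{-1}$,
$D^2\abs\xi^2 \mapsto 2\op{Vol}(S^{m-1})\,g^{-1}$,
$(D\abs\xi^2)(D^2\abs\xi^2)(\nabla^3\ell) \mapsto -\tfrac{8}{3m}\op{Vol}(S^{m-1})\,\mathcal S_\Delta$, and
$(D^2\abs\xi^2)(\nabla^2\abs\xi^2) \mapsto \tfrac{4}{3m}\op{Vol}(S^{m-1})\,\mathcal S_\Delta$,
while the term $b_0^3 k^2(D\abs\xi^2)^2(\nabla^2\abs\xi^2)b_0$ coming from the line $-\tfrac12 b_0^3k^2 D^2\abs\xi^2\nabla^2\abs\xi^2$... wait — here I must be careful: the $D^2\abs\xi^2\,\nabla^2\abs\xi^2$ term of \eqref{eq:mod-cur-prop-b2term} contracts to $\mathcal S_\Delta$, whereas the separate $(D\abs\xi^2)^2(\nabla^2\abs\xi^2)$ contraction vanishes by antisymmetry of the curvature tensor, so no such term appears in $b_2$ to begin with. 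Applying these rules line by line to \eqref{eq:mod-cur-prop-b2term}: the first line $2b_0^3k^2(\nabla k)b_0(\nabla k)b_0\abs\xi^4(D\abs\xi^2)^2$ becomes $\tfrac4m\cdot 2\,b_0^3k^2(\nabla k)b_0(\nabla k)b_0\abs\xi^6 g^{-1}$; the second line splits into the $-(2+\tfrac4m)b_0^2k(\nabla k)b_0(\nabla k)b_0\abs\xi^4 g^{-1}$ contribution (the $\abs\xi^2(D\abs\xi^2)^2$ piece gives $\tfrac4m\abs\xi^4 g^{-1}$ and the $\abs\xi^4 D^2\abs\xi^2$ piece gives $2\abs\xi^4 g^{-1}$, summing with the overall minus sign); the third line gives $\tfrac4m b_0^2k(\nabla k)b_0^2k(\nabla k)b_0\abs\xi^6 g^{-1}$; the $\nabla^3\ell$ line and the $D^2\abs\xi^2\nabla^2\abs\xi^2$ line combine to $(-\tfrac12)(-\tfrac8{3m}) + (-\tfrac12)(\tfrac4{3m}) = \tfrac4{3m}-\tfrac2{3m} = \tfrac2{3m}$ times $b_0^2k^2 \mathcal S_\Delta b_0\abs\xi^2$, i.e. $\tfrac1m\tfrac23 b_0^2k^2\mathcal S_\Delta b_0\abs\xi^2$; the line $-\tfrac12 b_0^2k(\nabla^2 k)b_0\abs\xi^2 D^2\abs\xi^2$ gives $-b_0^2k(\nabla^2 k)b_0\abs\xi^2 g^{-1}$; and the line $b_0^3k^2(\nabla^2 k)b_0\abs\xi^2(D\abs\xi^2)^2$ gives $\tfrac4m b_0^3k^2(\nabla^2 k)b_0\abs\xi^4 g^{-1}$. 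Collecting these six surviving terms reproduces exactly \eqref{eq:app-mod-cur-b2term-int-sphere}.

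The only genuinely delicate point — and the step I expect to be the main obstacle — is bookkeeping of the factors of $\abs\xi^2$ and of the dimensional constants $1/m$ when the replacement $(D\abs\xi^2)^2 \mapsto \tfrac{4\abs\xi^2}{m}g^{-1}$ is applied to a monomial that already carries a power $\abs\xi^{2j}$: one must verify the homogeneity degree matches the claimed $\abs\xi^4$, $\abs\xi^6$, etc., and confirm that no $\nabla^2\abs\xi^2$ or $\nabla^3\ell$ contribution has been double-counted or dropped between \eqref{eq:mod-cur-prop-b2term} and the integrated form. Since all of this is routine once the $\T^n$-invariance of $\abs\xi^2$ and its jets is in hand (so that reordering factors is legitimate) and the four substitution identities are available, the proof reduces to this careful but elementary line-by-line tally, which I would present compactly rather than in full detail.
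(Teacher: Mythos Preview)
Your proposal is correct and follows precisely the paper's approach: the paper's proof of this proposition consists of nothing more than the sentence ``Apply the following substitution rules \ldots\ to the $b_2$ term in proposition \ref{prop:mod-cur-prop-b2term},'' and your line-by-line tally is exactly that application spelled out in detail. If anything, you have been more explicit than the paper by checking each of the six surviving terms individually and tracking the combination of the $\nabla^3\ell$ and $\nabla^2\abs\xi^2$ contributions into the scalar-curvature term.
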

    \begin{rem}
        The contraction $(\nabla^2 k) g^{-1} = -\Delta k$ is nothing but the Laplacian of the conformal factor $k$.
    \end{rem}

%----------------------------
\bibliographystyle{plain}
%{alpha}

\bibliography{mylib}

\end{document}